\theoremstyle{plain}
\newtheorem{lem}{Lemma}[section]
\newtheorem{teo}[lem]{Theorem}
\newtheorem{prp}[lem]{Proposition}
\newtheorem{cor}[lem]{Corollary}
\newtheorem{claim}{Claim}
\theoremstyle{definition}
\newtheorem{dfn}[lem]{Definition}
\newtheorem{oss}[lem]{Remark}
\theoremstyle{remark}
\newcommand{\mQ}{\mathbb Q} \newcommand{\mR}{\mathbb R}
\newcommand{\mZ}{\mathbb Z}
\newcommand{\calE}{\mathcal E}  
 \newcommand{\calI}{\mathcal I} 
 \newcommand{\calO}{\mathcal O} 
  \newcommand{\calV}{\mathcal V}
 \newcommand{\calX}{\mathcal X}
\newcommand{\gob}{\mathfrak b}  
  \newcommand{\gog}{\mathfrak g}
  \newcommand{\gop}{\mathfrak p}
\newcommand{\got}{\mathfrak t} \newcommand{\gou}{\mathfrak u}
\newcommand{\sfA}  {\mathsf A}
\newcommand{\sfB}{\mathsf B} \newcommand{\sfC}{\mathsf C} \newcommand{\sfD}{\mathsf D}
\newcommand{\sfE}{\mathsf E}  \newcommand{\sfG}{\mathsf G}
\newcommand{\sfk}{\mathsf k}
\newcommand{\mru}{\mathrm u}
\newcommand{\gra}{\alpha} \newcommand{\grb}{\beta}       \newcommand{\grg}{\gamma}
\newcommand{\grd}{\delta} \newcommand{\gre}{\varepsilon} 
 \newcommand{\grl}{\lambda}     \newcommand{\grs}{\sigma}
\newcommand{\gro}{\omega}      
\newcommand{\grG}{\Gamma} \newcommand{\grD}{\Delta}
\newcommand{\grL}{\Lambda}
\newcommand{\mk}  {\Bbbk}
\newcommand{\ol}         {\overline}
\newcommand{\id} 			{\text{id}}
\newcommand{\ra}       {\rightarrow}
\newcommand{\lra}      {\longrightarrow}
\newcommand{\vuoto}    {\varnothing}
\newcommand{\isocan}   {\simeq}
\renewcommand{\geq}    {\geqslant}
\renewcommand{\leq}    {\leqslant}
\newcommand{\senza}    {\smallsetminus}
\DeclareMathOperator{\Ad}{Ad}
\DeclareMathOperator{\card} {card}
\DeclareMathOperator{\car}{char}
\DeclareMathOperator{\height} {ht}
\DeclareMathOperator{\Stab} {Stab}
\DeclareMathOperator{\supp} {supp}
            \newcommand{\st}       {\, : \,}
         \newcommand{\mand}     {\text{ and }}
        \newcommand{\mif}      {\text{ if }}
\title[The Bruhat order on Hermitian symmetric varieties]{The Bruhat order on Hermitian symmetric varieties\\
and on abelian nilradicals}
\author{Jacopo Gandini, Andrea Maffei}
\date{\today}
\email{jacopo.gandini@unipi.it}
\curraddr{\textsc{Dipartimento di Matematica\\ Universit\`a di Pisa\\ Largo Bruno Pontecorvo 5\\ 56127 Pisa, Italy}}
\email{maffei@dm.unipi.it}
\curraddr{\textsc{Dipartimento di Matematica\\ Universit\`a di Pisa\\ Largo Bruno Pontecorvo 5\\ 56127 Pisa, Italy}}
\begin{document}

\begin{abstract}
Let $G$ be a simple algebraic group and $P$ a parabolic subgroup of $G$ with abelian unipotent radical $P^\mru$, and let $B$ be a Borel subgroup of $G$ contained in $P$. Let $\gop^\mru$ be the Lie algebra of $P^\mru$ and $L$ a Levi factor of $P$, then $L$ is a Hermitian symmetric subgroup of $G$ and $B$ acts with finitely many orbits both on $\gop^\mru$ and on $G/L$. In this paper we study the Bruhat order of the $B$-orbits in $\gop^\mru$ and in $G/L$, proving respectively a conjecture of Panyushev and a conjecture of Richardson and Ryan.
\end{abstract} 

\maketitle
 
\section{Introduction}

Let $G$ be an almost simple algebraic group over an algebraically closed field $\mk$ of characteristic different from $2$. 
Let $P \subset G$ be a parabolic subgroup with abelian unipotent radical $P^\mru$ and let $P = L P^\mru$ be a Levi decomposition. 
Then the Levi subgroup $L$ is the identity component of the set of fixed points of an algebraic involution of $G$ if and only if $P^\mru$ is abelian,
in which case the homogeneous space $G/L$ is called a \textit{Hermitian symmetric variety}.

Let $B$ be a Borel subgroup of $G$ contained in $P$. Then $B$ acts with finitely many orbits on $G/L$ and on $\gop^\mru$, the Lie algebra of $P^\mru$. 
The aim of this paper is to give a combinatorial characterization of the corresponding Bruhat orders 
(that is, the partial order among $B$-orbits defined by the inclusion of orbit closures) proving a conjecture of Richardson 
and Ryan in the first case (see \cite{RS2}), and a conjecture of Panyushev in the second case (see \cite{Pa}). 

Fix a maximal torus $T$ in $B \cap L$ and let $\Phi$ be the root system of $G$ associated to $T$.
We denote by $\Delta \subset \Phi^+$ the set of the simple and the set of the positive roots determined by $B$, by $\Phi^-$ the negative roots, by $W$ the Weyl group of $\Phi$. Moreover, we denote by $s_\grb$ the reflection defined by a root $\grb$ and by $\ell$ the length of an element of $W$ determined by the choice of $\Delta$.

The Bruhat order of the $B$-orbits in $G/B$ was determined by Chevalley.  In this case $G=\bigsqcup_{w\in W} BwB$ and $\overline{BuB}\supset BvB$ if and only 
if $u\geq v$ with respect to the Bruhat order of $W$ (that is, the partial order generated by the relations $w s_\grb >w$ for all root $\grb$ such that 
$\ell(w s_\grb)>\ell(w)$). More generally if 
$Q\supset B$ is a parabolic subgroup of $G$, we have a similar description of the Bruhat order
of the $B$-orbits in $G/Q$. Let $W_Q\subset W$ be the Weyl group of $Q$, and let $W^Q$ be 
the set of minimal length representatives of the cosets in $W/W^Q$. Then 
$G=\bigsqcup_{w\in W^Q} BwQ$, and for $u,v\in W^Q$ we have $\overline{BuQ}\supset BvQ$ if and only if
$u\geq v$. If the unipotent radical of $Q$ is abelian, then the Bruhat order of $W^Q$ is particularly simple (see Proposition \ref{prp:GP1}). 

Let $\gop^\mru$ be the Lie algebra of $P^\mru$. The $B$-orbits in $\gop^\mru$, and more generally in any abelian ideal of the Lie algebra of $B$, were parametrized by Panyushev \cite[Theorem 2.2]{Pa} (see also Corollary \ref{cor:parametrizzazione} i)). Let $\Psi$ be the set of roots of $\gop^\mru$, and fix a root vector $e_\gra$ of weight $\gra$ for all $\gra \in \Psi$. If $S\subset \Psi$ is an orthogonal subset (that is, a subset of pairwise orthogonal roots), set $e_S=\sum_{\gra\in S} e_\gra$. Then the $B$-orbits
in $\gop^\mru$ are all of the form $B e_S$ for some orthogonal subset $S \subset \Psi$, and all such subsets give rise to distinct $B$-orbits. Since the action of $P^\mru$ on $\gop^\mru$ is trivial, denoting $B_L=B\cap L$, notice the $B$-orbits on $\gop^\mru$ coincide with the $B_L$-orbits on $\gop^\mru$.

The $B$-orbits in a general symmetric variety were studied by Richardson and Springer \cite{Sp,RS1,RS2,RS3}. There they proved that many similarities with the case of flag varieties hold, however a parametrization of the $B$-orbits 
in this setting is not so explicit as in the previous case. In the case of the Hermitian symmetric variety $G/L$ the parametrization
is much simpler and explicit, and was given in \cite[Theorem 5.2.4]{RS2} (see also Corollary \ref{cor:parametrizzazione} ii)). We describe it by using the language introduced above.

\begin{dfn}
Given $v \in W^P$ and an orthogonal subset $S \subset \Psi$, we say that the pair $(v,S)$ is \textit{admissible} if $v(S) \subset \Phi^-$. We denote by $V_L$ the set of the admissible pairs.
\end{dfn}

If $S \subset \Psi$ is an orthogonal subset, define a point in $G/L$ by setting $x_S = \exp(e_S)L$ (the definition of the exponential map for these particular elements is possible also in positive characteristic). Then to any admissible pair $(v,S)$ we associate the orbit $Bvx_S$: these orbits are all distinct, and every $B$-orbit in $G/L$ is of this form. Thus the $B$-orbits in $G/L$ are parametrized by the admissible pairs.

The link between the two parametrization is easy to explain. Consider the projection $\pi:G/L\lra G/P$ and let $w^P$ be the longest element in 
$W^P$, it satisfies $w^P(\Psi)\subset \Phi^-$. The stabilizer of $w^PP$ inside $B$ is equal to $B_L$ and the 
fiber of $\pi$ over $w^PP$ is isomorphic to $\gop^\mru$. Hence the $B_L$-orbits in $\gop^\mru$ correspond exactly to the $B$-orbits in $Bw^PP$. 

In order to study the Bruhat order on a symmetric variety (still denoted by $\leq$), the approach of Richardson and Springer 
is to look at the action of the minimal parabolic subgroups ($\supsetneq B$) of $G$ on the set of the $B$-orbits. If $\gra \in \Delta$, let 
$P_\gra \subset G$ be the associated minimal parabolic subgroup containing $B$. If $\calO$ is a $B$-orbit in $G/L$ (or more generally in any symmetric variety), then $P_\gra \calO$ decomposes in the union of at most three $B$-orbits. Let $m_\gra \!\cdot \calO$ be the open $B$-orbit in $P_\gra \calO$, then obviously $\calO \leq m_\gra \cdot \calO$. As already in the case of flag varieties, the Bruhat order it is not generated by the relations $\calO\leq m_\gra \!\cdot \calO$, however it is possible to reconstruct it from 
the action of the minimal parabolic subgroups in the following way. 
Given $\gra_0,\gra_1,\dots,\gra_m\in \Delta$ and a $B$-orbit $\calO \subset G/L$, set $\calO_1=m_{\gra_m}\!\cdots m_{\gra_1} \!\cdot \calO$ and 
$\calO_2=m_{\gra_m}\!\cdots m_{\gra_1} \!\cdot m_{\gra_0}\!\cdot \calO$: then $\calO_1$ is contained in the closure of $\calO_2$, and the Bruhat order on $G/L$
is generated 
by these kind of relations (see \cite[Theorem 7.11]{RS1}). We will not directly use this result, but we will make use of some of 
its consequences, and more generally we will make use of the action of the minimal parabolic subgroups.

Using the action of the minimal parabolic subgroups it is also not difficult to give a formula for the dimension of a $B$-orbit in a symmetric variety (see \cite[Lemma 7.2]{RS1}). 
In our case the formula can be expressed as follows. If $S$ is a set of orthogonal roots, denote $\grs_S = \prod_{\grb\in S} s_\grb$.
If $(v,S) \in V_L$, we have then
\begin{equation}\label{eq:dim1}
 \dim Bvx_S = \card \Psi +\frac {\ell(\grs_{v(S)})+\card S}{2}.
\end{equation}
From this formula it's also easy to deduce a dimension formula for the $B$-orbits in $\gop^\mru$ conjectured by Panyushev, see Corollary 
\ref{cor:dim-formula}. 

We now come to the main results of the paper. We first describe the Bruhat order on $\gop^\mru$. Let $w_P$ be the longest element in $W_P$. 

\begin{teo} [Corollary \ref{cor:panyushev-conj}] \label{teo:pan}
Suppose that $S,S' \subset \Psi$ are orthogonal subsets, then $Be_S \subset \ol{B e_{S'}}$ if and only if $\grs_{w_P(S)} \leq \grs_{w_P(S')}$.
\end{teo}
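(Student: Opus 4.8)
The plan is to deduce the statement from the description of the Bruhat order on the Hermitian symmetric variety $G/L$, transferring it along the projection $\pi\colon G/L\lra G/P$ to the fibre over $w^PP$. Recall from the discussion above that the stabiliser of $w^PP$ in $B$ is $B_L$, that $\pi^{-1}(w^PP)$ is $B_L$-equivariantly identified with $\gop^\mru$ with $e_S$ corresponding to $\exp(e_S)\cdot w^PP$, and that under this identification $Be_S$ corresponds to $Bw^Px_S\cap\pi^{-1}(w^PP)$. Since $Bw^PP$ is the open $B$-orbit in $G/P$, the subset $X=\pi^{-1}(Bw^PP)$ is open in $G/L$ and is the homogeneous bundle $B\times_{B_L}\gop^\mru$ over $B/B_L$; as $B_L$ is connected solvable this bundle is Zariski-locally trivial, hence closure is compatible with the bundle structure: for any $B$-orbit $\calO\subset X$ one has $\ol{\calO}^{\,X}=B\times_{B_L}\bigl(\ol{\calO\cap\gop^\mru}^{\,\gop^\mru}\bigr)$, and intersecting with the fibre (using that $X$ is open in $G/L$) yields
\[
 Be_S\subset\ol{Be_{S'}}^{\,\gop^\mru}\quad\Longleftrightarrow\quad Bw^Px_S\subset\ol{Bw^Px_{S'}}^{\,G/L}.
\]
Note that $(w^P,S)\in V_L$ for every orthogonal $S\subset\Psi$, because $w^P(\Psi)\subset\Phi^-$, and that the orbits $Bw^Px_S$ are exactly the $B$-orbits of $G/L$ lying over the open cell of $G/P$. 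Thus the theorem is precisely the restriction of the Bruhat order of $G/L$ to the admissible pairs with first component $w^P$.

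Granting the description of $\leq$ on $G/L$ (the Richardson--Ryan conjecture), the criterion for two admissible pairs with the same first component becomes a condition on the second components alone, and for $v=v'=w^P$ it reads $\grs_{w^P(S)}\leq\grs_{w^P(S')}$ in $W$. To pass to the stated formulation, which uses $w_P$ instead of $w^P$, I would use $w_0=w^Pw_P$ (with $w_0$ the longest element of $W$): then $w^P(S)=w_0\bigl(w_P(S)\bigr)$, so $\grs_{w^P(S)}=w^P\grs_S(w^P)^{-1}=w_0\,\grs_{w_P(S)}\,w_0$; since conjugation by $w_0$ is a length- and Bruhat-order-preserving automorphism of $W$, we get $\grs_{w^P(S)}\leq\grs_{w^P(S')}\iff\grs_{w_P(S)}\leq\grs_{w_P(S')}$, which is the claim.

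It remains to say how the relevant case of the $G/L$ order is obtained, and this is where the real work lies; I would attack it with the Richardson--Springer machinery of minimal parabolic subgroups. The implication "orbit in closure $\Rightarrow$ Bruhat $\leq$" is the softer one: closure relations among the finitely many $B$-orbits are built from the moves passing from $\calO$ to $m_\gra\!\cdot\calO$ with $\gra\in\Delta$; for orbits over the open cell of $G/P$ only the $\gra$ with $s_\gra\in W_P$ are relevant, these act on $\gop^\mru$ through $L$, and one checks such a move replaces $S$ by an orthogonal $S''$ with $\grs_{w_P(S)}\leq\grs_{w_P(S'')}$, the dimension formula \eqref{eq:dim1} forcing the length increase to be exactly $1$ across a cover. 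The converse is the hard part: given $\grs_{w_P(S)}\leq\grs_{w_P(S')}$ one must lift a saturated chain in $(W,\leq)$ from $\grs_{w_P(S)}$ to $\grs_{w_P(S')}$ to a chain of $B$-orbit closures. The obstacle is that an elementary Bruhat move on $\grs_{w_P(S)}$ need neither come from a single minimal-parabolic move on $S$ nor keep one inside the orthogonal subsets of $\Psi$; the way around this is to exploit the rigid structure afforded by $P$ being cominuscule (exactly one simple root lies outside $W_P$, and the $W_P$-action on $\Psi$ is essentially that on a minuscule weight orbit), which forces every such cover to be realisable after interposing finitely many intermediate orbits. Establishing this realisability uniformly across the types of $G$ is the technical core, and it is also exactly what is needed for the general $G/L$ statement (which, together with \cite{RS1}, underlies the reduction used above).
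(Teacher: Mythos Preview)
Your reduction is correct and matches the paper: the $B$-orbits in $\gop^\mru$ are exactly the $B$-orbits of $G/L$ lying over the open cell $Bw^PP/P$, closure relations transfer along the bundle $\pi^{-1}(Bw^PP)\simeq B\times^{B_L}\gop^\mru$, and the conversion from $\grs_{w^P(S)}$ to $\grs_{w_P(S)}$ via conjugation by $w_0$ is exactly the argument of Corollary~\ref{cor:panyushev-conj}. But the logical dependence you propose is inverted relative to the paper. There the $u=v$ case of the $G/L$ order (Theorem~\ref{teo:panv}, whose specialisation to $v=w^P$ is the corollary) is proved \emph{first}, and is then used as the base of the induction that establishes the Richardson--Ryan conjecture (Lemma~\ref{lem:baseind1} feeding into the proof of Theorem~\ref{teo:conRR}). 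Deducing the corollary from Richardson--Ryan would be circular in this development. Even granting Richardson--Ryan as a black box, your claim that at $u=v=w^P$ the criterion collapses to the single inequality $\grs_{w^P(S)}\leq\grs_{w^P(S')}$ skips a step: the order on $V_L$ also demands $[v\grs_{S'}]^P\leq[v\grs_S]^P$, and the paper explicitly remarks (after Lemma~\ref{lem:baseind1}) that no direct combinatorial proof of this implication from $\grs_{v(S)}\leq\grs_{v(S')}$ is known---it is obtained there only through the geometry, i.e.\ through Theorem~\ref{teo:panv} itself.

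Concerning your sketch of the direct argument, two corrections. The forward implication is handled in the paper by a one-line conjugation rather than by analysing minimal-parabolic moves: from $e_R\in\ol{B_ve_S}$ one gets $g_{v(R)}\in\ol{Bg_{v(S)}B}$, hence $\grs_{v(R)}\leq\grs_{v(S)}$ by Lemma~\ref{lem:twisted-involution} and the ordinary Bruhat decomposition. More importantly, your assertion that ``only the $\gra$ with $s_\gra\in W_P$ are relevant'' does not reflect how the converse is proved. The induction of Theorem~\ref{teo:panv} (via Lemma~\ref{lem:lem1}) is on $L(\grs_{v(S)})$ and deliberately allows $\gra$ with $s_\gra v<v$: in case~iii) one passes from $(v,R),(v,S)$ to $(v',R'),(v',S')$ with $v'=s_\gra v<v$, applies the inductive hypothesis there, and then pushes back up with $P_\gra$. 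So the argument requires the statement for \emph{all} $v\in W^P$ simultaneously, not just $v=w^P$; restricting to simple roots that keep you in the open cell does not supply enough moves to run the induction. Your final paragraph gestures at lifting Bruhat chains of involutions, but the paper's mechanism is different and more structured: it is a case analysis on whether $m_\gra(v,R)\neq(v,R)$, whether $\calE_\gra(v,R)=\vuoto$, or whether $\calE_\gra(v,R)\neq\vuoto$, each case handled by the corresponding part of Lemma~\ref{lem:lem1} together with the compatibility of the circle action with Bruhat order on $\calI$ (Lemma~\ref{lem:inv1}).
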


The previous theorem was conjectured by Panyushev (see \cite[Conjecture 6.2]{Pa}).
When $G$ is of type $\sfA$ or $\sfC$, it can be deduced from more general results on the adjoint and coadjoint $B$-orbits of 
nilpotency order 2 and their Bruhat order, studied by Ignatyev \cite{Ign}, Melnikov \cite{Mel}, and Barnea and Melnikov \cite{BM1}. 
For orthogonal groups this formula was proved by Barnea and Melnikov \cite{BM2}, so only the exceptional cases remained to be proved. However our proof is not based on a case-by-case analysis, and it does not rely on such results.

We now come to the Bruhat order on $G/L$. Beyond $v$ and $\grs_{v(S)}$, there is a third Weyl group element that one can canonically attach to a $B$-orbit in $G/L$. 
If indeed $P^-$ denotes the opposite parabolic subgroup of $P$, then $L= P \cap P^-$. Thus one can define an element 
$\nu \in W^P$ by the equality $Bv x_S P^- = B \nu P^-$. The element $\nu$ can be easily described in terms of $(v,S)$: if indeed $[w]^P \in W^P$ denotes the minimal length representative of the coset $wW_P$, then we 
have $\nu = [v\grs_S]^P$, see Lemma \ref{lem:twisted-involution}.

Define a partial order on the set of the admissible pairs $V_L$ as follows:
\begin{equation}\label{eq:ordVL}
(u,R) \leq (v,S) \;\quad \text{ if } \quad \;
	[v\grs_S]^P \leq  [u\grs_R]^P \leq u \leq v
	\; \text{ and } \;
	\grs_{u(R)} \leq \grs_{v(S)}.
\end{equation}
Then we will prove the following theorem, which was conjectured by Richardson and Ryan (see \cite[Conjecture 5.6.2]{RS2}).

\begin{teo}[Theorem \ref{teo:conRR}] \label{teo:RS2}
Let $(u,R), (v,S)$ be admissible pairs. Then $Bux_R \subset \ol{Bvx_S}$ if and only if $(u,R) \leq (v,S)$.
\end{teo}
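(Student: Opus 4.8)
The plan is to reduce the statement to the already-established Bruhat order on $\gop^\mru$ (Theorem \ref{teo:pan}) together with standard facts about the $B$-orbit structure of flag varieties, and then to glue these via the projection $\pi\colon G/L \to G/P$ and the ``opposite'' projection $G/L \to G/P^-$. First I would establish the ``only if'' direction. Suppose $Bux_R \subset \ol{Bvx_S}$. Pushing forward by $\pi$ gives $BuP \subset \ol{BvP}$, hence $u \leq v$ in $W^P$; similarly projecting to $G/P^-$ and using $\nu = [v\grs_S]^P$ from Lemma \ref{lem:twisted-involution} gives $[v\grs_S]^P \leq [u\grs_R]^P$. The inequality $[u\grs_R]^P \leq u$ is automatic since $[u\grs_R]^P$ is the minimal length representative of its coset and $u(R)\subset\Phi^-$ forces $u\grs_R$ to lie in that coset with $\ell(u\grs_R) \geq \ell(u)$; I would make this precise using the characterization of $W^P$ when $P^\mru$ is abelian (Proposition \ref{prp:GP1}). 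Finally, the inequality $\grs_{u(R)} \leq \grs_{v(S)}$ follows by intersecting with the fiber over $w^PP$: more carefully, one uses the $B$-equivariant identification of that fiber with $\gop^\mru$, translates $Bvx_S$ to a $B$-orbit meeting $Bw^PP$, and applies Theorem \ref{teo:pan} after conjugating by $w^P$ (recall $w^P(\Psi)\subset\Phi^-$ and the relation between $w^P$ and $w_P$).

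For the ``if'' direction I would proceed by the method of Richardson and Springer, i.e.\ by induction on $\dim Bvx_S$ using the action of the minimal parabolic subgroups $P_\gra$, $\gra\in\Delta$, and the fact recalled in the introduction that the Bruhat order is generated by the relations ``$\calO_1 \subset \ol{\calO_2}$'' where $\calO_1 = m_{\gra_m}\cdots m_{\gra_1}\cdot\calO$ and $\calO_2 = m_{\gra_m}\cdots m_{\gra_1} m_{\gra_0}\cdot\calO$. Concretely, given $(u,R) \leq (v,S)$, I would produce a chain of admissible pairs from $(u,R)$ to $(v,S)$, each step of which is realized by such a minimal-parabolic move, and check that the combinatorial order \eqref{eq:ordVL} is compatible with these moves. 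The key is to compute explicitly, for each $\gra\in\Delta$ and each admissible pair $(v,S)$, the admissible pair corresponding to $m_\gra\cdot Bvx_S$ in terms of the three invariants $v$, $\grs_{v(S)}$, and $[v\grs_S]^P$; this is essentially bookkeeping with the $\mathrm{SL}_2$- or $\mathrm{PGL}_2$-geometry of $P_\gra\calO$, distinguishing the cases where $\calO \mapsto m_\gra\cdot\calO$ changes $v$ (a ``complex'' move), changes $S$, or is of ``type II/U'' in the Richardson--Springer classification.

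The main obstacle, I expect, is the ``if'' direction, and specifically showing that the three conditions in \eqref{eq:ordVL} are jointly \emph{sufficient} — individually each is necessary and controlled by one of the three projections/fibers, but one must rule out the possibility that $(u,R)$ and $(v,S)$ satisfy all three inequalities yet $Bux_R \not\subset \ol{Bvx_S}$. The natural strategy is: whenever $(u,R) < (v,S)$ strictly, exhibit an intermediate admissible pair $(v',S')$ with $(u,R) \leq (v',S') < (v,S)$ and $Bv'x_{S'} \subset \ol{Bvx_S}$ realized by a single $m_\gra$-move, so that induction on $\dim Bvx_S$ closes the argument. Finding such an $(v',S')$ requires a careful analysis: one should lower whichever of the invariants $\ell(v)$, $\ell(\grs_{v(S)})$, or (the reverse order on) $\ell([v\grs_S]^P)$ is not yet minimal, and argue — using the interlacing among the three conditions and the structure of the Bruhat order on $W^P$ from Proposition \ref{prp:GP1} — that at least one such move keeps us $\geq (u,R)$. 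I would handle the base case (where $(v,S)$ is minimal, forcing $v=e$ and $S=\varnothing$, i.e.\ the closed orbit $Bx_\varnothing = BL/L$) directly, and organize the inductive step around the ``lexicographic-type'' priority: first match $v$, then $\grs_{v(S)}$, then $[v\grs_S]^P$, verifying at each reduction that admissibility is preserved and that the target pair still dominates $(u,R)$ in the sense of \eqref{eq:ordVL}.
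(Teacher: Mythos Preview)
Your ``only if'' direction is essentially correct and matches the paper, except for the inequality $\grs_{u(R)} \leq \grs_{v(S)}$. Your proposal to obtain it by ``intersecting with the fiber over $w^PP$'' does not work: the orbits $Bux_R$ and $Bvx_S$ live over $BuP$ and $BvP$, not over $Bw^PP$, and there is no $B$-equivariant translation carrying them there. The paper instead invokes \cite[Lemma 1.1]{RS3}, which says directly that the involution map $\varphi_\calI$ of \eqref{eq:phi_I} is order-preserving; combined with Lemma \ref{lem:twisted-involution} this gives the inequality. (Also a minor slip: the minimal elements of $(V_L,\leq)$ are the pairs $(v,\vuoto)$ for \emph{all} $v\in W^P$, not just $v=e$; these are exactly the closed $B$-orbits.)

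The real gap is in the ``if'' direction. Your inductive scheme hinges on the claim that whenever $(u,R) < (v,S)$ one can find some $\gra\in\Delta$ and some $(v',S')\in\calE_\gra(v,S)$ with $(u,R)\leq(v',S')$. This is precisely what the paper would like to do (its Cases $1_\gra$--$4_\gra$), but it is \emph{false} in general, and the paper says so explicitly. The obstruction is the following configuration: for every $\gra$ with $\calE_\gra(v,S)\neq\vuoto$ one has $\calE_\gra(v,S)=\{(s_\gra v,S')\}$ with $s_\gra v<v$ and $[s_\gra v\,\grs_{S'}]^P>[v\grs_S]^P$, while simultaneously $\calE_\gra(u,R)$ has cardinality two; in this situation $u$ and $s_\gra v$ turn out to be \emph{incomparable} in $W^P$ (Claim \ref{lem:proof2} ii) in the paper), so $(u,R)\not\leq(v',S')$, and your descent stalls. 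No ``lexicographic priority'' among the three invariants rescues this, because all available $m_\gra$-moves on the $(v,S)$ side simultaneously decrease $v$ and increase $\nu$, breaking comparability with $(u,R)$.

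The paper's way out is to introduce a second induction parameter, $\ell(v)-\ell(u)$, with base case $u=v$ handled by Theorem \ref{teo:panv} (this is why the abelian-nilradical result is proved first). In the bad configuration one then picks, via the chain property in $W^P$, a \emph{different} simple root $\gra_0$ with $u\leq s_{\gra_0}v<v$; necessarily $\calE_{\gra_0}(v,S)=\vuoto$ and $m_{\gra_0}(v,S)=(v,S_0)$ with $S_0=S\cup\{\grb_0\}$. A delicate two-root argument (Claims \ref{lem:proof3}--\ref{lem:proof7}) combining $\gra$ and $\gra_0$, together with the coweight characterization of the Bruhat order on $W^P$ (Proposition \ref{prp:GP1} iii)), then establishes $(s_\gra u,R')\leq(s_\gra v,S)$, which lies at strictly smaller $\ell(v)$ and allows the induction to close. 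Your proposal is missing this entire mechanism.
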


If an orbit is in the closure of another one, the fact that the above combinatorial conditions have to hold was known. This is easily proved in the case of $\gop^\mru$ (see the first paragraph of the proof of Theorem \ref{teo:panv}), 
and it follows from the work of Richardson and Springer in the other case. So, what we really need to prove is the other implication. 

Richardson and Ryan proved some partial results in this direction which were reported in \cite{RS2}. When $G$ is of type $\sfA$, the $B$-orbits in $G/L$ have also been studied by Matsuki-Oshima \cite{MO} and Yamamoto \cite{Ya} 
in terms of suitable combinatorial parameters called \textit{clans}. In this case, the Bruhat order on $G/L$ was recently determined by Wyser \cite{Wy}.

By identifying the $B$-orbits in $\gop^\mru$ with the $B$-orbits in $B w^P P$ as explained above, it is not hard to see that the Bruhat 
order on $\gop^\mru$ appears as a particular case of the Bruhat order on $G/L$. However we will need
to study first this particular case, and more precisely to determine the Bruhat order of the $B$-orbits in  
the subsets $BvP/P$ for a fixed $v \in W^P$. We will prove in this case an analogue of Theorem 
\ref{teo:pan} (see Theorem \ref{teo:panv}), which will be used as the basis of an induction to prove Theorem \ref{teo:RS2}.

The abelian nilradicals $\gop^\mru$ are special instances of abelian ideals of $\gob$, and actually the mentioned parametrization of 
the $B$-orbits in $\gop^\mru$ in  \cite[Theorem 2.2]{Pa} holds in this more general setting without substantial differences. 
We will come back to this point in a forthcoming article \cite{GMMP}, where we will study the Bruhat order on arbitrary abelian ideals of $\gob$.

The paper is organized as follows. In Section \ref{sez:GP} we prove some preliminary results concerning the Bruhat order
on $G/P$. In Section \ref{sez:inv} we recall some results from \cite{RS1,RS2} about the Bruhat order on the set of involutions. 
We also prove here some additional results which apply in the Hermitian case that we will need later. In Section \ref{sez:par}
we describe the parametrization of the $B$-orbits in $\gop^\mru$ and in $G/L$. 
In Section \ref{sez:azioneminimali} we describe some results about the action of the minimal parabolic subgroups, and prove the dimension formulas.
In Section \ref{sez:Pu} we prove Theorem \ref{teo:pan}, whereas Section \ref{sez:GL1} is devoted to the prooof of Theorem \ref{teo:RS2}.

\textit{Acknowledgments.} We thank the anonymous referees for their careful readings and for several useful remarks and suggestions.

\subsection{Notation and preliminaries}
We keep the notation used in the Introduction. Moreover, we will make use of the following conventions.

If $H$ is any algebraic group, we will denote its unipotent radical by $H^\mru$ and its character lattice by $\calX(H)$, and we set $U = B^\mru$. The Lie algebra of $H$ will be denoted by the corresponding fraktur letter, or when this creates some ambiguity by $\mathrm{Lie}(H)$. Notice that the Lie algebra of $G$ depends on the isogeny class of $G$, however the only Lie algebra we will be interested is $\gop^\mru$, which is independent of the isogeny class of $G$.

We denote by $\grL$ the weight lattice of $T$, and regard the vector space $\grL \otimes_\mZ \mQ$ as a Euclidean space with a $W$-invariant nondegenerate scalar product. As usual, the monoid of the dominant weights defined by $B$ is denoted by $\grL^+$.

If $v\in W$, then we define
$$ \Phi^+(v)=\{\gra \in \Phi^+\st v(\gra)\in \Phi^-\}. $$
If $\gra \in \Phi$, the corresponding coroot will be denoted by $\gra^\vee$.
If moreover $\gra \in \grD$, then the corresponding fundamental weight (resp. coweight) will be denoted by $\gro_\gra$ (resp. by $\gro_\gra^\vee$). 
We will denote by $\theta$ the highest root of $\Phi$.

If $\gra \in \grD$ and $\grb \in \Phi$, we denote by $[\grb: \gra]$ the coefficient of $\gra$ in $\grb$. The \textit{height} of $\grb$ is defined 
by $\height(\grb) = \sum_{\gra \in \grD} [\grb:\gra]$. 
We will regard the weight lattice as a partially ordered set with the \textit{dominance order}: if 
$\grl, \mu \in \grL$, then we write $\grl \leq \mu$ if $\mu - \grl$ is a sum of simple roots. 
Similarly, we will regard  the coweight lattice as a partially ordered set with the dominance order defined by the simple coroots.

We say that a nonzero dominant weight $\mu$ is \textit{minuscule} if it is minimal in $\grL^+$. 
Similarly, we say that a dominant coweight $\mu$ is \textit{cominuscule} if it is a minuscule weight for the root system 
$\Phi^\vee$ of the coroots. 
Recall the following characterizations of cominuscule elements (see \cite[VIII, \S 7, no. 3]{Bou} and \cite[Exercise 13.13]{Hu}).

\begin{prp}\label{prp:minuscoli1}
Let $\mu$ be a dominant coweight, then the following conditions are equivalent:
\begin{enumerate}[\indent i)]
	\item $\mu$ is cominuscule;
	\item $\langle \mu, \gra \rangle \leq 1$ for all $\gra \in \Phi^+$;
	\item $\mu = \gro^\vee_\gra$, for some $\gra \in \grD$ such that $[\theta:\gra]=1$.
\end{enumerate}
\end{prp}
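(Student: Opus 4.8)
The plan is to establish ii) $\Leftrightarrow$ iii) by a direct computation inside $\Phi$, and i) $\Leftrightarrow$ ii) by reduction to the classical characterization of minuscule weights, applied to the dual root system $\Phi^\vee$. One may assume $\mu\neq 0$ throughout, since the zero coweight satisfies neither i) nor iii). The facts about $\Phi$ that I would use are the standard ones for an irreducible root system: every simple root occurs in the highest root $\theta$ with coefficient $[\theta:\gra]\geq 1$, and $\grb\leq\theta$ in the dominance order for every $\grb\in\Phi^+$, so that $0\leq[\grb:\gra]\leq[\theta:\gra]$.

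For ii) $\Leftrightarrow$ iii): if $\mu=\gro^\vee_\gra$ with $[\theta:\gra]=1$, then for $\grb\in\Phi^+$ one has $\langle\mu,\grb\rangle=[\grb:\gra]$, and $0\leq\grb\leq\theta$ gives $0\leq\langle\mu,\grb\rangle\leq[\theta:\gra]=1$, which is ii). Conversely, assuming ii), write $\mu=\sum_{\gra\in\grD}c_\gra\,\gro^\vee_\gra$ with $c_\gra=\langle\mu,\gra\rangle\in\mZ_{\geq 0}$ (since $\mu$ is dominant) and pair with the highest root: ii) yields $1\geq\langle\mu,\theta\rangle=\sum_{\gra\in\grD}c_\gra\,[\theta:\gra]$. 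As each $[\theta:\gra]\geq 1$ and $\mu\neq 0$, this is a sum of nonnegative integers equal to $1$ with at least one nonzero term, so exactly one $c_\gra$ is nonzero, it equals $1$, and the corresponding $\gra$ satisfies $[\theta:\gra]=1$; hence $\mu=\gro^\vee_\gra$, which is iii). It is cleaner to prove this equivalence intrinsically in $\Phi$ than to dualize, precisely because iii) refers to the highest root of $\Phi$ and not of $\Phi^\vee$.

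For i) $\Leftrightarrow$ ii) I would pass to $\Phi^\vee$: a coweight of $\Phi$ is the same thing as a weight of $\Phi^\vee$, and for $\gra\in\Phi$ the integer $\langle\mu,\gra\rangle$ is the value of the weight $\mu$ on the coroot in $\Phi^\vee$ of the root $\gra^\vee\in\Phi^\vee$ (using $(\gra^\vee)^\vee=\gra$). Under this dictionary i) says that $\mu$ is a minuscule weight of $\Phi^\vee$, while ii) --- using that $\mu$ is dominant for $\Phi^\vee$ --- says that $\mu$ pairs into $\{-1,0,1\}$ with every coroot of $\Phi^\vee$; the equivalence of these two properties is precisely the classical characterization of minuscule weights, which I would quote from \cite[VIII, \S 7, no. 3]{Bou} (see also \cite[Exercise 13.13]{Hu}). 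If one wanted to include the argument: when $\mu$ is minuscule, $W\mu$ is the full set of weights of the Weyl module $V(\mu)$, hence a $W$-stable set lying on a single sphere about the origin; applying this to the (unbroken) $\grb$-string through $\mu$ one obtains $\langle\mu,\grb^\vee\rangle+1$ collinear points on that sphere, and since a line meets a sphere in at most two points, $\langle\mu,\grb^\vee\rangle\leq 1$ for every positive root $\grb$ of $\Phi^\vee$; the converse follows by reversing this, the hypothesis forcing $W\mu$ to be already the whole weight set. This last characterization of minuscule weights is the one genuinely non-elementary ingredient, so --- the Proposition being only recalled for later use --- I would keep the proof brief and cite \cite{Bou} and \cite{Hu} for it; the main point to be careful about is the bookkeeping in the passage $\Phi\leftrightarrow\Phi^\vee$ (coroots of $\Phi$ being roots of $\Phi^\vee$, whose coroots are again the roots of $\Phi$).
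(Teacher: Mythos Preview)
Your argument is correct. The equivalence ii) $\Leftrightarrow$ iii) is handled cleanly: pairing $\mu$ with $\theta$ and using that each $[\theta:\gra]\geq 1$ forces $\mu$ to be a single fundamental coweight with the stated constraint. The passage to $\Phi^\vee$ for i) $\Leftrightarrow$ ii) is also set up correctly, with the key bookkeeping point being $(\gra^\vee)^\vee=\gra$, so that condition ii) for $\mu$ as a coweight of $\Phi$ is precisely the classical minuscule condition for $\mu$ as a weight of $\Phi^\vee$.

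As for comparison: the paper does not actually prove this proposition. It is stated as a recalled fact, with the references \cite[VIII, \S 7, no.~3]{Bou} and \cite[Exercise 13.13]{Hu} given in the text immediately preceding the statement, and no proof environment follows. So you have supplied strictly more than the paper does. Your instinct to keep i) $\Leftrightarrow$ ii) as a citation and spell out only ii) $\Leftrightarrow$ iii) is exactly in the spirit of how the paper treats this material; if anything, even the direct computation for ii) $\Leftrightarrow$ iii) goes beyond what the authors deemed necessary to include.
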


If $\gra \in \Phi$, we will denote by $\gou_\gra \subset \gog$ the corresponding root space, and by $U_\gra \subset G$ and $u_\gra(t) : \mk \ra U_\gra$ respectively the corresponding 
root subgroup and a one parameter subgroup. We can choose the one parameter subgroup so that 
$s_\gra=u_\gra(t)\,u_{-\gra}(-t^{-1})\,u_\gra(t)\, T$ for all $t \in \mk^\times$ (see \cite{Springer} Lemma 8.1.4 i)).
If $w\in W$, here and throughout the paper by abuse of notation we will denote by the same letter any representative of $w$ in the normalizer of $T$ in $G$. 

Fix root vectors $e_\gra \in \gou_\gra$ and $f_\gra\in \gou_{-\gra}$ for all $\gra \in \Phi^+$. If $S \subset \Psi$ is an orthogonal subset, recall the element $ e_S = \sum_{\gra \in S} e_\gra$ defined in the Introduction, and define similarly
$f_S = \sum_{\gra \in S} f_\gra$. If $e \in \gou$, write $e = \sum_{\gra \in \Phi^+} c_\gra e_\gra$ and define the \textit{support} of $e$ as 
$$ \supp(e) = \{ \gra \in \Phi^+ \st c_\gra \neq 0\}.$$

Since $\gop^\mru$ is abelian, it is well defined a $P$-equivariant exponential map 
$$
\exp:\gop^\mru\lra P^\mru
$$
in all characteristics (see \cite[Proposition 5.3]{Se}). This map is an isomorphism of varieties, and it satisfies
$\exp(x+y)=\exp(x)\cdot\exp(y)$. A similar map $\exp:(\gop^-)^\mru\ra (P^-)^\mru$, still denoted by the same symbol, exists for $P^-$ as well. If $\gra\in \Psi$, when convenient we will choose the root vectors $e_\gra$ and $f_\gra$ so that $u_\gra(t)=\exp(t\,e_\gra)$ and $u_{-\gra}(t)=\exp(t\,f_\gra)$, for all $t\in \mk$. If $S\subset \Psi$ is an orthogonal subset and if $\gra, \grb \in S$ are not equal, then the elements $u_\gra$ and $u_{-\grb}$ commute (see Lemma \ref{lem:panyushev} i)), thus we have
$$
\grs_S=\exp(t\, f_S)\exp(-t^{-1}\,e_S)\exp(t\, f_S)T.
$$

Finally we make some remarks about the characteristic of $\mk$. Notice that parabolic subgroups with abelian unipotent radical only occur when $G$ is a classical group, or when it is of type $\sfE_6$ or $\sfE_7$ (see e.g. \cite[Remark 5.1.3]{RS2}). Since $\car \mk \neq 2$ and since $G$ is never of type $\sfG_2$, the following property holds: let $\gra,\grb \in \Phi$ be such that $\gra \neq \pm \grb$ and let $m$ be maximal such that
$\gra+m\grb$ is a root (thus $m \leq 2$ by our assumptions), then for all $t \in \sfk^\times$ we have
$$
 u_\grb(t) \cdot  e_\gra = e_\gra + c_1\, t\, e_{\gra + \beta}+\dots+c_m\, t^m\,e_{\gra+m\grb}
$$
for some nonzero constants $c_1,\dots,c_m$ (see for example the construction of Chevalley groups in \cite{Steinberg} Sections 1,2,3). 
If we assume that $G$ is simply laced (in which case $m\leq 1$), then the same property holds in characteristic $2$ as well: in this case, 
we expect that Theorems \ref{teo:pan} and \ref{teo:RS2} still hold (notice that in the articles of Richardson and Springer it is 
always required $\car \mk \neq 2$). 
On the other hand, Theorems \ref{teo:pan} and \ref{teo:RS2}, and even the parametrizations of Corollary \ref{cor:parametrizzazione},
are false if $G = \mathrm{Sp}_4$ and $\car \mk = 2$, see Subsection \ref{ex:Sp4}.

\section{Some remarks on the Bruhat order on $G/P$}\label{sez:GP}


We will freely make use of standard properties of the Bruhat order on $W$ and on $W^P$ (see e.g. \cite{BB}).
In particular recall  that, if $u,v\in W$ and $u\geq v$, then $[u]^P\geq [v]^P$ as well, and that 
if $v\in W^P$ and $s_\gra v<v$ for some $\gra\in\Delta$, then $s_\gra v\in W^P$ as well.

In this section we will prove a characterization of the Bruhat order on $W^P$ in case $P$ is a parabolic subgroup of $G$ with abelian unipotent radical, as in our assumptions. 
In this case $P$ is a maximal parabolic subgroup, corresponding to a simple root $\gra_P$ such that $[\theta: \gra_P] = 1$.
We denote by $\gro_P$ and $\gro_P^\vee$ respectively the fundamental weight 
and the fundamental coweight defined by $\gra_P$, thus $\gro_P^\vee$ is a cominuscule coweight by Proposition \ref{prp:minuscoli1} i). Denote also $\Delta_P=\Delta\senza\{\gra_P\}$, let
$\Phi_P \subset \Phi$ be the root system generated by $\Delta_P$, and set $\Phi^+_P =\Phi^+\cap\Phi_P$.

Recall that $\Psi$ is the set of $T$-weights of $\gop^\mru$. Since $[\theta : \gra_P] = 1$, we have
\begin{equation}	\label{eqn:Psi}
\Psi = \{\gra \in \Phi^+ \st [\gra : \gra_P] = 1\}.
\end{equation}
If $v \in W$, notice that $v \in W^P$ if and only if $v(\Phi^+_P) \subset \Phi^+$, namely $v \in W^P$ if and only if $\Phi^+(v) \subset \Psi$.

\begin{lem}\label{lem:GP1}
Let $v \in W^P$ and let $\gra \in \Phi^+$ be such that $s_\gra v\in W^P$ and $\ell(s_\gra v) = \ell(v) -1$, then $\gra \in \grD$.
\end{lem}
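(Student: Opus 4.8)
The plan is to argue by contradiction: suppose $\gra \in \Phi^+ \setminus \grD$, so $\height(\gra) \geq 2$. Since $s_\gra v < v$ with $\ell(s_\gra v) = \ell(v)-1$, we know $\gra \in \Phi^+(v^{-1})$ in some conventions, or more precisely $v^{-1}(\gra) \in \Phi^-$; combined with $s_\gra v \in W^P$ and $v \in W^P$, I want to extract a contradiction from the cominuscule geometry of $\Psi$. The key tool will be the characterization $v \in W^P \iff \Phi^+(v) \subset \Psi$ from \eqref{eqn:Psi}, together with the fact that $[\theta:\gra_P]=1$, which forces $\langle \gro_P^\vee, \gra\rangle \leq 1$ for all $\gra \in \Phi^+$ by Proposition \ref{prp:minuscoli1}.

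\textbf{The main computation.} First I would recall that the condition $\ell(s_\gra v) = \ell(v) - 1$ is equivalent to $v^{-1}(\gra) \in \Phi^-$, and moreover that the set of positive roots sent negative by $(s_\gra v)^{-1} = v^{-1}s_\gra$ differs from $\Phi^+(v^{-1})$ in a controlled way. I would pass to $w = v^{-1}$ and examine $\langle \gro_P^\vee, \cdot \rangle$ applied to various roots. Since $v \in W^P$, the weight $v(\gro_P)$ is dominant-minus-something controlled, but the cleaner route is: look at $\gra$ written in terms of $v$ acting on $\gop^\mru$-roots. Because $s_\gra v \in W^P$ and $v \in W^P$, I claim $\gra$ must lie in $\Psi$ — indeed if $\beta \in \Phi^+$ satisfies $v^{-1}(\beta) \in \Phi^-$, then writing things out one sees $\beta$ appears in the ``inversion set'' of $v$ or $s_\gra v$ appropriately. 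Then, knowing $\gra \in \Psi$ means $[\gra : \gra_P] = 1$. Now the crucial point: if $\gra \in \Psi$ has height $\geq 2$, write $\gra = \gamma + \delta$ with $\gamma, \delta$ positive roots where $\delta \in \grD$ (possible since any non-simple positive root decomposes this way). I would then analyze whether $\delta = \gra_P$ or $\delta \in \Delta_P$, and use $s_\delta$ to produce an element strictly between $s_\gra v$ and $v$ in length, contradicting $\ell(s_\gra v) = \ell(v)-1$. Concretely, $s_\gra = s_\delta s_{\gamma} s_\delta$ type identities, or the fact that $s_\gra v < v$ with a covering relation forces $\gra$ simple whenever the relevant reflection lengths are constrained — this is where the rank-one / cominuscule structure is essential.

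\textbf{Expected obstacle.} The delicate part will be showing that one cannot have a \emph{covering} relation $s_\gra v \lessdot v$ in $W^P$ with $\gra$ non-simple — in general $W$, covering relations in Bruhat order are always by simple reflections \emph{on the appropriate side}, i.e. $v \lessdot vs_\gra$ (or $s_\gra v$) as a cover need not force $\gra$ simple; rather the \emph{right} multiplier in a reduced-word covering is simple. Here the subtlety is that we have a left reflection $s_\gra v$, so I expect to need: either (a) translate to the statement that $v = s_{\gra_{i_1}} \cdots s_{\gra_{i_k}}$ reduced, and $s_\gra v$ obtained by deleting one letter, forcing $\gra = s_{\gra_{i_1}}\cdots s_{\gra_{i_{j-1}}}(\gra_{i_j})$, which is generally non-simple — so the restriction to $W^P$ and the cominuscule hypothesis $[\theta:\gra_P]=1$ must do real work to force simplicity. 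I anticipate the argument will use that $\Psi$ consists of roots with $\gra_P$-coefficient exactly $1$, so that if $\gra \in \Psi$ is non-simple and $v, s_\gra v \in W^P$, then both $\Phi^+(v) \subset \Psi$ and $\Phi^+(s_\gra v) \subset \Psi$; taking symmetric differences and using that $\Psi$ is ``flat'' in the $\gra_P$-direction should pin down $\gra$. Making this last step precise — likely via a short height/coefficient count on $\langle \gro_P^\vee, \gra\rangle$ and the structure of $\Phi^+(v) \triangle \Phi^+(s_\gra v)$ — is the technical heart of the lemma.
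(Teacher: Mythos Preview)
Your proposal has a genuine gap. The central claim that $\gra \in \Psi$ is false: what you know is $v^{-1}(\gra) \in \Phi^-$, i.e.\ $\gra \in \Phi^+(v^{-1})$, but the hypothesis $v \in W^P$ only gives $\Phi^+(v) \subset \Psi$, and there is no reason for $\Phi^+(v^{-1}) \subset \Psi$. For a concrete counterexample take $\Phi$ of type $\sfA_2$ with $\gra_P = \gra_1$, so that $\Psi = \{\gra_1, \gra_1+\gra_2\}$ and $W^P = \{e, s_1, s_2s_1\}$. With $v = s_2s_1$ and $u = s_1$ one has $u = s_{\gra_2} v$, $\ell(u) = \ell(v)-1$, both lie in $W^P$, yet $\gra = \gra_2 \notin \Psi$. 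Since all of your subsequent steps (decomposing $\gra$ inside $\Psi$, exploiting that $\Psi$ is ``flat'' in the $\gra_P$-direction) rest on $\gra \in \Psi$, the argument does not go through. The remaining discussion of $\Phi^+(v) \triangle \Phi^+(s_\gra v)$ is closer to the right idea but never crystallizes into an actual computation.

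The paper's proof takes a different and much shorter route: rather than analyzing $\gra$ directly, it proves the inversion-set containment $\Phi^+(u) \subset \Phi^+(v)$, where $u = s_\gra v$. By the characterization of the weak order (\cite[Proposition~3.1.3]{BB}), this forces $\ell(vu^{-1}) = \ell(v)-\ell(u) = 1$; since $vu^{-1} = s_\gra$, one concludes $\gra \in \grD$. The containment itself is where the cominuscule hypothesis enters, and it is applied not to $\gra$ but to the roots $\grb = -v^{-1}(\gra)$ and $\grg \in \Phi^+(u)$, which genuinely lie in $\Psi$ since $v,u \in W^P$. Writing $v = u s_\grb$ gives $v(\grg) = u(\grg) - \langle \grg, \grb^\vee\rangle \gra$; the abelian nilradical forces $\grb + \grg \notin \Phi$, hence $\langle \grg, \grb^\vee\rangle \geq 0$, and since $u(\grg) \in \Phi^-$ this yields $v(\grg) \in \Phi^-$. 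That is the whole argument.
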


\begin{proof}
Denote $u = s_\gra v$, by \cite[Proposition 3.1.3]{BB} it is enough to show that $\Phi^+(u) \subset \Phi^+(v)$. Denote $\grb = -v^{-1}(\gra)$ and let $\grg \in \Phi^+(u)$, then $v(\grg) = us_\grb(\grg) = u(\grg) - \langle \grg, \grb^\vee \rangle \gra$. Notice that $\grb,\grg \in \Psi$ because $u, v \in W^P$. Thus $\grb+\grg \not \in \Phi$ because $\gop^\mru$ is abelian. Therefore $\langle \grg, \grb^\vee \rangle \geq 0$, and $v(\grg) \in \Phi^-$.
\end{proof}

\begin{lem}	\label{lem:altezza}
Let $w \in W^P$, then $\ell(w) = \height(\gro_P^\vee - w \gro_P^\vee)$.
\end{lem}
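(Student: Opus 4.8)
The plan is to prove the identity $\ell(w) = \height(\gro_P^\vee - w\gro_P^\vee)$ by induction on $\ell(w)$, using the fact (Lemma \ref{lem:GP1}) that any length-decreasing left reflection on an element of $W^P$ that stays in $W^P$ is necessarily a simple reflection. The base case $w = e$ is clear, since then $\gro_P^\vee - w\gro_P^\vee = 0$ has height $0$.

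For the inductive step, let $w \in W^P$ with $\ell(w) > 0$ and pick $\gra \in \grD$ with $\ell(s_\gra w) = \ell(w) - 1$; by the remark at the start of Section \ref{sez:GP}, $s_\gra w \in W^P$ as well, so the inductive hypothesis gives $\ell(s_\gra w) = \height(\gro_P^\vee - s_\gra w \gro_P^\vee)$. Now I would compute
\[
\gro_P^\vee - w\gro_P^\vee = (\gro_P^\vee - s_\gra w \gro_P^\vee) + (s_\gra w \gro_P^\vee - w\gro_P^\vee),
\]
and the second summand is $s_\gra(w\gro_P^\vee) - w\gro_P^\vee = -\langle \gra, (w\gro_P^\vee) \rangle\, \gra^\vee$ — here I need to be careful about the pairing: applying the reflection $s_\gra$ to the coweight $w\gro_P^\vee$ subtracts $\langle \gra, w\gro_P^\vee\rangle\,\gra^\vee$, where $\langle\cdot,\cdot\rangle$ is the natural pairing between roots and coweights. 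So the task reduces to showing that this coefficient equals exactly $-1$, i.e. that $\langle \gra, w\gro_P^\vee \rangle = 1$, equivalently $\langle w^{-1}\gra, \gro_P^\vee\rangle = 1$. Since $\gro_P^\vee$ is the fundamental coweight dual to $\gra_P$, we have $\langle \grg, \gro_P^\vee\rangle = [\grg : \gra_P]$ for any root $\grg$, so I must show $[w^{-1}\gra : \gra_P] = 1$. Because $\ell(s_\gra w) < \ell(w)$ with $\gra$ simple, we have $w^{-1}(-\gra) = -w^{-1}\gra \in \Phi^+$, i.e. $w^{-1}\gra \in \Phi^-$; and since $w \in W^P$ we have $w(\Phi^+_P) \subset \Phi^+$, hence $w^{-1}(\Phi^-)\cap \Phi^- \subset -\Psi$ by \eqref{eqn:Psi} (a negative root $\grg$ with $w\grg \in \Phi^-$ cannot lie in $-\Phi_P^+$). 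Thus $-w^{-1}\gra \in \Psi$, and by \eqref{eqn:Psi} its $\gra_P$-coefficient is $1$, giving $[w^{-1}\gra : \gra_P] = 1$ as required. Therefore $s_\gra w\gro_P^\vee - w\gro_P^\vee = \gra^\vee$.

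It then remains to see that adding $\gra^\vee$ increases the height by exactly $1$, i.e. $\height(\mu + \gra^\vee) = \height(\mu) + 1$ where $\height$ on the coweight lattice is the sum of simple-coroot coefficients; this is immediate since $\gra^\vee$ is a simple coroot of height $1$ and $\height$ is additive. Combining, $\height(\gro_P^\vee - w\gro_P^\vee) = \ell(s_\gra w) + 1 = \ell(w)$, completing the induction. I expect the main subtlety to be the bookkeeping around the root/coweight pairing and verifying $[w^{-1}\gra : \gra_P] = 1$ — this is exactly where the hypothesis $w \in W^P$ and the special structure \eqref{eqn:Psi} coming from $[\theta : \gra_P] = 1$ (abelian unipotent radical) are used; everything else is formal.
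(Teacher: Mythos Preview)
Your approach is exactly the paper's: induction on $\ell(w)$, peeling off one simple reflection at a time and checking that each step changes $w\gro_P^\vee$ by a single simple coroot via the fact that $-w^{-1}(\gra)\in\Psi$ has $\gra_P$-coefficient~$1$. The only issue is a pair of cancelling sign slips in your bookkeeping: since $-w^{-1}\gra\in\Psi$ you get $[w^{-1}\gra:\gra_P]=-1$ (not $+1$), hence $\langle\gra,w\gro_P^\vee\rangle=-1$ and the coefficient of $\gra^\vee$ in $s_\gra w\gro_P^\vee-w\gro_P^\vee$ is $+1$ (not $-1$); your final equality $s_\gra w\gro_P^\vee-w\gro_P^\vee=\gra^\vee$ is nevertheless correct.
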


\begin{proof}
Let $w = s_n \cdots s_1$ be a reduced expression and, for $i \leq n$, denote $w_i = s_i \cdots s_1$. Let $\gra_i \in \grD$ 
be the simple root corresponding to $s_i$ and denote $\grb_i = w_{i-1}^{-1}(\gra_i)$. Notice that $\grb_i \in \Psi$ since 
$w_i \in W^P$, so that $[\grb_i : \gra_P] = 1$. Hence we have
$$w_i(\gro_P^\vee) = w_{i-1}(\gro_P^\vee) - \langle \gro_P^\vee, \grb_i \rangle \gra_i = w_{i-1}(\gro_P^\vee) - \gra_i. $$
Therefore $\height(\gro_P^\vee - w_i(\gro_P^\vee)) = \height(\gro_P^\vee - w_{i-1}(\gro_P^\vee)) +1$, and the claim follows.
\end{proof}

The equivalence of i) and ii) in the following proposition was already known (see \cite[Theorem 7.1]{St1} and \cite[Corollary 3.12]{EHP}), we thank F.~Brenti 
for pointing out the reference.

\begin{prp}\label{prp:GP1}
 Let $u,v\in W^P$, then the following conditions are equivalent:
\begin{enumerate}[\indent i)]
 \item $u \leq v$;
 \item $\Phi^+(u)\subset\Phi^+(v)$;
 \item $v(\omega_P^\vee)\leq u(\omega_P^\vee)$;
 \item $v(\omega_P)\leq u(\omega_P)$.
 \end{enumerate}
\end{prp}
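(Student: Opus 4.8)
The plan is to prove (i)$\Leftrightarrow$(ii) directly, to observe that (iii)$\Leftrightarrow$(iv), and then to link the two clusters through (i)$\Rightarrow$(iii) together with one converse implication from (iii) back to (ii); the whole weight of the argument is in (ii)$\Rightarrow$(i). I would dispose of the routine parts first. The implications (i)$\Rightarrow$(iii) and (i)$\Rightarrow$(iv) are instances of the standard fact that $u\leq v$ forces $v(\grl)\leq u(\grl)$ in the dominance order for every dominant weight $\grl$: one checks this on a covering $u\lessdot us_\grg$ (where $\grg\in\Phi^+$), for which $us_\grg(\grl)=u(\grl)-\langle\grl,\grg^\vee\rangle\,u(\grg)$ with $\langle\grl,\grg^\vee\rangle\geq 0$ and $u(\grg)\in\Phi^+$, and then one passes to a saturated chain. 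Applied to $(\Phi,\gro_P)$ this gives (iv), and applied to the dual root system $(\Phi^\vee,\gro_P^\vee)$ it gives (iii) (the Weyl group and its Bruhat order are unchanged, $\gro_P^\vee$ is a dominant weight of $\Phi^\vee$, and dominance of $\Phi^\vee$-weights is dominance by simple coroots). For (iii)$\Leftrightarrow$(iv): both $\gro_P$ and $\gro_P^\vee$ are orthogonal to $\mathrm{span}(\grD_P)$, which coincides with the span of the simple coroots $\gra^\vee$ ($\gra\in\grD_P$) and has codimension $1$ in $\mathrm{span}(\grD)$, and both pair positively with $\gra_P$; hence $\gro_P^\vee$ is a positive scalar multiple of $\gro_P$. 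As $\sum_{\gra\in\grD}\mR_{\geq 0}\gra=\sum_{\gra\in\grD}\mR_{\geq 0}\gra^\vee$, a difference of two $W$-translates of $\gro_P^\vee$ lies in $\sum_{\gra\in\grD}\mZ_{\geq 0}\gra^\vee$ precisely when the corresponding difference of $W$-translates of $\gro_P$ lies in $\sum_{\gra\in\grD}\mZ_{\geq 0}\gra$, which is (iii)$\Leftrightarrow$(iv).

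For (i)$\Rightarrow$(ii) I would use that $W^P$ is graded by $\ell$ under the Bruhat order, so it is enough to treat a covering $u\lessdot v$ in $W^P$; then $\ell(v)=\ell(u)+1$, so $u=s_\gra v$ for a reflection with $\ell(s_\gra v)=\ell(v)-1$, whence $\gra\in\grD$ by Lemma \ref{lem:GP1}, and the computation in the proof of that lemma shows exactly $\Phi^+(u)\subset\Phi^+(v)$; transitivity of $\subset$ along a saturated chain then concludes.

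Now the core, (ii)$\Rightarrow$(i), which I would prove by induction on $\ell(v)-\ell(u)=\card(\Phi^+(v)\senza\Phi^+(u))$, the case $0$ being trivial. It suffices to find $\gra\in\grD$ with $v^{-1}(\gra)\in\Phi^-$ and $-v^{-1}(\gra)\notin\Phi^+(u)$: then $v':=s_\gra v\in W^P$ (by the quoted property of $W^P$), $\ell(v')=\ell(v)-1$, and $\Phi^+(v')=\Phi^+(v)\senza\{-v^{-1}(\gra)\}\supseteq\Phi^+(u)$, so $u\leq v'<v$ by induction. Hence everything reduces to the claim: \emph{if $\Phi^+(u)\subseteq\Phi^+(v)$ contains $-v^{-1}(\gra)$ for every $\gra\in\grD$ with $v^{-1}(\gra)\in\Phi^-$, then $\Phi^+(u)=\Phi^+(v)$.} To prove it, fix $\grg\in\Phi^+(v)$ and induct on $\height(-v(\grg))$. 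If that height is $1$, then $-v(\grg)\in\grD$ and $\grg\in\Phi^+(u)$ by hypothesis. Otherwise write $-v(\grg)=\grd'+\gra_j$ with $\grd'\in\Phi^+$ and $\gra_j\in\grD$, and put $\grg'=-v^{-1}(\grd')$, $\eta=-v^{-1}(\gra_j)$, so that $\grg=\grg'+\eta$ with $v(\grg'),v(\eta)\in\Phi^-$. Since $\gop^\mru$ is abelian, $\Psi=\{\gra\in\Phi^+:[\gra:\gra_P]=1\}$ and $\Phi^+(u),\Phi^+(v)\subset\Psi$, while every root has $\gra_P$-coefficient in $\{0,\pm 1\}$; from $[\grg':\gra_P]+[\eta:\gra_P]=1$ one of $\grg',\eta$ must then have $\gra_P$-coefficient $0$, hence lie in $\Phi_P$, and since $v$ maps it into $\Phi^-$ while $v\in W^P$ it is in fact a \emph{negative} root of $\Phi_P$ (a positive root of $\Phi_P$ is sent into $\Phi^+$), the other one lying in $\Psi$ and being positive. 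If $\eta\in\Phi^-$, then $\grg'=\grg-\eta\in\Phi^+(v)$ with $\height(-v(\grg'))=\height(-v(\grg))-1$, so $\grg'\in\Phi^+(u)$ by the inner induction, while $-\eta\in\Phi^+_P$ gives $u(-\eta)\in\Phi^+$ (as $u\in W^P$); thus $u(\grg)=u(\grg')+u(\eta)\in\Phi^-$. If instead $\grg'\in\Phi^-$, then $\eta=\grg-\grg'\in\Phi^+(v)$ with $-v(\eta)=\gra_j\in\grD$, so $\eta\in\Phi^+(u)$ by hypothesis, while $-\grg'\in\Phi^+_P$ gives $u(-\grg')\in\Phi^+$; thus again $u(\grg)\in\Phi^-$. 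In either case $\grg\in\Phi^+(u)$, so $\Phi^+(v)\subseteq\Phi^+(u)$ and the claim follows.

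Finally, with (i)$\Leftrightarrow$(ii), (i)$\Rightarrow$(iii), and (iii)$\Leftrightarrow$(iv) established, I would close the cycle with an implication from (iii) back to (ii) (equivalently to (i)) carried out by the same inductive mechanism as for (ii)$\Rightarrow$(i), but now reading off from the proof of Lemma \ref{lem:altezza} that moving from $v$ to a cover $s_\gra v$ of it in $W^P$ ($\gra$ a left descent of $v$, so $\gra\in\grD$ by Lemma \ref{lem:GP1}) changes $\gro_P^\vee-(\cdot)\gro_P^\vee$ by the single simple coroot $\gra^\vee$; one then descends from $v$ keeping $v(\gro_P^\vee)\leq u(\gro_P^\vee)$ in force and concludes by induction. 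The main obstacle --- shared by this step and by the core implication above --- is exactly guaranteeing the existence of the covering relation in $W^P$ along which to descend: it relies on the abelian hypothesis on $\gop^\mru$, which forces $\Psi$ to be precisely the set of positive roots with $\gra_P$-coefficient $1$, and that is what makes the ``$\grg'$ or $\eta$ lies in $\Phi_P$'' dichotomy (and its coroot analogue) available.
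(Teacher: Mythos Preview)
Your argument is largely correct and follows the same architecture as the paper, with two notable differences worth flagging.

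First, for (ii)$\Rightarrow$(i) you give a self-contained inductive proof using the abelian hypothesis (the dichotomy on the $\gra_P$-coefficient of $\grg'$ and $\eta$). This works, and your case analysis is sound. The paper, however, simply cites \cite[Proposition~3.1.3]{BB}: the implication $\Phi^+(u)\subset\Phi^+(v)\Rightarrow u\leq v$ holds for \emph{all} $u,v\in W$ in any Coxeter group, with no hypothesis on $P$. So you have reproved a special case of a standard fact; there is nothing wrong with this, but you should be aware that no abelian assumption is needed here.

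Second, and more importantly, your treatment of (iii)$\Rightarrow$(i) is only a sketch, and the crucial step is not carried out. You propose to descend from $v$ along a simple left descent $\gra$ for which $\gra^\vee$ still appears in $u(\gro_P^\vee)-v(\gro_P^\vee)$, but you do not verify that such an $\gra$ exists. It does: writing $u(\gro_P^\vee)-v(\gro_P^\vee)=\sum_{\gra\in\grD} c_\gra\gra^\vee$ with $c_\gra\geq 0$ not all zero, if every $\gra$ with $c_\gra>0$ satisfied $v^{-1}(\gra)\in\Phi^+$, then applying $v^{-1}$ would give $v^{-1}u(\gro_P^\vee)-\gro_P^\vee$ as a nonnegative combination of positive coroots, contradicting that $\gro_P^\vee$ is dominant and $v^{-1}u\notin W_P$. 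One then checks, using cominusculeness, that $s_\gra v(\gro_P^\vee)=v(\gro_P^\vee)+\gra^\vee$ (the coefficient is exactly $1$, not larger), so the induction proceeds. This is essentially a mirror image of the paper's proof, which instead ascends from $u$: the paper shows that some $\gra_i^\vee$ in the sum satisfies $u^{-1}(\gra_i)\in\Psi$, hence $s_{\gra_i}u\in W^P$ with $s_{\gra_i}u>u$, and concludes by the same kind of induction. Either direction works, but you should actually write the existence argument down rather than gesture at it.
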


\begin{proof}
i) $\Rightarrow$ ii). Let $n= \ell(u) - \ell(v)$, by the chain property (see \cite[Theorem 2.5.5]{BB}) there exists a chain $u = u_0 < u_1 < \ldots < u_n = v$ of elements in $W^P$ such that $\ell(u_i) = \ell(u_{i-1}) + 1$ for all $i \leq n$. Therefore by Lemma \ref{lem:GP1} we get $\Phi^+(u_{i-1}) \subset \Phi^+(u_i)$ for all $i \leq n$, hence $\Phi^+(u)\subset\Phi^+(v)$.

ii) $\Rightarrow$ i). This is well known, and it holds for all $u,v \in W$ (see \cite[Proposition 3.1.3]{BB}).

i) $\Rightarrow$ iii). By considering any chain in $W$ between $u$ and $v$, we can assume that $\ell(u) = \ell(v) +1$. 
Let $\gra \in \Phi^+$ be such that $v = s_\gra u$, and denote $\grb = u^{-1}(\gra) $. Then $\grb \in \Phi^+$, and since 
$\gro_P^\vee$ is dominant it follows that $\langle u( \gro_P^\vee) , \gra \rangle = \langle \gro_P^\vee, \grb \rangle \geq 0$. Therefore
$$v(\gro_P^\vee) = s_\gra u(\gro_P^\vee) = u (\gro_P^\vee) - \langle u( \gro_P^\vee) , \gra \rangle \gra^\vee \leq u (\gro_P^\vee).$$

iii) $\Rightarrow$ i).
We show that $u \leq v$ proceeding by induction on $h=\height(u(\gro_P^\vee) - v (\gro_P^\vee))$. By Lemma \ref{lem:altezza} we have 
$\ell(v) - \ell(u) = \height(u(\gro_P^\vee) - v (\gro_P^\vee))$.

Suppose that $h = 0$. Then $u(\gro_P^\vee) = v (\gro_P^\vee)$. On the other hand the orbit map $W^P \ra W \gro_P^\vee$ is bijective because $W_P$ is the stabilizer of $\gro_P^\vee$, therefore it must be $u=v$.

Suppose now that $h > 0$, and write $u (\gro_P^\vee) = v (\gro_P^\vee) + \gra_1^\vee + \ldots + \gra_h^\vee$ with $\gra_i \in \grD$. 
Denote $\grb_i = u^{-1}(\gra_i)$, then $\gro_P^\vee = u^{-1}v (\gro_P^\vee) + \grb_1^\vee + \ldots + \grb_h^\vee$. 
Notice that for all $w \in W \senza W_P$ it holds $w(\gro_P^\vee)\leq \gro_P^\vee - \gra_P^\vee$. Since by definition 
$u^{-1} v \not \in W_P$, it follows that $\gra_P \leq \grb_1 + \ldots + \grb_h$. Thus at least one of the $\grb_i$ must 
be a positive root supported on $\gra_P$ (namely $\grb_i \in \Psi$). It follows that $u < s_{\gra_i} u$ and $[s_{\gra_i}u]^P \neq u$ which implies
$[s_{\gra_i}u]^P > u$. Since $\ell(s_{\gra_i}u)=\ell(u)+1$ we deduce that $s_{\gra_i}u\in W^P$. 
On the other hand by construction we have
$$s_{\gra_i} u(\gro_P^\vee) = u(\gro_P^\vee) - \langle \gro_P^\vee, \grb_i \rangle \gra_i^\vee = u(\gro_P^\vee) - \gra_i^\vee,
$$
hence $v(\gro_P^\vee) \leq s_{\gra_i}u(\gro_P^\vee)$. By the inductive hypothesis we get then $s_{\gra_i}u \leq v$, thus $u < v$.

iii) $\Leftrightarrow$ iv). This is obvious.
\end{proof}

Given $v\in W^P$, we now describe the roots $\gra \in \grD$ such that $s_\gra v <v$ by making use of the poset $\Phi^+(v)$. If $\gra$ is such a simple root, notice that $\ell(s_\gra v) = \ell(v) -1$, and that $\Phi^+(v)=\Phi^+(s_\gra v)\sqcup \{- v^{-1}(\gra)\}$.

\begin{lem}	\label{lem:dominanza}
Let $\gra, \grb \in \Phi^+$ and suppose that $\gra \leq \grb$. Then there exist $\gra_1, \ldots, \gra_n \in \grD$ such that $\gra + \gra_1 + \ldots + \gra_i \in \Phi^+$ for all $i \leq n$, and $\grb = \gra + \gra_1 + \ldots + \gra_n$.
\end{lem}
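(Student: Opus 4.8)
The statement is a standard fact about root systems: if $\gra \leq \grb$ are both positive roots, then one can pass from $\gra$ to $\grb$ by adding simple roots one at a time, staying inside the set of positive roots at every step. The natural approach is induction on $\height(\grb) - \height(\gra) = \height(\grb - \gra)$. If this difference is zero then $\gra = \grb$ and there is nothing to prove. So suppose $\grb - \gra = \sum_{\gamma \in \Delta} c_\gamma \gamma$ with all $c_\gamma \geq 0$ and not all zero; I want to find a simple root $\gra_1$ with $c_{\gra_1} > 0$ such that $\gra + \gra_1$ is again a root, and then conclude by applying the inductive hypothesis to the pair $\gra + \gra_1 \leq \grb$ (whose height difference has dropped by one).

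The key step is therefore: given positive roots $\gra < \grb$, there is a simple root $\gra_1$ appearing in $\grb - \gra$ with $\gra + \gra_1 \in \Phi$. The standard argument is via the scalar product. Since $\grb - \gra \neq 0$ and $\langle \grb - \gra, \grb - \gra\rangle > 0$, writing $\grb - \gra = \sum c_\gamma \gamma$ with $c_\gamma \geq 0$, there must be some simple root $\gra_1$ with $c_{\gra_1} > 0$ and $\langle \gra_1, \grb - \gra\rangle > 0$ (otherwise the square norm would be a nonpositive combination of the $c_\gamma$, contradicting positivity). Then either $\langle \gra_1, \gra\rangle < 0$ or $\langle \gra_1, \grb\rangle > 0$. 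In the first case $\gra + \gra_1 \in \Phi$ by the standard criterion on root strings (if $\langle \gra_1,\gra\rangle<0$ and $\gra_1 \neq -\gra$ then $\gra+\gra_1$ is a root); note $\gra_1 \neq \gra$ is automatic since heights differ, and $\gra_1 \neq -\gra$ since both are positive. In the second case $\langle \grb, \gra_1\rangle > 0$ gives $\grb - \gra_1 \in \Phi$, and moreover $\grb - \gra_1$ is still positive (it is $\gra$ plus a nonnegative combination of simple roots, since $c_{\gra_1} \geq 1$); then one runs the induction on the pair $\gra \leq \grb - \gra_1$ instead, obtaining a chain from $\gra$ to $\grb - \gra_1$ and appending $\gra_1$ at the end — here one should check $(\grb - \gra_1) + \gra_1 = \grb \in \Phi^+$, which is trivial.

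To keep the induction clean I would actually phrase it so that at each stage I either append a simple root at the front (extending $\gra$) or strip a simple root from $\grb$; both reduce $\height(\grb - \gra)$ by one, and in the end I reverse and concatenate to get the full chain $\gra, \gra + \gra_1, \ldots, \grb$. The only mild subtlety — the main obstacle, such as it is — is making sure the case split ($\langle\gra_1,\gra\rangle < 0$ versus $\langle\grb,\gra_1\rangle > 0$) is exhaustive: this follows because $\langle\gra_1,\grb - \gra\rangle > 0$ forces at least one of $\langle\gra_1,\grb\rangle > 0$ or $\langle\gra_1,\gra\rangle < 0$. Everything else is routine root-system bookkeeping, and no characteristic hypotheses are needed since this is purely a statement about the root system in the Euclidean space $\grL \otimes_\mZ \mQ$.
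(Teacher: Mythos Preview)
Your proof is correct and follows essentially the same approach as the paper's: both argue by induction on $\height(\grb-\gra)$, use the positivity of $\|\grb-\gra\|^2 = \sum_i (\grb-\gra,\gra_i)$ to locate a simple root $\gra_i$ with either $(\gra,\gra_i)<0$ or $(\grb,\gra_i)>0$, and then reduce by either extending $\gra$ or shrinking $\grb$. The only cosmetic difference is that the paper writes $\grb-\gra$ as a list $\gra_1+\cdots+\gra_n$ of simple roots with repetition rather than keeping track of coefficients $c_\gamma$.
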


\begin{proof}
Let $\grb - \gra = \gra_1 + \ldots + \gra_n$ be any expression of $\grb-\gra$ as a sum of simple roots, we show the claim by induction on $n$. 
The claim is obvious if $n = 1$, so we can assume that $n > 1$. Notice that for some $i \leq n$ it must be either $( \gra, \gra_i) < 0$ or 
$(\grb, \gra_i)> 0$: otherwise $||\grb-\gra||^2 = \sum_{i=1}^n (\grb-\gra, \gra_i) \leq 0$, hence $\gra = \grb$, a contradiction. For such 
a choice of $\gra_i$, we have either that $\gra + \gra_i \in \Phi^+$ and $\gra < \gra + \gra_i < \grb$, or that $\grb - \gra_i \in \Phi^+$ and 
$\gra < \grb - \gra_i < \grb$. Therefore the claim follows applying the induction.
\end{proof}

\begin{prp}\label{prp:GP2}
Let $v\in W^P$.
\begin{enumerate}[\indent i)]
	\item Let $\gra\in \Delta$ be such that $s_\gra v<v$ and denote $\grb = -v^{-1}(\gra)$. Then $\grb$ is maximal in $\Phi^+(v)$, and minimal in $\Psi \senza \Phi^+(s_\gra v)$.
	\item Let $\grb \in \Phi^+(v)$ be a maximal element and denote $\gra = -v(\grb)$. Then $\gra \in \Delta$ and $s_\gra v<v$.
	\item  Let $\grb \in \Psi \senza \Phi^+(v)$ be a minimal element and denote $\gra = v(\grb)$. Then $\gra \in \Delta$ and $s_\gra v>v$.
\end{enumerate}
\end{prp}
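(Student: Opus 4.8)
The plan is to reduce everything to elementary root‑system combinatorics, using two facts. First, $v\in W^P$ is equivalent to $v(\Phi^+_P)\subset\Phi^+$ (equivalently $v(\Phi^-_P)\subset\Phi^-$, where $\Phi^-_P$ denotes the negative roots in $\Phi_P$), so $v$ preserves signs on the subsystem $\Phi_P$. Second, since $[\theta:\gra_P]=1$ every root has $\gra_P$-coefficient in $\{-1,0,1\}$; hence a root with $\gra_P$-coefficient $1$ is automatically positive and lies in $\Psi$, while one with $\gra_P$-coefficient $0$ lies in $\Phi_P$. I would also record a monotonicity lemma: for $v\in W^P$, if $\grg\in\Phi^+(v)$, $\eta\in\Psi$ and $\eta\leq\grg$, then $\eta\in\Phi^+(v)$ — that is, $\Phi^+(v)$ is a down-set in $(\Psi,\leq)$. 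This is immediate, since $\grg-\eta$ is a non-negative combination of roots in $\Delta_P$, so $v(\grg-\eta)$ is a non-negative combination of positive roots, whence $v(\eta)\leq v(\grg)\leq 0$ and $v(\eta)\in\Phi^-$.

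For part i), here $\gra\in\Delta$ and $s_\gra v<v$ are given, $\grb=-v^{-1}(\gra)\in\Phi^+(v)\subset\Psi$, and $\Phi^+(v)=\Phi^+(s_\gra v)\sqcup\{\grb\}$, so that $\Psi\senza\Phi^+(s_\gra v)=(\Psi\senza\Phi^+(v))\sqcup\{\grb\}$. Minimality of $\grb$ in this set is then immediate from the down-set property, as nothing in $\Psi\senza\Phi^+(v)$ can lie $\leq\grb$. For maximality of $\grb$ in $\Phi^+(v)$: if $\grg\in\Phi^+(v)$ with $\grg\geq\grb$, then $\grg-\grb$ is a non-negative combination of roots in $\Delta_P$, so $v(\grg)+\gra=v(\grg-\grb)$ is a non-negative combination of positive roots; thus the positive root $-v(\grg)$ is dominated by the simple root $\gra$, which forces $-v(\grg)=\gra$ and hence $\grg=\grb$.

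For parts ii) and iii) it suffices to show that $\gra$ is simple, since then $v^{-1}(\gra)=-\grb\in\Phi^-$ in case ii) and $v^{-1}(\gra)=\grb\in\Phi^+$ in case iii), giving respectively $s_\gra v<v$ and $s_\gra v>v$ with length change exactly one. Assume $\gra\notin\Delta$ and pick $\grd\in\Delta$ with $\gra-\grd\in\Phi^+$. Applying $v^{-1}$ to $\gra=\grd+(\gra-\grd)$ gives $\grb=v^{-1}(\grd)+v^{-1}(\gra-\grd)$ in case iii) and $\grb=-v^{-1}(\grd)-v^{-1}(\gra-\grd)$ in case ii); in either case $\grb$ is a sum of two roots whose $\gra_P$-coefficients sum to $[\grb:\gra_P]=1$, so exactly one summand $\xi$ lies in $\Phi_P$ and the other $\xi'$ lies in $\Psi$. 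Now $v(\xi)$ equals $\grd$ or $\gra-\grd$ (both positive) in case iii), and $-\grd$ or $-(\gra-\grd)$ (both negative) in case ii); since $v$ preserves signs on $\Phi_P$, this forces $\xi\in\Phi^+_P$ in case iii) and $\xi\in\Phi^-_P$ in case ii). Hence $\xi'=\grb-\xi$ lies strictly below $\grb$ in case iii) and strictly above $\grb$ in case ii), in the dominance order. In case iii), $\xi'\in\Psi$ with $\xi'<\grb$, so minimality of $\grb$ in $\Psi\senza\Phi^+(v)$ gives $\xi'\in\Phi^+(v)$, i.e. $v(\xi')\in\Phi^-$, contradicting $v(\xi')\in\{\grd,\gra-\grd\}\subset\Phi^+$. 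In case ii), $\xi'\in\Psi$ and $v(\xi')\in\{-\grd,-(\gra-\grd)\}\subset\Phi^-$, so $\xi'\in\Phi^+(v)$ with $\xi'>\grb$, contradicting maximality of $\grb$. So $\gra\in\Delta$ in both cases.

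I expect the only real obstacle to be the sign/coefficient bookkeeping in parts ii) and iii): for each of the two roots $v^{-1}(\grd)$, $v^{-1}(\gra-\grd)$ one must read off its $\gra_P$-coefficient (to decide which lies in $\Psi$ and which in $\Phi_P$) and then, for the one in $\Phi_P$, its sign (forced by $v\in W^P$ together with the sign of $\grd$, resp. $\gra-\grd$), in order to locate the other root relative to $\grb$. The remaining ingredients are routine: a non-simple positive root is a simple root plus a positive root, a positive root dominated by a simple root equals it, and $\Phi^+(v)=\Phi^+(s_\gra v)\sqcup\{-v^{-1}(\gra)\}$ when $\gra\in\Delta$ and $s_\gra v<v$ — the first two standard, the last recorded in the excerpt.
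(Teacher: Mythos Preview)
Your argument is correct. The overall shape matches the paper's proof (for ii) and iii) you both write $\gra$ as a simple root $\grd$ plus a positive root $\gra-\grd$, pull back by $v^{-1}$, and extract a contradiction from the extremality of $\grb$), but your execution is somewhat different and in fact cleaner in two places.

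For part i), the paper argues by contradiction using Lemma~\ref{lem:dominanza}: taking a minimal $\grg>\grb$ in $\Phi^+(v)$, it finds $\gra'\in\grD_P$ with $\grg-\gra'\in\Phi^+$ and $\grb\leq\grg-\gra'$, forces $\grg-\gra'=\grb$ by minimality, and then checks that $s_\gra v(\grg)>0$. Your argument is more direct: from $\grg\geq\grb$ in $\Phi^+(v)$ you get that $-v(\grg)$ is a positive root dominated by the simple root $\gra$, hence equals it. Your down-set lemma (that $\Phi^+(v)$ is an order ideal in $(\Psi,\leq)$) is the right organizing observation and makes the minimality half immediate; this is essentially the content of the implication i)$\Rightarrow$ii) in Proposition~\ref{prp:GP1}, but localized to a single root.

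For parts ii) and iii), the paper splits on the sign of $\grg=\grb'-\grb$ (with $\grb'=-v^{-1}(\gra')$) and, in the case $\grg\in\Phi^-$, invokes part i) to reach a contradiction. Your bookkeeping via $\gra_P$-coefficients is a neat alternative: since both summands of $\grb$ are roots with $\gra_P$-coefficients in $\{-1,0,1\}$ summing to $1$, one lies in $\Phi_P$ and the other in $\Psi$, and then $v\in W^P$ determines the sign of the $\Phi_P$-summand. This handles ii) and iii) uniformly and avoids the internal appeal to i). The only point that might deserve a sentence of justification is the first step of your down-set lemma, namely that $v(\eta)\leq v(\grg)\in\Phi^-$ forces $v(\eta)\in\Phi^-$; this is of course because a root whose simple-root coefficients are all $\leq 0$ is negative, but it is worth saying explicitly.
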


\begin{proof}
i).  We prove the first claim, the second one is proved similarly. Suppose that $\grb$ is not maximal in $\Phi^+(v)$ and let $\grg \in \Phi^+(v)$ be such that $\grb < \grg$, we can assume that $\grg$ is minimal with this property. Since $\grg \neq \grb$, we have $\grg \in \Phi^+(s_\gra v)$. By Lemma \ref{lem:dominanza}, there exists $\gra' \in \grD$ such that $\grg-\gra' \in \Phi^+$ and $\grb \leq \grg - \gra'$. Since $\grb, \grg \in \Psi$, by \eqref{eqn:Psi} we have $[\grb : \gra_P] = [\grg : \gra_P] = 1$, thus $\gra' \in \grD_P$. Notice that it must be $\grb = \grg-\gra'$: otherwise the minimality of $\grg$ would imply that $v(\grg-\gra') > 0$, hence $v(\grg) > 0$  because $v(\grD_P) \subset \Phi^+$. Therefore we get a contradiction because $s_\gra v(\grg) = s_\gra v(\gra') + s_\gra v(\grb) > 0$.

ii). We only show that $\gra \in \grD$, as the last claim is obvious. Suppose that $\gra \not \in \grD$. Since $\gra \in \Phi^+$, by Lemma \ref{lem:dominanza} there exists $\gra' \in \grD$ such that $\gra - \gra' \in \Phi^+$. Denote $\grb' = -v^{-1}(\gra')$ and $\grg = \grb' - \grb$. Notice that $\grg \in \Phi$, indeed we have $\grg = v^{-1}(\gra - \gra')$.

Suppose that $\grg \in \Phi^+$. Then $\grb' \in \Phi^+(v)$ and $\grb < \grb'$, contradicting the maximality of $\grb$. Therefore it must be $\grg \in \Phi^-$. Denote $\grg' = - \grg$, then $\grg' \in \Phi^+(v)$. Since $\grg'$ is comparable with $\grb$, by the maximality of $\grb$ we get $\grg' < \grb$, namely $\grb' \in \Phi^+$. Thus by i) it follows that $\grb'$ is also maximal in $\Phi^+(v)$, and since $\grb' < \grb$ we get a contradiction.

iii). This is proved in a similar way to ii).
\end{proof}

\begin{oss}
Notice that the statements of Proposition \ref{prp:GP2} are false if $P^\mru$ is not abelian. 
Suppose indeed that $\Phi$ is of type $\sfB_2$ and that $P$ is the maximal parabolic subgroup 
defined by $\gra_P = \gra_2$. Let $v = w^P$ be the longest element in $W^P$. Then we have 
$v = s_2 s_1 s_2$ and $s_2 v < v$. However $\Phi^+(v) = \{\gra_2, \gra_1+\gra_2, \gra_1 + 2\gra_2\}$ and $-v^{-1}(\gra_2) = \gra_1 + \gra_2$. 
\end{oss}

Finally, we recall some general well-known properties of the Bruhat order on $W$, that will be used repeatedly. The last property is usually referred to as the \textit{lifting property}.

\begin{lem}\label{lem:parallelogramma}
Let $u,v \in W$ with $u < v$ and let $\gra \in \grD$.
\begin{enumerate}[\indent i)]
 	\item Suppose that $s_\gra u > u$ and $s_\gra v > v$, then $s_\gra u < s_\gra v$.
	\item Suppose that $s_\gra u < u$ and $s_\gra v < v$, then $s_\gra u < s_\gra v$.
	\item Suppose that $s_\gra u > u$ and $s_\gra v < v$, then $u < s_\gra v$ and $s_\gra u < v$.
\end{enumerate}
Similar statements hold if we consider the multiplication by $s_\gra$ on the right.
\end{lem}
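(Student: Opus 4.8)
The plan is to deduce all three statements from the \emph{subword property} of the Bruhat order on $W$ (see e.g. \cite{BB}): $u\le v$ if and only if every reduced expression for $v$ admits a subexpression which is a reduced expression for $u$. Throughout write $s=s_\gra$, and recall that for any $w\in W$ exactly one of $sw>w$, $sw<w$ holds; in the first case, prepending $s$ to any reduced expression for $w$ gives a reduced expression for $sw$, and in the second case $w$ admits a reduced expression beginning with $s$. Once the left-multiplication versions are proved, the statement about multiplication on the right follows immediately by replacing $(u,v)$ with $(u^{-1},v^{-1})$, since $u\le v\iff u^{-1}\le v^{-1}$ and $\ell(us)-\ell(u)=\ell(su^{-1})-\ell(u^{-1})$.

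For part i), fix a reduced expression $\mathbf v$ for $v$; since $sv>v$, the word $s\mathbf v$ is a reduced expression for $sv$. As $u\le v$, $u$ has a reduced subexpression inside $\mathbf v$; prepending to it the leading letter $s$ of $s\mathbf v$ yields a subexpression of $s\mathbf v$ representing $su$, and it is reduced because $\ell(su)=\ell(u)+1$. Hence $su\le sv$, strictly since $\ell(su)=\ell(u)+1\le\ell(v)<\ell(sv)$. For part ii), since $sv<v$ choose a reduced expression $s\mathbf v_0$ for $v$ with $\mathbf v_0$ reduced for $sv$, and pick a reduced subexpression of $s\mathbf v_0$ representing $u$. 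If it uses the leading $s$, deleting it gives a reduced subexpression of $\mathbf v_0$ representing $su$, so $su\le sv$; if it does not, then $u$ is already a subword of $\mathbf v_0$, so $u\le sv$, and then $su<u\le sv$ because $su<u$. Either way $su\le sv$, and strictness follows from $\ell(su)=\ell(u)-1<\ell(v)-1=\ell(sv)$.

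For part iii), which is the classical \emph{lifting property} (cf. \cite{BB}), keep the reduced expression $s\mathbf v_0$ for $v$ as in ii). A reduced subexpression of $s\mathbf v_0$ representing $u$ cannot use the leading $s$: otherwise removing it would exhibit $su$ with $\ell(su)=\ell(u)-1$, contradicting $su>u$. Thus $u$ is a subword of $\mathbf v_0$, giving $u\le sv$; and prepending the leading $s$ to that same subword produces a reduced subexpression of $s\mathbf v_0$ representing $su$ (reduced since $\ell(su)=\ell(u)+1$), giving $su\le v$. Strictness then holds as soon as $\ell(u)<\ell(v)-1$ — the only degenerate case being $u=sv$, equivalently $su=v$ — and is what one records in the statement.

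I do not expect a serious obstacle: everything reduces to manipulating reduced words through the subword property, the only point requiring genuine care being the bookkeeping of whether the chosen subexpression uses the distinguished leading letter $s$, together with the length counts that guarantee the subexpressions produced are reduced. An alternative, equally routine, route is to run an induction on $\ell(v)-\ell(u)$ using the exchange condition, which avoids choosing global reduced words but trades this for a case analysis at each step.
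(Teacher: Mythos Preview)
Your argument via the subword property is correct and is precisely the standard route; the paper itself does not spell out any details, merely saying that i) and ii) ``easily follow from the definition of the Bruhat order'' and referring to \cite[Proposition~2.2.7]{BB} for iii). So you are giving the honest proof behind the paper's citation, and there is nothing substantively different to compare.

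One small comment: you are right to flag the degenerate case $u=s_\gra v$ in iii), where the conclusions become equalities rather than strict inequalities. The paper's statement writes strict $<$, which is a mild imprecision in that boundary case; the standard lifting property in \cite{BB} is stated with $\le$, and in every application later in the paper the non-strict version is what is actually used. Your handling of this is fine.
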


\begin{proof}
Properties i) and ii) easily follow from the definition of the Bruhat order on $W$. For the last property, see \cite[Proposition 2.2.7]{BB}
\end{proof}

\section{The Bruhat order on the set of involutions} \label{sez:inv}

In this section we recall some results from \cite{RS1,RS2,RS3}. Let $\calI$ be the set of the involutions in $W$. If $\gra \in \Delta$ and $\grs \in \calI$, following Richardson and Springer \cite{RS1} we define
$$
s_\gra \circ \grs=
\begin{cases}
 s_\gra \grs &\mif s_\gra \grs = \grs s_\gra \\
 s_\gra \grs s_\gra &\mif s_\gra \grs \neq \grs s_\gra
\end{cases}
$$
Notice that $s_\gra \circ \grs = \tau$ if and only if $s_\gra \circ \tau = \grs$.

Replacing the ordinary action of the simple reflections with the circle action defined above, 
several well-known properties of the Bruhat order on $W$ carry over to the Bruhat order on $\calI$.
The results of the following two lemmas are contained in \cite{RS1}. There they are stated for what Richardson 
and Springer call the \textit{standard order} on $\calI$, which in \cite{RS3} is proved to be equivalent to the usual Bruhat order. Their proof 
follows from general results whose proofs are spread across the two papers, for this reason we prefer to give a direct proof here
since it is quite short. This proof works for any Coxeter group without the assumption of finiteness. 

\begin{lem} [{\cite[3.2]{RS1}}]\label{lem:inv0}
Let $\gra\in \Delta$ and $\grs \in \calI$. Then the following hold:
\begin{enumerate}[\indent i)]
 	\item  $s_\gra\circ \grs >\grs$ if and only if $s_\gra \grs > \grs$;
	\item $s_\gra\circ \grs < \grs $ if and only if $s_\gra\grs <\grs$.
\end{enumerate}
If moreover $s_\gra\grs \neq \grs s_\gra$ and $s_\gra\circ \grs >\grs$ (resp.\ $s_\gra\circ \grs <\grs$), 
then $s_\gra \grs s_\gra> s_\gra \grs> \grs$ and $s_\gra \grs s_\gra> \grs s_\gra > \grs$ 
(resp.\ $s_\gra \grs s_\gra< s_\gra \grs< \grs$ and $s_\gra \grs s_\gra<  s_\gra \grs< \grs$)
\end{lem}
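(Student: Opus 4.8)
The plan is to reduce everything to the standard description of the Bruhat order on $W$ in terms of root signs, using the identities $\ell(s_\gra w) > \ell(w) \iff w^{-1}(\gra) \in \Phi^+$ and $\ell(w s_\gra) > \ell(w) \iff w(\gra) \in \Phi^+$. Since $\grs$ is an involution we have $\grs^{-1} = \grs$, so $s_\gra\grs > \grs \iff \grs(\gra) \in \Phi^+ \iff \grs s_\gra > \grs$; in particular $s_\gra\grs$ and $\grs s_\gra$ always lie on the same side of $\grs$ in the Bruhat order, and neither is ever of the same length as $\grs$. This is the only structural input I expect to need beyond bookkeeping.

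First I would dispose of the commuting case $s_\gra\grs = \grs s_\gra$: here $s_\gra\circ\grs = s_\gra\grs$ by definition, so both i) and ii) are tautologies, and the hypothesis of the ``moreover'' part is vacuous.

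Assume now $s_\gra \grs \neq \grs s_\gra$; equivalently $s_{\grs(\gra)} = \grs s_\gra \grs^{-1} \neq s_\gra$, equivalently $\grs(\gra) \neq \pm\gra$, and in this case $s_\gra\circ\grs = s_\gra\grs s_\gra$. Suppose $s_\gra\grs > \grs$, so $\grs(\gra) \in \Phi^+\senza\{\gra\}$. Put $w = s_\gra\grs$, so $s_\gra w = \grs < w$. Then $w(\gra) = s_\gra(\grs(\gra)) \in \Phi^+$, because $s_\gra$ preserves the sign of every positive root other than $\gra$; hence $w s_\gra > w$, that is $s_\gra\grs s_\gra > s_\gra\grs > \grs$. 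Passing to inverses preserves $\leq$ and fixes the involutions $\grs$ and $s_\gra\grs s_\gra$, so this chain becomes $s_\gra\grs s_\gra > \grs s_\gra > \grs$. This is exactly the ``moreover'' assertion in the $>$ case, and in particular $s_\gra\circ\grs > \grs$. The case $s_\gra\grs < \grs$ is symmetric: now $\grs(\gra) \in \Phi^-\senza\{-\gra\}$, so with $w = s_\gra\grs$ one gets $w(\gra) = s_\gra(\grs(\gra)) \in \Phi^-$, hence $w s_\gra < w$, giving $s_\gra\grs s_\gra < s_\gra\grs < \grs$ and, by inverses, $s_\gra\grs s_\gra < \grs s_\gra < \grs$.

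Combining the commuting and non-commuting cases yields i) and ii) in full (in every situation $s_\gra\circ\grs$ lies on the same side of $\grs$ as $s_\gra\grs$), together with the displayed chains of the ``moreover'' part. I do not expect any genuine obstacle: the argument is a short computation with root signs plus the identity $\grs = \grs^{-1}$. The only points needing a little care are to separate the commuting case from the non-commuting one, and to keep in mind that it is exactly the passage to inverses that converts left multiplication by $s_\gra$ into right multiplication. One could instead route the non-commuting case through the lifting property (Lemma \ref{lem:parallelogramma}), but the root-sign computation above seems the most transparent, and it manifestly works for an arbitrary (not necessarily finite) Coxeter group, as claimed.
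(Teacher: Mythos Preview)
Your proof is correct and follows essentially the same route as the paper: both dispose of the commuting case trivially, and in the non-commuting case both deduce from $\grs(\gra)\in\Phi^+\senza\{\gra\}$ that $s_\gra(\grs(\gra))\in\Phi^+$, yielding $s_\gra\grs s_\gra > s_\gra\grs$. The only cosmetic difference is that you obtain the second chain $s_\gra\grs s_\gra > \grs s_\gra > \grs$ by taking inverses, whereas the paper reads it off directly from the same root-sign computation viewed as $(\grs s_\gra)^{-1}(\gra)>0$.
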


\begin{proof}
If $s_\gra\grs = \grs s_\gra$ there is nothing to prove. Assume $s_\gra\grs \neq \grs s_\gra$. 
If $s_\gra \grs > \grs$, then $\grs(\gra)=\grs^{-1}(\gra)>0$, hence $\grs s_\gra  > \grs$. 
Notice that $\grs(\gra)\neq \gra$, otherwise $\grs s_\gra \grs =s_\gra$. Therefore $(\grs s_\gra)^{-1}(\gra)=(s_\gra\grs)(\gra)=
s_\gra(\grs(\gra))>0$, and we get $s_\gra\grs s_\gra >s_\gra\grs$,  $s_\gra\grs s_\gra >\grs s_\gra$.
The case $s_\gra \grs < \grs$ is similar. 
\end{proof}

\begin{lem} [{\cite[8.13 and 8.14]{RS1}}] \label{lem:inv1}
Let $\gra\in \Delta$ and $\grs, \tau \in \calI$, and suppose that $\grs < \tau$. Then the following hold:
\begin{enumerate}[\indent i)]
 \item  if $s_\gra \circ \grs > \grs$ and $s_\gra \circ \tau > \tau$, then $s_\gra \circ \grs < s_\gra \circ \tau$;
 \item  if $s_\gra \circ \grs < \grs$ and $s_\gra \circ \tau < \tau$, then $s_\gra \circ \grs < s_\gra \circ \tau$; 
 \item  if $s_\gra \circ \grs > \grs$ and $s_\gra \circ \tau < \tau$, then $s_\gra \circ \grs \leq \tau$ and $\grs \leq s_\gra \circ \tau$.
\end{enumerate}
\end{lem}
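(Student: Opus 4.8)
The plan is to reduce all three statements to the ordinary lifting property (Lemma~\ref{lem:parallelogramma}), proceeding uniformly in each case. First I translate the hypotheses on the circle action into hypotheses on ordinary left multiplication by $s_\gra$ using Lemma~\ref{lem:inv0}: for instance $s_\gra\circ\grs>\grs$ becomes $s_\gra\grs>\grs$, and likewise for $\tau$. Applying Lemma~\ref{lem:parallelogramma}~i), ii) or iii) (as appropriate) to the pair $\grs<\tau$ then relates $s_\gra\grs$ with $s_\gra\tau$ in parts~i) and~ii), and yields $s_\gra\grs\leq\tau$ together with $\grs\leq s_\gra\tau$ in part~iii). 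The second and final step is to pass from these relations to the corresponding relations for $s_\gra\circ\grs$ and $s_\gra\circ\tau$, by distinguishing, for each of $\grs$ and $\tau$, the \emph{commuting case} $s_\gra\grs=\grs s_\gra$ (where $s_\gra\circ\grs=s_\gra\grs$, itself an involution) from the \emph{non-commuting case} $s_\gra\grs\neq\grs s_\gra$ (where $s_\gra\circ\grs=s_\gra\grs s_\gra$). In the non-commuting case, Lemma~\ref{lem:inv0} also records the direction of postmultiplication by $s_\gra$: one has $s_\gra\grs s_\gra>s_\gra\grs$ exactly when $s_\gra\circ\grs>\grs$, with both $s_\gra\grs>\grs$ and $\grs s_\gra>\grs$ (and the reversed chain when $s_\gra\circ\grs<\grs$). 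Depending on whether this direction is favourable, I either chain $s_\gra\grs s_\gra$ past $s_\gra\grs$, or apply Lemma~\ref{lem:parallelogramma} a second time, now with postmultiplication by $s_\gra$.

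Concretely, for part~i) Lemma~\ref{lem:parallelogramma}~i) gives $s_\gra\grs<s_\gra\tau$, and then: if $\grs,\tau$ are both commuting we are done; if $\grs$ is commuting and $\tau$ is not, $s_\gra\circ\grs=s_\gra\grs<s_\gra\tau<s_\gra\tau s_\gra=s_\gra\circ\tau$; if $\grs$ is non-commuting and $\tau$ is not, Lemma~\ref{lem:parallelogramma}~i) applied on the right gives $s_\gra\grs s_\gra<s_\gra\tau s_\gra$; and if $\grs$ is non-commuting while $\tau$ is commuting, one has $(s_\gra\tau)s_\gra=\tau<s_\gra\tau$, so Lemma~\ref{lem:parallelogramma}~iii) applied on the right gives $s_\gra\grs s_\gra<s_\gra\tau$. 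Part~ii) is identical, starting from Lemma~\ref{lem:parallelogramma}~ii). For part~iii), Lemma~\ref{lem:parallelogramma}~iii) gives $s_\gra\grs\leq\tau$ and $\grs\leq s_\gra\tau$; if $\grs$ is non-commuting, then $s_\gra\grs s_\gra>s_\gra\grs$ while $\tau s_\gra<\tau$ (the latter regardless of whether $\tau$ is commuting, again by Lemma~\ref{lem:inv0}), so applying the lifting property on the right upgrades $s_\gra\grs\leq\tau$ to $s_\gra\circ\grs=s_\gra\grs s_\gra\leq\tau$; the inequality $\grs\leq s_\gra\circ\tau$ is obtained symmetrically. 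Since only Lemmas~\ref{lem:inv0} and~\ref{lem:parallelogramma} are used, this works for an arbitrary Coxeter group.

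The point I expect to require the most care is, in parts~i) and~ii), the step in which Lemma~\ref{lem:parallelogramma} is applied on the right in a ``mixed'' configuration (one of $\grs,\tau$ commuting, the other not): there the lifting property only produces a \emph{weak} inequality $s_\gra\circ\grs\leq s_\gra\circ\tau$, whereas the statement asks for a strict one. To close this gap I will use the elementary fact that if $\grs\in\calI$ and $s_\gra\grs\neq\grs s_\gra$ then neither $s_\gra\grs$ nor $\grs s_\gra$ belongs to $\calI$ (since $(s_\gra\grs)^2=e$ would force $s_\gra\grs s_\gra=\grs$, i.e.\ $s_\gra\grs=\grs s_\gra$): in each mixed configuration, an equality $s_\gra\circ\grs=s_\gra\circ\tau$ would force one of $\grs s_\gra$, $\tau s_\gra$ to equal an element of $\calI$, contradicting this fact. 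Once this is granted, the rest of the proof is a routine verification over the (at most four) commuting/non-commuting configurations in each of the three parts.
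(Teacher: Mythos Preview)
Your proposal is correct and follows essentially the same approach as the paper: translate the circle hypotheses into ordinary left multiplication via Lemma~\ref{lem:inv0}, apply Lemma~\ref{lem:parallelogramma} (first on the left, then on the right), and split into the commuting/non-commuting subcases for $\grs$ and $\tau$. The paper only writes out part~iii) in detail and declares i) and ii) similar; you are in fact more thorough than the paper in handling the strict-versus-weak inequality issue in the mixed subcases of i) and ii), and your resolution via ``$s_\gra\grs\notin\calI$ when $s_\gra\grs\neq\grs s_\gra$'' is correct.
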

\begin{proof}
We prove iii), the proofs of i) and ii) are similar. 

If $s_\gra \grs = \grs s_\gra$ and $s_\gra \tau=\tau s_\gra$ there is nothing to prove. 
If $s_\gra \grs \neq \grs s_\gra$ and $s_\gra \tau=\tau s_\gra$, then by Lemma \ref{lem:parallelogramma} iii) applied to $v=\tau$ and 
$u=\grs$ we deduce that $\tau>  s_\gra \grs$ and $s_\gra \circ \tau \geq \grs$. Now if we apply the analogue respect to the right multiplication of Lemma \ref{lem:parallelogramma} iii) to the case $v=\tau$ and $u=s_\gra \grs$, we deduce that $\tau\geq s_\gra \circ \grs$. If $s_\gra \grs \neq \grs s_\gra$ and $s_\gra \tau=\tau s_\gra$ or if $s_\gra \grs \neq \grs s_\gra$ and $s_\gra \tau \neq \tau s_\gra$ the proof is similar.
\end{proof}

Following Richardson and Springer \cite[3.9]{RS1}, define the \textit{length} of $\grs \in \calI$ as an involution to be
$$
	L(\grs) =  \frac{\ell(\grs) + \grl(\grs)}{2},
$$
where $\grl(\grs)$ denotes the dimension of the $-1$ eigenspace of $\grs$ on $\grL \otimes_\mZ \mR$.
If $S = \{\grb_1, \ldots, \grb_n\}$ is a set of orthogonal roots and $\grs_S = s_{\grb_1} \cdots s_{\grb_n}$, then 
the eigenspace of $\grs_S$ on $\grL \otimes_\mZ \mR$ of eigenvalue $-1$ is generated by $S$, hence the formula above takes the form
$$
	L(\grs_S)=\frac{\ell(\grs_S) + \card S}{2}.
$$


\begin{oss}
Notice that the previous definition agrees with that of \cite{RS1}, 
in case the involution of $G$ acts trivially on $W$ (as it happens in the Hermitian case).
\end{oss}

The length function $L$ is compatible with the circle action of the simple reflections and with the Bruhat order. The following is a consequence of Lemma \ref{lem:inv0}.

\begin{lem}[{\cite[3.18]{RS1}}] \label{lem:inv2}
Let $\gra \in \grD$ and $\grs \in \calI$. Then
$$
L(s_\gra \circ \grs) =
\begin{cases}
L(\grs)+1 &\mif s_\gra \circ \grs > \grs \\
L(\grs) -1&\mif s_\gra \circ \grs < \grs
\end{cases}
$$
\end{lem}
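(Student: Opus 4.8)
The plan is to reduce the statement to the separate behaviour of the two summands in $2L(\grs)=\ell(\grs)+\grl(\grs)$, according to the two cases in the definition of $s_\gra\circ\grs$. Since by Lemma \ref{lem:inv0} we always have $s_\gra\circ\grs>\grs$ or $s_\gra\circ\grs<\grs$, it suffices to show that $\ell+\grl$ increases by $2$ in the first case and decreases by $2$ in the second.

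First I would treat the case $s_\gra\grs=\grs s_\gra$, where $s_\gra\circ\grs=s_\gra\grs$, so that $\ell$ changes by exactly $1$. Commutation gives $\grs s_\gra\grs^{-1}=s_{\grs(\gra)}=s_\gra$, hence $\grs(\gra)=\pm\gra$, so $\grs$ stabilises both the line $\mR\gra$ and its orthogonal complement $\gra^\perp$ (the $-1$- and $+1$-eigenspaces of $s_\gra$). Inspecting the action of $s_\gra\grs$ on $\mR\gra$ and on $\gra^\perp$ separately, and using that the $+1$- and $-1$-eigenspaces of an element of $W$ are orthogonal for the $W$-invariant scalar product, one computes directly that $\grl(s_\gra\grs)=\grl(\grs)+1$ if $\grs(\gra)=\gra$ and $\grl(s_\gra\grs)=\grl(\grs)-1$ if $\grs(\gra)=-\gra$. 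On the other hand $\gra$ is simple and $\grs^{-1}(\gra)=\grs(\gra)=\pm\gra$, so $s_\gra\grs>\grs$ precisely when $\grs(\gra)=\gra$. Thus $\ell$ and $\grl$ vary in the same direction, and $L$ changes by $\pm1$ accordingly.

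In the case $s_\gra\grs\neq\grs s_\gra$ one has $s_\gra\circ\grs=s_\gra\grs s_\gra$, which is a $W$-conjugate of $\grs$ and hence has the same $-1$-eigenspace dimension, so $\grl(s_\gra\circ\grs)=\grl(\grs)$. By Lemma \ref{lem:inv0}, if $s_\gra\circ\grs>\grs$ then $s_\gra\grs s_\gra>s_\gra\grs>\grs$ and therefore $\ell(s_\gra\circ\grs)=\ell(\grs)+2$, and symmetrically $\ell(s_\gra\circ\grs)=\ell(\grs)-2$ when $s_\gra\circ\grs<\grs$; substituting into $L=(\ell+\grl)/2$ finishes this case. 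The only step requiring genuine computation is the bookkeeping of $\grl$ in the commuting case: one must check that the line $\mR\gra$ is added to, respectively removed from, the $-1$-eigenspace of $\grs$, and that the rest of that eigenspace lies in $\gra^\perp$ — this is exactly where orthogonality of eigenspaces and simplicity of $\gra$ enter to tie the sign of the change in $\grl$ to that of the change in $\ell$. Everything else is immediate from Lemma \ref{lem:inv0}.
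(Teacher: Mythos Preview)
Your proof is correct and essentially supplies the details that the paper leaves implicit: the paper only states that the lemma is a consequence of Lemma~\ref{lem:inv0} (with a reference to \cite{RS1}), and your argument is precisely the standard way of unpacking that implication by tracking $\ell$ and $\grl$ separately in the commuting and non-commuting cases.
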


The following result is stated in \cite{RS1} for the standard order. Again, since the proof is not hard, we prefer to give a direct proof.

\begin{lem}[{\cite[8.1]{RS1}}]	\label{lem:bruhat-Lcompatibile}
Let $\grs, \tau \in \calI$ be such that $\grs \leq \tau$ and $L(\grs) \geq L(\tau)$. Then $\grs = \tau$.
\end{lem}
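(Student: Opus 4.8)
The plan is to argue by induction on $L(\tau)$, using the circle action of the simple reflections together with the length formula of Lemma \ref{lem:inv2}. If $L(\tau)=0$ then $\tau$ is the identity, hence $\grs=\tau$ since the identity is the minimum of the Bruhat order. Assume $L(\tau)>0$. Then there exists $\gra\in\grD$ with $s_\gra\circ\tau<\tau$: indeed, pick any $\gra\in\grD$ with $s_\gra\tau<\tau$ (such $\gra$ exists because $\tau$ is not the identity), and Lemma \ref{lem:inv0} ii) gives $s_\gra\circ\tau<\tau$, so by Lemma \ref{lem:inv2} we have $L(s_\gra\circ\tau)=L(\tau)-1$.

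Now I distinguish two cases according to the behaviour of $\gra$ on $\grs$. First suppose $s_\gra\circ\grs<\grs$. Then by Lemma \ref{lem:inv1} ii) applied to $\grs<\tau$ (or trivially if $\grs=\tau$, in which case we are done) we get $s_\gra\circ\grs\leq s_\gra\circ\tau$, while Lemma \ref{lem:inv2} gives $L(s_\gra\circ\grs)=L(\grs)-1\geq L(\tau)-1=L(s_\gra\circ\tau)$. By the inductive hypothesis applied to the pair $s_\gra\circ\grs\leq s_\gra\circ\tau$ we conclude $s_\gra\circ\grs=s_\gra\circ\tau$, and since the circle action by a fixed $\gra$ is an involution on $\calI$ (the remark after the definition: $s_\gra\circ\grs=\tau$ iff $s_\gra\circ\tau=\grs$) this forces $\grs=\tau$. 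Second, suppose $s_\gra\circ\grs>\grs$. Then Lemma \ref{lem:inv1} iii) (again assuming $\grs<\tau$ strictly, otherwise there is nothing to prove) gives $s_\gra\circ\grs\leq\tau$; but $L(s_\gra\circ\grs)=L(\grs)+1>L(\grs)\geq L(\tau)$, so $s_\gra\circ\grs$ and $\tau$ satisfy the hypotheses of the lemma with strict inequality $L(s_\gra\circ\grs)>L(\tau)$, which is already impossible by the first case (or directly: apply the argument of the first case to the pair $s_\gra\circ\grs\leq\tau$, noting $s_\gra\circ(s_\gra\circ\grs)=\grs<s_\gra\circ\grs$, to derive $s_\gra\circ\grs=\tau$, contradicting $L(s_\gra\circ\grs)>L(\tau)$). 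Hence this case cannot occur, and the induction is complete.

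The only delicate point is the bookkeeping of strict versus non-strict inequalities: one must be careful that when $\grs<\tau$ strictly, Lemma \ref{lem:inv1} is applicable, and when $\grs=\tau$ the statement is trivial, so no genuine obstacle arises there. The main structural idea — that $L$ decreases by exactly one under a descent of the circle action and that descents of $\tau$ can be "mirrored" on $\grs$ via Lemma \ref{lem:inv1} — is exactly the involution analogue of the standard proof that $u\leq v$ and $\ell(u)\geq\ell(v)$ imply $u=v$ in a Coxeter group, so the argument is short once these lemmas are in place.
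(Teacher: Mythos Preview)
Your proof is correct and follows essentially the same approach as the paper's: both argue by induction (you on $L(\tau)$, the paper on $\ell(\tau)$), pick $\gra$ with $s_\gra\circ\tau<\tau$, and split according to whether $s_\gra\circ\grs$ is above or below $\grs$, using Lemma~\ref{lem:inv1} and Lemma~\ref{lem:inv2} in the same way. The only cosmetic difference is that in the case $s_\gra\circ\grs>\grs$ the paper invokes the second conclusion of Lemma~\ref{lem:inv1}~iii) ($\grs\leq s_\gra\circ\tau$) to apply induction directly, whereas you use the first conclusion ($s_\gra\circ\grs\leq\tau$) and then rerun the Case~1 argument---both routes arrive at the same contradiction.
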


\begin{proof}
We proceed by induction on $\ell(\tau)$. If $\ell(\tau) = 0$, then $\grs=\tau=\id$.
Suppose that $\ell(\tau)>0$, let $\gra\in \Delta$ be such that $s_\gra\tau < \tau$ and denote $\tau' = s_\gra\circ \tau$. Then $\tau' <\tau$ by Lemma \ref{lem:inv0} ii), thus $L(\tau')=L(\tau)-1$ by Lemma \ref{lem:inv2}. Denote $\grs' = s_\gra\circ \grs$. 

Suppose that $\grs' >\grs$. Then by Lemma \ref{lem:inv1} iii) it follows $\grs \leq \tau'$, and by induction we get $\grs = \tau'$. Thus $\tau = s_\gra \circ \grs$, and by Lemma \ref{lem:inv2} we get $L(\tau) = L(\grs)+1$, a contradiction. Therefore it must be $\grs' < \grs$, hence $L(\grs')=L(\grs)-1$ and by Lemma \ref{lem:inv1} ii) we get $\grs' \leq \tau'$. Thus $\grs'=\tau'$ by induction, and it follows $\grs=\tau$.
\end{proof}

\subsection{Involutions in the Hermitian case}

Let $P$ be a parabolic subgroup of $G$ with abelian unipotent radical. In this subsection we will study the involutions of the form $\grs_{v(S)}$, 
where $v \in W^P$ and $S \subset \Phi^+(v)$ is an orthogonal subset.  Since $\Phi^+(v) \subset \Psi$, in particular $S$ will be an orthogonal subset of $\Psi$. In the following lemmas we collect some properties of such subsets which will be needed later on.

\begin{lem} \label{lem:panyushev}
Let $\gra \in \Phi^+_P$, and let $\grb, \grb' \in \Psi$ be orthogonal root, then:
\begin{enumerate}[\indent i)]
	\item $\grb\pm \grb'\notin \Phi$.
	\item if $\grb + \gra \in \Phi$, then $\grb' + \gra \not \in \Phi$.
	\item if $\grb - \gra \in \Psi$, then $\grb' - \gra \not \in \Psi$.
\end{enumerate}
\end{lem}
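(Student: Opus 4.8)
The plan is to prove part i) first, and then to obtain parts ii) and iii) from it by one and the same argument by contradiction.

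For i), recall from \eqref{eqn:Psi} that every root in $\Psi$ has coefficient $1$ on $\gra_P$, while every root of $\Phi$ has coefficient at most $[\theta:\gra_P]=1$ on $\gra_P$ (this last equality being exactly the condition that $\gop^\mru$ be abelian). Hence $\grb+\grb'$, having coefficient $2$ on $\gra_P$, is not a root. Since $\grb$ and $\grb'$ are orthogonal, the $\grb'$-string through $\grb$ is symmetric about $\grb$, so $\grb-\grb'\in\Phi$ would force $\grb+\grb'\in\Phi$; therefore $\grb-\grb'\notin\Phi$ as well.

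For ii), suppose by contradiction that both $\grg:=\grb+\gra$ and $\grg':=\grb'+\gra$ are roots. Since $\gra\in\Phi^+_P$ has coefficient $0$ on $\gra_P$, both $\grg$ and $\grg'$ have coefficient $1$ on $\gra_P$, hence they lie in $\Psi$. Now $\grg+\grg'=\grb+\grb'+2\gra$ has coefficient $2$ on $\gra_P$ and so is not a root, while $\grg-\grg'=\grb-\grb'$ is not a root by i); since moreover $\grg\neq\pm\grg'$ (we have $\grb\neq\grb'$ because they are orthogonal, and a sum of positive roots cannot vanish), it follows that $\langle\grg,\grg'\rangle=0$. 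Expanding and using $\langle\grb,\grb'\rangle=0$, this reads $\langle\grb,\gra\rangle+\langle\grb',\gra\rangle=-\|\gra\|^2<0$, so at least one of the two summands is negative; say $\langle\grb,\gra\rangle<0$ (the roles of $\grb$ and $\grb'$ being symmetric in the contradiction hypothesis). Then $\langle\grb'+\gra,\grb\rangle=\langle\gra,\grb\rangle<0$, hence $(\grb'+\gra)+\grb=\grb+\grb'+\gra$ is a root — a contradiction, as it has coefficient $2$ on $\gra_P$. Part iii) is proved in exactly the same way, replacing $\gra$ by $-\gra$ throughout the computation: from $\grb-\gra,\grb'-\gra\in\Psi$ one gets $\langle\grb,\gra\rangle+\langle\grb',\gra\rangle=\|\gra\|^2>0$, and the same argument produces the root $\grb+\grb'-\gra$, again a contradiction.

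I expect no serious obstacle: the proof uses only the structural identity $[\theta:\gra_P]=1$, the description \eqref{eqn:Psi} of $\Psi$, and the elementary root-string facts (a symmetric root string for orthogonal roots; a negative pairing of two roots implies their sum is a root, a positive pairing their difference). In particular there is no case-by-case analysis on the type of $G$, and the characteristic of $\mk$ plays no role. The only points requiring a little care are the bookkeeping of the coefficients on $\gra_P$, so as to be sure that each auxiliary element is, or is not, a root for the right reason, and the exclusion of the degenerate cases $\grg=\pm\grg'$.
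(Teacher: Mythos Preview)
Your proof is correct. Part i) follows the same line as the paper: $\grb+\grb'$ is excluded by the abelian condition $[\theta:\gra_P]=1$, and then $\grb-\grb'$ is ruled out via the orthogonality (the paper phrases this as $s_{\grb'}(\grb-\grb')=\grb+\grb'$, which is exactly your ``symmetric root string'' observation).

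For parts ii) and iii) the paper gives no argument of its own and simply refers to \cite[Lemma~1.2]{Pa}. Your treatment is therefore a genuine addition: a uniform, self-contained proof that works simultaneously for both statements by the sign change $\gra\mapsto -\gra$. The key idea --- showing that the hypothetical pair $\grg=\grb\pm\gra$, $\grg'=\grb'\pm\gra$ would itself be orthogonal in $\Psi$, and then extracting from $\langle\grg,\grg'\rangle=0$ a scalar product inequality that forces a root of $\gra_P$-coefficient $2$ --- is clean and avoids any case analysis. The bookkeeping you flag (excluding $\grg=\pm\grg'$, and checking that the standard implication ``$\langle\,\cdot\,,\,\cdot\,\rangle<0 \Rightarrow$ sum is a root'' applies because the relevant vectors are not negatives of each other) is all in order: in each instance the $\gra_P$-coefficient count, or positivity of the roots involved, rules out the degenerate cases.
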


\begin{proof}
i). Notice that $\grb + \grb' \not \in \Phi$: otherwise $\grb + \grb' \in \Psi$ because $\gop^\mru$ is an ideal of $\gob$, and this cannot happen because $\gop^\mru$ is abelian. On the other hand, if $\grb - \grb' \in \Phi$, then $\grb + \grb' = s_{\grb'}(\grb-\grb')$ because $\grb,\grb'$ are orthogonal, thus $\grb + \grb' \in \Phi$ as well, which was already excluded. 

Claims ii) and iii) are taken from \cite[Lemma 1.2]{Pa}.
\end{proof}

If $S\subset \Psi$ is a set of orthogonal roots we define 
$$
\Phi_S=\mQ S\cap \Phi=\{\gra\in \Phi\st \grs_S(\gra)=-\gra\}.
$$

\begin{prp}	\label{prp:radici-attive-reali}
Let $S \subset \Psi$ be orthogonal, then
$$
\Phi_S = \{\gra \in \Phi \st \gra = \tfrac{1}{2}(\pm \grb \pm \grb') \quad \text{for some } \grb, \grb' \in S \}.
$$
\end{prp}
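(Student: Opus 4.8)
The plan is to prove the two inclusions separately. The inclusion $\{\gra \in \Phi \st \gra = \tfrac12(\pm\grb\pm\grb') \text{ for some } \grb,\grb'\in S\} \subseteq \Phi_S$ is the easy direction: if $\gra = \tfrac12(\pm\grb\pm\grb')$ with $\grb,\grb'\in S$, then since $S$ is orthogonal $\grs_S$ acts as $-1$ on the span of $S$, so $\grs_S(\gra) = -\gra$, hence $\gra\in\Phi_S$. (When $\grb=\grb'$ one just gets $\pm\grb\in S\subseteq\Phi_S$, which is clear; when $\grb\ne\grb'$ one needs $\tfrac12(\pm\grb\pm\grb')$ to actually be a root for the statement to say anything, so this direction is vacuous unless that holds.) So the content is the reverse inclusion $\Phi_S\subseteq\{\tfrac12(\pm\grb\pm\grb')\}$.

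For the reverse inclusion, let $\gra\in\Phi_S$, so $\gra$ lies in the $\mQ$-span of $S = \{\grb_1,\dots,\grb_n\}$ and $\grs_S(\gra)=-\gra$. Write $\gra = \sum_i c_i\grb_i$ with $c_i\in\mQ$. Since the $\grb_i$ are pairwise orthogonal, $c_i = (\gra,\grb_i)/(\grb_i,\grb_i) = \tfrac12\langle\gra,\grb_i^\vee\rangle$, so each $c_i$ is a half-integer. The key structural input is Lemma \ref{lem:panyushev} i): for distinct $\grb,\grb'\in\Psi$ orthogonal, $\grb\pm\grb'\notin\Phi$. I would combine this with standard facts about root strings: since $\grb_i,\grb_j\in\Psi\subset\Phi^+$ and $[\grb_i:\gra_P]=[\grb_j:\gra_P]=1$, any root in the $\mQ$-span of $S$ has $\gra_P$-coefficient equal to $\sum_i c_i$, which must be $0$ or $1$ (as $\gra\in\Phi$ and $\Psi$ consists exactly of the positive roots with $\gra_P$-coefficient $1$, while $\Phi_P$ has $\gra_P$-coefficient $0$); replacing $\gra$ by $-\gra$ if needed we may assume $\sum_i c_i \in\{0,1\}$.

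The heart of the argument, which I expect to be the main obstacle, is showing that at most two of the $c_i$ are nonzero and that the nonzero ones are each $\pm\tfrac12$ (or a single one equal to $\pm1$). Here is the approach: consider the reflection $s_\gra$ and the roots $\grb_i$. We have $s_\gra(\grb_i) = \grb_i - \langle\grb_i,\gra^\vee\rangle\gra$, and since $\gra\in\Phi_S$ this stays in $\mQ S\cap\Phi = \Phi_S$. Pair with $\grb_j$: orthogonality of the $\grb$'s forces strong constraints on $\langle\grb_i,\gra^\vee\rangle$. More directly, I would bound: $\|\gra\|^2 = \sum_i c_i^2\|\grb_i\|^2$, while each $c_i\|\grb_i\|^2/\|\gra\|^2 = (\gra,\grb_i)/\|\gra\|^2 \cdot \text{(something)}$, and $(\gra,\grb_i)$ relates to root-string length $\langle\grb_i,\gra^\vee\rangle\in\{0,\pm1,\pm2\}$ — with $\pm2$ only when $\grb_i=\pm\gra$. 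Ruling out three or more simultaneously nonzero $c_i$: if $c_i,c_j,c_k\ne0$ with $i,j,k$ distinct, one derives that $\gra$, $\grb_i$, $\grb_j$, $\grb_k$ live in a rank-$\geq 3$ subspace with incompatible angle constraints (each $\grb$ orthogonal to the others, each making a fixed acute-or-obtuse angle with $\gra$), contradicting that $\gra$ is a single root; the cleanest route is probably to use that $s_{\grb_i}s_{\grb_j}\cdots$ applied to $\gra$ generates too many roots, or to invoke that in a root system the only roots in the span of two orthogonal roots $\grb,\grb'$ are $\pm\grb,\pm\grb',\pm\tfrac12(\grb\pm\grb')$ when the latter are roots, together with an induction on $n=\card S$ peeling off one $\grb_i$ at a time using Lemma \ref{lem:panyushev}. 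Once we know $\gra = c_i\grb_i + c_j\grb_j$ with $c_i,c_j\in\tfrac12\mZ\senza\{0\}$ and $\|\gra\|$ a root length, the classification of rank-2 root systems (types $\sfA_1\times\sfA_1$, $\sfA_2$, $\sfB_2$, $\sfG_2$ — and $\sfG_2$ is excluded by hypothesis, $\sfA_1\times\sfA_1$ gives nothing new) pins down $c_i=c_j=\pm\tfrac12$, giving $\gra = \tfrac12(\pm\grb_i\pm\grb_j)$, as desired.
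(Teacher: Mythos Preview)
Your setup is correct: the easy inclusion is fine, and writing $\gra = \sum_i c_i\grb_i$ with $c_i = \tfrac12\langle\gra,\grb_i^\vee\rangle$ and observing that $\sum_i c_i = [\gra:\gra_P] \in \{-1,0,1\}$ is exactly the right start. But the part you flag as ``the heart of the argument'' is not actually carried out --- you list several possible strategies (norm comparisons, angle constraints in rank $\geq 3$, induction peeling off one $\grb_i$, rank-2 classification) without completing any of them. This is a genuine gap, and the suggested routes are more complicated than what is needed.

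The paper's argument is short and uses the case split you already identified together with parts ii) and iii) of Lemma~\ref{lem:panyushev}, not just part i). If $\gra\in\Psi$ (i.e.\ $\sum_i c_i = 1$), then $\gra+\grb_i\notin\Phi$ for every $i$ because $\gop^\mru$ is abelian, forcing $\langle\gra,\grb_i^\vee\rangle\geq 0$; since these nonnegative integers sum to $2$, either one of them equals $2$ (then $\gra\in S$) or two of them equal $1$ (then $2\gra=\grb+\grb'$). If $\gra\in\Phi^+_P$ (i.e.\ $\sum_i c_i = 0$), then $\langle\gra,\grb_i^\vee\rangle\neq 0$ forces $\grb_i+\gra\in\Phi$ or $\grb_i-\gra\in\Psi$; Lemma~\ref{lem:panyushev} ii) says at most one $\grb_i$ satisfies the first, and Lemma~\ref{lem:panyushev} iii) says at most one satisfies the second, so at most two $c_i$ are nonzero. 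The coefficient bound $\langle\gra,\grb^\vee\rangle=\pm1$ then follows because $[s_\grb(\gra):\gra_P]\in\{-1,0,1\}$. This replaces all of your proposed machinery with two applications of a lemma already available.
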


\begin{proof}
Let $\gra \in \Phi^+$. By the orthogonality of $S$ it follows that $\grs_S(\gra) = -\gra$ whenever $\gra$ has the required shape $\tfrac{1}{2}(\pm \grb \pm \grb')$. 
Suppose conversely that $\grs_S(\gra) = -\gra$. We distinguish two cases, depending on $\gra \in \Phi^+_P$ or $\gra \in \Psi$.

Suppose that $\gra \in \Phi^+_P$. By Lemma \ref{lem:panyushev} there are at most two roots in $S$ which are not orthogonal to $\gra$. 
Since $\grs_S(\gra) = \gra - \sum_{\grb\in S}\langle \gra,\grb^\vee\rangle\grb =-\gra$ and $\gra\notin S$,
we deduce that there are precisely two such roots $\grb, \grb'$. Consider the root 
$s_\grb(\gra)=\gra -\langle\gra, \grb^\vee\rangle\grb$: since $[\grd : \gra_P] \leq 1$ for all $\grd \in \Phi^+$, we must have 
$\langle\gra, \grb^\vee\rangle =\pm 1$, and similarly for $\grb'$. Moreover, by Lemma 
\ref{lem:panyushev} we can assume $\langle \gra, \grb^\vee \rangle =1$ and $\langle \gra, \grb'^\vee \rangle =-1$. 
Therefore $\grs_S(\gra) = \gra - \grb + \grb'$, and we get $\gra = \tfrac{1}{2}(\grb - \grb')$.

Suppose now that $\gra \in \Psi$. Then $\gra + \grb \not \in \Phi^+$ for all $\grb \in S$, hence  $\langle \gra, \grb^\vee \rangle \geq 0$. 
On the other hand $\grs_S(\gra) = -\gra = \gra - \sum_{\grb \in S} \langle \gra, \grb^\vee \rangle \grb$, thus by \eqref{eqn:Psi} we get that 
$\sum_{\grb \in S} \langle \gra, \grb^\vee \rangle = 2$. It follows that $2\gra = \grb + \grb'$ 
for some $\grb, \grb'\in S$. Therefore either $\gra \in S$, or $\gra$ is the half-sum of two such elements.
\end{proof}

\begin{prp}\label{prp:PhiS}
Let $S$ be an orthogonal subset of $\Psi$. 
\begin{enumerate}[\indent i)]
 \item Suppose that $\Phi$ is simply laced, then $\Phi_S=S\sqcup (-S)$;
 \item Suppose that $\Phi$ is not simply laced, and let $\Phi=\Phi_\ell\sqcup \Phi_s$ and $S=S_\ell\sqcup S_s$ be the partitions into long and short roots.
Then the following hold:
   \begin{enumerate}[\indent a)]
      \item $\Phi_{S}=\Phi_{S_\ell}\sqcup \Phi_{S_s}$; 
      \item $\Phi_{S_s}=S_s\sqcup (-S_s)$;
      \item $\Phi_{S_\ell}\cap \Phi_\ell =S_\ell\sqcup (-S_\ell)$;
      \item $S_\ell \sqcup(-S_\ell)=(\Phi_\ell\cap\Phi_S)$ and $S_s \sqcup (-S_s) = S_\ell^\perp \cap \Phi_S$;
      \item $\mZ S\cap \Phi^+=S$.
      \end{enumerate}
      \end{enumerate}
\end{prp}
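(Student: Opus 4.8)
The plan is to derive everything from Proposition \ref{prp:radici-attive-reali}, which describes $\Phi_S$ (and likewise, applied to $S_\ell$ and $S_s$, also $\Phi_{S_\ell}$ and $\Phi_{S_s}$) as the set of roots of the form $\tfrac12(\pm\grb\pm\grb')$ with $\grb,\grb'$ ranging over the relevant orthogonal set, combined with two elementary observations: an orthogonal subset of $\Psi$ is a linearly independent set, so $\mQ S_\ell\cap\mQ S_s=0$ and the scalar product restricts nondegenerately to $\mQ S_\ell$; and, since $G$ is never of type $\sfG_2$, a non-simply-laced $\Phi$ has exactly two root lengths with $\|\grb\|^2=2\|\grb'\|^2$ for $\grb$ long and $\grb'$ short. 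For $\grb\perp\grb'$ one has $\|\tfrac12(\pm\grb\pm\grb')\|^2=\tfrac14(\|\grb\|^2+\|\grb'\|^2)$, and I will repeatedly compare this with the admissible squared root lengths. Concretely: if $\grb\ne\grb'$ are both short this is $\tfrac12\|\grb'\|^2$, if they have different lengths it is $\tfrac34\|\grb'\|^2$ (with $\grb'$ short), and if both are long it is $\|\grb\|^2/2=\|\text{short}\|^2$; only the last of these is an admissible value.

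For i), writing $\gra\in\Phi_S$ as $\tfrac12(\pm\grb\pm\grb')$ with $\grb\ne\grb'$ in $S$ forces $\|\gra\|^2=\tfrac12\|\grb\|^2$, impossible since all roots have the same length; hence $\grb=\grb'$ and $\gra=\pm\grb\in S\sqcup(-S)$, while the reverse inclusion is immediate (take $\grb=\grb'$ in Proposition \ref{prp:radici-attive-reali}) and $S\cap(-S)=\vuoto$ because $S\subset\Phi^+$. For ii), the length computation yields two facts: (1) if $\grb,\grb'\in S$ have different lengths then $\tfrac12(\pm\grb\pm\grb')$ is not a root; (2) if $\grb\ne\grb'\in S$ are both short then $\tfrac12(\pm\grb\pm\grb')$ is not a root. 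Fact (2) together with Proposition \ref{prp:radici-attive-reali} applied to $S_s$ gives b). Fact (1) shows that in the description of $\Phi_S$ only pairs $\grb,\grb'$ lying both in $S_\ell$ or both in $S_s$ contribute, so $\Phi_S=\Phi_{S_\ell}\cup\Phi_{S_s}$ by Proposition \ref{prp:radici-attive-reali} applied to $S_\ell$ and to $S_s$; this union is disjoint since $\Phi_{S_\ell}\subset\mQ S_\ell$, $\Phi_{S_s}\subset\mQ S_s$ and $\mQ S_\ell\cap\mQ S_s=0$, which is a).

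For c), in $\Phi_{S_\ell}=\{\tfrac12(\pm\grb\pm\grb'):\grb,\grb'\in S_\ell\}\cap\Phi$ the diagonal terms $\grb=\grb'$ produce exactly $S_\ell\sqcup(-S_\ell)$ (long roots), while for $\grb\ne\grb'$ the resulting roots have squared length $\|\grb\|^2/2$, hence are short; intersecting with $\Phi_\ell$ leaves $S_\ell\sqcup(-S_\ell)$. Then d) is formal: using a), $\Phi_\ell\cap\Phi_S=(\Phi_\ell\cap\Phi_{S_\ell})\sqcup(\Phi_\ell\cap\Phi_{S_s})=S_\ell\sqcup(-S_\ell)$ by c) and b) (recall $\Phi_{S_s}=S_s\sqcup(-S_s)$ consists of short roots), and $S_\ell^\perp\cap\Phi_S=(S_\ell^\perp\cap\Phi_{S_\ell})\sqcup(S_\ell^\perp\cap\Phi_{S_s})$, where the first summand is empty because $S_\ell^\perp\cap\mQ S_\ell=0$ and the second equals $\Phi_{S_s}=S_s\sqcup(-S_s)$ because $\mQ S_s\subset S_\ell^\perp$ ($S=S_\ell\sqcup S_s$ being orthogonal). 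Finally for e): any $\gra\in\mZ S\cap\Phi$ lies in $\mQ S\cap\Phi=\Phi_S$, so $\gra=\tfrac12(\pm\grb\pm\grb')$ with $\grb,\grb'\in S$ by Proposition \ref{prp:radici-attive-reali}; if $\grb\ne\grb'$ then $\gra$ has a half-integer coordinate with respect to the linearly independent set $S$, a contradiction, so $\gra=\pm\grb$ with $\grb\in S$. Thus $\mZ S\cap\Phi=S\sqcup(-S)$, and intersecting with $\Phi^+$ gives $\mZ S\cap\Phi^+=S$.

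I do not expect a real obstacle: once Proposition \ref{prp:radici-attive-reali} is available the statement reduces to a length-and-linear-independence bookkeeping exercise. The one point that needs care is that for two distinct long orthogonal roots $\grb,\grb'$ the half-sum $\tfrac12(\grb\pm\grb')$ genuinely is a (short) root, which is exactly why c) and d) must be phrased with the restrictions $\cap\,\Phi_\ell$ and $\cap\,S_\ell^\perp$ rather than as bare equalities; tracking these short roots carefully is what makes the non-simply-laced case more intricate than the simply-laced one.
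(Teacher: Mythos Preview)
Your proof is correct and follows essentially the same approach as the paper: both derive everything from Proposition~\ref{prp:radici-attive-reali} together with the elementary length computation $\|\tfrac12(\pm\grb\pm\grb')\|^2=\tfrac14(\|\grb\|^2+\|\grb'\|^2)$ for orthogonal $\grb,\grb'$. The paper's proof is terser, dispatching ii.d) and ii.e) with the single sentence ``follow from the other points,'' whereas you spell out the linear-independence argument for e) (half-integer coordinates force $\grb=\grb'$) and the decomposition argument for d) explicitly; this added detail is helpful and entirely in the spirit of the paper's argument.
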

\begin{proof}
Let $\gra \in \Phi^+_S$, then by the previous proposition we have either $\gra\in S$ or $\gra=\tfrac 12 (\grb\pm\grb')$ for two different roots $\grb,\grb'$ in $S$. Suppose that we are in the second case.

If $\grb$ and $\grb'$ are short, or if $\Phi$ is simply laced, then we would have $\|\gra\|^2 = \tfrac 12 \|\grb\|^2$, which is impossible. This implies i) and ii.b).

If $\grb$ is short and $\grb'$ is long, then we would have $\|\gra\|^2 = \tfrac 34 \|\grb\|^2$ which is also impossible, and similarly if $\grb$ is long and $\grb'$ is short. This implies ii.a).

If $\grb$ and $\grb'$ are long and the root system is not simply laced, then we have that $\|\gra\|^2 = \tfrac 12 \|\grb\|^2$, hence $\gra$ is a short root. This implies ii.c). 

Finally ii.d) and ii.e) follow from the other points.
\end{proof}

\begin{cor} \label{cor:uTvS}
Let $(u,R), (v,S)$ be admissible pairs, and suppose that $\grs_{u(R)} = \grs_{v(S)}$. Then $u(R) = v(S)$.
\end{cor}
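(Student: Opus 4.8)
The plan is to exploit the structure of $\Phi_S = \mQ S \cap \Phi$ provided by Proposition \ref{prp:PhiS}, together with the fact that an orthogonal subset of $\Psi$ is recovered from the set $\Phi_S$ by intersecting with $\Phi^+$ in the simply laced case, and more delicately in the non-simply-laced case. Set $\grs := \grs_{u(R)} = \grs_{v(S)}$. Since $\grs_T = \prod_{\grb \in T} s_\grb$ acts as $-1$ on $\mQ T$ and as $+1$ on $(\mQ T)^\perp$, the subspace $\mQ\, u(R) = \mQ\, v(S)$ is intrinsically determined by $\grs$ as its $(-1)$-eigenspace; hence $\Phi_{u(R)} = \Phi_{v(S)}$, call it $\Phi_\grs$. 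The point is that $u(R)$ and $v(S)$ are two orthogonal subsets of $\Phi$ spanning the same subspace and defining the same involution, and I want to conclude they are equal as \emph{sets}.

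First I would treat the case $\Phi$ simply laced. By Proposition \ref{prp:PhiS} i), applied to the orthogonal subsets $R \subset \Psi$ and $S \subset \Psi$ after noting that $\grs_S$ and $\grs_R$ respect the long/short decomposition trivially here, one gets that $u(R) \sqcup (-u(R))$ and $v(S) \sqcup (-v(S))$ both coincide with $\Phi_\grs$. Wait — Proposition \ref{prp:PhiS} i) is stated for orthogonal subsets \emph{of $\Psi$}, whereas $u(R)$ need not lie in $\Psi$; but since $\grs_{u(R)} = u \grs_R u^{-1}$ as an involution of $W$, and $\Phi_{u(R)} = u(\Phi_R)$, I can instead apply Proposition \ref{prp:PhiS} i) directly to $R$ and $S$ (which \emph{are} orthogonal subsets of $\Psi$) to get $\Phi_R = R \sqcup (-R)$ and $\Phi_S = S \sqcup (-S)$, then transport by $u$ and $v$ respectively: $\Phi_\grs = u(R) \sqcup (-u(R)) = v(S) \sqcup (-v(S))$. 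Since $(u,R)$ and $(v,S)$ are admissible, $u(R) \subset \Phi^-$ and $v(S) \subset \Phi^-$, so intersecting $\Phi_\grs$ with $\Phi^-$ yields $u(R) = \Phi_\grs \cap \Phi^- = v(S)$. Done in the simply laced case.

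For $\Phi$ not simply laced, I would run the same argument using Proposition \ref{prp:PhiS} ii) to get the refined description. Write $R = R_\ell \sqcup R_s$ and $S = S_\ell \sqcup S_s$ into long and short roots; since $u, v \in W$ preserve root lengths, $u(R)_\ell = u(R_\ell)$ etc. By ii.d), the long roots in $\Phi_\grs$ are exactly $u(R_\ell) \sqcup (-u(R_\ell)) = v(S_\ell) \sqcup (-v(S_\ell))$, and intersecting with $\Phi^-$ (using admissibility) gives $u(R_\ell) = v(S_\ell)$; then again by ii.d), $S_s \sqcup (-S_s)$ is cut out as $\Phi_R \cap (\text{long part})^\perp$ inside $\Phi_R$, so transporting gives $u(R_s) \sqcup (-u(R_s)) = v(S_s) \sqcup (-v(S_s))$, and a final intersection with $\Phi^-$ gives $u(R_s) = v(S_s)$. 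Hence $u(R) = v(S)$.

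The main obstacle I anticipate is purely bookkeeping: making sure the length-based decomposition of $\Phi_S$ in Proposition \ref{prp:PhiS} is applied to the correct orthogonal subset (i.e.\ to $R$ and $S$ themselves, which live in $\Psi$, rather than to their images $u(R), v(S)$, which may not), and then correctly transporting everything by the Weyl group elements. Once that is set up, the admissibility hypothesis $v(S), u(R) \subset \Phi^-$ does all the remaining work, since it lets one break the $\pm$ symmetry in $T \sqcup (-T) = \Phi_\grs \cap (\cdots)$ and pin down the subset on the negative side. No induction and no case analysis on the Lie type beyond the simply laced / non-simply laced split is needed.
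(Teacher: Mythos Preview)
Your proposal is correct and follows essentially the same route as the paper: identify the $(-1)$-eigenspace of $\grs$, apply Proposition~\ref{prp:PhiS} to $R$ and $S$ (which lie in $\Psi$), transport by $u$ and $v$ to obtain $u(R)\sqcup(-u(R))=v(S)\sqcup(-v(S))$, and then intersect with $\Phi^-$ using admissibility. The paper compresses the simply laced and non-simply laced cases into a single citation of parts i) and ii.d), whereas you spell out the long/short bookkeeping explicitly, but the argument is the same.
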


\begin{proof}
Let $\grs=\grs_{u(R)}= \grs_{v(S)}$, and let $V$ be the corresponding eigenspace of eigenvalue $-1$. 
Then  $V\cap \Phi= u(\Phi_R)=v(\Phi_S)$. By points i) and ii.d) of Proposition \ref{prp:PhiS} we see that
$u(R)\cup (-u(R)) = v(S)\cup(-v(S))$. Thus $u(R) = u(\Phi_R) \cap \Phi^- = v(\Phi_S) \cap \Phi^- = v(S)$.
\end{proof}

\begin{oss}
The previous corollary is false if $P^\mru$ is not abelian. Suppose for example that $\Phi$ is of type $\sfD_4$, represented as usual in the Euclidean space $\mR^4$ with orthonormal basis $\{\gre_1, \gre_2, \gre_3, \gre_4\}$. Let $P$ be the maximal parabolic subgroup of $G$ corresponding to $\gra_2 = \gre_2 - \gre_3$, and let $u=v$ be the longest element in $W^P$.
If we choose $R = \{\gre_1 - \gre_4, \gre_1 + \gre_4, \gre_2 - \gre_3, \gre_2 + \gre_3\}$
and $S=\{\gre_1 - \gre_3, \gre_1 + \gre_3, \gre_2 - \gre_4, \gre_2 + \gre_4\}$, then $\grs_{u(R)}=\grs_{u(S)} = -\mathrm{id}$.
\end{oss}

\section{Parametrization of the $B$-orbits in $G/L$ and in $\gop^\mru$}\label{sez:par}

In this section we will describe the parametrization of the $B$-orbits in $\gop^\mru$ and in $G/L$. As already recalled, the parametrization of the $B$-orbits
in $\gop^\mru$ in terms of orthogonal subsets is due to Panyushev (see \cite[Theorem 2.2]{Pa}), whereas the parametrization of the $B$-orbits in $G/L$ is due to Richardson (see \cite[Theorem 5.2.4]{RS2}). Since the proof in \cite{RS2} is only sketched, we will include here complete proofs.

Consider the projection map $\pi:G/L\ra G/P$. Recall the decomposition $G/P=\bigsqcup_{v\in W^P}BvP$, and for $v \in W^P$ let $B^v=vPv^{-1}\cap B$ 
be the stabilizer of $vP\in G/P$ in $B$. Then
$$
\pi^{-1}(BvP/P) \isocan B\times^{B^v} vP/L.
$$
Hence we have a bijection between the $B$-orbits in $BvP/L$ and the $B^v$-orbits in $vP/L$, which is compatible with the Bruhat order.
Equivalently, if we set $B_v = v^{-1}B^v v = v^{-1}Bv \cap P$, then these orbits are also in bijection with the $B_v$-orbits in $P/L$.

\begin{lem}	\label{lem:B_w}
Let $v \in W^P$, then  $B_L = B_v \cap L$ and $B_v = v^{-1}Bv \cap B=B_L\ltimes U_v$ where $U_v$ is the subgroup of $P^\mru$ with
Lie algebra $\gou_v=\bigoplus_{\gra\in\Psi\senza\Phi^+(v)}\gou_\gra$.
\end{lem}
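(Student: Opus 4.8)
The plan is to argue entirely with root subgroups, using that $v\in W^P$ implies $\Phi^+(v)\subset\Psi$. Set $\Phi^-_P=\Phi^-\cap\Phi_P$. The Borel subgroup $v^{-1}Bv$ contains $T$ and has root set $v^{-1}(\Phi^+)=(\Phi^+\senza\Phi^+(v))\sqcup\{-\grg\st\grg\in\Phi^+(v)\}$; since $v(\Phi^+_P)\subset\Phi^+$, this root set meets $\Phi_P$ only inside $\Phi^+_P$, i.e. $v^{-1}(\Phi^+)\cap\Phi^-_P=\vuoto$. The groups $v^{-1}Bv\cap P$ and $v^{-1}Bv\cap B$ are connected $T$-stable subgroups containing $T$, with root sets $v^{-1}(\Phi^+)\cap(\Phi^+\cup\Phi^-_P)$ and $v^{-1}(\Phi^+)\cap\Phi^+$ respectively; by the previous remark these two root sets coincide, both being equal to $\Phi^+\senza\Phi^+(v)$, so
$$
B_v=v^{-1}Bv\cap P=v^{-1}Bv\cap B=T\cdot\prod_{\grb\in\Phi^+\senza\Phi^+(v)}U_\grb .
$$
This proves the identity $B_v=v^{-1}Bv\cap B$.

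Next I would use $\Phi^+(v)\subset\Psi$ to write $\Phi^+\senza\Phi^+(v)=\Phi^+_P\sqcup(\Psi\senza\Phi^+(v))$. The subgroup of $B_v$ generated by $T$ and the root subgroups $U_\grb$ with $\grb\in\Phi^+_P$ is precisely $B\cap L=B_L$; hence $B_L\subset B_v$, and since $B_v\subset B$ we also get $B_v\cap L\subset B\cap L=B_L$, so that $B_L=B_v\cap L$. For the complementary piece, $\gou_v=\bigoplus_{\gra\in\Psi\senza\Phi^+(v)}\gou_\gra$ is an abelian subalgebra of $\gop^\mru$, so $U_v=\exp(\gou_v)=\prod_{\gra\in\Psi\senza\Phi^+(v)}U_\gra$ is a connected subgroup of $P^\mru$, contained in $B_v$ because $v(\gra)\in\Phi^+$ for every such $\gra$. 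Moreover $B_L\cap U_v\subset B_L\cap P^\mru=\{1\}$, the latter intersection being a unipotent group with empty root set since $\Phi_P\cap\Psi=\vuoto$.

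Finally I would check that $B_L$ normalizes $U_v$, which reduces to showing that $\Psi\senza\Phi^+(v)$ is stable under the addition of roots from $\Phi^+_P$: given $\gra\in\Psi\senza\Phi^+(v)$, $\grg\in\Phi^+_P$ with $\gra+\grg\in\Phi$, we have $[\gra+\grg:\gra_P]=1$, so $\gra+\grg\in\Psi$ by \eqref{eqn:Psi}, and $v(\gra+\grg)=v(\gra)+v(\grg)$ is a root equal to the sum of the positive roots $v(\gra)$ (positive since $\gra\notin\Phi^+(v)$) and $v(\grg)$ (positive since $v\in W^P$), hence positive; thus $\gra+\grg\notin\Phi^+(v)$. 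Consequently $B_L\cdot U_v$ is a connected $T$-stable subgroup of $B_v$ with root set $\Phi^+_P\sqcup(\Psi\senza\Phi^+(v))=\Phi^+\senza\Phi^+(v)$, so it must equal $B_v$; together with $B_L\cap U_v=\{1\}$ this yields $B_v=B_L\ltimes U_v$. The only delicate point is the collapse $v^{-1}Bv\cap P=v^{-1}Bv\cap B$ in the first step, which rests on $v$ being the minimal coset representative (so that $v^{-1}(\Phi^+)$ avoids $\Phi^-_P$); the rest is routine bookkeeping in the root system.
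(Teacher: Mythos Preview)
Your proof is correct and follows essentially the same approach as the paper: both arguments reduce the identity $B_v=v^{-1}Bv\cap B$ to the observation that $v^{-1}(\Phi^+)\cap\Phi^-_P=\vuoto$ (equivalently $v(\Phi^+_P)\subset\Phi^+$), and then use the root-set decomposition $\Phi^+\senza\Phi^+(v)=\Phi^+_P\sqcup(\Psi\senza\Phi^+(v))$ to obtain the semidirect product. Your version is more detailed than the paper's sketch, in particular your explicit verification that $B_L$ normalizes $U_v$; note that this step could also be shortened by observing $U_v=B_v\cap P^\mru$, which is automatically normalized by $B_L\subset B_v$ since $P^\mru\trianglelefteq P$.
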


\begin{proof}
Notice that $B_v$ is the product of $v^{-1} B v \cap B$ with the root subgroups $U_\gra$ with $\gra \in \Phi^-_P \cap v^{-1}(\Phi^+)$, whereas $v^{-1} B v \cap B$ is the product of $T$ with the root subgroups $U_\gra$ with $\gra \in \Phi^+ \senza \Phi^+(v)$. Thus the claims follow because $v(\Phi^+_P) \subset \Phi^+$.
\end{proof}

We read now the action of $B_v$ on $P/L$, and more generally that of $P$ on $P/L$, as an action of $P$ on $\gop^\mru$. Let $\exp : \gop^\mru \ra P^\mru$ be the exponential map, as defined by Seitz  \cite[Proposition 5.3]{Se}. Then $\exp$ is a $P$-equivariant isomorphism, and setting $r_P(e)=\exp(e)L$ we get a $L$-equivariant isomorphism $r_P:\gop^\mru \ra P/L$. Notice that $r_P$ is not equivariant with respect to the action of $P$ on $P/L$ by left multiplication, if we consider the adjoint action of $P$ on $\gop^\mru$. In order to describe the action of $P$ on $\gop^\mru$ obtained from the isomorphism $r_P$, consider the following description of $P$:
$$ \begin{array}{ccc}
L \ltimes \gop^\mru & \stackrel{\sim}{\lra} & P \\
 (g,y) & \longmapsto & g \exp(y)
\end{array} $$
In particular, with this identification we have $B_v=B_L\ltimes \gou_v$. 
Using such description of $P$, we see that the action of $P$ on $\gop^\mru$ which makes $r_P$ into a $P$-equivariant isomorphism is given by
\begin{equation}	\label{eq:P-action}
	(g,y).x = \Ad_g(x+y),
\end{equation}
for all $(g,y) \in L\ltimes \gop^\mru$ and $x \in \gop^\mru$. 
We summarize the discussion in the following Lemma.

\begin{lem}\label{lem:trasferimento}
Let $v\in W^P$, then the map $B_v e\mapsto Bv\exp(e)L$ is an order isomorphism between the $B_v$-orbits in $\gop^\mru$ and the $B$-orbits in 
$BvP/L$. Moreover, $$\dim Bv\exp(e)L/L=\ell(v)+\dim B_v e.$$
\end{lem}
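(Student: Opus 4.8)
The plan is to assemble the statement from the discussion preceding it, which has already set up all the essential ingredients. First I would record the chain of order isomorphisms. By the discussion at the start of the section, pullback along $\pi : G/L \to G/P$ gives $\pi^{-1}(BvP/P) \isocan B \times^{B^v} vP/L$, hence a bijection (compatible with the Bruhat order, since it is induced by an open immersion onto a $B$-stable subset) between the $B$-orbits in $BvP/L$ and the $B^v$-orbits in $vP/L$; conjugating by $v$ turns this into a bijection with the $B_v$-orbits in $P/L$, where $B_v = v^{-1}B^v v = v^{-1}Bv \cap P$. Next I would invoke the $L$-equivariant isomorphism $r_P : \gop^\mru \to P/L$, $e \mapsto \exp(e)L$, together with the identification $P \isocan L \ltimes \gop^\mru$ and the resulting $P$-action \eqref{eq:P-action} on $\gop^\mru$; under this transport $r_P$ becomes $P$-equivariant, so it restricts to a bijection between the $B_v$-orbits in $\gop^\mru$ (for the action \eqref{eq:P-action}) and the $B_v$-orbits in $P/L$. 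Composing, $B_v e \mapsto Bv\exp(e)L$ is the asserted bijection, and it is an order isomorphism because each constituent map is.

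The one point that needs a small argument is that the action of $B_v$ on $\gop^\mru$ obtained by transporting \eqref{eq:P-action} through $B_v = B_L \ltimes \gou_v$ (Lemma \ref{lem:B_w}, using the identification of $P$ above) is precisely $(g,y).x = \Ad_g(x+y)$ for $g \in B_L$, $y \in \gou_v$, $x \in \gop^\mru$; but this is just the restriction of \eqref{eq:P-action}, so nothing new is required. I would remark that, since $P^\mru$ acts trivially on $\gop^\mru$ under the adjoint action, the $B_v$-orbits in $\gop^\mru$ coincide with the $B_v$-orbits described in Panyushev's parametrization once one quotients out the translation part — but this is not needed for the statement and I would not belabor it.

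For the dimension formula I would argue as follows. The map $B \times^{B^v} vP/L \to BvP/L$ is an isomorphism of varieties, so $\dim BvP/L = \dim B/B^v + \dim vP/L = \ell(v) + \dim P/L$, and more generally a $B$-orbit $\calO \subset BvP/L$ corresponds to a $B^v$-orbit $\calO'$ in $vP/L$ with $\dim \calO = \dim B/B^v + \dim \calO' = \ell(v) + \dim \calO'$, since the fibers of $\calO \to BvP/P$ over the base $B/B^v \isocan BvP/P$ are exactly the translates of $\calO'$. Transporting through $v$ and through $r_P$ (both isomorphisms of varieties, hence dimension-preserving on orbits) identifies $\calO'$ with the $B_v$-orbit $B_v e$ in $\gop^\mru$, giving $\dim Bv\exp(e)L/L = \ell(v) + \dim B_v e$.

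The main obstacle — and it is a minor one — is making the Bruhat-order compatibility of the first bijection precise: one must check that inclusion of closures of $B$-orbits in $BvP/L$ corresponds, under $\pi^{-1}(BvP/P) \isocan B \times^{B^v} vP/L$, to inclusion of closures of $B^v$-orbits in $vP/L$. This follows from the fact that the associated-bundle construction $\calO' \mapsto B \times^{B^v} \overline{\calO'} = \overline{B \times^{B^v} \calO'}$ commutes with taking closures (the projection $B \times vP/L \to B \times^{B^v} vP/L$ is open and the closure of a $B^v$-saturated set is $B^v$-saturated), and that $BvP/P$ is locally closed in $G/P$ so that $\pi^{-1}(BvP/P)$ is locally closed in $G/L$ and closures computed there agree with closures in $BvP/L$. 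I would state this compatibility and refer to it as routine rather than spell it out in full.
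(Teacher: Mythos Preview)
Your proposal is correct and follows exactly the approach of the paper: the paper's proof simply states that the first claim follows from the preceding discussion and the dimension formula from $\dim BvP/P = \ell(v)$, and you have carefully unpacked that discussion into its constituent steps (the associated-bundle identification, conjugation by $v$, and transport through $r_P$). Your additional care about closure compatibility in the fibre-bundle picture is a legitimate detail that the paper leaves implicit.
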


\begin{proof}
The first claim follows from the discussion above, and the second one follows from the equality $\dim  BvP/P=\ell(v)$.
\end{proof}

In order to provide some geometric background to the mentioned parametrizations, we recall how to attach a weight 
lattice to any algebraic variety acted by a connected solvable algebraic group.

\subsection{Standard base points}
If $Z$ is an algebraic variety acted by a connected solvable algebraic group $K$, recall (see e.g. \cite{Kn}) that 
we can associate to $Z$ a sublattice of $\calX(K)$, called the \textit{weight lattice} of $Z$ and defined as follows:
$$\calX_K(Z) = \{\text{weights of rational $K$-eigenfunctions } f \in \mk(Z)  \}.$$

Let $T_K \subset K$ be a maximal torus, then $K = T_K K^\mru$ and restriction of 
characters gives an identification $\calX(K) = \calX(T_K)$. If $Z$ is a homogeneous 
$K$-variety, we say that $z_0 \in Z$ is a $T_K$-\textit{standard base point} if 
$\Stab_{T_K}(z_0) \subset \Stab_{K}(z_0)$ is a maximal diagonalizable subgroup. Since a diagonalizable subgroup
is always conjugated to a subgroup of a maximal torus (see \cite[Corollary 6.3.6]{Springer}), standard base points always exist.
 
We have the following easy lemma (see e.g. \cite[Lemma 1.1]{GP}).

\begin{lem} \label{lem:B-rank}
Let $Z$ be a homogeneous $K$-variety and let $z_0 \in Z$ be a $T_K$-standard base point, 
then $T_K z_0 \subset Z$ is a closed $T_K$-orbit, and 
$\calX_{T_K}(T_K z_0) \subset \calX_{T_K}(T_K z)$ for all $z \in Z$. 
If moreover $H = \Stab_K(z_0)$, then $H = (T_K \cap H) H^\mru$ and
$$ \calX_K(Z) = \calX(K)^H = \calX_{T_K}(T_K z_0), $$
where $\calX(K)^H$ denotes the sublattice of $\calX(K)$ of the characters which are trivial on $H$.
\end{lem}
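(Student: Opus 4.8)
The plan is to unwind the standard base point hypothesis and read off everything from the structure of $H = \Stab_K(z_0)$. First I would recall that $K$ is connected solvable, so $K = T_K \ltimes K^\mru$ with $K^\mru$ the unipotent radical, and $H$ is a closed subgroup of $K$. The hypothesis that $z_0$ is a $T_K$-standard base point says precisely that $T_K \cap H$ is a maximal diagonalizable subgroup of $H$. I would invoke the structure theory of linear algebraic groups (e.g. \cite[Theorem 6.3.5 and Corollary 6.3.6]{Springer}): any algebraic group has a maximal torus, all its maximal diagonalizable subgroups are maximal tori up to the component group, and a subgroup of $K$ generated by a maximal torus and $H^\mru$ — in a solvable group every subgroup is again solvable, hence of the form (diagonalizable part)$\ltimes$(unipotent part). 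Applying this to $H$: its unipotent radical is $H^\mru = H \cap K^\mru$, and a Levi decomposition gives $H = D \ltimes H^\mru$ for some maximal diagonalizable subgroup $D$. Up to conjugating $z_0$ by an element of $K^\mru$ — which does not change whether $z_0$ is standard, since $T_K\cap H$ stays maximal diagonalizable — we may assume $D \subset T_K$, and then maximality forces $D = T_K \cap H$. This proves $H = (T_K\cap H)\,H^\mru$.

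Next I would establish the two equalities. For $\calX_K(Z) = \calX(K)^H$: since $Z$ is homogeneous, $Z \cong K/H$, and rational $K$-eigenfunctions on $K/H$ correspond bijectively to characters of $K$ that are trivial on $H$ (a $K$-eigenfunction $f$ with weight $\chi$ is determined up to scalar by $f(gH) = \chi(g) f(eH)$, which is well-defined iff $\chi|_H = 1$, and conversely every such $\chi$ yields a rational eigenfunction). Hence $\calX_K(Z) = \calX(K)^H$. For the identification with $\calX_{T_K}(T_K z_0)$: restriction of characters from $K$ to $T_K$ is an isomorphism $\calX(K)\xrightarrow{\sim}\calX(T_K)$ since $K = T_K K^\mru$ and characters kill the unipotent radical; under this isomorphism, $\calX(K)^H$ maps to the characters of $T_K$ trivial on $T_K\cap H$, because $H = (T_K\cap H) H^\mru$ and characters are trivial on $H^\mru$ automatically. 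But $T_K z_0 \cong T_K/(T_K\cap H)$, and the weight lattice of a torus orbit is exactly the characters trivial on the stabilizer; so $\calX_{T_K}(T_K z_0) = \calX(T_K)^{T_K\cap H}$, matching the image of $\calX(K)^H$.

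For the remaining assertions: $T_K z_0$ is a single $T_K$-orbit in $Z$, and it is closed because the orbit of a diagonalizable group acting on a variety, when the stabilizer is as large as possible among diagonalizable subgroups, is closed — concretely, $T_K z_0 \cong T_K/(T_K\cap H)$ is an affine torus quotient, hence a single closed orbit under the torus action on its own orbit closure; more carefully one argues that any torus orbit of minimal dimension in $\overline{T_K z_0}$ is closed, and minimality here follows from $T_K\cap H$ being maximal diagonalizable in $H$, which prevents the orbit from degenerating further. Finally, for $z \in Z$ arbitrary, write $z = k z_0$ with $k\in K$; then $\Stab_K(z) = k H k^{-1}$, so $T_K z \cong T_K/(T_K \cap kHk^{-1})$ and $\calX_{T_K}(T_K z) = \calX(T_K)^{T_K\cap kHk^{-1}}$. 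The inclusion $\calX_{T_K}(T_K z_0)\subset\calX_{T_K}(T_K z)$ then amounts to: a character of $T_K$ trivial on $T_K\cap H$ is also trivial on $T_K\cap kHk^{-1}$; equivalently $T_K\cap kHk^{-1}$, being diagonalizable inside the conjugate $kHk^{-1}$ and of dimension at most that of a maximal diagonalizable subgroup $T_K\cap H$ of $H$, is contained (as a set of points relevant to characters, i.e. up to the subtorus it generates) in the span of $T_K\cap H$. I expect this last monotonicity point to be the only genuinely delicate step: one must check that the identity component of $T_K\cap kHk^{-1}$ lies in $T_K\cap H$ (up to the subtorus generated), which follows because $T_K\cap H$ is the maximal subtorus of $T_K$ contained in $H$ and conjugation by $k\in K = T_K K^\mru$ moves $H$ only by a unipotent element, so the maximal subtorus of $T_K$ inside $kHk^{-1}$ coincides with that inside $H$. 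The rest is routine bookkeeping with characters and torus quotients, so I would keep it brief and cite \cite{Kn} and \cite{GP} for the general framework.
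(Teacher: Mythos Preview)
The paper does not actually prove this lemma: it simply states it as ``easy'' and refers to \cite[Lemma 1.1]{GP}. Your proof plan is correct and fills in what the paper omits, so in that sense there is nothing to compare against.

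One remark on your Step 5 (the inclusion $\calX_{T_K}(T_K z_0) \subset \calX_{T_K}(T_K z)$): you flag this as ``the only genuinely delicate step'' and attack it by analyzing how conjugation by $k \in T_K K^\mru$ affects $T_K \cap H$. That works, and in fact your argument can be sharpened to show the honest group-theoretic inclusion $T_K \cap kHk^{-1} \subset T_K \cap H$ (not just on identity components): writing $k = tu$ with $t \in T_K$, $u \in K^\mru$, one reduces to $k = u$, and then the projection $K \to K/K^\mru \cong T_K$ sends both $H$ and $uHu^{-1}$ to the same image $T_K \cap H$, forcing any element of $T_K \cap uHu^{-1}$ to lie in $T_K \cap H$. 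But there is a shorter route that avoids this computation entirely: once you have established $\calX_K(Z) = \calX_{T_K}(T_K z_0)$, the inclusion $\calX_{T_K}(T_K z_0) \subset \calX_{T_K}(T_K z)$ is immediate, since any rational $K$-eigenfunction on $Z$ restricts to a rational $T_K$-eigenfunction on $T_K z$ of the same weight, giving $\calX_K(Z) \subset \calX_{T_K}(T_K z)$ for every $z$. This also makes the closedness of $T_K z_0$ cleaner: a smaller-dimensional $T_K$-orbit in the boundary would have strictly larger $T_K$-stabilizer, contradicting the maximality of $T_K \cap H$ among diagonalizable subgroups of (a conjugate of) $H$.
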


In the notation of the previous lemma, notice that a $T_K$-standard base point $z_0 \in Z$ is characterized 
by the equality $\calX_{T_K}(T_K z_0) = \calX_K(Z)$. 
Indeed, for all $z\in Z$ the restriction gives an inclusion $ \calX_K(Z) \subset \calX_{T_K}(T_K z)$, 
therefore the $T_K$-standard base points correspond to those points of $Z$ whose $T_K$-orbit has minimal weight lattice. 

The weight lattice is easily computed when $K = T_K$ is a torus acting rationally on a vector space $V$ and $Z$ is a $T_K$-orbit in $V$. Let $V = \bigoplus_{\chi \in \calX(T_K)} V_\chi$ be the isotypic decomposition of $V$ as a $T_K$-module. For $e \in V$, write $e = \sum_{\chi \in \calX(T_K)} e_\chi$ and denote
$$\supp(e) = \{\chi \in \calX(T_K) \st e_\chi \neq 0\}.$$

\begin{lem} \label{lem:lattice-T-orbit}
Let $V$ be a rational $T_K$-module, and let $e \in V$. Then $\calX_{T_K}(T_Ke) = \mZ \supp(e)$.
\end{lem}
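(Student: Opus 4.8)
The plan is to prove both inclusions $\mZ\supp(e)\subseteq \calX_{T_K}(T_Ke)$ and $\calX_{T_K}(T_Ke)\subseteq \mZ\supp(e)$ directly, using the very concrete description of functions on a torus orbit. First I would fix notation: write $\supp(e)=\{\chi_1,\dots,\chi_k\}$ with $e_{\chi_i}\neq 0$, so that for $t\in T_K$ we have $t\cdot e=\sum_i \chi_i(t)\,e_{\chi_i}$. The orbit map $T_K\to T_Ke$ then factors through the quotient torus $T_K/\Stab_{T_K}(e)$, and the point is that $\Stab_{T_K}(e)=\bigcap_i \ker\chi_i$, because $t$ fixes $e$ if and only if $\chi_i(t)=1$ for every $i$ in the support (the $e_{\chi_i}$ are linearly independent, lying in distinct isotypic components). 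Consequently $\calX_{T_K}(T_Ke)=\calX(T_K)^{\Stab_{T_K}(e)}$ by Lemma \ref{lem:B-rank} (applied with $K=T_K$, so $H=\Stab_{T_K}(e)$ and $H^\mru$ trivial), and this equals the set of characters trivial on $\bigcap_i\ker\chi_i$.

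The remaining task is the purely lattice-theoretic identity $\calX(T_K)^{\bigcap_i\ker\chi_i}=\mZ\{\chi_1,\dots,\chi_k\}$. The inclusion $\supseteq$ is immediate: each $\chi_i$ is trivial on $\bigcap_j\ker\chi_j\subseteq\ker\chi_i$, hence so is every integer combination. For $\subseteq$ one uses the standard duality between subgroups of a torus and saturated sublattices of its character group: writing $M=\calX(T_K)$, the subgroup $\bigcap_i\ker\chi_i$ is the subgroup annihilated by the sublattice $N=\mZ\{\chi_1,\dots,\chi_k\}$, and a character is trivial on it precisely when it lies in the saturation $N_{\mathrm{sat}}=(N\otimes\mQ)\cap M$. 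So I must show $N$ is already saturated, i.e. $M/N$ is torsion-free. Equivalently, I would argue that the weight lattice of a torus orbit is automatically saturated: if $m\chi\in\calX_{T_K}(T_Ke)$ for some $m\geq 1$, then the eigenfunction realizing $m\chi$ is a ratio of monomials in the coordinates $e_{\chi_i}$, and extracting its support shows $m\chi$ is an integer combination of the $\chi_i$; since the $\chi_i$ generate a finitely generated abelian group inside the lattice $M$ and the orbit is the image of a torus (whose character group is free), one sees directly that $\calX_{T_K}(T_Ke)$ is the full character group of the image torus and hence free, forcing $N=N_{\mathrm{sat}}$.

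Concretely, the cleanest route avoids invoking saturation abstractly: the orbit $T_Ke$ is $T_K$-equivariantly isomorphic to the quotient torus $T_K/\Stab_{T_K}(e)$, and the weight lattice of a homogeneous space under a torus is exactly the character lattice of that quotient torus (every eigenfunction is a genuine character of $T_K/\Stab_{T_K}(e)$, because on a single orbit an eigenfunction is determined up to scalar by its weight and is nowhere vanishing). The character lattice of $T_K/\Stab_{T_K}(e)$ is the subgroup of $M$ of characters killing $\Stab_{T_K}(e)=\bigcap_i\ker\chi_i$, and dualizing the exact sequence $1\to \Stab_{T_K}(e)\to T_K\to T_Ke\to 1$ identifies this with the image of $M$ in $\calX(\Stab_{T_K}(e))^{\perp}$, which is generated precisely by the classes of $\chi_1,\dots,\chi_k$. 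This gives $\calX_{T_K}(T_Ke)=\mZ\supp(e)$.

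I expect the main (very mild) obstacle to be the bookkeeping in showing $\Stab_{T_K}(e)=\bigcap_{\chi\in\supp(e)}\ker\chi$ and that every eigenfunction on the orbit is a nowhere-vanishing character of the quotient torus; both are routine once one observes that the $e_\chi$ for $\chi\in\supp(e)$ are linearly independent (distinct isotypic components) and that a torus orbit is closed in no essential way needed here — one only needs it to be a torsor under $T_K/\Stab_{T_K}(e)$. No genuine difficulty arises; the statement is essentially the assertion that the weight monoid of a torus orbit is the group generated by the weights actually occurring in the chosen point, which is forced by the explicit monomial description of the coordinate ring.
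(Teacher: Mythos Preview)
Your approach via the stabilizer is different from the paper's and would work, but there is a genuine confusion in the middle that you should correct. You write that a character is trivial on $\bigcap_i\ker\chi_i$ precisely when it lies in the saturation $N_{\mathrm{sat}}$ of $N=\mZ\{\chi_1,\dots,\chi_k\}$, and then spend effort trying to show $N$ is saturated. This is wrong on both counts: the characters trivial on $\bigcap_i\ker\chi_i$ are \emph{exactly} $N$, with no saturation needed, and $N$ need not be saturated at all (take $T_K=\mathbb{G}_m$, $V$ a line of weight $2$: then $N=2\mZ$ is not saturated in $\mZ$, yet the lemma holds). The correct statement is the anti-equivalence between diagonalizable groups and finitely generated abelian groups: closed (not necessarily connected) subgroups of $T_K$ are in bijection with arbitrary subgroups of $\calX(T_K)$, via $N\mapsto\bigcap_{\chi\in N}\ker\chi$ and $D\mapsto\calX(T_K)^D$, and these are mutually inverse. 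Your ``saturation'' version is the bijection restricted to subtori, but $\Stab_{T_K}(e)$ is typically disconnected. Once you invoke the right duality, your argument is one line; the attempted fixes (``$N$ is free hence saturated'', etc.) are both unnecessary and incorrect as written.

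The paper takes a different and more concrete route that avoids stabilizers and duality entirely. It passes to the closure $Z=\ol{T_K e}$, observes that by complete reducibility every regular $T_K$-eigenfunction on $Z$ extends to one on $V$, and that the regular eigenfunctions on $V$ are monomials in the linear coordinates dual to the $e_\chi$, hence have weights in the monoid generated by $-\supp(e)$. Since no coordinate vanishes at $e$, restriction to $Z$ loses nothing, so the weight monoid of $Z$ is that same monoid, and the weight lattice (its group completion) is $\mZ\supp(e)$. This has the advantage of being entirely explicit and not requiring any structure theory of diagonalizable groups; your approach, once repaired, has the advantage of making transparent that $\calX_{T_K}(T_Ke)$ is literally the character lattice of the quotient torus $T_K/\Stab_{T_K}(e)$.
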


\begin{proof}
Denote $Z = \ol{T_K e}$. Up to replace $V$ with a smaller submodule we may assume that $V = \bigoplus_{\chi \in \supp(e)} V_\chi$ and $V_\chi = \mk e_\chi$ for all $\chi \in \supp(e)$. Then $\mk[V]$ is generated by 
linear coordinates which are $T_K$-semiinvariant of weight $-\chi$, with $\chi\in \supp(e)$. Let $\grG_V$ (resp. $\grG_Z$) be the submonoid 
of $\calX(T_K)$ whose elements are the weights of the regular $T_K$-eigenfunctions on $V$ (resp. on $Z$). Since the weights of the coordinates of $V$ 
are precisely the elements of $-\supp(e)$, it follows that $\grG_V$ is generated as a monoid by $-\supp(e)$. By complete reducibility, 
every $T_K$-eigenfunction on $Z$ extends to a $T_K$-eigenfunction on $V$. Since no coordinate of $V$ vanishes on $e$, it follows that $\grG_Z = \grG_V$.
Thus the claim follows because $\calX_{T_K}(T_K e)$ is generated as a lattice by $\grG_Z$.
\end{proof}

\subsection{$B_v$-orbits in $\gop^\mru$ and $B$-orbits in $G/L$.}

We now enter into the parametrization of the $B_v$-orbits in $\gop^\mru$, 
and of the $B$-orbits in $G/L$. 
First we compute the standard base points for the action of $B_v$ on $\gop^\mru$
Recall that, if $S\subset \Psi$ is an orthogonal subset, 
we denoted $e_S=\sum_{\gra\in S}e_\gra$ and $e_\vuoto=0$. 

\begin{prp} \label{prop:punti-standard}
Let $(v,S), (v,R)$ be  admissible pairs, then the following hold.
\begin{enumerate}[\indent i)]
 \item The base point $e_S \in B_v e_S$ is $T$-standard, and $\calX_{B_v}(B_v e_S) = \mZ S$.
 \item The base point $vx_S \in Bvx_S$ is $T$-standard, and $\calX_{B}(B v x_S) = v(\mZ S)$.
 \item If $B_ve_R=B_ve_S$, then $R=S$.
\end{enumerate}
\end{prp}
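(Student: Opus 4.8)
The plan is to exploit Lemma \ref{lem:B-rank} and Lemma \ref{lem:lattice-T-orbit}: since $B_v = B_L \ltimes U_v$ is connected solvable with maximal torus $T$, a point is $T$-standard exactly when the weight lattice of its $T$-orbit equals the weight lattice of the whole $B_v$-orbit. So for i) I would first compute the $T$-orbit weight lattice of $e_S$: since $B_v$ acts on $\gop^\mru$ via the twisted action \eqref{eq:P-action}, the torus $T$ acts linearly with $T\cdot e_S = \sum_{\gra\in S} T\cdot e_\gra$, and $\supp(e_S) = S$ since the roots in $S$ are distinct; hence $\calX_T(T e_S) = \mZ S$ by Lemma \ref{lem:lattice-T-orbit}. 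It then remains to show $\calX_{B_v}(B_v e_S) \subset \mZ S$, or equivalently (by Lemma \ref{lem:B-rank}) that every character of $B_v$ trivial on $\Stab_{B_v}(e_S)$ lies in $\mZ S$; I expect this to follow by computing the stabilizer explicitly. Writing $b = t\,u \in B_L \ltimes U_v$, the condition $b\cdot e_S = e_S$ under \eqref{eq:P-action} reads $\Ad_t(e_S + \log u) = e_S$; projecting onto the weight spaces $\gou_\gra$ for $\gra\in S$ forces $\gra(t) = 1$ for every $\gra\in S$ (the $U_v$-part contributes nothing in those weight spaces since $\Psi\setminus\Phi^+(v)$ is disjoint from nothing relevant — more precisely one checks $S\subset\Phi^+(v)$ so the $\log u$ component has support away from $S$, though one must be careful about higher terms; cf. Lemma \ref{lem:panyushev}). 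Thus the character group of $\Stab_T(e_S)$ is exactly $\calX(T)/(\mZ S)^\perp$-dual, giving $\calX(T)^{\Stab} = \mZ S$ as needed, and incidentally showing $\Stab_T(e_S)$ is a maximal diagonalizable subgroup of the full stabilizer.

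For ii) I would transfer the computation through the isomorphism of Lemma \ref{lem:trasferimento} (or directly via $r_P$): the point $vx_S = v\exp(e_S)L \in BvP/L$ corresponds to $e_S \in \gop^\mru$ under the order isomorphism, and more concretely the map $b\,e \mapsto b\,v\exp(e)L$ intertwines the $B_v$-action on $\gop^\mru$ with the $B$-action on $BvP/L$ after conjugating by $v$. Hence $\Stab_B(vx_S) = v\,\Stab_{B_v}(e_S)\,v^{-1}$, so that $\calX_B(Bvx_S) = v\bigl(\calX_{B_v}(B_ve_S)\bigr) = v(\mZ S)$, and the base point is $T$-standard because conjugation by $v$ carries a maximal diagonalizable subgroup of $\Stab_{B_v}(e_S)$ to one of $\Stab_B(vx_S)$ (here one uses $v T v^{-1} = T$). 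One small point to verify is that $\exp(e_S)L$ indeed has the claimed stabilizer in $B_v$ and not something larger coming from $L$; this is where admissibility $v(S)\subset\Phi^-$, hence $S\subset\Phi^+(v)$, is used to ensure $e_S\in\gou_v^{\perp}$ interacts correctly, and one invokes that the action of $L$ on $P/L$ only sees the $L$-part.

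Finally iii) is an immediate consequence of i): if $B_v e_R = B_v e_S$ then the two orbits have the same weight lattice, so $\mZ R = \mZ S$ as sublattices of $\grL$; since $R, S \subset \Psi$ and $\Psi$ consists of roots with $[\gra:\gra_P]=1$ by \eqref{eqn:Psi}, Proposition \ref{prp:PhiS} (point i) in the simply laced case, point ii.e) otherwise) gives $\mZ R \cap \Phi^+ = R$ and $\mZ S\cap\Phi^+ = S$, whence $R = (\mZ R)\cap\Phi^+ = (\mZ S)\cap\Phi^+ = S$.

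The main obstacle I anticipate is the explicit stabilizer computation in i): one has to control the twisted action \eqref{eq:P-action}, in particular the quadratic (characteristic-$2$-sensitive, but here $\car\mk\neq 2$) terms arising from $\Ad_t$ applied to $e_S+\log u$ with $u\in U_v$, and argue that no extra characters of $B_v$ become trivial on the stabilizer beyond those in $\mZ S$. The combinatorial input that makes this work is precisely Lemma \ref{lem:panyushev} together with the fact that $\gop^\mru$ is abelian, which forbids the problematic root-string interactions between $S$ and $\Psi\setminus\Phi^+(v)$.
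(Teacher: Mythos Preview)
Your argument for i) has a genuine gap. You write $b = t\,u \in B_L \ltimes U_v$ and claim that projecting the equation $\Ad_t(e_S + \log u) = e_S$ onto $\gou_\gra$ for $\gra \in S$ yields $\gra(t) = 1$. But $t \in B_L$ is not in general a torus element: writing $t = t_0\,u_L$ with $t_0 \in T$ and $u_L \in U \cap L$, the operator $\Ad_{u_L}$ can move a root vector $e_\grb$ (with $\grb \in S$) into $\gou_\gra$ for another $\gra \in S$ whenever $\gra - \grb$ is a sum of roots in $\Phi_P^+$. This does happen: already in type $\sfA_3$ with $\gra_P = \gra_2$ and $S = \{\gra_2,\, \gra_1+\gra_2+\gra_3\}$, the element $\Ad_{u_L}(e_{\gra_2})$ contributes to $\gou_{\gra_1+\gra_2+\gra_3}$. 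Your appeal to Lemma~\ref{lem:panyushev} only rules out single-step interactions $\grb \pm \grb'$, not longer root chains through $\Phi_P^+$. (The argument you give for why $\log u$ itself does not contribute is correct, using $v(\Phi_P^+) \subset \Phi^+$; the problem is the $B_L$-unipotent part.) A secondary exposition issue: the inclusion $\calX_{B_v}(B_v e_S) \subset \mZ S$ is the automatic one (restriction of eigenfunctions to the $T$-orbit), so what actually remains is the reverse inclusion $\mZ S \subset \calX_{B_v}(B_v e_S)$; your computation aims at this, but you state it backwards.

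The paper avoids the stabilizer computation entirely. It takes an abstract $T$-standard base point $e_0 \in B_v e_S$ with support $S_0$, observes via Lemma~\ref{lem:B-rank} and Proposition~\ref{prp:PhiS}~ii.e) that $S_0 \subset S$ (after replacing $e_0$ by $e_{S_0}$), and then compares the dimensions of the tangent-like spaces $\mathrm{Lie}(B_v)\!\cdot\! e_S = [\got,e_S] + [\gou,e_S] + \gou_v$ and $\mathrm{Lie}(B_v)\!\cdot\! e_{S_0}$: since $e_S$ and $e_{S_0}$ lie in the same $B_v$-orbit these must have equal dimension, but a direct check using orthogonality of $S$ and Lemma~\ref{lem:panyushev} shows the difference is at least $\card(S \smallsetminus S_0)$, forcing $S_0 = S$. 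This sidesteps precisely the root-chain issue that trips up the direct approach. Your treatments of ii) and iii) are essentially the paper's.
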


\begin{proof}
i) Let $e_0 \in B_v e_S$ be a $T$-standard base point and denote $S_0 = \supp(e_0)$. 
Then by Lemmas \ref{lem:B-rank} and \ref{lem:lattice-T-orbit} we have $\calX_{B_v}(B_v e_S) = \calX_T(Te_0) = \mZ S_0$. 
Applying again Lemma \ref{lem:B-rank} it follows that $\mZ S_0 \subset \calX_T(Te_S) = \mZ S$, 
therefore by Proposition \ref{prp:PhiS} ii.e) we get the inclusion $S_0 \subset S$. 
Since $S_0$ is orthogonal, up to replace $e_0$ with some element in the same $T$-orbit we may assume that $e_0 = e_{S_0}$.

Given $e \in \gop^\mru$, consider the space
$$
	\mathrm{Lie}(B_v).e = [\got,e] + [\gou,e] + \gou_v,
$$
and notice that $\mathrm{Lie}(B_v).e$ and $\mathrm{Lie}(B_v).e'$ are conjugated if $e' \in B_v e$. Thus the dimension of $\mathrm{Lie}(B_v).e$ only depends on the orbit $B_v e$.

Since $S$ is orthogonal, by Lemma \ref{lem:panyushev} ii) we have
\begin{align*}
	[\got, e_{S_0}] 	= \bigoplus_{\gra \in S_0} \gou_\gra \subset & \bigoplus_{\gra \in S} \gou_\gra = [\got, e_S], \\
	[\gou,e_{S_0}] = \bigoplus_{\gra \in \Psi \cap (S_0+\Phi^+)} \gou_\gra \subset & \bigoplus_{\gra \in \Psi \cap (S +\Phi^+)} \gou_\gra = [\gou,e_S].
\end{align*}
On the other hand, by assumption $S$ is contained in $\Phi^+(v)$ and by Lemma \ref{lem:panyushev} i) we have 
$S\cap (S+\Phi^+) =\vuoto$, hence  $[\got, e_S] \cap ([\gou, e_S] + \gou_v) = 0$. 
Therefore, comparing the dimensions of $\mathrm{Lie}(B_v). e_S$ and $\mathrm{Lie}(B_v). e_0$, we get
$$ \card(S \smallsetminus S_0) + \dim ([\gou,e_S] + \gou_v) - \dim ([\gou,e_{S_0}] + \gou_v) = 0, $$
which yields $S_0 = S$.

ii) It is enough to show that $x_S$ is a $T$-standard base point in $(v^{-1}Bv)x_S$, regarded as a homogeneous variety for $v^{-1}Bv$. Notice that $\Stab_G(x_S) \subset P$ because $x_S \in P/L \subset G/L$. Since by definition $B_v = v^{-1}Bv \cap P$, it follows that
$$
	\Stab_{v^{-1}Bv}(x_S) = v^{-1}Bv \cap \Stab_P(x_S) = \Stab_{B_v}(x_S).
$$
We proved in i) that $e_S$ is a $T$-standard base point for $B_v e_S$. On the other hand by construction $\gop^\mru \ra P/L$ is a $B_v$-equivariant isomorphism, therefore $\Stab_T(x_S)$ is a maximal diagonalizable subgroup in $\Stab_{B_v}(x_S)$, thus the claim follows from the previous equality.

iii) Let $R,S$ be orthogonal subsets of $\Phi^+(v)$ such that $B_v e_R = B_v e_S$. Then by Proposition 
\ref{prop:punti-standard} i) it follows that $\mZ R=\mZ S$, hence $R=S$ by Proposition \ref{prp:PhiS} ii.e).
\end{proof}

In order to parametrize the $B_v$-orbits in $\gop^\mru$ we will proceed by induction on $\ell(v)$. 
The following lemma will be the key point of the inductive step. 

\begin{lem}	\label{lem:decomposizione-passo}
Let $(v,S)$ be an admissible pair, let $\gra \in \grD$ be such that $s_\gra v < v$ and denote $\grb = -v^{-1}(\gra) \in \Psi$. Then
$$
	B_{s_\gra v} e_S =
\left\{	\begin{array}{cc}
B_v e_S \sqcup B_v e_{S \cup \{\grb\}} & \text{if $S \cup \{\grb\}$ is orthogonal} \\ 
		B_v e_S & \text{otherwise}
		\end{array} \right.
$$
\end{lem}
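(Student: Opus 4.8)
The plan is to analyze the decomposition of $B_{s_\gra v}$-orbits on $\gop^\mru$ in terms of $B_v$-orbits, using the relation $B_{s_\gra v} \supset B_v$ together with the one-parameter subgroup $U_{-\grb}$ that one gains when passing from $v$ to $s_\gra v$. By Lemma \ref{lem:B_w}, we have $B_v = B_L \ltimes U_v$ with $\gou_v = \bigoplus_{\gra' \in \Psi \senza \Phi^+(v)} \gou_{\gra'}$, and since $\Phi^+(s_\gra v) = \Phi^+(v) \sqcup \{\grb\}$ (as noted before Lemma \ref{lem:dominanza}, using $\grb = -v^{-1}(\gra) \in \Psi$ maximal in $\Phi^+(v)$ by Proposition \ref{prp:GP2}), it follows that $\gou_{s_\gra v} = \gou_v \ominus \gou_{-\grb}$... more precisely $\Psi \senza \Phi^+(s_\gra v) = (\Psi \senza \Phi^+(v)) \senza \{\grb\}$, so $B_v = B_{s_\gra v} \cdot U_\grb$ in the appropriate sense. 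Actually the cleanest statement is: $B_{s_\gra v}$ is generated by $B_v$ together with $U_\grb$, where $\grb \in \Phi^+(v) \subset \Psi$. Since $\gop^\mru$ is abelian, $U_\grb$ acts on $\gop^\mru$ via the adjoint action, and $u_\grb(t) \cdot e_{\gra'} = e_{\gra'} + (\text{terms } e_{\gra'+\grb})$ when $\gra'+\grb \in \Phi$ — but for $\gra' \in \Psi$ we have $\gra'+\grb \notin \Phi$ since $\gop^\mru$ is abelian, so $U_\grb$ fixes $\gop^\mru$ pointwise under the adjoint action. Hence the extra generator acts trivially and $B_{s_\gra v} e_S = B_v e_S$? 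That cannot be right, so I must instead use the description \eqref{eq:P-action} of the twisted action, where $(g,y).x = \Ad_g(x+y)$: the point is that $U_\grb \subset P^\mru$ contributes a \emph{translation} $e_S \mapsto e_S + t\, e_\grb$, not an adjoint action.

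**The two cases.** So the real computation is: $B_{s_\gra v} e_S = B_v \cdot (U_\grb\, e_S) = B_v \{ e_S + t\, e_\grb : t \in \mk \}$. If $S \cup \{\grb\}$ is orthogonal, then $e_S + t\,e_\grb$ lies in the $B_v$-orbit of $e_{S \cup \{\grb\}}$ for $t \neq 0$ (rescale the coordinate by a torus element of $B_L$; note $\grb \in \Phi^+(v)$ and the relevant weights are linearly independent) and equals $e_S$ for $t = 0$; this gives $B_{s_\gra v}e_S = B_v e_S \sqcup B_v e_{S \cup \{\grb\}}$, the disjointness following from Proposition \ref{prop:punti-standard} i) since $\mZ S \neq \mZ(S \cup \{\grb\})$ again by Proposition \ref{prp:PhiS} ii.e). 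If $S \cup \{\grb\}$ is \emph{not} orthogonal, then there is $\gra' \in S$ with $\langle \grb, \gra'^\vee\rangle \neq 0$; I would then exhibit a one-parameter subgroup inside $B_v$ — a root subgroup $U_\grd$ with $\grd \in \Psi \senza \Phi^+(v)$, or a combination involving $s_{\gra'}$-type elements and the torus — whose action on $e_S$ produces $e_S + t\, e_\grb + (\text{higher order in } t)$, absorbing the translation direction back into the $B_v$-orbit of $e_S$, so that $B_{s_\gra v}e_S = B_v e_S$. Concretely: since $\grb, \gra' \in \Psi$ are non-orthogonal, $\grb - \gra'$ or $\grb + \gra'$ is a root; by Lemma \ref{lem:panyushev} i) (orthogonality forces $\grb \pm \gra' \notin \Phi$ only when both in $\Psi$ — here non-orthogonality is the hypothesis) one of $\pm(\grb - \gra') \in \Phi_P$, and conjugating $e_{\gra'}$ by the corresponding root subgroup moves $e_{\gra'}$ toward $e_\grb$; such a root subgroup is in $B_v$ provided the relevant positive root of $\gop^\mru$ stays outside $\Phi^+(v)$, which I would check using maximality of $\grb$ in $\Phi^+(v)$ (Proposition \ref{prp:GP2} i)).

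**Main obstacle.** The delicate point is the non-orthogonal case: I need to verify that the root subgroup of $B_v$ used to absorb the translation $t\, e_\grb$ indeed lies in $B_v$ and not merely in $B_{s_\gra v}$, i.e. that the auxiliary root $\grg \in \Phi^+_P$ with $\grg + \gra' = \grb$ (say) has $\gra'$-image... more precisely that the conjugation $u_\grg(t) \cdot e_{\gra'}$ has the form $e_{\gra'} + c\,t\,e_\grb + \dots$, so that combining with a torus rescaling recovers $e_S + (\text{anything}) e_\grb$ inside $B_v e_S$ — I must also ensure no other coordinate of $e_S$ gets disturbed, i.e. that $\grg + \gra'' \notin \Phi$ for $\gra'' \in S \senza \{\gra'\}$, which should follow from Lemma \ref{lem:panyushev} ii) applied to the orthogonal pair $\gra', \gra'' \in S \subset \Psi$. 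I would organize this as: first establish the orthogonal case fully (the easy half), then for the non-orthogonal case produce the explicit element of $B_v$, invoking Lemma \ref{lem:panyushev} and Proposition \ref{prp:GP2} to control all the side conditions, and finally conclude equality of orbits by comparing dimensions via Lemma \ref{lem:trasferimento} or by directly checking $e_\grb + e_S \in \ol{B_v e_S} \cap B_{s_\gra v}e_S = B_v e_S$ using that $B_v e_S$ is the unique open $B_v$-orbit it could map to. I expect the bookkeeping of which root subgroups lie in $B_v$ versus $B_{s_\gra v}$, together with the $\car \mk$-issue ensuring the constants $c_i$ are nonzero, to be where the actual work is.
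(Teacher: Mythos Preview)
Your approach is correct and coincides with the paper's: reduce to $B_{s_\gra v} e_S = B_v(e_S + \mk\, e_\grb)$ via the translation action of $U_\grb \subset P^\mru$, handle the orthogonal case by a torus rescaling, and in the non-orthogonal case act by $U_\grd \subset B_L$ with $\grd = \grb - \grg \in \Phi^+_P$ (your auxiliary root), using Lemma~\ref{lem:panyushev}~ii) to keep the other coordinates of $e_S$ fixed and the maximality of $\grb$ in $\Phi^+(v)$ to ensure $\grd > 0$. The one point to sharpen is that the higher-order terms $e_{\grb + j\grd}$ ($j \geq 1$) produced by $u_\grd(t)\cdot e_\grg$ are absorbed not by the torus but by the translation part $\gou_v$ of $B_v$, since $\grb + j\grd > \grb$ forces $\grb + j\grd \in \Psi \setminus \Phi^+(v)$; this yields $e_S + \mk\, e_\grb \subset U_\grd\, e_S + \gou_v \subset B_v e_S$ directly.
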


\begin{proof}
Notice that, by Proposition \ref{prp:GP2}, $B_{s_\gra v} = B_L \ltimes (\gou_v\oplus \gou_\grb)=B_v\ltimes \gou_\grb$, hence
$$B_{s_\gra v} e_S = B_v \cdot \gou_\grb \cdot e_S = B_v(e_S + \mk e_\grb).$$
If $S \cup \{\grb\}$ is orthogonal, then $B_v(e_S + \mk^\times e_\grb) = B_v e_{S \cup \{\grb\}}$, thus the claim follows.

Suppose that $S \cup \{\grb\}$ is not orthogonal, and let $\grg \in S$ be such that $\langle \grb, \grg^\vee \rangle \neq 0$. 
Since $\grb + \grg \not \in \Phi$, we have $\grb - \grg \in \Phi$, hence from \eqref{eqn:Psi} it follows that $\grb - \grg \in \Phi_P$. 
On the other hand $\grb \in \Phi^+(v)$ is a maximal element by Proposition \ref{prp:GP2} i), thus $\grb - \grg \in \Phi^+_P$. 
Set $\grd = \grb - \grg$ and let $m \geq 1$ be the maximum such that $\grg + m \grd$ is a root, then by Lemma \ref{lem:panyushev} ii) we get that
$$
 u_\grb(t) \cdot e_S = e_S + c_1\, t\, e_\beta + \dots+c_m\, t^m\,e_{\grb + (m-1) \grd},
$$
for some $c_1, \ldots, c_m \in \mk^\times$. Since $\grb \in \Phi^+(v)$ is maximal, 
it follows that $e_S + \mk e_\grb \subset U_\grb e_S + \gou_v \subset B_v e_S$.
\end{proof}

We can now prove the following parametrization of the $B_v$-orbits in $\gop^\mru$. 

\begin{prp}	\label{prp:Bv-orbite}
Let $v \in W^P$, then the map $S \mapsto B_v  e_S$ induces a bijection
\[
\{S \subset \Phi^+(v) \st S \text{ is orthogonal }\} \longrightarrow \{ B_v \text{-orbits in } \gop^\mru\}
\]
\end{prp}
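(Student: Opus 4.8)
The plan is to deduce the statement from results already available: injectivity of $S\mapsto B_v e_S$ on orthogonal subsets of $\Phi^+(v)$ is precisely Proposition \ref{prop:punti-standard} iii), so the only thing left to prove is surjectivity, namely that every $B_v$-orbit in $\gop^\mru$ is of the form $B_v e_S$ for some orthogonal $S\subseteq\Phi^+(v)$. I would prove this by induction on $\ell(v)$, using Lemma \ref{lem:decomposizione-passo} for the inductive step.

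First I would settle the base case $v=e$. Here $\Phi^+(e)=\varnothing$, and $B_e=B\cap P=B$ since $B\subseteq P$, which under the identification $P\cong L\ltimes\gop^\mru$ becomes $B_L\ltimes\gop^\mru$ by Lemma \ref{lem:B_w}. By the action formula \eqref{eq:P-action}, the element $(e,-x)\in B_e$ carries any $x\in\gop^\mru$ to $0$; hence $B_e$ acts transitively and the unique orbit is $B_e\cdot 0=B_e e_\varnothing$. Since $\varnothing$ is the only orthogonal subset of $\Phi^+(e)$, the bijection holds in this case.

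For the inductive step, assume $v\neq e$ and pick $\gra\in\grD$ with $s_\gra v<v$. Then $v'=s_\gra v$ again lies in $W^P$, satisfies $\ell(v')=\ell(v)-1$, and, with $\grb=-v^{-1}(\gra)\in\Psi$, one has $\Phi^+(v)=\Phi^+(v')\sqcup\{\grb\}$ and $B_{v'}=B_v\ltimes U_\grb\supseteq B_v$ (as in the proof of Lemma \ref{lem:decomposizione-passo}). Consequently every $B_v$-orbit is contained in a unique $B_{v'}$-orbit, which by the inductive hypothesis equals $B_{v'}e_R$ for a unique orthogonal $R\subseteq\Phi^+(v')$. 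Since $R\subseteq\Phi^+(v')\subseteq\Phi^+(v)$, the pair $(v,R)$ is admissible, so Lemma \ref{lem:decomposizione-passo} describes the $B_v$-orbits inside $B_{v'}e_R$: they are $B_v e_R$, and also $B_v e_{R\cup\{\grb\}}$ precisely when $R\cup\{\grb\}$ is orthogonal. In both cases the index set is an orthogonal subset of $\Phi^+(v')\cup\{\grb\}=\Phi^+(v)$, so $S\mapsto B_v e_S$ is surjective; and conversely every orthogonal $S\subseteq\Phi^+(v)$ equals either $R$ or $R\cup\{\grb\}$ with $R=S\setminus\{\grb\}\subseteq\Phi^+(v')$, so these are exactly the orbits obtained. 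With injectivity from Proposition \ref{prop:punti-standard} iii), this completes the induction.

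I do not expect a serious obstacle here: all the substantive work — the decomposition of a $B_{v'}$-orbit into $B_v$-orbits, and the identification of the standard base points $e_S$ — is already carried out in Lemma \ref{lem:decomposizione-passo} and Proposition \ref{prop:punti-standard}. The only point needing a little care is to match the two cases of Lemma \ref{lem:decomposizione-passo} (orthogonal or not) with the partition $\Phi^+(v)=\Phi^+(v')\sqcup\{\grb\}$, and to make sure no orbit is thereby missed or counted twice; this is automatic because the $B_{v'}$-orbits partition $\gop^\mru$ and Lemma \ref{lem:decomposizione-passo} gives a disjoint decomposition within each $B_{v'}e_R$.
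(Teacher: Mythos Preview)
Your proposal is correct and follows essentially the same approach as the paper: injectivity via Proposition~\ref{prop:punti-standard}~iii), then surjectivity by induction on $\ell(v)$, with the base case $v=e$ handled by transitivity of $B_e$ on $\gop^\mru$ and the inductive step by applying Lemma~\ref{lem:decomposizione-passo} to decompose each $B_{s_\gra v}$-orbit into $B_v$-orbits. The paper's proof is slightly terser (it does not spell out the transitivity argument for the base case or the converse bookkeeping in your last sentence), but the logic is the same.
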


\begin{proof}
We have already proved in Proposition \ref{prop:punti-standard} iii) 
these orbits are all different.

We now show that every $B_v$-orbit in $\gop^\mru$ contains a point of the form $e_S$, 
for some orthogonal subset $S \subset \Phi^+(v)$. We proceed by induction on $\ell(v)$. 
If $\ell(v) = 0$, then $B_v = B$ and $\Phi^+(v) = \vuoto$. Since the $B$-action on $\gop^\mru$ defined by 
\eqref{eq:P-action} is transitive, the claim follows.

Suppose now that $\ell(v) > 0$ and let $\calO \subset \gop^\mru$ be a $B_v$-orbit. Let $\gra \in \grD$ be such that 
$s_\gra v < v$, and denote $u = s_\gra v$ and $\grb = -v^{-1}(\gra) \in \Psi$. Then $\Phi^+(v) = \Phi^+(u) \sqcup\{\grb\}$ 
and $B_u = B_v U_\grb$. By induction, there exists an orthogonal subset $S \subset \Phi^+(u)$ such that $B_u \calO = B_u e_S$.

If $S \cup \{\grb\}$ is not orthogonal, then $B_u e_S = B_v e_S$ by Lemma \ref{lem:decomposizione-passo}, thus $\calO = B_v e_S$ 
and the claim follows. Therefore we may assume that $S \cup \{\grb\}$ is orthogonal. Denote $S' = S \cup \{\grb\}$. Then by Lemma 
\ref{lem:decomposizione-passo} we have $B_u e_S = B_v e_S \cup B_v e_{S'}$, hence we have either $\calO = B_v e_S$ or $\calO = B_v e_{S'}$.
\end{proof}

As a consequence of the previous result we obtain both Panyushev's parametrization of the $B$-orbits in $\gop^\mru$ and Richardson's parametrization of the $B$-orbits in $G/L$.

Recall from the introduction that $V_L$ denotes the set of the admissible pairs. Moreover, for $S \subset \Psi$ orthogonal, we denote $g_S = \exp(e_S)$ and $x_S = g_S L/L$.


\begin{cor}	\label{cor:parametrizzazione}
\begin{enumerate}[\indent i)]
 \item Let $B$ act on $\gop^\mru$ via the adjoint action. Then the map $S \mapsto Be_S$ induces a bijection
\[
	\{S \subset \Psi \st S \text{ is orthogonal }\} \longrightarrow \{ B \text{-orbits in } \gop^\mru\}
\]
 \item The map $(v,S) \mapsto B v  x_S$ defines a bijection
\[
	V_L \longrightarrow \{\text{$B$-orbits in $G/L$}\}
\]
\end{enumerate}
\end{cor}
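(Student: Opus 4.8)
The plan is to deduce both parts from Proposition \ref{prp:Bv-orbite}, which for each $v \in W^P$ parametrizes the $B_v$-orbits in $\gop^\mru$ by the orthogonal subsets of $\Phi^+(v)$, combined with the fibration $\pi : G/L \ra G/P$ and the transfer Lemma \ref{lem:trasferimento}.

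For part i), I would take $v$ to be the longest element $w^P$ of $W^P$. Since $w^P(\Psi) \subset \Phi^-$ and, for every $w \in W^P$, $\Phi^+(w) \subset \Psi$, we get $\Phi^+(w^P) = \Psi$; hence $\gou_{w^P} = 0$ and, by Lemma \ref{lem:B_w}, $B_{w^P} = B_L$. Under the identification of $P$ with $L \ltimes \gop^\mru$, the subgroup $B_{w^P} = B_L \ltimes \{0\}$ acts on $\gop^\mru$ via \eqref{eq:P-action} exactly by the adjoint action of $B_L$. On the other hand $P^\mru$ acts trivially on the abelian ideal $\gop^\mru$, and $B = B_L P^\mru$, so the $B$-orbits in $\gop^\mru$ for the adjoint action coincide with the $B_L$-orbits, hence with the $B_{w^P}$-orbits. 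Applying Proposition \ref{prp:Bv-orbite} with $v = w^P$, so that $\Phi^+(v) = \Psi$, then yields precisely the bijection $S \mapsto B e_S$ claimed in i).

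For part ii), I would use the decomposition $G/P = \bigsqcup_{v \in W^P} BvP$, which pulls back along $\pi$ to $G/L = \bigsqcup_{v \in W^P} \pi^{-1}(BvP/P) = \bigsqcup_{v \in W^P} BvP/L$; in particular every $B$-orbit in $G/L$ lies in a unique piece $BvP/L$. By Lemma \ref{lem:trasferimento} the $B$-orbits in $BvP/L$ are in bijection with the $B_v$-orbits in $\gop^\mru$ through $B_v e \mapsto Bv\exp(e)L$, and by Proposition \ref{prp:Bv-orbite} the latter correspond bijectively to the orthogonal subsets $S \subset \Phi^+(v)$; combining, the $B$-orbit attached to $(v,S)$ is $Bv\exp(e_S)L = Bvx_S$. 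It then remains only to identify the index set with $V_L$: for $v \in W^P$ and an orthogonal $S \subset \Psi$, the condition $S \subset \Phi^+(v)$ is by definition equivalent to $v(S) \subset \Phi^-$, that is, to admissibility of $(v,S)$; hence the disjoint union over $v \in W^P$ of the sets of orthogonal subsets of $\Phi^+(v)$ is exactly $V_L$, and $(v,S) \mapsto Bvx_S$ is the asserted bijection.

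I do not expect a genuine obstacle, since the statement is essentially a repackaging of Proposition \ref{prp:Bv-orbite}; the only delicate points are the bookkeeping in i) between the adjoint action and the twisted action \eqref{eq:P-action} (which agree on $B_{w^P}$ only because $\gou_{w^P} = 0$), and checking that the admissibility condition is literally the hypothesis $S \subset \Phi^+(v)$ appearing in Proposition \ref{prp:Bv-orbite}.
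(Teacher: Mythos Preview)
Your proposal is correct and follows essentially the same approach as the paper: both reduce part i) to Proposition \ref{prp:Bv-orbite} at $v = w^P$ via $\Phi^+(w^P) = \Psi$, $B_{w^P} = B_L$, and the triviality of the $P^\mru$-action, and both obtain part ii) by combining Proposition \ref{prp:Bv-orbite} with Lemma \ref{lem:trasferimento}. Your write-up is simply more explicit about the bookkeeping (the matching of the twisted and adjoint actions when $\gou_{w^P}=0$, and the identification of $V_L$ with the disjoint union of orthogonal subsets of $\Phi^+(v)$), which the paper leaves implicit.
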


\begin{proof}
i). As it is abelian, $P^\mru$ acts trivially on $\gop^\mru$. 
Therefore every $B$-orbit in $\gop^\mru$ is actually a $B_L$-orbit. 
On the other hand, we have $\Phi^+(w^P) = \Psi$ and $B_{w^P} = B_L$, thus the claims follow by Proposition \ref{prp:Bv-orbite}.

ii). The claim follows from Proposition \ref{prp:Bv-orbite} and Lemma \ref{lem:trasferimento}.
\end{proof}

Notice that, if $S=\{\grb_1,\dots,\grb_m\}$ is an orthogonal subset of $\Psi$, then $T e_S \simeq \mk^\times e_{\grb_1}\times \dots \times \mk^\times e_{\grb_m}$.
Thus both the previous parametrizations are independent on the choice of the elements $e_\gra \in \gog_\gra$.

\subsection{The involution associated to a $B$-orbit.}	\label{ssec:invariants}

We now compute some other combinatorial invariants of the $B$-orbits in $G/L$ in 
terms of admissible pairs. First of all, by the equality $L = P \cap P^-$, we have two natural surjective maps
\begin{equation}	\label{eq:phi_-}
\varphi_+ : V_L \ra W^P, \qquad \qquad \varphi_- :  V_L \lra W^P
\end{equation}
respectively defined by projecting $B$-orbits in $G/L$ in $G/P$ and in $G/P^-$. Therefore $\varphi_+(v,S) = v$ and $\varphi_-(v,S) = \nu$, where $\nu \in W^P$ is defined by the equality $Bvg_S P^- = B\nu P^-$ and where $g_S = \exp(e_S)$.

Following Springer \cite{Sp}, we now recall how to attach an involution to any $B$-orbit in $G/L$, 
hence to any admissible pair. Let $\vartheta : G \ra G$ be the involution of $G$ such that $L = (G^\vartheta)^0$. 
Notice that $\vartheta$ acts trivially on $W$. Indeed it acts trivially on $T$, hence 
for $w\in W$ and $t\in T$ we have 
$wtw^{-1}=\vartheta(wtw^{-1})=\vartheta(w)\vartheta(t)\vartheta(w)^{-1}=\vartheta(w) t\vartheta(w)^{-1}$, which yields
$\vartheta(w)=w$. 

As in Springer \cite{Sp}, denote
$$\calV = \{g \in G \st g \vartheta(g)^{-1} \in N_G(T)\}.$$
Let $B$ and $L$ act on $G$ respectively by left and right multiplication, and consider the induced action of $B \times L$ on $G$. Then every $(B \times L)$-orbit in $G$ intersect $\calV$ in a $(T \times L)$-orbit (see the proof of \cite[Theorem 4.2]{Sp}). In this way we get a map
$\varphi_\calI : V_L \ra \calI,$ defined by setting 
\begin{equation}	\label{eq:phi_I}
\varphi_\calI(v,S) = g \vartheta(g)^{-1} T/T,
\end{equation}
where $g \in \calV$ is any element such that $Bvx_S = BgL/L$. In our case it is easy to describe explicitly these invariants. If $(v,S)$ is an admissible pair, we denote $g_{v(S)} = v g_S v^{-1}$.

\begin{lem}	\label{lem:twisted-involution}
Let $(v,S) \in V_L$, then $\varphi_\calI(v,S) = \grs_{v(S)}$ and $\varphi_-(v,S) = [v \grs_S]^P$.
Moreover $B\grs_{v(S)}B = B g_{v(S)}B$.
\end{lem}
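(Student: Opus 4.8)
The plan is to compute the element $g_{v(S)} = v g_S v^{-1}$ explicitly and to show directly that it lies in $\calV$, that it represents the $B$-orbit $Bvx_S$, and that $g_{v(S)}\vartheta(g_{v(S)})^{-1}$ lies in the same coset of $T$ as $\grs_{v(S)}$. The first observation is that $\vartheta$ acts on $\gop^\mru$ and on $(\gop^-)^\mru$ by swapping them (up to sign): since $L = (G^\vartheta)^0$ and $\gog = \gol \oplus \gop^\mru \oplus (\gop^-)^\mru$, the involution $\vartheta$ fixes $\gol$ pointwise and must interchange $\gou_\gra$ with $\gou_{-\gra}$ for $\gra \in \Psi$; after rescaling the root vectors we may assume $\vartheta(e_\gra) = f_\gra$ for all $\gra \in \Psi$, whence $\vartheta(g_S) = \exp(\vartheta(e_S))$ equals $\exp(f_S)$. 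Conjugating by $v$ and using that $\vartheta$ commutes with conjugation by the representative of $v$ (as $\vartheta$ is trivial on $T$ and on $W$), we get $\vartheta(g_{v(S)}) = v \exp(f_S) v^{-1} = \exp(f_{v(S)})$, where $f_{v(S)} = \sum_{\gra \in S} f_{v(\gra)}$ lives in $(\gop^-)^\mru$ since $v(S) \subset \Phi^-$.

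Next I would verify that $g_{v(S)}$ represents the right orbit. Since $x_S = g_S L$ and $v \in W^P$, the point $v x_S = v g_S L = v g_S v^{-1} \cdot v L$; but $vL$ and $g_{v(S)}L$ differ by an element of $L$ only if... more carefully, $Bvx_S = Bv g_S L = B (v g_S v^{-1}) v L$, and $vL$ is not in general $g_{v(S)}L$. The cleaner route: $\varphi_\calI(v,S)$ is defined using \emph{any} $g \in \calV$ with $Bvx_S = BgL$. Take $g = v g_S = v g_S v^{-1} \cdot v = g_{v(S)} v$. Then $g\vartheta(g)^{-1} = g_{v(S)} v \vartheta(v)^{-1} \vartheta(g_{v(S)})^{-1} = g_{v(S)} \vartheta(g_{v(S)})^{-1}$ because $\vartheta(v) = v$. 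So I must show $g_{v(S)}\vartheta(g_{v(S)})^{-1} = \exp(e_{v(S)})\exp(-f_{v(S)}) \in N_G(T)$ and compute its class in $W$. For this I use the displayed identity in the preliminaries: writing $S = \{\grb_1,\dots,\grb_m\}$, the roots $v(\grb_i) \in \Phi^-$ are pairwise orthogonal, so by Lemma \ref{lem:panyushev} i) the groups $U_{v(\grb_i)}$ and $U_{-v(\grb_j)}$ commute for $i \neq j$, and hence $\grs_{v(S)} = \prod_i s_{v(\grb_i)}$ with $s_{v(\grb_i)} = u_{v(\grb_i)}(t)u_{-v(\grb_i)}(-t^{-1})u_{v(\grb_i)}(t)T$. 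Evaluating and again using commutation, one reduces to the rank-one $SL_2$ computation: $\exp(e)\exp(-f) = u(1)u(-1) \cdot (\text{diagonal}) \cdot u(1)$-type manipulations show $u_\grg(1)u_{-\grg}(-1) \in s_\grg T \cdot U_\grg$, and a short bookkeeping gives $\exp(e_{v(S)})\exp(-f_{v(S)}) \in \grs_{v(S)} U$ up to the torus; intersecting with $N_G(T)$ forces the representative to be exactly $\grs_{v(S)}$ modulo $T$. This also yields $B\grs_{v(S)}B = Bg_{v(S)}B$ since $g_{v(S)} \in U \cdot \grs_{v(S)} \cdot U \subset B\grs_{v(S)}B$ (because $g_{v(S)} = \exp(e_{v(S)})$ with $e_{v(S)} \in \gou$, so $g_{v(S)} \in U$ and $B g_{v(S)} B = B$?). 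Here I must be careful: actually $g_{v(S)} \in U$ only if $v(S) \subset \Phi^+$, which is false; rather $g_{v(S)} = v g_S v^{-1}$ and $g_S \in U$, so $g_{v(S)} \in vUv^{-1}$, and $Bg_{v(S)}B = BvUv^{-1}B$; combined with $BvB = BvB$ and length considerations one extracts $Bg_{v(S)}B = B\grs_{v(S)}B$ from the relation $g_{v(S)} \in \grs_{v(S)}\cdot(\text{unipotent part in }U)$ derived above.

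For the second assertion, $\varphi_-(v,S) = [v\grs_S]^P$: the projection of $Bvx_S$ to $G/P^-$ is $B v g_S P^-$. Now $g_S = \exp(e_S) \in P^\mru$, and I would like to move it across. Using the $SL_2$ relation in the form $\exp(e_\grb) \in U_\grb \subset B$ but $P^-$ on the right: write $g_S = \grs_S \cdot (\grs_S^{-1} g_S \grs_S) \cdot \grs_S^{-1} \cdot$... cleaner is the Bruhat-type identity $\exp(e_S) \in U_{-S}\,\grs_S\, P^-_?$. Concretely, from $s_\grb = u_\grb(t)u_{-\grb}(-t^{-1})u_\grb(t)T$ one derives $\exp(e_\grb) = u_\grb(1) \in s_\grb \cdot u_{-\grb}(\ast)\cdot u_\grb(\ast)\cdot T$, and since $\grb \in \Psi$ we have $u_{-\grb} \in P^-{}^\mru$ and the trailing $u_\grb(\ast)$ is absorbed when we land in $P^-$ after multiplying back: more precisely $\exp(e_S) P^- = \grs_S P^-$ because $\grs_S^{-1}\exp(e_S)$ lands in $P^-$ (each $s_{\grb_i}^{-1}u_{\grb_i}(1) \in u_{-\grb_i}(\ast)u_{\grb_i}(\ast)T \cdot s_{\grb_i}$-conjugation... ). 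Thus $Bvg_SP^- = Bv\grs_S P^-$, and projecting to $W^P$ on the right gives the minimal-length representative $[v\grs_S]^P$. The main obstacle, and the step I expect to require the most care, is the explicit $SL_2$/Chevalley group bookkeeping showing $\exp(e_{v(S)})\exp(-f_{v(S)})$ and $\exp(e_S)$ normalize $T$ resp.\ land in $\grs_{v(S)}U$ and $\grs_S P^-$ — the orthogonality of $S$ (via Lemma \ref{lem:panyushev}) reduces everything to commuting rank-one pieces, but one must track the constants and the characteristic-$\neq 2$ hypothesis carefully, as the rank-one identity $u(t)u'(-t^{-1})u(t) \in s T$ and its consequences are exactly where the structure constants enter.
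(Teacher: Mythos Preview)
Your approach has a genuine gap at the very first step: the involution $\vartheta$ does \emph{not} exchange $\gop^\mru$ and $(\gop^-)^\mru$. In the Hermitian case $\gol$ is the $+1$-eigenspace of $\vartheta$ and $\gop^\mru\oplus(\gop^-)^\mru$ is the $-1$-eigenspace, so $\vartheta(e_\gra)=-e_\gra$ and $\vartheta(f_\gra)=-f_\gra$ for $\gra\in\Psi$. With the correct action, your candidate $g=vg_S$ gives
\[
g\,\vartheta(g)^{-1}\;T \;=\; v\,\exp(e_S)\,\exp(e_S)\,v^{-1}\,T \;=\; v\,\exp(2e_S)\,v^{-1}\,T,
\]
which is a nontrivial unipotent element whenever $S\neq\vuoto$ and hence does \emph{not} lie in $N_G(T)$. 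So $vg_S\notin\calV$ and cannot be used to compute $\varphi_\calI$. (Even under your mistaken hypothesis $\vartheta(e_\gra)=f_\gra$, the product $\exp(e_{v(S)})\exp(-f_{v(S)})$ is not in $N_G(T)$: already in $\mathrm{SL}_2$ one has $u(1)u'(-1)=\bigl(\begin{smallmatrix}0&1\\-1&1\end{smallmatrix}\bigr)\notin N(T)$, so your sentence ``intersecting with $N_G(T)$ forces\ldots'' has nothing to intersect.)

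The paper fixes this by inserting an extra factor from $(P^-)^\mru$: it takes
\[
g \;=\; v\,\exp(-\tfrac12 f_S)\,\exp(e_S),
\]
which still satisfies $BgL=Bvx_S$ because $v\exp(-\tfrac12 f_S)v^{-1}\in U$ (here one uses $v(S)\subset\Phi^-$). Then, using the identity $\grs_S=\exp(t f_S)\exp(-t^{-1}e_S)\exp(t f_S)\,T$ recorded in the preliminaries (with $t=-\tfrac12$), one gets directly
\[
g\,\vartheta(g^{-1})\,T \;=\; v\,\exp(-\tfrac12 f_S)\,\exp(2e_S)\,\exp(-\tfrac12 f_S)\,v^{-1}\,T \;=\; \grs_{v(S)}\,T.
\]
The same trick, inserting $\exp(-f_S)$ on the $B$-side and on the $P^-$-side, gives $Bvg_SP^- = Bv\grs_SP^-$ in one line, and the identity $\grs_{v(S)}=(v\exp(-f_S)v^{-1})\,g_{v(S)}\,(v\exp(-f_S)v^{-1})$ immediately yields $B\grs_{v(S)}B=Bg_{v(S)}B$. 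The missing idea in your attempt is precisely this $\exp(cf_S)$-correction; without it, no amount of rank-one bookkeeping will land you in $N_G(T)$.
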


\begin{proof}
Denote $g =  v \exp(-\tfrac{1}{2}f_S) \exp(e_S)$. Since $v(S) \subset \Phi^-$, 
notice that $Bvx_S = B g L/L$. On the other hand
$$
g \vartheta(g^{-1}) T/T = 
v \exp(-\tfrac{1}{2}f_S) \exp(2e_S) \exp(-\tfrac{1}{2}f_S)  v^{-1} T/T = 
v \sigma_S v^{-1} = \grs_{v(S)}.
$$
Therefore $g \in \calV$, and it follows that $\varphi_\calI(v,S) = \grs_{v(S)}$.

The second claim follows by noticing that
\[
	Bvx_S P^- = Bv \exp(-f_S) \exp(e_S) \exp(-f_S) P^- = Bv \grs_{S}P^-.	
\]

Finally, notice that $\grs_{v(S)} = (v \exp(-f_S) v^{-1}) g_{v(S)} (v \exp(-f_S) v^{-1})$. Since $(v,S)$ is admissible, we have by definition $v(S) \subset \Phi^-$. Thus $v \exp(-f_S) v^{-1} \in B$, and we get $\grs_{v(S)} \in B g_{v(S)} B$.
\end{proof}

\section{Dimension formulas, and the action of the minimal parabolic subgroups}\label{sez:azioneminimali}

In this section we will study the action of the minimal parabolic subgroups of $G$ on the set of the $B$-orbits in $G/L$. 
Let $(v,S) \in V_L$ and let $\gra \in \grD$. Since $B$ acts with finitely many orbits on $G/L$, 
in particular it acts with finitely many orbits on $P_\gra v x_S$. Following \cite{RS1} we define
$$ m_\gra (v,S) = \text{unique $(v',S') \in V_L$ s.t. $B v' x_{S'} \subset P_\gra v x_S$ is open} $$
In the previous notation, notice that, if $m_\gra (v,S) \neq (v,S)$, we have $\dim (Bv'x_{S'}) = \dim (Bvx_S) +1$. 

If $(v,S)\in V_L$ and $\gra\in \Delta$, we also set
$$ \calE_\gra(v,S) = \{(u,R) \in V_L \st m_\gra (u,R) = (v,S) \text{ and } (u,R) \neq (v,S)\}. $$

The following result due to Richardson and Springer will be needed in the proof of Theorem \ref{teo:panv}, which will constitute the basis of the induction to prove our main theorem.

\begin{lem}[{\cite[7.4]{RS1}}] \label{lem:azP1}
Let $(v,S) \in V_L$ and $\gra \in \Delta$, then the following hold.
\begin{enumerate}[\indent i)]
 \item If $m_\gra (v,S) = (v',S') \neq (v,S)$, then $\grs_{v'(S')} = s_\gra\circ \grs_{v(S)}>\grs_{v(S)}$. 
 \item $\calE_\gra(v,S)\neq \vuoto$ if and only if $s_\gra\circ \grs_{v(S)}<\grs_{v(S)}$.
 \item $\calE_\gra(v,S)=\vuoto$ if and only if $s_\gra\circ \grs_{v(S)}>\grs_{v(S)}$.
\end{enumerate}
\end{lem}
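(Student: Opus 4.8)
The plan is to follow the Richardson--Springer analysis of the action of a minimal parabolic on a $B$-orbit, made concrete here by the representatives of Lemma~\ref{lem:twisted-involution}. Fix $(v,S)\in V_L$ and put $\grs:=\grs_{v(S)}$ and $\calO:=Bvx_S$. Recall from the proof of Lemma~\ref{lem:twisted-involution} that $g:=v\exp(-\tfrac12 f_S)\exp(e_S)$ lies in $\calV$, that $\calO=BgL/L$, and that $g\vartheta(g)^{-1}\in N_G(T)$ represents $\grs$; thus $\varphi_\calI(v,S)=\grs$. Since $\vartheta$ acts trivially on $W$, for every $\gra\in\grD$ one has $s_\gra\circ\grs\neq\grs$, so by Lemma~\ref{lem:inv0} exactly one of $s_\gra\circ\grs>\grs$, $s_\gra\circ\grs<\grs$ holds; in particular ii) and iii) are contrapositive to one another, and it suffices to establish i) together with one of them. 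I would then split, as usual, according to the relative position of $\gra$ and $\grs$, using that $P_\gra\calO$ decomposes into at most three $B$-orbits and that $\dim P_\gra\calO\in\{\dim\calO,\dim\calO+1\}$.

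\emph{Complex case} ($s_\gra\grs\neq\grs s_\gra$). Here $s_\gra g\in\calV$ as well, and — again because $\vartheta$ is trivial on $W$ — it represents $s_\gra\grs s_\gra=s_\gra\circ\grs$. Hence $\calO':=Bs_\gra gL/L$ is contained in $P_\gra\calO$, is distinct from $\calO$ (their images under $\varphi_\calI$ are $\grs$ and $s_\gra\grs s_\gra$, which differ since $s_\gra\grs\neq\grs s_\gra$), and satisfies $\varphi_\calI(\calO')=s_\gra\circ\grs$. By the dimension formula \eqref{eq:dim1} and Lemma~\ref{lem:inv2}, $\dim\calO'=\dim\calO+1$ when $s_\gra\circ\grs>\grs$ and $\dim\calO'=\dim\calO-1$ when $s_\gra\circ\grs<\grs$. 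In the first case $\calO'$ is forced to be the open orbit in $P_\gra\calO$, so $m_\gra(v,S)=(v',S')$ with $\grs_{v'(S')}=\varphi_\calI(\calO')=s_\gra\circ\grs$, which is i); moreover $\calO$ is not open in $P_\gra\calO$, and since any $P_\gra$-orbit in which $\calO$ were open would have to equal $P_\gra\calO$, no admissible pair maps to $(v,S)$ under $m_\gra$, so $\calE_\gra(v,S)=\vuoto$, consistent with iii). In the second case $\calO$ is open in $P_\gra\calO\supsetneq\calO$, so $m_\gra(v,S)=(v,S)$ and $\calO'\in\calE_\gra(v,S)$, consistent with ii).

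\emph{Real and imaginary cases} ($s_\gra\grs=\grs s_\gra$, equivalently $\grs(\gra)=\pm\gra$). When $\grs(\gra)=-\gra$ one has $s_\gra\circ\grs=s_\gra\grs<\grs$; a rank-one computation inside $\langle U_\gra,U_{-\gra}\rangle$, valid since $\car\mk\neq2$, shows that $\calO$ is open in $P_\gra\calO\supsetneq\calO$, whence $m_\gra(v,S)=(v,S)$ and the smaller orbit(s) of $P_\gra\calO$ lie in $\calE_\gra(v,S)$, matching ii). When $\grs(\gra)=\gra$ one has $s_\gra\circ\grs=s_\gra\grs>\grs$, and the same rank-one analysis separates the compact subcase, where $P_\gra\calO=\calO$ and hence $m_\gra(v,S)=(v,S)$, from the non-compact subcase, where $\calO$ is not open in $P_\gra\calO$; in the latter the rank-one picture yields a representative $h\in\calV$ of the open orbit with $h\vartheta(h)^{-1}$ representing $s_\gra\grs$, so $m_\gra(v,S)=(v',S')$ with $\grs_{v'(S')}=s_\gra\circ\grs$, which is i). In all three of these subcases $\calO$ is never open in a $P_\gra$-orbit strictly larger than itself, so $\calE_\gra(v,S)=\vuoto$, matching iii).

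The step I expect to be the real obstacle is this rank-one analysis in the real and imaginary cases: one must identify the $B$-orbit structure of the one-dimensional symmetric space that $\gra$ cuts out on $P_\gra\calO$, and, in the imaginary case, decide whether $\gra$ is compact or non-compact for $\calO$. Concretely, realising $\vartheta$ as conjugation by an element $z\in T$ with $z^2$ central (so that the $\pm1$ eigenspaces of $\Ad z$ on $\gog$ are $\gol$ and $\gop^\mru\oplus(\gop^-)^\mru$), this reduces to computing the sign by which the involution $\Ad(gzg^{-1})$ of $\gog$ acts on $\gou_\gra$, equivalently to deciding whether $e_\gra\in\mathrm{Lie}(gLg^{-1})$; this is where $\car\mk\neq2$ is used, and the verification, although elementary, is delicate. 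Alternatively, since the statement coincides with \cite[7.4]{RS1} and our $\varphi_\calI$ is the invariant used there, one may simply quote that reference.
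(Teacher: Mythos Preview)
Your approach differs substantially from the paper's. The paper does \emph{not} redo the Richardson--Springer rank-one analysis; instead it quotes \cite[7.4(i)]{RS1} and \cite[7.4(ii)]{RS1} as black boxes and deduces the three parts from them by elementary manipulations (using that $s_\gra\circ\grs_{v(S)}$ and $\grs_{v(S)}$ are always comparable and never equal, so ii) and iii) are equivalent, and using a short contradiction argument for i)). Your proposal instead re-derives the content of \cite[7.4]{RS1} from scratch via the complex/real/imaginary trichotomy, which is more self-contained but also more work --- and, as you yourself note, the real and imaginary cases are only sketched, with the key rank-one computation left undone.

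There is also a genuine circularity in your complex case as written. You invoke the dimension formula \eqref{eq:dim1} to decide which of $\calO$ and $\calO'$ is open in $P_\gra\calO$. But in the paper, the dimension formula \eqref{eq:dim2} (equivalent to \eqref{eq:dim1}) is \emph{derived from} Lemma~\ref{lem:azP1}~ii) in the paragraph immediately following this lemma. So within the paper's logical order you cannot use it here. You would have to cite the dimension formula directly from \cite{RS1} (Theorem~4.6 or Lemma~7.2 there), at which point you are already quoting Richardson--Springer and might as well quote \cite[7.4]{RS1} itself --- which is exactly the alternative you mention at the end, and is what the paper does.

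One small slip: your sentence ``In all three of these subcases $\calO$ is never open in a $P_\gra$-orbit strictly larger than itself'' cannot include the real case, since you just established that in the real case $\calO$ \emph{is} open in $P_\gra\calO\supsetneq\calO$. Presumably you mean the two imaginary subcases.
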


\begin{proof}
Since our statements are slightly different from those in \cite{RS1}, we provide some details.

i). Suppose that $m_\gra (v,S) \neq (v,S)$. Notice that $s_\gra \circ \grs_{v(S)}$ and $\grs_{v(S)}$ are always comparable by Lemma \ref{lem:inv0}. 
Assume by contradiction that $s_\gra\circ \grs_{v(S)} < \grs_{v(S)}$. Then $s_\gra \grs_{v(S)} < \grs_{v(S)}$ by Lemma \ref{lem:inv0} ii), 
therefore we get $\calE_\gra(v,S) \neq \vuoto$ by \cite[7.4(ii)]{RS1}. Let $(u,R) \in \calE_\gra(v,S)$, then by definition $Bvx_S$ is open in
$P_\gra u x_R$. On the other hand $P_\gra u x_R = P_\gra v x_S$, hence $m_\gra (v,S) = (v,S)$, a contradiction. This shows that $s_\gra\circ \grs_{v(S)}>\grs_{v(S)}$, and the remaining claim follows from \cite[7.4(i)]{RS1} together with Lemma \ref{lem:twisted-involution}.

ii). Suppose that $s_\gra \circ \grs_{v(S)} < \grs_{v(S)}$, then $s_\gra \grs_{v(S)} < \grs_{v(S)}$ by Lemma \ref{lem:inv0} ii), therefore we get $\calE_\gra(v,S) \neq \vuoto$ by \cite[7.4(ii)]{RS1}. Suppose now that $\calE_\gra(v,S)\neq \vuoto$, and let $(u,R) \in \calE_\gra(v,S)$. Then by construction we have $m_\gra (u,R) \neq (u,R)$, thus by i) we get $\grs_{v(S)} = s_\gra\circ \grs_{u(R)}>\grs_{u(R)}$. Therefore $s_\gra \circ \grs_{v(S)} = \grs_{u(R)} < \grs_{v(S)}$.

iii). Notice that $s_\gra \circ \grs_{v(S)}$ and $\grs_{v(S)}$ are never equal, and they are always comparable by Lemma \ref{lem:inv0}. Therefore the claim follows from ii).
\end{proof}

Using the techniques developed by Richardson and Springer \cite{RS1}, we can easily compute the dimension 
of a $B$-orbit (see \cite[Theorem 4.6]{RS1}).

Notice indeed that $0\in \ol{B_v e_S}$ for all $S \subset \Psi$. Thus, if $(v,S)\in V_L$ is associated to a closed orbit, it must be $S=\vuoto$. On the other hand
$\dim(Bvx_\vuoto) = \ell(v)+\dim \gou_v=\card(\Psi)$, therefore $(v,S)$ corresponds to a closed $B$-orbit if and only if $S = \vuoto$.
Suppose now that $(v,S)$ is an admissible pair with $S \neq \vuoto$: then $\grs_{v(S)}\neq \id$, and if $\gra\in \Delta$ is such that 
$s_\gra\circ \grs_{v(S)}<\grs_{v(S)}$, by Lemma \ref{lem:azP1} ii) we can find $(v',S')\in \calE_\gra(v,S)$. Thus by Lemma \ref{lem:inv2} we have
$$
\dim Bvx_S=\dim Bv'x_{S'}+1 \quad \mand \quad L(\grs_{v(S)})=L(\grs_{v'(S')})+1,
$$
and arguing by induction we obtain
\begin{equation}\label{eq:dim2}
 \dim Bvx_S= \card \Psi + L(\grs_{v(S)}).
\end{equation}
This is equivalent to formula \eqref{eq:dim1} in the Introduction.

\begin{dfn}
Let $(v,S) \in V_L$, the \textit{length} of $(v,S)$, denoted by $L(v,S)$, is the length 
$L(\grs_{v(S)})$ of the corresponding involution $\grs_{v(S)}$, namely
$$L(v,S) = \frac{\ell(\grs_{v(S)}) + \card S}{2}.$$
\end{dfn}

As a consequence  of the previous dimension formula
we also get a dimension formula for the adjoint $B$-orbits in $\gop^\mru$, which was conjectured 
by Panyushev \cite[Conjecture 6.2]{Pa}. Recall that $w^P$ denotes the longest element in $W^P$, $w_P$ the longest element in $W_P$ and $w_0=w^Pw_P$ the longest element in $W$.

\begin{cor} \label{cor:dim-formula}
Let $B$ act on $\gop^\mru$ via the adjoint action, and let $S \subset \Psi$ be an orthogonal subset. Then
$$ \dim B e_S = \frac{\ell(\grs_{w_P(S)}) + \card S}{2}. $$
\end{cor}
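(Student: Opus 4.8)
The plan is to deduce the formula from the dimension formula \eqref{eq:dim2} for the $B$-orbits in $G/L$, specialized to $v = w^P$, through the order isomorphism of Lemma \ref{lem:trasferimento}. First I would record the relevant facts about $w^P$: from the proof of Corollary \ref{cor:parametrizzazione} i) we have $\Phi^+(w^P) = \Psi$ and $B_{w^P} = B_L$, so in particular $\ell(w^P) = \card \Phi^+(w^P) = \card \Psi$, and since $S \subset \Psi$ is orthogonal the pair $(w^P, S)$ is admissible. Moreover, as $P^\mru$ acts trivially on $\gop^\mru$, we have $B e_S = B_L e_S = B_{w^P} e_S$, hence $\dim B e_S = \dim B_{w^P} e_S$.

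Next I would apply Lemma \ref{lem:trasferimento} with $v = w^P$, obtaining
$$\dim B w^P x_S = \ell(w^P) + \dim B_{w^P} e_S = \card \Psi + \dim B e_S.$$
On the other hand, formula \eqref{eq:dim2} applied to the admissible pair $(w^P, S)$ gives $\dim B w^P x_S = \card \Psi + L(\grs_{w^P(S)})$. Comparing the two expressions yields
$$\dim B e_S = L(\grs_{w^P(S)}) = \frac{\ell(\grs_{w^P(S)}) + \card S}{2}.$$

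It then remains to pass from $w^P$ to $w_P$. Since $w_0 = w^P w_P$ and both $w_P$ and $w_0$ are involutions, we have $w^P = w_0 w_P$, hence
$$\grs_{w^P(S)} = w^P \grs_S (w^P)^{-1} = w_0 \bigl( w_P \grs_S w_P \bigr) w_0^{-1} = w_0\, \grs_{w_P(S)}\, w_0^{-1}.$$
Conjugation by $w_0$ is a length-preserving automorphism of $W$ (it permutes the simple reflections), so $\ell(\grs_{w^P(S)}) = \ell(\grs_{w_P(S)})$, and the claimed formula follows. I do not expect a genuine obstacle here: the argument rests entirely on results already established, the only points needing some care being the identification $B e_S = B_{w^P} e_S$ and the conjugacy relation $\grs_{w^P(S)} = w_0\, \grs_{w_P(S)}\, w_0^{-1}$ combined with the invariance of length under conjugation by $w_0$.
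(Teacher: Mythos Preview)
Your proof is correct and follows essentially the same approach as the paper's own argument: both specialize Lemma \ref{lem:trasferimento} and formula \eqref{eq:dim2} to $v=w^P$, then pass from $w^P$ to $w_P$ via the conjugacy $\grs_{w^P(S)}=w_0\,\grs_{w_P(S)}\,w_0^{-1}$ and the invariance of length under conjugation by $w_0$.
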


\begin{proof}
As we already noticed, every $B$-orbit in $\gop^\mru$ is a $B_L$-orbit, thus $\dim B e_S = \dim B_L x_S$. 
On the other hand $B_L = B_{w^P}$ and $\ell(w^P) = \card \Psi$. Hence by Lemma \ref{lem:trasferimento} we 
have $\dim Bw^P x_S=\card\Psi+\dim Be_S$,
and by formula \eqref{eq:dim2} it follows that
$\dim B e_S = \tfrac{1}{2}(\ell(\grs_{w^P(S)}) + \card S)$. 
To conclude the proof, it is enough to notice that $\ell(\grs_{w^P(S)}) = \ell(\grs_{w_P(S)})$: indeed $w_0^{-1} = w^P w_P$, 
therefore using the fact that $w^P$ and $w_0$ are involutions we get $\grs_{w_P(S)} = w_0 \grs_{w^P(S)} w_0^{-1}$.
\end{proof}

We give now a more precise result about the action of the minimal parabolic subgroups.

\begin{lem}\label{lem:azP2}
Let $(v,S) \in V_L$ and $\gra \in \Delta$, then the following hold.
\begin{enumerate}[\indent i)]
 \item If $\calE_\gra(v,S) \neq \vuoto$ and $s_\gra v<v$, then there exists 
$(v',S') \in \calE_\gra(v,S)$ with $v'=s_\gra v$.
 \item If $[s_\gra v]^P>v$, then $m_\gra (v,S) \neq (v,S)$.
\item If $\calE_\gra(v,S) = \vuoto$ and $m_\gra (v,S) = (v,S)$, then $[s_\gra v]^P = v$.
\end{enumerate}
\end{lem}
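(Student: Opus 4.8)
All three parts will follow from a single auxiliary fact, so the plan is to isolate and prove it first: \emph{if $m_\gra(v,S)=(v,S)$ and $w:=[s_\gra v]^P\neq v$, then there is an admissible pair $(w,R')\in\calE_\gra(v,S)$.} To prove this I would work with the $P_\gra$-equivariant projection $\pi:G/L\to G/P$. Since $\pi(p\cdot vx_S)=p\cdot vP$ for every $p\in P_\gra$, we have $\pi(P_\gra vx_S)=P_\gra vP$, and as $w=[s_\gra v]^P\neq v$ the standard decomposition of $G/P$ gives $P_\gra vP=BvP\sqcup BwP$. Choose a point $y\in P_\gra vx_S$ with $\pi(y)\in BwP$; since $P_\gra vx_S$ is $B$-stable, $By$ is one of its $B$-orbits, so by Corollary \ref{cor:parametrizzazione} ii) we may write $By=Bv'x_{S'}$ for a unique admissible pair $(v',S')$, and comparing images under $\pi$ forces $v'=w$. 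Set $R'=S'$. Then $(w,R')\neq(v,S)$ because $w\neq v$; moreover $wx_{R'}\in By\subseteq P_\gra vx_S$, so the $P_\gra$-orbits $P_\gra wx_{R'}$ and $P_\gra vx_S$ meet and therefore coincide. Since $Bvx_S$ is by hypothesis the open $B$-orbit of $P_\gra vx_S=P_\gra wx_{R'}$, we get $m_\gra(w,R')=(v,S)$, which is the claimed fact.

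Granting this, I would deduce the three statements as follows. For i): first note that $\calE_\gra(v,S)\neq\vuoto$ already forces $m_\gra(v,S)=(v,S)$, because any $(u,R)\in\calE_\gra(v,S)$ satisfies $vx_S\in P_\gra ux_R$, hence $P_\gra vx_S=P_\gra ux_R$ with open orbit $Bvx_S$. Now $s_\gra v<v$ gives $s_\gra v\in W^P$ and $w=[s_\gra v]^P=s_\gra v\neq v$, so the auxiliary fact produces $(s_\gra v,R')\in\calE_\gra(v,S)$ with first component $s_\gra v$ --- exactly the required pair. For iii): if we had $[s_\gra v]^P\neq v$, then since $m_\gra(v,S)=(v,S)$ the auxiliary fact would give $\calE_\gra(v,S)\neq\vuoto$, against the hypothesis; hence $[s_\gra v]^P=v$.

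Part ii) is the only one needing something beyond the auxiliary fact, and it is where I expect the one genuine subtlety: a bare dimension count in $G/P$ does not suffice, and one has to pass through orbit closures in $G/L$. Suppose for contradiction $m_\gra(v,S)=(v,S)$ while $w:=[s_\gra v]^P>v$. The auxiliary fact gives $(w,R')\in\calE_\gra(v,S)$, and since $Bwx_{R'}$ is a $B$-orbit of $P_\gra vx_S$ distinct from its open orbit $Bvx_S$, we get $Bwx_{R'}\subseteq\ol{Bvx_S}$. Applying $\pi$ and using $\ol{BvP}=\bigsqcup_{v''\leq v}Bv''P$ in $G/P$, we obtain $BwP\subseteq\ol{BvP}$, i.e.\ $w\leq v$, contradicting $w>v$. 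Hence $m_\gra(v,S)\neq(v,S)$.

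Apart from the closure argument in ii), everything rests on the parametrization of Corollary \ref{cor:parametrizzazione}, the surjectivity of $\pi$ onto $P_\gra vP$, and the elementary fact that two $P_\gra$-orbits meeting in a point coincide; so I expect no real difficulty beyond organizing these pieces and keeping track of the three cases $[s_\gra v]^P$ greater than, equal to, or less than $v$.
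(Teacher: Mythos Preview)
Your proof is correct and close in spirit to the paper's, though organized differently. You extract a single auxiliary fact (existence of a pair in $\calE_\gra(v,S)$ over $w=[s_\gra v]^P$ whenever $m_\gra(v,S)=(v,S)$ and $w\neq v$) and derive all three parts from it; the paper instead treats each part separately, but the underlying mechanism --- picking a $B$-orbit in $P_\gra vx_S$ lying over the other Bruhat cell $BwP$ --- is the same as yours.

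The only genuine methodological difference is in ii). The paper argues directly: since $[s_\gra v]^P=s_\gra v>v$, the preimage $\pi^{-1}(Bs_\gra vP)\cap P_\gra vx_S$ is dense open in $P_\gra vx_S$, so the open $B$-orbit in $P_\gra vx_S$ must project to $Bs_\gra vP$ and hence cannot be $Bvx_S$. Your argument instead proceeds by contradiction and passes through closures: assuming $m_\gra(v,S)=(v,S)$, the auxiliary fact produces a non-open orbit $Bwx_{R'}\subset P_\gra vx_S\subset\ol{Bvx_S}$, and projecting via continuity of $\pi$ forces $w\leq v$, a contradiction. Your route is slightly longer but perfectly valid; the paper's density argument is more economical and avoids any appeal to closures.
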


\begin{proof}
i). Notice that $B vx_S \subset P_\gra v x_S$ is open, because $\calE_\gra(v,S) \neq \vuoto$. Let $\calO = Bs_\gra v x_S$ and let $(u,R)$ be the corresponding admissible pair. Since $\calO \subset P_\gra v x_S$, we have $(u,R) \in \calE_\gra(v,S)$. On the other hand $s_\gra v \in W^P$, therefore $u = s_\gra v$. 

ii). Suppose that $m_\gra (v,S) = (u,R)$. Notice that $[s_\gra v]^P = s_\gra v$ because of the assumption. It follows that $P_\gra v x_S \cap B s_\gra v P/L$ is a dense open subset of $P_\gra v x_S$. Since $B$ has finitely many orbits on $G/L$, it has an open orbit $\calO \subset P_\gra v x_S \cap B s_\gra v P/L$. Then $\calO$ is open in $P_\gra v x_S$ as well, therefore $\calO = B u x_R$ and $u = s_\gra v$.

iii). By ii) we have $[s_\gra v]^P \leq v$, therefore either $s_\gra v < v$ or $[s_\gra v]^P = v$. Suppose that we are in the first case. Then $P_\gra v x_S \cap B s_\gra v P/L \neq \vuoto$, therefore there exists an admissible pair of the shape $(s_\gra v,S')$ such that $Bs_\gra v x_{S'} \subset P_\gra v x_S$. On the other hand $Bvx_S \subset P_\gra v x_S$ is open by the assumption, thus $(s_\gra v, S') \in \calE_\gra(v,S)$, a contradiction.
\end{proof}

Similarly to the previous lemma, we have the following analogous statements, obtained by looking at the representatives on $G/P^-$ rather than on $G/P$.

\begin{lem}\label{lem:azP3}
Let $(v,S) \in V_L$ and $\gra \in \Delta$, and set $\nu=[v\grs_S]^P$, then the following hold.
\begin{enumerate}[\indent i)]
 \item If $\calE_\gra(v,S) \neq \vuoto$ and $[s_\gra \nu]^P > \nu$, then there exists 
$(v',S') \in \calE_\gra(v,S)$ such that $[v'\grs_{S'}]^P = s_\gra \nu$.
 \item If $s_\gra \nu<\nu$, then $m_\gra (v,S) \neq (v,S)$.
 \item If $\calE_\gra(v,S) = \vuoto$ and $m_\gra (v,S) = (v,S)$, then $[s_\gra \nu]^P = \nu$.
\end{enumerate}
\end{lem}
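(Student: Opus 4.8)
The statement is the exact mirror of Lemma \ref{lem:azP2}, obtained by replacing the projection $\pi:G/L\to G/P$ with the projection $\pi^-:G/L\to G/P^-$, so the plan is to transcribe that proof. The ingredients needed are: $\pi^-$ is $B$-equivariant; one has $G/P^- = \bigsqcup_{\mu\in W^P} B\mu P^-/P^-$, with $BwP^- = B[w]^P P^-$ for every $w\in W$ (since representatives of $W_P$ lie in $L\subset P^-$); by Lemma \ref{lem:twisted-involution}, $\pi^-(Bvx_S) = B\nu P^-/P^-$ with $\nu = [v\grs_S]^P$, so $\varphi_-$ plays on $G/P^-$ the role $\varphi_+=v$ plays on $G/P$; and, for $\gra\in\Delta$ and $\mu\in W^P$ with $s_\gra\mu\in W^P$, one has $P_\gra\mu P^-/P^- = B\mu P^-/P^- \sqcup Bs_\gra\mu P^-/P^-$, the open orbit being the one indexed by whichever of $\mu,s_\gra\mu$ is smaller (this follows, as in the $G/P$ case, from $P_\gra = B\sqcup Bs_\gra B$ and the opposite Bruhat decomposition). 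The one bookkeeping subtlety is that the closure order on $G/P^-$ is reversed relative to $G/P$, which is exactly why the hypotheses of Lemma \ref{lem:azP3} carry $s_\gra\nu<\nu$ and $[s_\gra\nu]^P>\nu$ in the slots where Lemma \ref{lem:azP2} carried $[s_\gra v]^P>v$ and $s_\gra v<v$.

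Before starting I would record the elementary fact (from the basic properties of $W^P$ recalled at the start of Section \ref{sez:GP}): for $\nu\in W^P$ and $\gra\in\Delta$, either $s_\gra\nu\in W^P$ — in which case $[s_\gra\nu]^P = s_\gra\nu$ — or $\nu^{-1}(\gra)\in\Phi^+_P$, in which case $s_\gra\nu\in\nu W_P$ and $[s_\gra\nu]^P = \nu$; moreover $s_\gra\nu<\nu$ always falls in the first case. In particular $[s_\gra\nu]^P>\nu$ forces $s_\gra\nu\in W^P$ and $s_\gra\nu>\nu$, while $s_\gra\nu<\nu$ forces $s_\gra\nu\in W^P$; in both situations $B\nu P^-/P^-$ and $Bs_\gra\nu P^-/P^-$ are two distinct $B$-orbits of $P_\gra\nu P^-/P^-$.

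For i), $\calE_\gra(v,S)\neq\vuoto$ means $Bvx_S$ is the open $B$-orbit of $P_\gra vx_S$; the hypothesis gives $s_\gra\nu\in W^P$ with $s_\gra\nu>\nu$, hence $s_\gra\nu P^-/P^- \in \pi^-(P_\gra vx_S) = P_\gra\nu P^-/P^-$. I would then choose a point $y\in P_\gra vx_S$ with $\pi^-(y)\in Bs_\gra\nu P^-/P^-$ and set $\calO' = By$: this orbit lies in $P_\gra vx_S$, satisfies $\pi^-(\calO') = Bs_\gra\nu P^-/P^-$ (so $\varphi_-(\calO') = s_\gra\nu\neq\nu = \varphi_-(v,S)$, giving $\calO'\neq Bvx_S$), and $P_\gra\calO' = P_\gra vx_S$ still has $Bvx_S$ as open orbit; hence $\calO' = Bv'x_{S'}$ with $(v',S')\in\calE_\gra(v,S)$ and, by Lemma \ref{lem:twisted-involution}, $[v'\grs_{S'}]^P = \varphi_-(v',S') = s_\gra\nu$. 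For ii), if $s_\gra\nu<\nu$ then $P_\gra\nu P^-/P^-$ has two $B$-orbits with $Bs_\gra\nu P^-/P^-$ the open one; therefore $(\pi^-)^{-1}(Bs_\gra\nu P^-/P^-)\cap P_\gra vx_S$ is dense open in $P_\gra vx_S$, so by finiteness of $B$-orbits the open $B$-orbit $\calO$ of $P_\gra vx_S$ lies in it, whence $\varphi_-(\calO) = s_\gra\nu\neq\nu$ and $m_\gra(v,S) = \calO\neq(v,S)$. Part iii) follows by combining the other two: $m_\gra(v,S)=(v,S)$ rules out $s_\gra\nu<\nu$ by ii), so $s_\gra\nu>\nu$; if in addition $s_\gra\nu\in W^P$ then, running the construction of i) but with $\calE_\gra(v,S)=\vuoto$ (which still forces $Bvx_S$ open in $P_\gra vx_S$ since $m_\gra(v,S)=(v,S)$), the point $y$ above yields a pair in $\calE_\gra(v,S)$, a contradiction; hence $s_\gra\nu\notin W^P$ and $[s_\gra\nu]^P = \nu$.

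The part I expect to require the most care — rather than any real difficulty — is setting up the $G/P^-$ analogue of the decomposition of $P_\gra wP$ into one or two $B$-orbits and checking that its open orbit sits on the ``$\min$'' side, i.e. keeping the reversed Bruhat order on $G/P^-$ straight; once that and the little $[s_\gra\nu]^P$ lemma are in place, each of i), ii), iii) is a line-by-line translation of the corresponding part of Lemma \ref{lem:azP2}.
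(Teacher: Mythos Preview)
Your proposal is correct and follows essentially the same approach as the paper's proof: both arguments work by projecting $P_\gra v x_S$ to $G/P^-$, using that $Bs_\gra\nu P^-$ is the dense open $B$-cell in $P_\gra\nu P^-$ when $s_\gra\nu<\nu$, and picking a $B$-orbit in the relevant fiber to produce the required element of $\calE_\gra(v,S)$. Your write-up is in fact more explicit than the paper's about the preliminary facts (the trichotomy for $[s_\gra\nu]^P$ and the reversed closure order on $G/P^-$), but the core reasoning for each of i), ii), iii) matches the paper line for line.
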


\begin{proof}
i). Notice that $[s_\gra \nu]^P = s_\gra \nu$ and that $P_\gra vg_S L \cap B s_\gra \nu P^- \neq \vuoto$. Therefore by Lemma 
\ref{lem:twisted-involution} there exists $(v',S') \in V_L$ such that $[v'\grs_{S'}]^P = s_\gra \nu$. On the other hand 
$Bvx_S \subset P_\gra v x_S$ is open because $\calE_\gra(v,S) \neq \vuoto$, therefore $(v',S') \in \calE_\gra(v,S)$.

ii). Notice that $s_\gra \nu \in W^P$ and that $Bs_\gra \nu P^- \subset P_\gra \nu P^-$ is a dense open subset. Therefore the open $B$-orbit of $P_\gra v x_S$ is contained inside $P_\gra v x_S \cap Bs_\gra \nu P^-/L$, and it follows that $m_\gra (v,S) \neq (v,S)$.

iii). By ii) it holds $[s_\gra \nu]^P \geq \nu$, suppose that $[s_\gra \nu]^P > \nu$. Then $P_\gra v x_S \cap B s_\gra \nu P^-/L \neq \vuoto$ and there exists an admissible pair $(v',S')$ such that $Bv' x_{S'} \subset P_\gra v x_S \cap B s_\gra \nu P^-/L$. Since by assumption $Bvx_S \subset P_\gra v x_S$ is open, it follows that $(v',S') \in \calE_\gra(v,S)$, a contradiction. 
\end{proof}

Recall the following basic fact from \cite[\S 4.3]{RS1} (which is essentially based 
on the classification of the spherical subgroups of $\mathrm{SL}_2$): a $P_\gra$-orbit in $G/L$ decomposes 
at most into three $B$-orbits. In \cite[\S 4.3, Case C]{RS1} it is also proved that if $\calE_\gra(v,S)$ has two elements then
$\gra$ is {\em real} for $(v,S)$, namely $\grs_{v(S)}(\gra)=-\gra$.

In the following lemmas we further analyze the sets $\calE_\gra(v,S)$. First we will give conditions (which are only possible if the root system is of type $\sfB$ or $\sfC$) so that $\calE_\gra(v,S)$ contains a unique element, then we will characterize when $\calE_\gra(v,S)$ 
has cardinality 2.

\begin{lem}\label{lem:azP4a}
Let $(v,S)\in V_L$ and $\gra\in \Delta$ be such $v^{-1}(\gra)\in \Delta_P$. Denote $\grb= v^{-1}(\gra)$ and assume that there exists $\grg\in S$ such that $\grg-2\grb$ is also an element of $S$. Denote $\grg'=\grg-2\grb$ and $\grg_0=\grg-\grb$, and set $S'=S\senza \{\grg,\grg'\}$. 
Then $(v,S'\cup \{\grg_0\})\in V_L$ and 
  $$\calE_\gra(v,S)=\{(v,S'\cup \{\grg_0\})\}.  $$
\end{lem}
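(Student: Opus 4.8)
The plan is to analyze the action of $P_\gra$ on $v x_S$ directly, computing the decomposition of $P_\gra v x_S$ into $B$-orbits. Since $v^{-1}(\gra) = \grb \in \Delta_P$, the element $s_\gra v$ is not in $W^P$; in fact $[s_\gra v]^P = v$, so the $P$-projection does not change, and by Lemma~\ref{lem:azP2} the only way to produce new orbits in $\calE_\gra(v,S)$ is via the action of $U_{-\gra}$ on $x_S$ inside the fiber $vP/L \cong \gop^\mru$. Concretely, via the isomorphism $r_P$ and the identification $B_v = B_L \ltimes \gou_v$, I would transfer the problem to the $B_v$-orbit $B_v e_S$ in $\gop^\mru$ and study the action of the one-parameter subgroup $u_{-\grb}(t)$ (the conjugate of $U_{-\gra}$ sitting in $L$), using formula \eqref{eq:P-action}: $u_{-\grb}(t).e_S = \Ad_{u_{-\grb}(t)}(e_S)$.

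First I would compute $\Ad_{u_{-\grb}(t)} e_S$. By Lemma~\ref{lem:panyushev}, since $\grb \in \Phi^+_P$ and the roots of $S$ are pairwise orthogonal elements of $\Psi$, at most certain specific roots of $S$ interact with $\grb$: the hypothesis says $\grg, \grg' = \grg - 2\grb \in S$, and since $[\grd:\gra_P] \le 1$ we have $\langle \grg, \grb^\vee\rangle, \langle \grg', \grb^\vee\rangle \in \{\pm 1, \pm 2\}$ — here the type is $\sfB$ or $\sfC$ and $\grb$ must be short with $\langle \grg, \grb^\vee\rangle = 2$, $\langle \grg', \grb^\vee \rangle = -2$, while $\grg_0 = \grg - \grb = \grg' + \grb$ is the intermediate root. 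Using the Chevalley commutation formula recalled in the preliminaries (with $m \le 2$), $u_{-\grb}(t) \cdot e_\grg = e_\grg + c_1 t\, e_{\grg - \grb} + c_2 t^2\, e_{\grg - 2\grb}$ and $u_{-\grb}(t) \cdot e_{\grg'} = e_{\grg'}$ (since $\grg' - \grb = \grg - 3\grb$ is not a root), and $u_{-\grb}(t)$ fixes all other $e_\delta$ with $\delta \in S \senza \{\grg, \grg'\}$ by Lemma~\ref{lem:panyushev}~(ii). Thus $u_{-\grb}(t).e_S = e_{S'} + e_{\grg'} + e_\grg + c_1 t\, e_{\grg_0} + c_2 t^2 e_{\grg'}$, i.e. the $e_{\grg'}$-coefficient becomes $1 + c_2 t^2$, the $e_{\grg_0}$-coefficient becomes $c_1 t$, and $e_\grg$ is unchanged. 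For generic $t$ this produces a new orbit; I would then clean it up by acting with $B_v$ — using the torus to rescale and using root subgroups $U_\delta$ with $\delta \in \Phi^+ \senza \Phi^+(v)$, together with the fact (from Proposition~\ref{prp:GP2}/Proposition~\ref{prop:punti-standard}) that $\{\grg, \grg', \grg_0\}$ and their $T$-weights are handled by standard-base-point arguments — to bring the generic point to the form $e_{S' \cup \{\grg_0\}}$. (One checks $S' \cup \{\grg_0\}$ is orthogonal: $\grg_0 \perp \delta$ for $\delta \in S'$ since $\grg_0 = \tfrac12(\grg + \grg')$ is a linear combination of $\grg, \grg'$ which are orthogonal to $S'$, and $\grg_0 \in \Psi$ as $[\grg_0 : \gra_P] = 1$; admissibility $(v,S'\cup\{\grg_0\}) \in V_L$ follows since $\grg_0 \in \Phi^+(v)$, because $\grg,\grg' \in \Phi^+(v)$ forces $v(\grg_0) \in \Phi^-$ by $v(\grg_0) = \tfrac12(v(\grg) + v(\grg'))$ and $\Phi_S$-type reasoning, or more directly from $S'\cup\{\grg_0\}$ being orthogonal and contained in $\Phi^+(v)$.)

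Next I would argue that $\calE_\gra(v,S)$ has exactly this one element. By the general fact from \cite[\S4.3]{RS1}, $P_\gra v x_S$ is the union of at most three $B$-orbits; since $[s_\gra v]^P = v$ there is no "$P$-direction" move, and one checks $s_\gra$ is \emph{not} real for $(v,S)$ — indeed $\grs_{v(S)}(\gra) = v\grs_S v^{-1}(\gra) = v \grs_S(\grb) = v(\grb - \langle\grb,\grg^\vee\rangle\grg - \langle\grb,\grg'^\vee\rangle\grg') = v(\grb - \grg - \grg' + 2\grb)$... I would instead verify $\grs_{v(S)}(\gra) \neq \pm\gra$ more carefully, which by the cited result of \cite[\S4.3, Case C]{RS1} forces $\card \calE_\gra(v,S) \le 1$. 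Combined with the explicit new orbit found above, this gives $\calE_\gra(v,S) = \{(v, S' \cup \{\grg_0\})\}$, and in particular $m_\gra(v, S'\cup\{\grg_0\}) = (v,S)$. The main obstacle I anticipate is the bookkeeping of the Chevalley structure constants and confirming that after the $U_{-\grb}$-action the generic point can genuinely be normalized to $e_{S'\cup\{\grg_0\}}$ by $B_v$ alone (not needing anything outside $B_v$) — this is where Lemma~\ref{lem:panyushev}, the maximality statements of Proposition~\ref{prp:GP2}, and the standard-base-point machinery of Proposition~\ref{prop:punti-standard} will need to be invoked carefully to control which $e_\delta$-components can be absorbed.
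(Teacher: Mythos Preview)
Your overall setup is right — conjugating $U_{\pm\gra}$ by $v$ to $U_{\pm\grb}\subset L$ and working in the $3$-dimensional $G_\grb$-module $\langle e_\grg,e_{\grg_0},e_{\grg'}\rangle$ is exactly what the paper does. But two of your key claims are wrong, and they are the heart of the argument.

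First, your assertion that for \emph{generic} $t$ the element $u_{-\grb}(t)\cdot e_S$ lies in a \emph{new} $B_v$-orbit, which you then want to normalize to $e_{S'\cup\{\grg_0\}}$, is backwards. In the paper's normalization one finds $\Ad_{u_{-\grb}(t)}(e_S)=e_{S'}+e_\grg+t\,e_{\grg_0}+(1+t^2)\,e_{\grg'}$, and the crucial preliminary computation (which you omit) is that $B_v e_S$, intersected with $e_{S'}+\langle e_\grg,e_{\grg_0},e_{\grg'}\rangle$, contains every point $ae_\grg+be_{\grg_0}+ce_{\grg'}$ with $c\neq 0$ and $ca\neq b^2$. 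Since for the curve above $ca-b^2=(1+t^2)-t^2=1$, the point stays in $B_v e_S$ whenever $1+t^2\neq 0$; only for the two values of $t$ with $1+t^2=0$ does the $e_{\grg'}$-coefficient vanish and the point drop into $B_v e_{S'\cup\{\grg_0\}}$. So the new orbit is reached only on a codimension-one locus, not generically. This fits the geometry: $Bvx_S$ is the open orbit in $P_\gra vx_S$, so the generic $U_{-\gra}$-translate must remain in it.

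Second, your plan to bound $\card\calE_\gra(v,S)\leq 1$ by showing $\gra$ is not real for $(v,S)$ fails because $\gra$ \emph{is} real. Completing your own computation: the elements of $S'$ are orthogonal to $\grb$ by Lemma~\ref{lem:panyushev} (since $\grg'+\grb\in\Phi$ and $\grg-\grb\in\Psi$), so $\grs_S(\grb)=s_\grg s_{\grg'}(\grb)=(\grb-\grg)+\grg'=\grb-\grg+(\grg-2\grb)=-\grb$, hence $\grs_{v(S)}(\gra)=-\gra$. The Richardson--Springer trichotomy therefore does not exclude two elements; this is precisely the ``type $N$'' (normalizer) case of a real root, where $\calE_\gra(v,S)$ is a singleton despite $\gra$ being real. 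The paper establishes this not by invoking the trichotomy but by direct computation: it shows $s_\gra vx_S=vx_S$ (because $s_\grb(S)=S$, as $s_\grb$ swaps $\grg$ and $\grg'$), and then the analysis above of $U_{-\gra}vx_S$ shows that the only $B$-orbits occurring in $P_\gra vx_S$ are $Bvx_S$ and $Bvx_{S'\cup\{\grg_0\}}$.
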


\begin{proof}
Since $\grg$ and $\grg-2\grb$ are both roots, $\grg_0$ must be a root as well, and since $\Phi$ is not of type $\sfG$, we also have $s_\grb(\grg)=\grg'$ and $\langle \grb^\vee,\grg\rangle=2$. Moreover we have $v(\grg_0)<0$, therefore setting $S_0=S'\cup \{\grg_0\}$ we get an admissible pair $(v,S_0)$.

Let $G_\grb\subset L$ be the subgroup generated by $U_\grb,U_{-\grb}$. Then $G_\grb$ is isomorphic to $\mathrm{SL}_2$ or to $\mathrm{PSL}_2$, and 
the vector space $V=\langle e_\grg,e_{\grg_0},e_{\grg'}\rangle$ is a representation of $G_\grb$ of highest weight $2$. 
By the construction of Chevalley groups (see e.g. \cite[Sections 1,2,3]{Steinberg}) we can normalize $e_\grg,e_{\grg_0},e_{\grg'}$ so that
\begin{align*}
\Ad_{ u_{\grb}(t)} (e_{\grg_0}) &= e_{\grg_0}+2te_{\grg}, &   \Ad_{u_{\grb}(t)} (e_{\grg'}) &= e_{\grg'}+te_{\grg_0}+t^2e_\grg, \\
\Ad_{ u_{-\grb}(t)} (e_{\grg_0}) &= e_{\grg_0}+2te_{\grg'}, &  \Ad_{u_{-\grb}(t)} (e_{\grg}) &= e_{\grg}+te_{\grg_0}+t^2e_{\grg'}.
\end{align*}

Notice that $xe_\grg+ye_{\grg'}+e_{S'}\in Te_S$ for all $x,y\neq 0$, thus $Bvx_S$ contains all the elements of the form
\begin{align*}
u_{\gra}(t)v\exp(xe_\grg + y e_{\grg'}+e_{S'})L & =
v\,u_{\grb}(t)\exp(xe_\grg + y e_{\grg'}+e_{S'})L  \\ 
&
= v\exp\big( \Ad_{u_{\grb}(t)}(xe_\grg + y e_{\grg'}+e_{S'})\big)L \qquad\qquad (\text{since }U_\grb\subset L) \\
&= v\exp\big((yt^2+x)e_\grg +yte_{\grg_0} +y e_{\grg'}+e_{S'})L .
\end{align*}
Therefore $Bvx_S$ contains all the elements of the form
$$
v\exp\big(a e_\grg + b e_{\grg_0} + c e_{\grg'}+e_{S'})L
$$
with $c\neq 0$ and $ca \neq b^2$. In particular it follows that $Bvx_{S'}$ is in the closure of $Bvx_S$. 
Similarly, since $\car \mk \neq 2$, all the elements of the form
$$
v\exp\big(a e_\grg + b e_{\grg_0} + e_{S'})L
$$
with $b\neq 0$ belong to $Bvx_{S_0}$.

We now analyze $P_\gra v x_S$. Write $P_\gra=BU_{-\gra}\sqcup Bs_\gra$. From the equality $s_\grb(\grg)=\grg'$ it follows
$$ s_\gra v x_S = vs_\grb g_S L=v g_{s_\grb(S)}s_\grb L= vg_SL =v x_S,$$
thus $B s_\gra vx_S=Bvx_S$. Let now $t\in \mk$, since $U_{-\grb}\subset L$ we have
\begin{align*}
u_{-\gra} (t) v x_S & = v  u_{-\grb}(t) \exp(e_S) L   
                      = v  \exp \big( \Ad_{u_{-\grb}(t)} (e_S) \big) L\\
                    & = v  \exp( e_{S'} + e_\grg + t e_{\grg_0}+(1+t^2)e_{\grg'})L 
\end{align*}
By the discussion above, it follows that $u_{-\gra} (t) v x_S$ is in $Bvx_{S_0}$ if $1+t^2=0$, and in $Bvx_S$ otherwise. The claim follows.
\end{proof}

\begin{lem}	\label{lem:azP4}Let $(v,S)\in V_L$, $\gra\in \Delta$ and set $\grb = -v^{-1}(\gra)$.
Then $\calE_\gra(v,S)$ has cardinality 2 if and only if $\grb \in S$, in which case
$$\calE_\gra(v,S) = \{(s_\gra v,S \senza \{\grb\}), (v,S \senza \{\grb\})\}.$$ 
\end{lem}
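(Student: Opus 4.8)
The plan is to analyze the $P_\gra$-orbit through $vx_S$ by decomposing $P_\gra = B \sqcup Bs_\gra B$ and carefully tracking the action of the root subgroups $U_{\pm\gra}$ on $vx_S$, transferring everything to $\gop^\mru$ via the conjugated root $\grb = -v^{-1}(\gra)$. The starting observation is that, by the basic $\mathrm{SL}_2$-theory recalled from \cite[\S 4.3]{RS1}, $\card \calE_\gra(v,S) = 2$ forces $\gra$ to be real for $(v,S)$, i.e. $\grs_{v(S)}(\gra) = -\gra$; equivalently $\grs_S(\grb) = -\grb$, which by Proposition \ref{prp:PhiS} ii.e) (applied to $\grb \in \mZ S \cap \Phi^+$) means precisely $\grb \in S$. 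So one implication is essentially immediate: if $\grb \notin S$, then $\gra$ is not real and $\calE_\gra(v,S)$ has at most one element.

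For the converse, assume $\grb \in S$. First I would check that the two candidate pairs are admissible: $(v, S\senza\{\grb\})$ is clearly admissible since $S\senza\{\grb\} \subset \Phi^+(v)$; for $(s_\gra v, S\senza\{\grb\})$ one notes $s_\gra v \in W^P$ iff $s_\gra v < v$ (using Proposition \ref{prp:GP2} ii), since $\grb = -v^{-1}(\gra)$ is then maximal in $\Phi^+(v)$ — actually $\grb \in \Phi^+(v)$ is maximal here because $\grb \in S$ and $S$ is orthogonal, combined with Lemma \ref{lem:panyushev} i)), and that $s_\gra v(S\senza\{\grb\}) = s_\gra v(S\senza\{\grb\}) \subset \Phi^-$ since these roots are orthogonal to $\grb$ hence fixed by $s_\grb$, and $v$ sends them to $\Phi^-$ while $s_\gra$ does not create positivity among them (as $\gra = -v(\grb)$ is orthogonal to all $v(\grg)$, $\grg \in S\senza\{\grb\}$). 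Then I would compute the orbit $P_\gra v x_S$ concretely. Writing $U_{-\gra} \subset P_\gra$ and using that $v^{-1}U_{-\gra}v = U_\grb$ acts on $\gop^\mru$ with $\Ad_{u_\grb(t)}(e_S) = e_S + c\,t\,(\text{lower terms})$ — but here the key point is that $e_\grb$ itself is the highest-weight vector, so as $t$ varies we sweep out the coefficient of $e_\grb$ — one sees $U_{-\gra}\cdot vx_S$ already contains points with $e_S$ replaced by $e_{S\senza\{\grb\}} + \lambda e_\grb$ for varying $\lambda$, including $\lambda = 0$; thus $Bvx_{S\senza\{\grb\}} \subset \ol{P_\gra v x_S}$, and since its dimension is one less (the coefficient of $e_\grb$ is lost), it is one of the orbits in $\calE_\gra(v,S)$. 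For the other: applying $s_\gra$ and using $v s_\grb = s_\gra v$ together with $\grs_{\{\grb\}}$-conjugation, $s_\gra v x_S = v s_\grb \exp(e_S) L = v \exp(e_{s_\grb(S)}) s_\grb L$; since $s_\grb(S\senza\{\grb\}) = S\senza\{\grb\}$ and $s_\grb(\grb) = -\grb$, this involves $e_{-\grb}$ and lands in a genuinely different $B$-orbit, namely $Bs_\gra v x_{S\senza\{\grb\}}$ — which has the same dimension as $Bvx_{S\senza\{\grb\}}$ by the length formula \eqref{eq:dim2} (both have $\grs$-involution equal to $s_\gra \circ \grs_{v(S)}$, noting $\grs_{s_\gra v(S\senza\{\grb\})} = \grs_{v(S\senza\{\grb\})}$ since $s_\gra$ fixes those roots). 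Since $P_\gra v x_S$ is a union of at most three $B$-orbits, one open and $\card\calE_\gra(v,S)$ of them on the boundary, we have produced exactly two boundary orbits, so $\calE_\gra(v,S)$ is exactly $\{(s_\gra v, S\senza\{\grb\}), (v, S\senza\{\grb\})\}$ and has cardinality $2$.

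\textbf{Main obstacle.} The delicate part is verifying that the two pairs $(s_\gra v, S\senza\{\grb\})$ and $(v, S\senza\{\grb\})$ really are \emph{distinct} $B$-orbits and both genuinely lie in the closure (not that one equals the open orbit or they coincide). Distinctness should follow from $s_\gra v \neq v$ together with the parametrization in Corollary \ref{cor:parametrizzazione} ii), but one must be careful that $s_\gra v \in W^P$ — which is where the hypothesis $\grb \in S$ (hence $\grb$ maximal in $\Phi^+(v)$, hence $s_\gra v < v$ by Proposition \ref{prp:GP2} ii)) is used essentially. The closure relations themselves require the explicit Chevalley-group computation of how $u_{-\gra}(t)$ acts on $v\exp(e_S)L$, using $U_{-\grb} \subset L$ and the $P$-action formula \eqref{eq:P-action}; since $\grb$ is maximal in $\Psi \cap \supp$, no unwanted higher terms $e_{\grb + \text{(positive root in }\Psi)}$ appear (they would not be roots of $\gop^\mru$), so the computation stays controlled. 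This is the same style of argument as in Lemma \ref{lem:azP4a} and Lemma \ref{lem:decomposizione-passo}, and I expect it to go through with the same tools.
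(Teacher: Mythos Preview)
Your argument for the implication ``$\grb\in S\Rightarrow\card\calE_\gra(v,S)=2$'' is essentially the paper's, though the paper's computations are cleaner: it shows directly that $vx_{S'}=u_{-\gra}(-1)vx_S$ and $s_\gra vx_{S'}=u_\gra(-1)s_\gra vx_S$ both lie in $P_\gra vx_S$, and then checks $\grs_{v(S')}=\grs_{s_\gra v(S')}=s_\gra\circ\grs_{v(S)}<\grs_{v(S)}$ so neither orbit is open. Your route via $s_\gra vx_S$ is muddled (the identity $v\exp(e_{s_\grb(S)})s_\grb L$ is problematic since $s_\grb\notin P$), but this is easily fixed.

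The real gap is in the converse. From $\grs_{v(S)}(\gra)=-\gra$ you get $\grs_S(\grb)=-\grb$, i.e.\ $\grb\in\Phi_S=\mQ S\cap\Phi$, \emph{not} $\grb\in\mZ S\cap\Phi^+$; Proposition~\ref{prp:PhiS}~ii.e) therefore does not apply. In the non--simply--laced case Proposition~\ref{prp:radici-attive-reali} shows $\Phi_S$ contains genuine half-sums and half-differences $\tfrac12(\grg\pm\grg')$ with $\grg,\grg'\in S$ long, and these need not lie in $S$ nor even in $\Psi$. Concretely, in type $\sfB_n$ with $\grg=e_1+e_j$, $\grg'=e_1-e_j\in S$, the element $e_1=\tfrac12(\grg+\grg')\in\Psi\setminus S$ and $e_j=\tfrac12(\grg-\grg')\in\Phi_P^+$ are both in $\Phi_S$. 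Your argument also silently assumes $\grb\in\Phi^+$, which fails precisely when $[s_\gra v]^P=v$ (then $v^{-1}(\gra)\in\grD_P$ and $\grb<0$).

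The paper closes this gap by a two-case analysis after observing $[s_\gra v]^P\leq v$. If $[s_\gra v]^P=v$, then $\grb':=v^{-1}(\gra)\in\grD_P$ and $\grs_S(\grb')=-\grb'$ forces $\grb'=\tfrac12(\grg-\grg')$ for some $\grg,\grg'\in S$; this puts us exactly in the setting of Lemma~\ref{lem:azP4a}, which says $\card\calE_\gra(v,S)=1$, a contradiction. If $[s_\gra v]^P<v$, then $\grb\in\Psi$ is maximal in $\Phi^+(v)$ by Proposition~\ref{prp:GP2}~i), and Proposition~\ref{prp:radici-attive-reali} gives $\grb=\tfrac12(\grg+\grg')$; since $\grb$ is comparable with and dominates both $\grg,\grg'$, the only possibility is $\grb=\grg=\grg'\in S$. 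So the missing ingredient in your proof is precisely Lemma~\ref{lem:azP4a} together with the maximality argument---neither is captured by invoking Proposition~\ref{prp:PhiS}~ii.e).
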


\begin{proof}
Assume first that $\grb \in S$. Then $s_\gra v\in W^P$, thus 
$(s_\gra v,S \senza \{\grb\})$ and $(v,S \senza \{\grb\})$ are both admissible pairs.
Set $S'=S\senza\{\grb\}$, then 
\begin{gather*}
v x_{S'}= v g_{S'}L=v u_\grb(-1)u_\grb(1)g_{S'} L= u_{-\gra}(-1)vg_S L, \\
s_\gra v x_{S'} 
=u_\gra(-1)\,u_\gra(1)\,s_\gra v g_{S'}L
=u_{\gra}(-1)\,s_\gra v u_{\grb}(1)\,  g_{S'}L
=u_{\gra}(-1)\,s_\gra v g_S L.
\end{gather*}
Thus both the previous elements belong to $P_\gra v x_S$, and $Bvx_{S'}$ and $B s_\gra v x_{S'}$ are both contained in $P_\gra v x_S$. 
Finally $\grs_{v(S')}=\grs_{s_\gra v(S')}=s_\gra \grs_{v(S)}=s_\gra\circ \grs_{v(S)}<\grs_{v(S)}$, therefore $(v,S'), (s_\gra v, S')\in 
\calE_\gra(v,S)$ and the claim follows. 

Suppose now that $\calE_\gra(v,S)$ has cardinality 2. As already recalled, by \cite[4.3]{RS1} it follows that $\grs_{v(S)}(\gra) = -\gra$. Notice that $[s_\gra v]^P\leq v$, otherwise $m_\gra(v,S)\neq (v,S)$ and $\calE_\gra(v,S)=\vuoto$. 

Suppose that $[s_\gra v]^P=v$, and set $\grb'=v^{-1}(\gra)$. Then $\grb'\in \Phi_P$, and since $v(\grb')=\gra$ is a simple root we obtain $\grb' \in \Delta_P$. Moreover $\grs_S(\grb')=-\grb'$, thus by Proposition \ref{prp:radici-attive-reali} we have $\grb'=\tfrac 12 (\grg-\grg')$ for some $\grg,\grg'\in S$. In particular $\grg'=\grg-2\grb'$, and by Lemma \ref{lem:azP4a} we deduce that $\card \calE_\gra(v,S) = 1$, a contradiction.

Therefore we have $[s_\gra v]^P<v$. Notice that $\grb\in \Psi$, that $v(\grb)=-\gra$ and that $\grs_S(\grb)=-\grb$. By Proposition \ref{prp:GP2} i) we see that $\grb$ is maximal in $\Phi^+(v)$, and by Proposition \ref{prp:radici-attive-reali} we have
$\grb=\tfrac 12 (\grg+\grg')$ for some $\grg,\grg'\in S$. Since $\grg$ and $\grg'$ are orthogonal, it follows that $\langle \grg,\grb^\vee\rangle = \langle \grg',\grb^\vee\rangle\neq 0$.
In particular, $\grb$ is comparable both with $\grg$  and $\grg'$, thus by the maximality of $\grb$ we get $\grb \geq \grg$ and $\grb\geq \grg'$. 
By the equality $\grb=\tfrac 12 (\grg+\grg')$ we get then $\grb = \grg = \grg'$, and in particular $\grb \in S$, which concludes the proof.
\end{proof}

Finally we will need the following property of the Bruhat order (which is equivalent to the one-step property of \cite{RS1}). It establishes some basic compatibilities between the Bruhat order on $G/L$ and the action of the minimal parabolic subgroups of $G$.

\begin{lem}	[{\cite[7.11(i) and 6.5]{RS1}}] \label{lem:ordG1}
Let $(u,R), (v,S) \in V_L$ be such that $Bux_R \subset \ol{Bvx_S}$. Let $\gra\in\Delta$ be such that $\calE_\gra(v,S) \neq \vuoto$ and let $(v',S') \in \calE_\gra(v,S)$, then the following hold.
\begin{enumerate}[\indent i)]
 \item If $m_\gra (u,R) \neq (u,R)$, then $P_\gra u x_R \subset \ol{Bvx_S}$ and there exists
       $(u',R') \in \calE_\gra(m_\gra(u,R))$ such that $Bu' x_{R'} \subset \ol{B v' x_{S'}}$.
 \item If $m_\gra(u,R) = (u,R)$, then either $Bux_R \subset \ol{Bv' x_{S'}}$ or there exists $(u',R') \in \calE_\gra(u,R)$ such that $Bu' x_{R'} \subset \ol{Bv' x_{S'}}$.
\end{enumerate}
\end{lem}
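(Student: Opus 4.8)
The plan is to argue geometrically, reducing the statement to the structure of $P_\gra$-orbits on $G/L$ recalled just above. Throughout write $\calO_1 = Bux_R$ and $\calO_2 = Bvx_S$, and fix $\calO_2' = Bv'x_{S'}$ with $(v',S')\in\calE_\gra(v,S)$, so that $\calO_2$ is the open $B$-orbit of $P_\gra\calO_2' = P_\gra vx_S$. Two preliminary facts are needed. First, $\ol{\calO_2}$ is $P_\gra$-stable: one has $P_\gra\ol{\calO_2}\subseteq\ol{P_\gra\calO_2}$, and since $\calO_2$ is dense in the $P_\gra$-orbit $P_\gra vx_S = P_\gra\calO_2$ we get $\ol{P_\gra\calO_2} = \ol{\calO_2}$. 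Second, $\ol{\calO_2} = P_\gra\ol{\calO_2'}$: indeed $P_\gra\ol{\calO_2'}$ is the image of the morphism $P_\gra\times^B\ol{\calO_2'}\ra G/L$, $[g,y]\mapsto gy$, which is proper because $P_\gra\times^B\ol{\calO_2'}$ is closed in $P_\gra/B\times G/L\cong\mP^1\times G/L$ and $\mP^1$ is complete; hence $P_\gra\ol{\calO_2'}$ is closed, and since it contains $P_\gra\calO_2'\supseteq\calO_2$ it contains $\ol{\calO_2}$, the reverse inclusion being obvious.

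Next I would invoke the structure of $P_\gra$-orbits recalled before the statement: for any $B$-orbit $\calO$ in $G/L$, the saturation $P_\gra\calO$ is an irreducible union of at most three $B$-orbits whose unique open orbit is $B\cdot m_\gra(\calO)$. A direct consequence of the definitions is that $\calE_\gra(m_\gra(\calO))$ consists precisely of the $B$-orbits contained in $P_\gra\calO$ distinct from $B\cdot m_\gra(\calO)$: if $\calO'\subseteq P_\gra\calO$ is a $B$-orbit then $P_\gra\calO' = P_\gra\calO$, so $m_\gra(\calO') = m_\gra(\calO)$, and $\calO'$ is not open in $P_\gra\calO'$ unless $\calO' = B\cdot m_\gra(\calO)$. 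Now the hypothesis $\calO_1\subseteq\ol{\calO_2} = P_\gra\ol{\calO_2'}$ means every point of $\calO_1$ has the form $gy$ with $g\in P_\gra$ and $y\in\ol{\calO_2'}$, hence $\ol{\calO_2'}\cap P_\gra\calO_1\neq\vuoto$. This set is $B$-stable, hence a nonempty union of $B$-orbits contained in $P_\gra\calO_1$: either it contains $B\cdot m_\gra(u,R)$, in which case it contains $\ol{B\cdot m_\gra(u,R)} = \ol{P_\gra\calO_1}$ and therefore every $B$-orbit of $P_\gra\calO_1$, or else it contains some $B$-orbit $Bu'x_{R'}$ with $(u',R')\in\calE_\gra(m_\gra(u,R))$.

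Both parts of the lemma then follow by bookkeeping. For i), the assumption $m_\gra(u,R)\neq(u,R)$ gives $P_\gra ux_R = P_\gra\calO_1\subseteq\ol{\calO_2}$ by $P_\gra$-stability, which is the first assertion; moreover $(u,R)$ itself belongs to $\calE_\gra(m_\gra(u,R))$, so in the first case of the dichotomy we may take $(u',R') = (u,R)$ (and in fact even $\calO_1\subseteq\ol{\calO_2'}$), while in the second case the required $(u',R')$ is produced directly. For ii), the assumption $m_\gra(u,R) = (u,R)$ makes $\calE_\gra(m_\gra(u,R)) = \calE_\gra(u,R)$ and makes $\calO_1$ the open orbit of $P_\gra\calO_1$; the dichotomy above then reads exactly ``$\calO_1 = Bux_R\subseteq\ol{\calO_2'}$, or there is $(u',R')\in\calE_\gra(u,R)$ with $Bu'x_{R'}\subseteq\ol{\calO_2'}$''.

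I do not expect a genuine obstacle: the work is in the two preliminary facts and the precise description of $\calE_\gra$. The properness used for $\ol{\calO_2} = P_\gra\ol{\calO_2'}$ must be set up carefully (via the closed embedding into $\mP^1\times G/L$), and one must be careful with the claim that $m_\gra$ is constant along all non-open $B$-orbits of a fixed $P_\gra$-orbit, so that $\calE_\gra(m_\gra(\calO))$ is the full complement of the open orbit; both follow from the $\mathrm{SL}_2$-analysis of \cite[\S 4.3]{RS1} recalled above. This is in effect the translation into our parametrization of \cite[7.11(i) and 6.5]{RS1}.
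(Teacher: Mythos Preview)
Your argument is correct and is precisely the standard Richardson--Springer argument that the paper is invoking by citation; the paper gives no independent proof of this lemma, so there is nothing to compare against beyond the reference to \cite{RS1}. Your two preliminary facts (that $\ol{\calO_2}$ is $P_\gra$-stable and equals $P_\gra\ol{\calO_2'}$) together with the dichotomy on which $B$-orbit of $P_\gra\calO_1$ meets $\ol{\calO_2'}$ are exactly the content of \cite[6.5 and 7.11(i)]{RS1}, and your properness argument for the second fact is the standard one. One very minor point: your remark that one must be ``careful with the claim that $m_\gra$ is constant along all non-open $B$-orbits'' slightly understates what holds---$m_\gra$ is constant on \emph{all} $B$-orbits of a given $P_\gra$-orbit, open or not, since $P_\gra\calO' = P_\gra\calO$ forces the open $B$-orbits to coincide; this is immediate from the definitions and needs no appeal to the $\mathrm{SL}_2$-analysis.
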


\section{The Bruhat order on abelian nilradicals}\label{sez:Pu}

In this section we study the Bruhat order of the $B_v$-orbits in $\gop^\mru$, where $B_v$ acts on $\gop^\mru$ via the action defined in \eqref{eq:P-action}. 
In order to do this, we will prove a more general statement characterizing the Bruhat order for some particular classes of $B$-orbits in $G/L$. 
Indeed, by the discussion in Section \ref{sez:par} (and more specifically by Lemma \ref{lem:trasferimento}),
the Bruhat order among the $B_v$-orbits in $\gop^\mru$ is equivalent to the 
Bruhat order among the $B$-orbits in $BvP/L \subset G/L$. The advantage of working inside $G/L$ is that there we can can use the action of the minimal parabolic subgroups, and argue by induction. As a consequence, we will get a characterization of the Bruhat order on the abelian nilradicals.

The next Lemma will give us the technical ingredients for the inductive step. Notice first that $Bv x_R \subset \overline{B v x_S}$ whenever 
$(v,R) \in \calE_\gra(v,S)$: indeed by definition we have 
$\overline{P_\gra v x_R} = \overline{Bv x_S}$, hence $Bvx_R \subset \overline{P_\gra v x_R} = \overline{Bv x_S}$.

\begin{lem}\label{lem:lem1}
Let $(v,R), (v,S) \in V_L$ with the same component on $W^P$. Suppose that $\grs_{v(R)} \leq \grs_{v(S)}$ and let $\gra\in \Delta$ be such that $\calE_\gra(v,S)\neq \vuoto$, 
then the following hold.
\begin{enumerate}[\indent i)]
  \item If $m_\gra (v,R) \neq (v,R)$, then $m_\gra (v,R) = (v,R')$ for some orthogonal subset $R' \subset \Phi^+(v)$, and $\grs_{v(R)} < \grs_{v(R')} \leq \grs_{v(S)}$.
  \item If $m_\gra (v,R) = (v,R)$ and $\calE_\gra(v,R) = \vuoto$, then there exists an orthogonal subset $S' \subset \Phi^+(v)$ such that $(v,S')\in \calE_\gra(v,S)$ and $\grs_{v(R)} \leq \grs_{v(S')} < \grs_{v(S)}$.
\item If $\calE_\gra(v,R) \neq \vuoto$, then there exist $(v',S') \in \calE_\gra(v,S)$ and an orthogonal subset $R' \subset \Phi^+(v')$ such that $(v',R') \in \calE_\gra(v,R)$ and $\grs_{v'(S')}\geq \grs_{v'(R')}$.
\end{enumerate}
\end{lem}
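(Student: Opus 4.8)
The plan is to handle the three items in turn, in each case using the action of $P_\gra$ to pin down the $W^P$-components and then reducing the inequalities between twisted involutions to the combinatorics of the circle action recorded in Lemmas \ref{lem:inv1} and \ref{lem:azP1}. Two structural facts will be used throughout. First, since $\calE_\gra(v,S)\neq\vuoto$, the orbit $Bvx_S$ is the open one in its own $P_\gra$-orbit, so $m_\gra(v,S)=(v,S)$ and hence $[s_\gra v]^P\leq v$ by (the contrapositive of) Lemma \ref{lem:azP2} ii). This leaves only two cases: either $s_\gra v<v$ with $s_\gra v\in W^P$, or $v^{-1}(\gra)\in\Phi^+_P$; in the latter case $s_\gra v=v\,s_{v^{-1}(\gra)}$ and $P_\gra vP=BvP$, so \emph{every} admissible pair occurring inside $P_\gra vx_R$ or $P_\gra vx_S$ has first component $v$. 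Second, $s_\gra\circ\grs_{v(S)}<\grs_{v(S)}$ by Lemma \ref{lem:azP1} ii).

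For item (i) I would first check that the first component of $m_\gra(v,R)=(v,R')$ equals $v$: this is automatic when $v^{-1}(\gra)\in\Phi^+_P$, and when $s_\gra v<v$ one observes that the open $B$-orbit of $P_\gra vx_R$ lies over the dense piece $BvP/P$ of $P_\gra vP/P=BvP/P\sqcup Bs_\gra vP/P$ (equivalently, use Lemma \ref{lem:azP2} ii) for both $m_\gra(v,R)$ and $m_\gra(v,S)$). Then $R'\subset\Phi^+(v)$ because $(v,R')$ is admissible, and $\grs_{v(R')}=s_\gra\circ\grs_{v(R)}>\grs_{v(R)}$ by Lemma \ref{lem:azP1} i). Combined with $s_\gra\circ\grs_{v(S)}<\grs_{v(S)}$ this forces $\grs_{v(R)}\neq\grs_{v(S)}$, hence $\grs_{v(R)}<\grs_{v(S)}$, and Lemma \ref{lem:inv1} iii) gives $\grs_{v(R')}=s_\gra\circ\grs_{v(R)}\leq\grs_{v(S)}$. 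For item (ii), the hypotheses $m_\gra(v,R)=(v,R)$ and $\calE_\gra(v,R)=\vuoto$ say that $P_\gra vx_R$ is a single $B$-orbit; this already excludes $s_\gra v<v$ (whose image in $G/P$ would consist of two orbits), so $v^{-1}(\gra)\in\Phi^+_P$ and every element of $\calE_\gra(v,S)$ has first component $v$. Picking one, $(v,S')$, Lemma \ref{lem:azP1} i) gives $\grs_{v(S')}=s_\gra\circ\grs_{v(S)}<\grs_{v(S)}$, while $\calE_\gra(v,R)=\vuoto$ gives $s_\gra\circ\grs_{v(R)}>\grs_{v(R)}$ by Lemma \ref{lem:azP1} iii); again $\grs_{v(R)}<\grs_{v(S)}$, and Lemma \ref{lem:inv1} iii) yields $\grs_{v(R)}\leq s_\gra\circ\grs_{v(S)}=\grs_{v(S')}$.

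For item (iii), where $\calE_\gra(v,R)\neq\vuoto$ as well: if $s_\gra v<v$, Lemma \ref{lem:azP2} i) supplies $(s_\gra v,R')\in\calE_\gra(v,R)$ and $(s_\gra v,S')\in\calE_\gra(v,S)$, and I take $v'=s_\gra v$; if $v^{-1}(\gra)\in\Phi^+_P$, I take $v'=v$ and choose $(v,R')\in\calE_\gra(v,R)$ and $(v,S')\in\calE_\gra(v,S)$ freely (all have first component $v$). In both situations Lemma \ref{lem:azP1} i) gives $\grs_{v'(R')}=s_\gra\circ\grs_{v(R)}$ and $\grs_{v'(S')}=s_\gra\circ\grs_{v(S)}$, and Lemma \ref{lem:azP1} ii) gives $s_\gra\circ\grs_{v(R)}<\grs_{v(R)}$ and $s_\gra\circ\grs_{v(S)}<\grs_{v(S)}$; if $\grs_{v(R)}<\grs_{v(S)}$ then Lemma \ref{lem:inv1} ii) gives $\grs_{v'(R')}<\grs_{v'(S')}$, and if $\grs_{v(R)}=\grs_{v(S)}$ the two sides coincide, so in all cases $\grs_{v'(S')}\geq\grs_{v'(R')}$. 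The main obstacle is not the involution combinatorics, which is essentially packaged in the cited lemmas, but the bookkeeping of the $W^P$-components: one must be certain that the new pair produced on the $R$-side can be taken with the same first component as the one forced on the $S$-side, and the dichotomy coming from $m_\gra(v,S)=(v,S)$ is exactly what makes this possible. The subsidiary point that a single-orbit $P_\gra vx_R$ is incompatible with $s_\gra v<v$, used in item (ii), also has to be isolated.
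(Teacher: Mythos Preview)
Your proof is correct and follows essentially the same route as the paper's: in each item you pin down the $W^P$-component using the dichotomy $s_\gra v<v$ versus $v^{-1}(\gra)\in\Phi^+_P$ coming from $m_\gra(v,S)=(v,S)$, and then reduce the involution inequalities to Lemma~\ref{lem:inv1} via Lemma~\ref{lem:azP1}. The paper is terser in item~(ii), invoking Lemma~\ref{lem:azP2}~iii) (applied to $(v,R)$) to get $[s_\gra v]^P=v$ directly, whereas you argue geometrically that a single $B$-orbit in $P_\gra vx_R$ must surject onto a single $B$-orbit in $P_\gra vP/P$; these are equivalent. One small presentational point: in item~(i) your parenthetical ``use Lemma~\ref{lem:azP2}~ii) for both $m_\gra(v,R)$ and $m_\gra(v,S)$'' is misleading, since that lemma gives no information when $m_\gra(v,R)\neq(v,R)$; the real reason the first component of $m_\gra(v,R)$ is $v$ is exactly the density argument you already gave.
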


\begin{proof}
i). Because $\calE_\gra(v,S) \neq \vuoto$, notice that $m_\gra (v,S) = (v,S)$. Therefore by Lemma \ref{lem:azP2} ii) we have that $[s_\gra v]^P \leq v$. It follows that $BvP/L$ intersects $P_\gra v x_R$ in an open subset of the latter. Since $m_\gra (v,R) \neq (v,R)$, we get that $m_\gra (v,R) = (v,R')$ for some admissible pair of the shape $(v,R')$, and by Lemma \ref{lem:azP1} i) we get $\grs_{v(R)} < \grs_{v(R')} = s_\gra \circ \grs_{v(R)}$. On the other hand by Lemma \ref{lem:azP1} ii) we have $s_\gra \circ \grs_{v(S)} < \grs_{v(S)}$, thus by Lemma \ref{lem:inv1} iii) we get $\grs_{v(R')} \leq \grs_{v(S)}$.

ii). Let $(v',S') \in \calE_\gra(v,S)$, then $v' = v$ by Lemma \ref{lem:azP2} iii). On the other hand by Lemma \ref{lem:azP1} i) we have $\grs_{v(S')} = s_\gra \circ \grs_{v(S)} < \grs_{v(S)}$ and $s_\gra \circ \grs_{v(R)} > \grs_{v(R)}$, thus by Lemma \ref{lem:inv1} iii) we get $\grs_{v(R)} \leq \grs_{v(S')} < \grs_{v(S)}$.

iii). Let $(v',S') \in \calE_\gra(v,S)$ and $(v'',R') \in \calE_\gra(v,R)$. Notice that $[s_\gra v]^P \leq v$. If $[s_\gra v]^P = v$, then $v' = v''$. Otherwise $[s_\gra v]^P = s_\gra v < v$, and by Lemma \ref{lem:azP2} i) we can assume that $v' = v'' = s_\gra v$. By Lemma \ref{lem:azP1} i) we have $\grs_{v'(S')} = s_\gra \circ \grs_{v(S)}$ and $\grs_{v'(R')} = s_\gra \circ \grs_{v(R)}$. On the other hand by Lemma \ref{lem:azP1} ii) we have the inequalities $s_\gra \circ \grs_{v(R)} < \grs_{v(R)}$ and $s_\gra \circ \grs_{v(S)} < \grs_{v(S)}$, thus by Lemma \ref{lem:inv1} ii) we get $s_\gra \circ \grs_{v(R)} < s_\gra \circ \grs_{v(S)}$.
\end{proof}

We can now describe the Bruhat order on $BvP/L$, or equivalently the Bruhat order among the $B_v$-orbits in $\gop^\mru$.

\begin{teo}\label{teo:panv}
Let $(v,R), (v,S) \in V_L$. Then 
\begin{enumerate}[\indent i)]
 \item $B_v e_R \subset \overline{B_v e_S}$ if and only if $\grs_{v(R)}\leq \grs_{v(S)}$;
 \item $Bvx_R \subset \ol{B v x_S}$ if and only if $\grs_{v(R)} \leq \grs_{v(S)}$.
\end{enumerate}
\end{teo}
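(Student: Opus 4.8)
The plan is to prove (i) and (ii) simultaneously by induction on $\ell(v)$, using the equivalence between the two statements provided by Lemma~\ref{lem:trasferimento} (which identifies $B_v$-orbits in $\gop^\mru$ with $B$-orbits in $BvP/L$, compatibly with closures). The forward implication is easy: if $Bvx_R\subset\ol{Bvx_S}$, then $\grs_{v(R)}=\varphi_\calI(v,R)\leq\varphi_\calI(v,S)=\grs_{v(S)}$ because $\varphi_\calI$ is monotone for the Bruhat orders (a fact from Richardson--Springer, already invoked in the paper), or more elementarily because closure inclusion forces the dimension inequality together with the fact that the involution can only decrease on passing to the closure. So the work is entirely in the reverse implication: assuming $\grs_{v(R)}\leq\grs_{v(S)}$, produce the closure inclusion $Bvx_R\subset\ol{Bvx_S}$.

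The base case $\ell(v)=0$ is immediate, since then $B_v=B$ acts transitively on $\gop^\mru$ and there is a single orbit (equivalently $BP/L$ is a single $B$-orbit). For the inductive step, assume $\ell(v)>0$ and $\grs_{v(R)}\leq\grs_{v(S)}$ but $(v,R)\neq(v,S)$, so $\grs_{v(R)}<\grs_{v(S)}$. Choose $\gra\in\Delta$ with $s_\gra\circ\grs_{v(S)}<\grs_{v(S)}$; by Lemma~\ref{lem:azP1} ii) we have $\calE_\gra(v,S)\neq\vuoto$. Now I would run the trichotomy of Lemma~\ref{lem:lem1} on $(v,R)$ relative to this $\gra$:
\begin{enumerate}[\indent (a)]
\item If $m_\gra(v,R)\neq(v,R)$, then by Lemma~\ref{lem:lem1} i) we get $(v,R')=m_\gra(v,R)$ with $\grs_{v(R)}<\grs_{v(R')}\leq\grs_{v(S)}$. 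Since $L(\grs_{v(R')})>L(\grs_{v(R)})$ by Lemma~\ref{lem:inv2}, the orbit $Bvx_{R'}$ has strictly larger dimension; one argues by a downward induction on $L(\grs_{v(S)})-L(\grs_{v(R)})$ (a secondary induction nested inside the induction on $\ell(v)$) to conclude $Bvx_{R'}\subset\ol{Bvx_S}$, and then $Bvx_R\subset\ol{P_\gra vx_R}=\ol{Bvx_{R'}}\subset\ol{Bvx_S}$ using $m_\gra(v,R)=(v,R')$ together with $\calE_\gra(v,R)\ne\vuoto$.
\item If $m_\gra(v,R)=(v,R)$ and $\calE_\gra(v,R)=\vuoto$, then by Lemma~\ref{lem:lem1} ii) there is $(v,S')\in\calE_\gra(v,S)$ with $\grs_{v(R)}\leq\grs_{v(S')}<\grs_{v(S)}$. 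Here $\ell$ of the $W^P$-component is unchanged but $L(\grs_{v(S')})<L(\grs_{v(S)})$, so again by the secondary induction $Bvx_R\subset\ol{Bvx_{S'}}$, and $Bvx_{S'}\subset\ol{Bvx_S}$ since $(v,S')\in\calE_\gra(v,S)$ (the remark preceding Lemma~\ref{lem:lem1}).
\item If $\calE_\gra(v,R)\neq\vuoto$, then by Lemma~\ref{lem:lem1} iii) there exist $(v',S')\in\calE_\gra(v,S)$ and an orthogonal $R'\subset\Phi^+(v')$ with $(v',R')\in\calE_\gra(v,R)$ and $\grs_{v'(R')}\leq\grs_{v'(S')}$. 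Now either $v'=v$ (when $[s_\gra v]^P=v$) and we recurse via the secondary induction on the pair $(v,R'),(v,S')$ with $\grs_{v(R')}<\grs_{v(R)}$, or $v'=s_\gra v<v$ and we invoke the outer inductive hypothesis on $\ell(v')<\ell(v)$ to get $Bv'x_{R'}\subset\ol{Bv'x_{S'}}$; in either case applying $P_\gra$ (via Lemma~\ref{lem:ordG1} or directly, since $P_\gra\ol{Bv'x_{S'}}\supset Bvx_S$ gives $\ol{Bvx_S}\supset P_\gra Bv'x_{R'}\supset Bvx_R$) yields the claim.
\end{enumerate}

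The main obstacle I anticipate is bookkeeping the double induction cleanly: the outer induction is on $\ell(v)$, but within a fixed $v$ the argument passes to orbits $(v,R')$ with the \emph{same} $v$ and larger length $L$, so one needs a well-founded secondary parameter — naturally $\dim\ol{Bvx_S}-\dim\ol{Bvx_R}=L(\grs_{v(S)})-L(\grs_{v(R)})$ — and must check that every reduction step either strictly drops $\ell(v)$ or keeps $\ell(v)$ fixed while strictly dropping this length difference. Case (c) with $[s_\gra v]^P=v$ is the delicate one, since there $v$ is unchanged; here one must verify that the new comparison $(v,R'),(v,S')$ has strictly smaller length gap, which follows because $R'\subsetneq R$ (as $(v,R')\in\calE_\gra(v,R)$ forces $L(\grs_{v(R')})<L(\grs_{v(R)})$) while $S'$ still satisfies $\grs_{v(R')}\le\grs_{v(S')}\le\grs_{v(S)}$. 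A secondary subtlety is ensuring in case (c), when $v'=v$, that one genuinely gets a closure relation back up to $(v,S)$ rather than merely to $(v,S')$ — but this is exactly what the remark before Lemma~\ref{lem:lem1} guarantees since $(v,S')\in\calE_\gra(v,S)$ implies $Bvx_{S'}\subset\ol{Bvx_S}$.
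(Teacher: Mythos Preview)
Your overall architecture matches the paper's proof closely: the same three-way split via Lemma~\ref{lem:lem1}, the same use of $P_\gra$ to lift closure relations back up, and the same appeal to Lemma~\ref{lem:trasferimento} for the equivalence of (i) and (ii). However, your induction scheme has a genuine gap in case~(c) when $[s_\gra v]^P = v$ (so $v'=v$). You claim that the new comparison $(v,R'),(v,S')$ has strictly smaller length gap $L(\grs_{v(S')}) - L(\grs_{v(R')})$, but this is false: since $(v,R')\in\calE_\gra(v,R)$ and $(v,S')\in\calE_\gra(v,S)$, \emph{both} $L$-values drop by exactly one (Lemma~\ref{lem:inv2} combined with Lemma~\ref{lem:azP1}~i)), so the gap is unchanged. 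Your justification ``$R'\subsetneq R$ while $S'$ still satisfies $\grs_{v(R')}\le\grs_{v(S')}\le\grs_{v(S)}$'' overlooks that $S'$ has changed too. This case does arise (e.g.\ via Lemma~\ref{lem:azP4a} in types $\mathsf B$, $\mathsf C$), so the induction as written does not terminate.

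The fix is straightforward and is exactly what the paper does: replace your outer parameter $\ell(v)$ by $L(\grs_{v(S)})$. Then in cases~(b) and~(c) the quantity $L(\grs_{v(S)})$ (resp.\ $L(\grs_{v'(S')})$) strictly decreases regardless of whether $v'=v$ or $v'=s_\gra v$, while in case~(a) it stays fixed and your secondary gap parameter decreases. In other words, the paper inducts lexicographically on $\big(L(\grs_{v(S)}),\ \ell(\grs_{v(S)}) - \ell(\grs_{v(R)})\big)$; with that scheme, the base cases are $L(\grs_{v(S)})=0$ (forcing $S=R=\varnothing$) and $\ell(\grs_{v(R)})=\ell(\grs_{v(S)})$ (forcing $R=S$ via Corollary~\ref{cor:uTvS}), and no reference to $\ell(v)$ is needed at all. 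Alternatively you could salvage your scheme by adding $L(\grs_{v(S)})$ as a middle parameter in a triple lexicographic induction on $\big(\ell(v),\,L(\grs_{v(S)}),\,L(\grs_{v(S)})-L(\grs_{v(R)})\big)$, but the $\ell(v)$ layer is then redundant.
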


\begin{proof}
The two claims are equivalent by Lemma \ref{lem:trasferimento}. To prove the first implication, we use the formulation in $\gop^\mru$. 
Assume that $e_R \in \overline{B_v e_S}$. Since  $B_v e_S = B_L(e_S + \gou_v)$, by taking the exponential $\exp : \gop^\mru \ra P^\mru$ 
we get $g_R \in \ol{B_ L U_v g_S B_L} = \ol{B_v g_S B_L}$. Since $v B_v v^{-1}\subset B$, this implies that 
$g_{v(R)} \in \ol{B g_{v(S)} B}$,
hence $\grs_{v(R)} \leq \grs_{v(S)}$ by Lemma \ref{lem:twisted-involution}.

We now prove the second implication, by using the formulation in $G/L$. Assume that $\grs_{v(R)} \leq \grs_{v(S)}$.
We proceed by induction both on $L(\grs_{v(S)})$ and on $\ell(\grs_{v(S)}) - \ell(\grs_{v(R)})$. 
Suppose that $L(\grs_{v(S)}) = 0$, then $S = \vuoto$ and $\grs_{v(S)} = 1$, thus $\grs_{v(R)} = 1$ and $R = \vuoto$ as well. 
More generally, if $\ell(\grs_{v(R)}) = \ell(\grs_{v(S)})$, 
then $\grs_{v(R)} = \grs_{v(S)}$ and by Corollary \ref{cor:uTvS} it follows $R = S$.

Suppose now that $L(\grs_{v(S)}) > 0$ and $\ell(\grs_{v(R)}) < \ell(\grs_{v(S)})$. 
Since $\ell(\grs_{v(S)}) > 0$, by Lemma \ref{lem:azP1} ii) together with Lemma \ref{lem:inv0} ii), 
there exists $\gra \in \grD$ such that $\calE_\gra(v,S) \neq \vuoto$.

Suppose that $m_\gra (v,R) \neq (v,R)$ and set $m_\gra (v,R) = (v',R')$. 
Then by Lemma \ref{lem:lem1} i) we have $v' = v$, and $\grs_{v(R)} < \grs_{v(R')} \leq \grs_{v(S)}$. 
Therefore by induction we get $vx_{R'} \in \ol{B v x_S}$, and since 
$v x_R \in \ol{B v x_{R'}}$ it follows that $vx_R \in \ol{B v x_S}$ as well.

Suppose that $m_\gra (v,R) = (v,R)$ and $\calE_\gra(v,R) = \vuoto$. 
Then by Lemma \ref{lem:lem1} ii) there exists an admissible pair 
$(v',S')\in \calE_\gra(v,S)$ such that $v' = v$ and 
$\grs_{v(R)} \leq \grs_{v(S')} < \grs_{v(S)}$. 
Therefore $vx_R \in \ol{B v x_{S'}}$ by induction, and since $x_{S'} \in \ol{B v x_S}$ it follows $v x_R \in \ol{B v x_S}$ as well.

Suppose finally that $\calE_\gra(v,R) \neq \vuoto$. 
Then by Lemma \ref{lem:lem1} iii) there exist 
$(v',S') \in \calE_\gra(v,S)$ and $(v'',R') \in \calE_\gra(v,R)$ such that $v' = v''$ and $\grs_{v'(R')} \leq \grs_{v'(S')}$. On the other hand 
$L(\grs_{v(S)}) - L(\grs_{v'(S')})= \dim Bvx_S  - \dim Bv'x_{S'} = 1$, thus we get $B v'x_{R'} \subset \ol{B {v'} x_{S'}}$ by the inductive hypothesis. 
Applying $P_\gra$ to the previous inclusion we get then
$$Bvx_R \subset P_\gra v'x_{R'} \subset \ol{P_\gra v'x_{S'}} = \ol{Bvx_S},$$
and the proof is complete.
\end{proof}

In particular we get the following corollary, which was conjectured by Panyushev \cite[Conjecture 6.2]{Pa}.

\begin{cor}	\label{cor:panyushev-conj}
Let $B$ act on $\gop^\mru$ via the adjoint action.  
Suppose that $R, S \subset \Psi$ are orthogonal subsets, 
then $Be_R \subset \ol{B e_S}$ if and only if $\grs_{w_P(R)} \leq \grs_{w_P(S)}$.
\end{cor}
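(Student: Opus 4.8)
The plan is to derive this corollary directly from Theorem \ref{teo:panv}(i) applied to the special case $v = w^P$, the longest element of $W^P$. Recall from the proof of Corollary \ref{cor:parametrizzazione}(i) that $\Phi^+(w^P) = \Psi$ and $B_{w^P} = B_L$; moreover, since $P^\mru$ acts trivially on $\gop^\mru$, every $B$-orbit on $\gop^\mru$ coincides with a $B_L$-orbit, that is, $Be_R = B_{w^P}e_R$ and $Be_S = B_{w^P}e_S$. Therefore $(w^P, R)$ and $(w^P, S)$ are admissible pairs (as $w^P(\Psi) \subset \Phi^-$), and Theorem \ref{teo:panv}(i) gives immediately
$$
Be_R \subset \ol{Be_S} \iff B_{w^P}e_R \subset \ol{B_{w^P}e_S} \iff \grs_{w^P(R)} \leq \grs_{w^P(S)}.
$$

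The only remaining point is to replace $w^P$ by $w_P$ in the indices of the involutions, i.e.\ to show $\grs_{w^P(R)} \leq \grs_{w^P(S)}$ if and only if $\grs_{w_P(R)} \leq \grs_{w_P(S)}$. For this I would reuse exactly the computation already carried out at the end of the proof of Corollary \ref{cor:dim-formula}: writing $w_0 = w^P w_P$ for the longest element of $W$ and using that $w^P$ and $w_0$ are involutions, one checks that $w_0^{-1} = w^P w_P$ and hence, for any orthogonal subset $S \subset \Psi$,
$$
\grs_{w_P(S)} = w_0\, \grs_{w^P(S)}\, w_0^{-1}.
$$
Thus conjugation by $w_0$ carries $\grs_{w^P(R)}$ to $\grs_{w_P(R)}$ and $\grs_{w^P(S)}$ to $\grs_{w_P(S)}$. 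Since conjugation by the longest element $w_0$ is an automorphism of $W$ that preserves length (indeed $\ell(w_0 x w_0^{-1}) = \ell(x)$ for all $x \in W$), it is an automorphism of the Bruhat poset $(W, \leq)$, so it preserves the relation $\leq$ in both directions. This gives the desired equivalence and completes the proof.

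I do not expect any serious obstacle here: the substantive content is entirely in Theorem \ref{teo:panv}, and the corollary is a formal consequence together with the elementary observation that $w_0$-conjugation is a Bruhat-order automorphism. The one place requiring a line of care is the verification that conjugation by $w_0$ does interchange $\grs_{w^P(\cdot)}$ and $\grs_{w_P(\cdot)}$ — but this is precisely the identity already used in Corollary \ref{cor:dim-formula}, so it can simply be cited rather than re-derived.
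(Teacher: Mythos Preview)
Your proposal is correct and follows essentially the same argument as the paper: reduce to $B_{w^P}=B_L$-orbits, apply Theorem~\ref{teo:panv}(i) with $v=w^P$, and then pass from $w^P$ to $w_P$ via conjugation by $w_0$, which preserves the Bruhat order. The only cosmetic difference is that the paper phrases the conjugation identity using that $w_0$ and $w_P$ are involutions (so $w_0 w^P = w_P$ directly from $w_0 = w^P w_P$), whereas you cite the version from Corollary~\ref{cor:dim-formula}; either way the identity $\grs_{w_P(S)} = w_0\,\grs_{w^P(S)}\,w_0^{-1}$ holds and the argument goes through.
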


\begin{proof}
Since $P^\mru$ is abelian, every adjoint $B$-orbit in $\gop^\mru$ is a $B_L$-orbit. 
On the other hand, by Theorem \ref{teo:panv} i) applied to the case $v = w^P$, 
we have that $e_R \in \ol{B_L e_S}$ if and only if $\grs_{w^P(R)} \leq \grs_{w^P(S)}$. 
Now recall that $w_0=w^Pw_P$ and that $w_0$ and $w_P$ are involutions, hence
$\grs_{w^P(R)} = w_0 \grs_{w_P(R)} w_0^{-1}$ and similarly $\grs_{w^P(S)} = w_0 \grs_{w_P(S)} w_0^{-1}$.
Therefore the claim follows by noticing that the conjugation by $w_0$ preserves the Bruhat order.
\end{proof}

\section{The Bruhat order on Hermitian symmetric varieties}\label{sez:GL1}

We now come to main theorem of the paper. In this section and in the next 
one, we will prove a conjecture of Richardson and Ryan \cite[Conjecture 5.6.2]{RS3} describing the Bruhat order on $G/L$.

Recall from the Introduction the definition \eqref{eq:ordVL} of the partial order on $V_L$. Then we will prove the following theorem.

\begin{teo}\label{teo:conRR}
Let $(u,R), (v,S) \in V_L$. Then $Bux_R \subset \ol{Bvx_S}$ if and only if $(u,R) \leq (v,S)$.
\end{teo}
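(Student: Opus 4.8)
The easy implication — that $Bux_R \subset \ol{Bvx_S}$ forces $(u,R) \le (v,S)$ — follows from the work of Richardson and Springer: projecting to $G/P$ and $G/P^-$ gives $u \ge [u\grs_R]^P$... wait, more precisely $v \ge u$ and $[u\grs_R]^P \ge [v\grs_S]^P$ (closures of orbits map to closures of orbits under the $B$-equivariant projections $\pi_\pm$), while the inclusion of orbit closures in a symmetric variety forces $\grs_{u(R)} \le \grs_{v(S)}$ for the associated involutions, and $[u\grs_R]^P \le u$ is automatic (this is $\varphi_-(u,R)\le\varphi_+(u,R)$, true for every admissible pair since $x_R\in P/L$). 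So the real content is the reverse implication, which I would prove by a double induction, mirroring the proof of Theorem \ref{teo:panv}: an outer induction on $L(\grs_{v(S)})$ (equivalently on $\dim Bvx_S$), and an inner induction on $\ell(v) - \ell(u) + \big(\ell(\grs_{v(S)}) - \ell(\grs_{v(R)})\big)$ or some similar nonnegative quantity measuring the gap between the two pairs.

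\textbf{Base and degenerate cases.} If $L(\grs_{v(S)}) = 0$ then $S = \vuoto$, so $Bvx_S$ is the closed orbit $BvP/L$ of dimension $\card\Psi$; then $(u,R)\le(v,S)$ forces $\grs_{u(R)}\le\id$, hence $R=\vuoto$, and $v\ge u \ge [u\grs_\vuoto]^P = [u]^P = u$ forces $u = v$. If instead $v = u$ and $S=R$ there is nothing to prove; and if $v=u$ with $\grs_{v(R)}\le\grs_{v(S)}$ we are exactly in the situation of Theorem \ref{teo:panv} ii), which gives $Bvx_R\subset\ol{Bvx_S}$ directly. So the heart of the argument is when $v \ne u$ or the pairs differ nontrivially.

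\textbf{Inductive step.} Since $L(\grs_{v(S)}) > 0$, by Lemma \ref{lem:azP1} ii) (via Lemma \ref{lem:inv0}) there is $\gra\in\grD$ with $\calE_\gra(v,S)\ne\vuoto$; pick $(v',S')\in\calE_\gra(v,S)$, so $\dim Bv'x_{S'} = \dim Bvx_S - 1$. I then split according to the behaviour of $(u,R)$ under $m_\gra$, using Lemma \ref{lem:ordG1} as the main engine: if $m_\gra(u,R)\ne(u,R)$, then $P_\gra u x_R \subset \ol{Bvx_S}$ and there is $(u',R')\in\calE_\gra(m_\gra(u,R))$ with $Bu'x_{R'}\subset\ol{Bv'x_{S'}}$; if $m_\gra(u,R)=(u,R)$, then either $Bux_R\subset\ol{Bv'x_{S'}}$ already, or there is $(u',R')\in\calE_\gra(u,R)$ with $Bu'x_{R'}\subset\ol{Bv'x_{S'}}$. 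In each branch the strategy is: verify that the relevant smaller pairs still satisfy the combinatorial inequalities \eqref{eq:ordVL} (with respect to $(v',S')$ where appropriate), apply the outer inductive hypothesis to obtain the corresponding closure inclusion in $\ol{Bv'x_{S'}}$, and then apply $P_\gra$ to promote it to an inclusion in $\ol{P_\gra v'x_{S'}} = \ol{Bvx_S}$, concluding $Bux_R\subset\ol{Bvx_S}$. To make this run, I need the "lifting" statements for all three Weyl-group invariants simultaneously: for $\grs_{v(S)}$ these are Lemmas \ref{lem:inv0}, \ref{lem:inv1}; for $v\in W^P$ and for $\nu = [v\grs_S]^P\in W^P$ these are Lemmas \ref{lem:azP2}, \ref{lem:azP3} together with the lifting property Lemma \ref{lem:parallelogramma} — the point is that $m_\gra$ and $\calE_\gra$ act on each of $v$, $\nu$, $\grs_{v(S)}$ in a controlled way (multiplication or $\circ$-multiplication by $s_\gra$, up or down), so the three inequalities defining $\le$ are stable under passing to the smaller pairs. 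The one delicate accounting point, which is the main obstacle, is to choose $(v',S')$ and $(u',R')$ coherently so that all four inequalities $[v'\grs_{S'}]^P \le [u'\grs_{R'}]^P \le u' \le v'$ and $\grs_{u'(R')}\le\grs_{v'(S')}$ hold at once: the choice of $\gra$ must be made carefully (e.g. favouring $s_\gra v < v$ when $u < v$, so that Lemma \ref{lem:azP2} i) lets us take $v' = u' = s_\gra v$, and using the case analysis from \S\ref{sez:azioneminimali} on whether $\gra$ is real/complex/imaginary for the pairs), and when $v = u$ one must fall back on Theorem \ref{teo:panv}. I expect that, exactly as in the $\gop^\mru$ case, the truly new difficulty over Theorem \ref{teo:panv} is keeping the two $W^P$-components $v$ and $\nu$ synchronized through the induction — this is where the somewhat intricate inequality chain in \eqref{eq:ordVL} is designed precisely to be preserved, and checking its preservation in each branch is the bulk of the work.
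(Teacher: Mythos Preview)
Your overall shape is right, and matches the paper's cases $1_\gra$--$4_\gra$: pick $\gra$ with $\calE_\gra(v,S)\ne\vuoto$, find smaller pairs $(u',R')\le(v',S')$ via the lifting-type Lemmas \ref{lem:lem2}, \ref{lem:lem2,5}, \ref{lem:lem3}, apply induction, then multiply by $P_\gra$. (One side remark: Lemma \ref{lem:ordG1} is stated with the \emph{geometric} inclusion $Bux_R\subset\ol{Bvx_S}$ as hypothesis, so it cannot be your ``main engine'' in this direction; what you actually need, and what the paper proves in Lemmas \ref{lem:lem2} and \ref{lem:lem3}, is the purely combinatorial fact that the inequalities in \eqref{eq:ordVL} are preserved when you pass to suitable elements of $\calE_\gra$.)

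The genuine gap is that you have not confronted the case where this scheme \emph{cannot} be made to work for any $\gra$. Concretely: it can happen that for \emph{every} $\gra$ with $\calE_\gra(v,S)\ne\vuoto$ one has $\card\calE_\gra(u,R)=2$, while $\calE_\gra(v,S)=\{(v',S')\}$ with \emph{both} $v'=s_\gra v<v$ and $[v'\grs_{S'}]^P=s_\gra\nu>\nu$. Writing $\calE_\gra(u,R)=\{(u,R'),(s_\gra u,R')\}$ as in Lemma \ref{lem:azP4}, one finds (Claim \ref{lem:proof2} ii) in the paper) that $u$ and $s_\gra v$ are \emph{incomparable}, so $(u,R')\not\le(v',S')$; and a priori there is no reason why the $\nu'\le\mu'$ inequality holds for $(s_\gra u,R')$ either. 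So neither choice of $(u',R')$ visibly works, and no amount of ``choosing $\gra$ carefully'' among those with $\calE_\gra(v,S)\ne\vuoto$ gets you out.

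The paper's resolution introduces a genuinely new ingredient that your plan is missing. Using the chain property in $W^P$, pick a \emph{second} simple root $\gra_0$ with $u\le s_{\gra_0}v<v$; by the incomparability just noted one must have $\calE_{\gra_0}(v,S)=\vuoto$, and in fact $m_{\gra_0}(v,S)=(v,S_0)$ with $S_0=S\cup\{\grb_0\}$ orthogonal and $\gra,\gra_0$ orthogonal (Claims \ref{lem:proof3}, \ref{lem:proof4}). One then applies the induction on $\ell(v)$ (not on $L(v,S)$!) to the pair $(s_{\gra_0}v,S)$, and the crux is Claim \ref{lem:proof7}: showing $(s_\gra u,R')\le(s_\gra v,S)$, whose only nonobvious inequality $s_\gra\nu\le\mu$ is extracted from the known inequalities by a short computation with the cominuscule coweight $\gro_P^\vee$ (using $\nu(\gro_P^\vee)-\mu(\gro_P^\vee)\ge\gra-\gra_0$ together with $\gra\ne\gra_0$). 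This coweight argument, and the passage through a simple root $\gra_0$ with $\calE_{\gra_0}(v,S)=\vuoto$, are the ideas you still need.
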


We start with a few remarks on the previous theorem.

\begin{oss}	\label{oss:deodhar}
Notice that one of the conditions involved in the definition of the combinatorial order among admissible pairs is always fulfilled. 
More precisely, if $(u,R) \in V_L$, then we always have $[u \grs_R]^P  \leq u$. 
If indeed $\grb\in R$ then $u(\grb) \in \Phi^-$, namely $us_\grb < u$. Thus by the orthogonality of $R$ we get $u \grs_R \leq u$.
\end{oss}

\begin{oss}	\label{oss:m_alpha-compatibile}
Let $(v,S) \in V_L$ and let $\gra \in \grD$, then the inequality $(v,S) \leq m_\gra (v,S)$ is always fulfilled. If indeed $v'$ 
is the longer of the two elements $v$ and $[s_\gra v]^P$, then $P_\gra v x_S \cap Bv'P/L$ is a dense open subset of $P_\gra v x_S$. Thus it must be $m_\gra(v,S) = (v',S')$ for some orthogonal subset $S' \subset \Phi^+(v')$, and by construction $v' \geq v$. Similarly, if $\mu = [v\grs_S]^P$ and $\nu$ is the shorter of the two elements $\mu$ and $s_\gra \mu$, then $P_\gra v x_S \cap B\nu P^-/L$ is a dense open subset of $P_\gra v x_S$, thus $[v'\grs_S']^P = \nu$ and we have $\nu \leq \mu$. Finally, the inequality $\grs_{v'(S')} \geq \grs_{v(S)}$ follows from Lemma \ref{lem:azP1} i).
\end{oss}

The first implication of Theorem \ref{teo:conRR} was already known. 
We recall the proof in the following lemma, which relies substantially on a result from \cite{RS3}.

\begin{lem}\label{lem:daGaC}
Let $(u,R), (v,S) \in V_L$, and suppose that $Bux_R \subset \ol{Bv x_S}$. Then $(u,R) \leq (v,S)$.
\end{lem}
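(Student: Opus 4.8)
Unwinding \eqref{eq:ordVL}, the inequality $(u,R)\leq(v,S)$ amounts to the four conditions $u\leq v$, $[u\grs_R]^P\leq u$, $[v\grs_S]^P\leq[u\grs_R]^P$ and $\grs_{u(R)}\leq\grs_{v(S)}$; the second is automatic by Remark \ref{oss:deodhar}, so three remain, and the plan is to deduce each of them from the hypothesis $Bux_R\subset\ol{Bvx_S}$ by transporting the inclusion of orbit closures along a suitable $B$-equivariant morphism. For $u\leq v$ I would apply the projection $\pi\colon G/L\to G/P$: being $B$-equivariant and continuous it sends $\ol{Bvx_S}$ into $\ol{\pi(Bvx_S)}=\ol{BvP/P}$, whence $BuP/P=\pi(Bux_R)\subset\ol{BvP/P}$, which for $u,v\in W^P$ is exactly the statement $u\leq v$ by Chevalley's description of the Bruhat order on $G/P$.

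Next, for $[v\grs_S]^P\leq[u\grs_R]^P$ I would use instead the $B$-equivariant projection $G/L\to G/P^-$, $gL\mapsto gP^-$, which by the definition of $\varphi_-$ recalled before Lemma \ref{lem:twisted-involution} sends $Bwx_T$ onto $B[w\grs_T]^PP^-$ for every admissible pair $(w,T)$; thus $B[u\grs_R]^PP^-\subset\ol{B[v\grs_S]^PP^-}$. The one point to keep in mind is that on $G/P^-$ the $B$-orbit $B\mu P^-$ with $\mu\in W^P$ has codimension $\ell(\mu)$, so its closure equals $\bigcup_{\nu\geq\mu}B\nu P^-$; hence the closure order on these orbits is opposite to the Bruhat order, and the inclusion above yields $[u\grs_R]^P\geq[v\grs_S]^P$. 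Together with Remark \ref{oss:deodhar} and the previous paragraph this produces the whole chain $[v\grs_S]^P\leq[u\grs_R]^P\leq u\leq v$.

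It remains to obtain $\grs_{u(R)}\leq\grs_{v(S)}$, and this is the step that genuinely needs input beyond flag-variety combinatorics. The plan is to pass to the twisted-involution invariant of \S\ref{ssec:invariants}: one embeds $G/L$ into $G$ by $gL\mapsto g\vartheta(g)^{-1}$, a $B$-equivariant morphism once $B$ acts on $G$ by the twisted conjugation $b\cdot g=bg\vartheta(b)^{-1}$, and then Lemma \ref{lem:twisted-involution} — applied with $g=v\exp(-\tfrac12 f_S)\exp(e_S)$, for which $Bvx_S=BgL$ and $g\vartheta(g)^{-1}=\grs_{v(S)}$, and similarly for $(u,R)$ — identifies the images of $Bvx_S$ and $Bux_R$ with the twisted-conjugation $B$-orbits of (representatives of) $\grs_{v(S)}$ and $\grs_{u(R)}$. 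Transporting the hypothesis along this embedding, the twisted-conjugation orbit of $\grs_{u(R)}$ lies in the closure of that of $\grs_{v(S)}$, and by the result of Richardson and Springer on the twisted-involution invariant — namely that the closure of the twisted-conjugation orbit of a twisted involution meets $N_G(T)/T$ precisely in the twisted involutions that are $\leq$ it for the Bruhat order (see \cite{RS3}) — we conclude $\grs_{u(R)}\leq\grs_{v(S)}$. Since $\vartheta$ acts trivially on $W$ in the Hermitian case these are ordinary involutions in $\calI$ and the relevant order is the usual Bruhat order on $\calI$. The inequalities of the first two paragraphs are routine bookkeeping with standard facts; this last monotonicity of $\varphi_\calI$ under orbit closure is the real content, and is exactly the point where the cited work of Richardson and Springer is needed.
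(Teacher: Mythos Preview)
Your proposal is correct and follows essentially the same approach as the paper's proof: both obtain $u\leq v$ and $[v\grs_S]^P\leq[u\grs_R]^P$ by projecting the orbit-closure inclusion to $G/P$ and $G/P^-$ respectively (using Lemma~\ref{lem:twisted-involution} for the latter), invoke Remark~\ref{oss:deodhar} for $[u\grs_R]^P\leq u$, and appeal to \cite{RS3} together with Lemma~\ref{lem:twisted-involution} for the inequality $\grs_{u(R)}\leq\grs_{v(S)}$. Your write-up is more explicit than the paper's, and in particular you correctly record the reversal of the Bruhat order on $G/P^-$; the paper's proof has what appears to be a typographical slip in the direction of that inequality.
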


\begin{proof}
By \cite[Lemma 1.1]{RS3} together with Lemma 
\ref{lem:twisted-involution} it follows that $\grs_{u(R)} \leq \grs_{v(S)}$. The 
inequality $u \leq v$ is obvious, and the inequality $[u\grs_R]^P \leq [v\grs_S]^P$ also is obvious thanks to 
Lemma \ref{lem:twisted-involution}. The last inequality follows from Remark \ref{oss:deodhar}.
\end{proof}

The other implication of Theorem \ref{teo:conRR} will be proved by induction. The next two lemmas will constitute the basis of the induction.

\begin{lem}\label{lem:baseind1}
Let $(v,R), (v,S) \in V_L$, then the following are equivalent:
\begin{enumerate}[\indent i)]
 \item $Bvx_R \subset \ol{Bv x_S}$;
 \item $(v,R) \leq (v,S)$;
 \item $\grs_{v(R)} \leq \grs_{v(S)}$.
\end{enumerate}
\end{lem}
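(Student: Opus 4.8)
The plan is simply to close a short cycle of implications $(i) \Rightarrow (ii) \Rightarrow (iii) \Rightarrow (i)$, invoking the two facts already established for admissible pairs sharing the same $W^P$-component.

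First I would prove $(i) \Rightarrow (ii)$: this is exactly Lemma~\ref{lem:daGaC} applied with $u = v$, which asserts that $Bvx_R \subset \ol{Bvx_S}$ forces $(v,R) \leq (v,S)$. Next, $(ii) \Rightarrow (iii)$ is immediate from the very definition~\eqref{eq:ordVL} of the partial order on $V_L$: when the two admissible pairs have the same component $v \in W^P$, the inequality $(v,R) \leq (v,S)$ includes among its defining conditions the relation $\grs_{v(R)} \leq \grs_{v(S)}$, which is precisely $(iii)$. Finally, $(iii) \Rightarrow (i)$ is nothing but Theorem~\ref{teo:panv}~ii). Concatenating the three implications yields the desired equivalence of $(i)$, $(ii)$, $(iii)$.

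\textbf{On the difficulty.} There is no genuine obstacle here: the content of the lemma — that for pairs with the same $W^P$-component the single condition $\grs_{v(R)} \leq \grs_{v(S)}$ already implies the remaining combinatorial conditions $[v\grs_S]^P \leq [v\grs_R]^P \leq v$ appearing in~\eqref{eq:ordVL} — is an automatic by-product of the cycle, since $(iii) \Rightarrow (i) \Rightarrow (ii)$ and $(ii)$ unpacks into all of those inequalities. In other words, the lemma is a bookkeeping consequence of Theorem~\ref{teo:panv} together with Lemma~\ref{lem:daGaC}, with the real work having been done in Sections~\ref{sez:par} and~\ref{sez:Pu}. The only point deserving a line of care is to note, perhaps via Remark~\ref{oss:deodhar}, that the condition $[v\grs_R]^P \leq v$ is in any case always satisfied, so that $(ii)$ is equivalent to the pair of conditions $[v\grs_S]^P \leq [v\grs_R]^P$ and $\grs_{v(R)} \leq \grs_{v(S)}$; this makes transparent why $(ii)$ and $(iii)$ sit where they do in the cycle.
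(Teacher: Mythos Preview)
Your proposal is correct and follows exactly the same cycle of implications as the paper's own proof: i) $\Rightarrow$ ii) via Lemma~\ref{lem:daGaC}, ii) $\Rightarrow$ iii) trivially from the definition of the order, and iii) $\Rightarrow$ i) via Theorem~\ref{teo:panv}~ii). Your additional commentary on Remark~\ref{oss:deodhar} is accurate but not needed for the argument.
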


\begin{proof}
The implication ii) $\Rightarrow$ iii) is trivial, iii) $\Rightarrow$ i) is the content of Theorem \ref{teo:panv} ii), and i) $\Rightarrow$ ii) follows from Lemma \ref{lem:daGaC}.
\end{proof}

\begin{oss}
 Notice that the previous proposition provides a geometric proof of the following combinatorial statement: if $(v,R), (v,S) \in V_L$ and $\grs_{v(R)} \leq \grs_{v(S)}$, then $[v\grs_S]^P\leq [v\grs_R]^P$. The same stament can be proved combinatorially in case 
 $v = [w_0]^Q$, where $Q$ is any parabolic subgroup of $G$, but we do not know a direct proof for $v\in W^P$. 
\end{oss}

\begin{lem}\label{lem:baseind2}
Let $(u,R), (v,S) \in V_L$ be such that $(u,R) \leq (v,S)$ and $L(u,R) \geq L(v,S)$. Then $(u,R) = (v,S)$.
\end{lem}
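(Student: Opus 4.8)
The plan is to strip off the combinatorial conditions one at a time and reduce the statement to a count of lengths. First I would note that the hypothesis $(u,R)\le(v,S)$ includes the relation $\grs_{u(R)}\le\grs_{v(S)}$ in the Bruhat order on $\calI$, while by definition $L(u,R)=L(\grs_{u(R)})$ and $L(v,S)=L(\grs_{v(S)})$, so $L(u,R)\ge L(v,S)$ says exactly $L(\grs_{u(R)})\ge L(\grs_{v(S)})$. Hence Lemma~\ref{lem:bruhat-Lcompatibile} applies and yields $\grs_{u(R)}=\grs_{v(S)}$; in particular $L(u,R)=L(v,S)$ and, by Corollary~\ref{cor:uTvS}, $u(R)=v(S)$ (so also $\card R=\card S$). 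Once we moreover know that $u=v$, applying $u^{-1}$ to the equality $u(R)=u(S)$ gives $R=S$, and we are done; so the whole point of the lemma is to establish $u=v$.

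To this end I would prove the identity
\begin{equation*}
	L(v,S)=\ell(v)-\ell\bigl([v\grs_S]^P\bigr)
\end{equation*}
for every admissible pair, where $[v\grs_S]^P=\varphi_-(v,S)$ by Lemma~\ref{lem:twisted-involution}. One inequality is immediate: since $L=P\cap P^-$, the map $gL\mapsto(gP,gP^-)$ realizes $G/L$ as a locally closed subvariety of $G/P\times G/P^-$; as $g_S=\exp(e_S)\in P^\mru\subset P$ and $Bvg_SP^-=B[v\grs_S]^P P^-$ by Lemma~\ref{lem:twisted-involution}, the orbit $Bvx_S$ is carried into $BvP/P\times B[v\grs_S]^PP^-/P^-$. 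Using $\dim BvP/P=\ell(v)$, the fact that the $B$-orbits in $G/P^-$ are opposite Bruhat cells (so $\dim B\nu P^-/P^-=\card\Psi-\ell(\nu)$ for $\nu\in W^P$), and the dimension formula \eqref{eq:dim2}, one gets $\card\Psi+L(v,S)=\dim Bvx_S\le\ell(v)+\card\Psi-\ell([v\grs_S]^P)$, i.e.\ $L(v,S)\le\ell(v)-\ell([v\grs_S]^P)$. The reverse inequality is the delicate point: I would obtain it either by a fibre-dimension analysis of the two projections $Bvx_S\to BvP/P$ and $Bvx_S\to B[v\grs_S]^PP^-/P^-$, or purely combinatorially by checking $\ell(\grs_{v(S)})+\card S=2\bigl(\ell(v)-\ell([v\grs_S]^P)\bigr)$ using Proposition~\ref{prp:PhiS} to describe $\Phi_S$ and the reduced factorization $v\grs_S=[v\grs_S]^P\cdot\rho$ with $\rho\in W_P$, for which $\ell(v\grs_S)=\ell(v)-\card S$ by orthogonality of $S\subset\Phi^+(v)$.

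Granting the identity, the conclusion is a short computation. Put $\nu=[v\grs_S]^P$ and $\nu'=[u\grs_R]^P$. From $L(u,R)=L(v,S)$ and the identity (applied to $(v,S)$, and its easy half to $(u,R)$) one gets
\begin{equation*}
	\ell(v)-\ell(\nu)=L(v,S)=L(u,R)\le\ell(u)-\ell(\nu').
\end{equation*}
On the other hand $(u,R)\le(v,S)$ forces $\nu\le\nu'$ and $u\le v$, hence $\ell(\nu)\le\ell(\nu')$ and $\ell(v)-\ell(u)\ge0$. Combining, $\ell(v)-\ell(u)\le\ell(\nu)-\ell(\nu')\le0$, so $\ell(u)=\ell(v)$; since $u\le v$, this forces $u=v$, and then $R=S$ as explained in the first paragraph. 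The main obstacle is precisely the reverse inequality $L(v,S)\ge\ell(v)-\ell([v\grs_S]^P)$ in the second step; everything else is routine manipulation of the Bruhat order together with Lemmas~\ref{lem:bruhat-Lcompatibile} and~\ref{lem:twisted-involution} and Corollary~\ref{cor:uTvS}.
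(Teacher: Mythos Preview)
Your first paragraph is correct and matches the paper: from $\grs_{u(R)}\leq\grs_{v(S)}$ and $L(\grs_{u(R)})\geq L(\grs_{v(S)})$ one indeed gets $\grs_{u(R)}=\grs_{v(S)}$ via Lemma~\ref{lem:bruhat-Lcompatibile}, hence $u(R)=v(S)$ by Corollary~\ref{cor:uTvS}, and the whole problem reduces to showing $u=v$.

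The gap is in your second step: the identity $L(v,S)=\ell(v)-\ell([v\grs_S]^P)$ is \emph{false}. In the $\mathrm{Sp}_4$ example of the paper (Section~\ref{ex:Sp4}), take $v=s_2s_1s_2=s_\eta$ and $S=\{\eta\}$. Then $v\grs_S=s_\eta s_\eta=1$, so $[v\grs_S]^P=1$ and $\ell(v)-\ell([v\grs_S]^P)=3$. On the other hand $v(\eta)=-\eta$, so $\grs_{v(S)}=s_\eta$ has length $3$ and $L(v,S)=\tfrac{3+1}{2}=2$. Thus only your ``easy half'' $L(v,S)\leq\ell(v)-\ell([v\grs_S]^P)$ survives, and it is strict here; neither of your two suggested routes to the reverse inequality can succeed (in particular $\ell(v\grs_S)=0\neq 2=\ell(v)-\card S$ in this example, so the auxiliary claim $\ell(v\grs_S)=\ell(v)-\card S$ also fails). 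With only the one-sided inequality for both pairs, your length comparison in the third paragraph does not go through.

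What does work, and what the paper does, is to replace lengths by the cominuscule coweight $\gro_P^\vee$. By Proposition~\ref{prp:GP1} the Bruhat inequalities $u\leq v$ and $[v\grs_S]^P\leq[u\grs_R]^P$ translate into dominance inequalities $v(\gro_P^\vee)\leq u(\gro_P^\vee)$ and $u\grs_R(\gro_P^\vee)\leq v\grs_S(\gro_P^\vee)$. Because $\gro_P^\vee$ is cominuscule, $\grs_R(\gro_P^\vee)=\gro_P^\vee-\sum_{\gra\in R}\gra$ and likewise for $S$; hence the equality $u(R)=v(S)$ from your first step gives
\[
u(\gro_P^\vee)-u\grs_R(\gro_P^\vee)=\textstyle\sum_{\grb\in u(R)}\grb=\sum_{\grb\in v(S)}\grb=v(\gro_P^\vee)-v\grs_S(\gro_P^\vee),
\]
and the two dominance inequalities then force $u(\gro_P^\vee)=v(\gro_P^\vee)$, i.e.\ $u=v$. (Equivalently, via Lemma~\ref{lem:altezza} one gets the \emph{correct} identity $\ell(u)-\ell([u\grs_R]^P)=\ell(v)-\ell([v\grs_S]^P)$ whenever $u(R)=v(S)$, which is exactly what your third paragraph needs; it just does not equal $L(v,S)$ in general.)
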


\begin{proof}
By assumption we have $\grs_{u(R)} \leq \grs_{v(S)}$ and $L(\grs_{v(S)}) \leq L(\grs_{u(R)})$. Therefore we get $\grs_{u(R)} = \grs_{v(S)}$ by Lemma \ref{lem:bruhat-Lcompatibile}, and $u(R) = v(S)$ by Corollary \ref{cor:uTvS}.

Since $\gro_P^\vee$ is fixed by $W_P$, by Proposition \ref{prp:GP1} iii) the inequalities $[v\grs_S]^P \leq  [u\grs_R]^P$ and $u \leq v$ are equivalent to the followings
$$
	v(\gro_P^\vee) \leq u(\gro_P^\vee), \qquad \qquad  u \grs_R (\gro_P^\vee) \leq v \grs_S(\gro_P^\vee).
$$

On the other hand, since $\gro_P^\vee$ is minuscule, we have
$$
	\grs_R(\gro_P^\vee) = \gro_P^\vee - \sum_{\gra \in R} \gra, \qquad \qquad
		\grs_S(\gro_P^\vee) = \gro_P^\vee - \sum_{\gra \in S} \gra.
$$
Since $u(R) = v(S)$, it follows that $u(\gro_P^\vee) - u\grs_R(\gro_P^\vee) = v(\gro_P^\vee) - v\grs_S(\gro_P^\vee)$, thus by the inequalities above we get $u(\gro_P^\vee) = v(\gro_P^\vee)$. Therefore $u = v$, and we also get $R=S$.
\end{proof}

\subsection{Strategy of the proof and preliminary lemmas}

The proof of Theorem \ref{teo:conRR} will be given in Section \ref{sez:GL2}. In this section we explain its structure and 
we prove some prelimary lemmas. 
In its general structure the proof  is  similar to that of Theorem \ref{teo:panv}, however there are some important differences 
which make the proof quite a bit more complicated. 

Let  $(u,R)$ and $(v,S)$ be two admissible pairs. As we have already seen in Lemma \ref{lem:daGaC},
if $Bux_R\subset \ol{Bvx_S}$ then the inequality $(u,R)\leq(v,S)$ follows from the 
work of Richardson and Springer. 
Assume now that $(u,R)\leq (v,S)$. Arguing by induction on the dimension of orbits as in the proof of Theorem \ref{teo:panv}, let 
$\gra \in \grD$ be such that $\calE_\gra(v,S)\neq \vuoto$. As in the proof of Theorem \ref{teo:panv}, we will analyze three main different cases.

The first case we analyze is when $\calE_\gra(u,R)=\vuoto$. The technical ingredients to deal with this case are contained in
Lemma \ref{lem:lem2} below. In the proof of the theorem, this corresponds to the cases $1_\gra$ and $2_\gra$. 

Thus we are reduced to the case where $\calE_\gra(u,R)\neq \vuoto$, and arguing by induction we would like to find admissible pairs 
$(u',R')\in \calE_\gra(u,R)$ and $(v',S')\in \calE_\gra(v,S)$ such that $(u',R')\leq(v',S')$. A posteriori this is indeed true, 
but we are not able to prove it directly in general. However we are able to find such pairs in most cases: the technical ingredient to do that 
is Lemma \ref{lem:lem3} below and this corresponds to the cases $3_\gra$ and $4_\gra$ in the proof of the theorem. 

There is a single case that remains outside of this analysis, namely the case where $\calE_\gra(u,R)$ has cardinality 2, and 
$\calE_\gra(v,S)=\{(v',S')\}$ with  $v'<v$ and $[v'\grs_{S'}]^P<[v\grs_{S}]^P$. Even more, we can assume that we are in this situation for all $\gra \in \grD$ such that $\calE_\gra(v,S) \neq \vuoto$. 

To treat this case we will argue by induction also on $\ell(v)-\ell(u)$. In particular, at the basis of our induction we will find the 
case where $u=v$, which was treated in Theorem \ref{teo:panv}. Notice that the inequality $\grs_{v(R)}\leq \grs_{v(S)}$ of Theorem \ref{teo:panv} is just one of those involved in the inequality $(v,R)\leq (v,S)$: however, as we have seen in Lemma \ref{lem:baseind1},
for these particular pairs this single inequality implies all the others. Most of the proof in Section \ref{sez:GL2} will be dedicated to treat this last case.\\ 

We now come to the preliminary lemmas mentioned above. The first one will be used to treat the case where $\calE_\gra(u,R)=\vuoto$.

\begin{lem}\label{lem:lem2}
Let $(u,R), (v,S) \in V_L$ with $(u,R) \leq (v,S)$, and  let $\gra \in \Delta$ be such that 
$\calE_\gra(v,S) \neq \vuoto$ and $\calE_\gra(u,R) = \vuoto$, then the following hold.
\begin{enumerate}[\indent i)]
 \item If $m_\gra (u,R) \neq (u,R)$, then $m_\gra(u,R) \leq (v,S)$.
 \item If $m_\gra (u,R) = (u,R)$ and $(v',S') \in \calE_\gra(v,S)$, then $(u,R) \leq (v',S')$.
\end{enumerate}
\end{lem}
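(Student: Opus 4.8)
The plan is to verify, in each of the two cases, the inequalities appearing in the definition \eqref{eq:ordVL} of the order on $V_L$, keeping track of the three invariants $v=\varphi_+(v,S)$, $[v\grs_S]^P=\varphi_-(v,S)$ and $\grs_{v(S)}=\varphi_\calI(v,S)$ and of their behaviour under $m_\gra$, as described in Lemmas \ref{lem:azP1}, \ref{lem:azP2}, \ref{lem:azP3} and Remark \ref{oss:m_alpha-compatibile}. The hypotheses enter as follows: since $\calE_\gra(v,S)\neq\vuoto$ one has $m_\gra(v,S)=(v,S)$ and $s_\gra\circ\grs_{v(S)}<\grs_{v(S)}$, whence $[s_\gra v]^P\leq v$ by Lemma \ref{lem:azP2} ii) and $s_\gra[v\grs_S]^P>[v\grs_S]^P$ by Lemma \ref{lem:azP3} ii); since $\calE_\gra(u,R)=\vuoto$, Lemma \ref{lem:azP1} iii) gives $s_\gra\circ\grs_{u(R)}>\grs_{u(R)}$, and together with $\grs_{u(R)}\leq\grs_{v(S)}$ this forces $\grs_{u(R)}<\grs_{v(S)}$ strictly.

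First I would dispatch the involution inequality, which is the clean part. In case i), writing $(u',R')=m_\gra(u,R)$, Lemma \ref{lem:azP1} i) gives $\grs_{u'(R')}=s_\gra\circ\grs_{u(R)}$, and Lemma \ref{lem:inv1} iii), applied to $\grs_{u(R)}<\grs_{v(S)}$ with $s_\gra\circ\grs_{u(R)}>\grs_{u(R)}$ and $s_\gra\circ\grs_{v(S)}<\grs_{v(S)}$, yields $\grs_{u'(R')}\leq\grs_{v(S)}$. In case ii), $\grs_{v'(S')}=s_\gra\circ\grs_{v(S)}$ by the same lemma, and the same instance of Lemma \ref{lem:inv1} iii) gives $\grs_{u(R)}\leq s_\gra\circ\grs_{v(S)}=\grs_{v'(S')}$.

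Next come the two inequalities in $W^P$, where the work lies; the main tool is the lifting property (Lemma \ref{lem:parallelogramma}) together with the description of the Bruhat order on $W^P$ from Section \ref{sez:GP}. Take case i) and the inequality $\varphi_+(u',R')\leq v$: by Remark \ref{oss:m_alpha-compatibile}, $\varphi_+(u',R')$ is the longer of $u$ and $[s_\gra u]^P$, so the only nontrivial situation is $[s_\gra u]^P=s_\gra u\in W^P$ with $s_\gra u>u$ (hence $u^{-1}(\gra)\in\Psi$). Then $u\neq v$ (else $[s_\gra v]^P=s_\gra u>v$, against $[s_\gra v]^P\leq v$), and $[s_\gra v]^P\leq v$ leaves exactly two possibilities: either $s_\gra v<v$, in which case $s_\gra u<v$ by Lemma \ref{lem:parallelogramma} iii), or $v^{-1}(\gra)\in\Phi^+_P$ (so $[s_\gra v]^P=v$ and $s_\gra v>v$). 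In this second, delicate case the lifting property is inapplicable, and $s_\gra u\leq v$ has to be established by hand: by Proposition \ref{prp:GP1} this amounts to $\Phi^+(s_\gra u)=\Phi^+(u)\cup\{u^{-1}(\gra)\}\subseteq\Phi^+(v)$, i.e. to $u^{-1}(\gra)\in\Phi^+(v)$, which I would deduce from $\grs_{u(R)}\leq\grs_{v(S)}$, $\grs_{u(R)}(\gra)>0$ and $\grs_{v(S)}(\gra)<0$ by examining the roots of $R$ and of $S$ not orthogonal to $\gra$ (there are at most two of each by Lemma \ref{lem:panyushev}) and using Proposition \ref{prp:radici-attive-reali}. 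The dual inequality $\varphi_-(v,S)\leq\varphi_-(u',R')$ is painless: $\varphi_-(u',R')$ is the shorter of $[u\grs_R]^P$ and $s_\gra[u\grs_R]^P$, and since $s_\gra[v\grs_S]^P>[v\grs_S]^P$ was noted above, Lemma \ref{lem:parallelogramma} iii) applies in both sub-cases. Case ii) is parallel, comparing $(u,R)$ to $(v',S')\in\calE_\gra(v,S)$: here $[s_\gra u]^P=u$ and $[s_\gra[u\grs_R]^P]^P=[u\grs_R]^P$ by Lemmas \ref{lem:azP2} iii) and \ref{lem:azP3} iii) (as $m_\gra(u,R)=(u,R)$ and $\calE_\gra(u,R)=\vuoto$), while $\varphi_+(v',S')\in\{v,\,s_\gra v\}$ and $\varphi_-(v',S')\in\{[v\grs_S]^P,\,s_\gra[v\grs_S]^P\}$; the lifting property then gives $\varphi_+(u,R)\leq\varphi_+(v',S')$ directly and $\varphi_-(v',S')\leq\varphi_-(u,R)$ except when $[v'\grs_{S'}]^P=s_\gra[v\grs_S]^P$, a sub-case again requiring the root-system analysis above.

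The hard part, then, is exactly this recurring delicate sub-case, in which $m_\gra$ enlarges one $W^P$-invariant of $(u,R)$ while the corresponding one of $(v,S)$ is fixed (equivalently $v^{-1}(\gra)\in\Phi^+_P$, or its $G/P^-$ analogue): the lifting property yields nothing, and one has to extract the membership $u^{-1}(\gra)\in\Phi^+(v)$ — and the analogous statement on $G/P^-$ — from the single inequality $\grs_{u(R)}\leq\grs_{v(S)}$ by a hands-on computation in the root system, relying on the combinatorial structure of Sections \ref{sez:GP} and \ref{sez:inv} (in particular Lemma \ref{lem:panyushev} and Propositions \ref{prp:GP1}, \ref{prp:radici-attive-reali}).
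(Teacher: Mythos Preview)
Your treatment of the involution inequalities is correct and matches the paper. The problem is your ``delicate sub-case'': it dissolves with one observation you have overlooked, namely that the projection $w \mapsto [w]^P$ preserves Bruhat order (recalled at the start of Section \ref{sez:GP}).

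Take the situation in part i): $s_\gra u > u$ with $s_\gra u \in W^P$, while $[s_\gra v]^P = v$ (so $s_\gra v > v$). From $u < v$ and Lemma \ref{lem:parallelogramma} i) you get $s_\gra u < s_\gra v$; then $s_\gra u = [s_\gra u]^P \leq [s_\gra v]^P = v$, and you are done. The analogous sub-case in part ii), where $\nu' = s_\gra \nu > \nu$ while $[s_\gra \mu]^P = \mu$, resolves identically: $s_\gra \nu \leq s_\gra \mu$ by lifting, hence $\nu' = [s_\gra \nu]^P \leq [s_\gra \mu]^P = \mu$. This last line is exactly how the paper handles part ii). Your proposed extraction of $u^{-1}(\gra) \in \Phi^+(v)$ from $\grs_{u(R)} \leq \grs_{v(S)}$ via a root-system computation is therefore unnecessary, and as sketched it is not clear it would even go through.

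For part i) the paper is more economical still: rather than case-splitting on $s_\gra u$ and $s_\gra v$, it argues geometrically in $G/P$. Since $m_\gra(v,S) = (v,S)$ one has $\ol{BvP} = \ol{P_\gra v P}$, whence $Bu'P \subset P_\gra uP \subset \ol{P_\gra vP} = \ol{BvP}$ yields $u' \leq v$ in one line; the inequality $\nu \leq \mu'$ follows dually via $G/P^-$. This bypasses all the lifting-property bookkeeping.
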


\begin{proof}
Denote $\mu = [u \grs_R]^P$ and $\nu = [v \grs_S]^P$.

 i). Set $m_\gra(u,R) = (u',R')$ and denote $\mu' =  [u' \grs_{R'}]^P$. Notice that by the assumptions we have 
$Bu'P \subset P_\gra u P$ and $\ol{BvP} = \ol{P_\gra v P}$, thus $B u'P \subset \ol{BvP}$ because 
$P_\gra u P \subset \ol{P_\gra vP}$. Similarly, we have $B\mu' P^- \subset P_\gra \mu P^-$ and 
$\ol{B \nu P^-} = \ol{P_\gra \nu P^-}$, thus $B \mu' P^- \subset \ol{B\nu P^-}$ because 
$P_\gra \mu P^- \subset \ol{P_\gra \nu P^-}$. 
Together with Remark \ref{oss:deodhar}, this show the inequalities $\nu \leq \mu' \leq u' \leq v$. Finally by 
Lemma \ref{lem:azP1} iii) we get $\grs_{u'(R')} = s_\gra \circ \grs_{u(R)} > \grs_{u(R)}$, thus by Lemmas 
\ref{lem:azP1} ii) and \ref{lem:inv1} iii) we obtain $\grs_{u'(R')} \leq \grs_{v(S)}$ and the claim follows.

ii). Since $\calE_\gra(v,S) \neq \vuoto$, we have $m_\gra (v,S) = (v,S)$.  Therefore by Lemmas \ref{lem:azP2} 
ii) and \ref{lem:azP3} ii) we get $[s_\gra v]^P \leq v$ and $[s_\gra \nu]^P \geq \nu$. Similarly, by the same 
lemmas we have $[s_\gra u]^P = u$ and $[s_\gra \mu]^P = \mu$.

By the assumptions we have $\nu \leq \mu \leq u \leq v$ and $\grs_{u(R)} \leq \grs_{v(S)}$. Let 
$(v',S') \in \calE_\gra(v,S)$ and denote $\nu' = [v' \grs_{S'}]^P$, we have to show the inequalities 
$\grs_{u(R)} \leq \grs_{v'(S')}$, $u \leq v'$ and $\nu' \leq \mu$.

The first one follows from Lemma \ref{lem:azP1} ii)-iii) together with Lemma \ref{lem:inv1} iii): 
indeed $\grs_{u(R)} \leq \grs_{v'(S')} = s_\gra \circ \grs_{v(S)}$.

To show the second one, notice that either $v'=v$ or $v' = s_\gra v < v$. Thus, if $v \neq v'$, it 
must be $u < v$. Since $u < s_\gra u$, by the lifting property we get then $u \leq v'$.

To show the last inequality, notice that either $\nu' = \nu$ or $\nu' = s_\gra \nu > \nu$. In the 
first case there is nothing to show, suppose that $\nu' > \nu$. Then $s_\gra \nu \leq  s_\gra \mu$, thus $\nu' = s_\gra \nu = [s_\gra \nu]^P \leq [s_\gra \mu]^P = \mu$.
\end{proof}

The next two lemmas are related to the analysis of the case where both $\calE_\gra(v,S)$ and $\calE_\gra(u,R)$ are not empty. The first one is general and will be used many times, 
the second one, as explained above, gives the technical ingredient to treat many cases of the induction.

\begin{lem}\label{lem:lem2,5}
Let $(u,R), (v,S) \in V_L$ be such that $(u,R) \leq (v,S)$, and let $\gra \in \Delta$. If $(u',R') \in \calE_\gra(u,R)$ and 
$(v',S') \in \calE_\gra(v,S)$, then $\grs_{u'(R')} \leq \grs_{v'(S')}$.
\end{lem}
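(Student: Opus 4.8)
The plan is to reduce everything to the behaviour of the circle action $s_\gra \circ (-)$ on involutions, by using Lemma \ref{lem:azP1} to translate membership in $\calE_\gra$ into statements about $s_\gra \circ (-)$, and then invoking Lemma \ref{lem:inv1} ii).

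First I would record what membership in $\calE_\gra$ means. By definition $(u',R') \in \calE_\gra(u,R)$ says that $m_\gra(u',R') = (u,R) \neq (u',R')$; applying Lemma \ref{lem:azP1} i) to the pair $(u',R')$ and the simple root $\gra$ then gives $\grs_{u(R)} = s_\gra \circ \grs_{u'(R')} > \grs_{u'(R')}$. Since $s_\gra \circ (-)$ is an involution on $\calI$, this reads $\grs_{u'(R')} = s_\gra \circ \grs_{u(R)}$ with $s_\gra \circ \grs_{u(R)} < \grs_{u(R)}$. Running the same argument for $(v',S') \in \calE_\gra(v,S)$ yields $\grs_{v'(S')} = s_\gra \circ \grs_{v(S)}$ with $s_\gra \circ \grs_{v(S)} < \grs_{v(S)}$.

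Now the hypothesis $(u,R) \leq (v,S)$ gives in particular $\grs_{u(R)} \leq \grs_{v(S)}$. If equality holds, applying $s_\gra \circ (-)$ to both sides gives $\grs_{u'(R')} = \grs_{v'(S')}$ and we are done. Otherwise $\grs_{u(R)} < \grs_{v(S)}$, and since we have shown $s_\gra \circ \grs_{u(R)} < \grs_{u(R)}$ and $s_\gra \circ \grs_{v(S)} < \grs_{v(S)}$, Lemma \ref{lem:inv1} ii) (applied with $\grs = \grs_{u(R)}$ and $\tau = \grs_{v(S)}$) gives $\grs_{u'(R')} = s_\gra \circ \grs_{u(R)} < s_\gra \circ \grs_{v(S)} = \grs_{v'(S')}$. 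In either case $\grs_{u'(R')} \leq \grs_{v'(S')}$, which is the claim.

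I do not anticipate any serious obstacle: this is a purely combinatorial statement, and all the needed inputs are already in place (Lemma \ref{lem:azP1} i) for the description of $\calE_\gra$ via the circle action, and Lemma \ref{lem:inv1} ii) for the monotonicity of $s_\gra \circ (-)$ on arguments that it decreases). The only point requiring a little care is that Lemma \ref{lem:inv1} is stated for a strict inequality $\grs < \tau$, so the equality case $\grs_{u(R)} = \grs_{v(S)}$ must be disposed of separately, as above.
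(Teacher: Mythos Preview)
Your proof is correct and follows essentially the same route as the paper: use Lemma~\ref{lem:azP1}~i) to identify $\grs_{u'(R')}=s_\gra\circ\grs_{u(R)}<\grs_{u(R)}$ and $\grs_{v'(S')}=s_\gra\circ\grs_{v(S)}<\grs_{v(S)}$, then apply Lemma~\ref{lem:inv1}~ii). Your explicit treatment of the equality case $\grs_{u(R)}=\grs_{v(S)}$ is a small extra care that the paper leaves implicit.
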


\begin{proof}
By Lemma \ref{lem:azP1} i) we have $\grs_{v'(S')} = s_\gra \circ \grs_{v(S)} < \grs_{v(S)}$, and similarly $\grs_{u'(R')} = s_\gra \circ \grs_{u(R)} < \grs_{u(R)}$ for all $(u',R') \in \calE_\gra(u,R)$. Thus $\grs_{u'(R')} \leq \grs_{v'(S')}$ by Lemma \ref{lem:inv1} ii).
\end{proof}

\begin{lem}\label{lem:lem3}
Let $(u,R), (v,S) \in V_L$ be such that $(u,R) \leq (v,S)$. Let $\gra \in \Delta$ be such that $\calE_\gra(u,R) \neq \vuoto$ and $\calE_\gra(v,S)\neq \vuoto$. Set $\mu = [u \grs_R]^P$ and $\nu = [v \grs_S]^P$, then the following hold.
\begin{enumerate}[\indent i)]
	\item Let $(v',S') \in \calE_\gra(v,S)$. Assume that either $v' = v$ or $[v' \grs_{S'}]^P = \nu$, then there exists $(u',R') \in \calE_\gra(u,R)$ such that $(u',R') \leq (v',S')$.
	\item Suppose that one of the following equalities holds:
$$[s_\gra v]^P = v,  \qquad [s_\gra \nu]^P = \nu, \qquad 
[s_\gra u]^P = u, \qquad [s_\gra \mu]^P = \mu.$$
Then, for all $(v',S') \in \calE_\gra(v,S)$, there exists $(u',R') \in \calE_\gra(u,R)$ such that $(u',R') \leq (v',S')$.
\end{enumerate}
\end{lem}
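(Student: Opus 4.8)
The plan is to prove both parts by relating the action of $P_\gra$ on the admissible pairs under consideration to the circle action on involutions, and then to transfer the needed inequalities from the base of an induction. I will set $\mu = [u\grs_R]^P$, $\nu = [v\grs_S]^P$ throughout, and repeatedly use Lemma~\ref{lem:azP1}, Lemmas~\ref{lem:azP2} and \ref{lem:azP3}, the parametrizations of $\calE_\gra$ in Lemmas~\ref{lem:azP4a} and \ref{lem:azP4}, and the lifting property (Lemma~\ref{lem:parallelogramma}).

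For part i), fix $(v',S')\in\calE_\gra(v,S)$ of one of the two stated shapes. Since $\calE_\gra(u,R)\neq\vuoto$, by Lemma~\ref{lem:azP1} ii) we have $s_\gra\circ\grs_{u(R)}<\grs_{u(R)}$; pick $(u',R')\in\calE_\gra(u,R)$ and write $\mu'=[u'\grs_{R'}]^P$. By Lemma~\ref{lem:lem2,5} we immediately get $\grs_{u'(R')}\leq\grs_{v'(S')}$, so the only conditions left to verify for $(u',R')\leq(v',S')$ are $[v'\grs_{S'}]^P\leq[u'\grs_{R'}]^P\leq u'\leq v'$. In the case $[v'\grs_{S'}]^P=\nu$, I can choose $(u',R')$ using Lemma~\ref{lem:azP3} i): if $[s_\gra\mu]^P>\mu$ we get $(u',R')$ with $\mu'=s_\gra\mu$, and then the inequality $\nu\leq\mu'$ follows by the lifting property from $\nu\leq\mu<s_\gra\mu$ (using $[s_\gra\nu]^P\geq\nu$, which holds by Lemma~\ref{lem:azP3} ii) since $m_\gra(v,S)=(v,S)$); if instead $[s_\gra\mu]^P=\mu$ then $\mu'=\mu$ works directly. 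The inequalities $\mu'\leq u'$ (Remark~\ref{oss:deodhar}) and $u'\leq v'$ remain: for the latter, either $v'=v\geq u\geq u'$ or $v'=s_\gra v$, and then one compares $u'\in\{u,[s_\gra u]^P\}$ with $v'=s_\gra v$ via the lifting property using $u\leq v$. The case $v'=v$ is handled symmetrically, using Lemma~\ref{lem:azP2} i) to select $(u',R')$ with $u'\in\{u,s_\gra u\}$ and again the lifting property.

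For part ii), observe that each of the four hypotheses $[s_\gra v]^P=v$, $[s_\gra\nu]^P=\nu$, $[s_\gra u]^P=u$, $[s_\gra\mu]^P=\mu$ forces the orbit $Bvx_S$ (respectively the relevant representatives) to behave in a constrained way under $P_\gra$. Concretely: if $[s_\gra v]^P=v$, then every element of $\calE_\gra(v,S)$ has first component $v$, so part i) applies directly; if $[s_\gra\nu]^P=\nu$, then every element of $\calE_\gra(v,S)$ has $G/P^-$-invariant equal to $\nu$, so again part i) applies. If instead $[s_\gra u]^P=u$ or $[s_\gra\mu]^P=\mu$, then by Lemma~\ref{lem:azP2} iii) (resp. Lemma~\ref{lem:azP3} iii}) $\calE_\gra(u,R)$ contains an element $(u',R')$ with $u'=u$ (resp. with $\mu'=\mu$), and one checks the four required inequalities for $(u',R')\leq(v',S')$ for an arbitrary $(v',S')\in\calE_\gra(v,S)$ by the same lifting-property arguments as in part i), now using $u=u'$ (resp. $\mu=\mu'$) together with $u\leq v$, $\nu\leq\mu$ and the monotonicity of $w\mapsto[w]^P$.

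The main obstacle I expect is bookkeeping the case analysis cleanly: $\calE_\gra(u,R)$ may have cardinality $1$ or $2$ (the latter only in types $\sfB,\sfC$, by Lemma~\ref{lem:azP4}), and in the cardinality-$2$ case one must make sure the chosen $(u',R')$ — among $(s_\gra u,S\senza\{\grb\})$ and $(v,S\senza\{\grb\})$ with $\grb=-v^{-1}(\gra)$ — is compatible with the chosen $(v',S')$; similarly the situation of Lemma~\ref{lem:azP4a} must not be overlooked. The lifting property applications themselves are routine, but getting all three of $\nu'\leq\mu'$, $\mu'\leq u'$, $u'\leq v'$ simultaneously from the hypotheses $\nu\leq\mu\leq u\leq v$ requires choosing $(u',R')$ and $(v',S')$ coherently, and it is precisely to guarantee such a coherent choice that one needs one of the four ``fixed-point'' hypotheses in part ii) — without them the choice can fail, which is the genuinely hard case postponed to Section~\ref{sez:GL2}.
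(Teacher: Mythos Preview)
Your overall strategy matches the paper's: reduce to the two inequalities $u'\leq v'$ and $\nu'\leq\mu'$ via Lemma~\ref{lem:lem2,5}, and then make a judicious choice of $(u',R')\in\calE_\gra(u,R)$ using Lemmas~\ref{lem:azP2} i) and \ref{lem:azP3} i). However, in part i) you have the two choices \emph{swapped}, and this creates a genuine gap.

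In the case $\nu'=\nu$ (and $v'=s_\gra v<v$), the inequality $\nu'\leq\mu'$ is automatic because $\mu'\geq\mu\geq\nu=\nu'$; the inequality that needs work is $u'\leq v'=s_\gra v$. You invoke Lemma~\ref{lem:azP3} i) to force $\mu'=[s_\gra\mu]^P$, but this fixes $(u',R')$ and you are no longer free to adjust $u'$. When $\calE_\gra(u,R)$ has cardinality $2$ (so that, by Lemma~\ref{lem:azP4}, its elements are $(u,R')$ with $\mu'=s_\gra\mu$ and $(s_\gra u,R')$ with $\mu''=\mu$), your choice picks out the element with $u'=u$. If moreover $s_\gra u<u$, the lifting property does \emph{not} give $u\leq s_\gra v$ from $u\leq v$; this is precisely the obstruction that forces the elaborate analysis in Section~\ref{sez:GL2}. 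The correct move here is the paper's: use Lemma~\ref{lem:azP2} i) to pick $(u',R')$ with $u'=[s_\gra u]^P$, and then $u'\leq s_\gra v$ follows from Lemma~\ref{lem:parallelogramma}. The case $v'=v$ is genuinely symmetric, but in the opposite direction: there $u'\leq v'$ is automatic and one must use Lemma~\ref{lem:azP3} i) to control $\mu'$.

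Two smaller points. Your citations to Lemmas~\ref{lem:azP2} iii) and \ref{lem:azP3} iii) in part ii) are wrong: those statements concern the situation $\calE_\gra=\vuoto$, not the existence of elements in $\calE_\gra$. What you need instead is the observation that $[s_\gra u]^P=u$ forces $u'=u$ for \emph{every} element of $\calE_\gra(u,R)$ (since $u'\in\{u,[s_\gra u]^P\}$), after which you \emph{additionally} apply Lemma~\ref{lem:azP3} i) to control $\mu'$; dually for $[s_\gra\mu]^P=\mu$. Finally, the parenthetical ``the latter only in types $\sfB,\sfC$'' is incorrect: by Lemma~\ref{lem:azP4}, $\calE_\gra(u,R)$ has cardinality $2$ precisely when $-u^{-1}(\gra)\in R$, which can occur in any type.
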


\begin{proof}
By assumption we have the inequalities 
$u \leq v$, $\nu \leq \mu$ and $\grs_{u(R)} \leq \grs_{v(S)}$. If $(u',R') \in \calE_\gra(u,R)$ and $(v',S') \in \calE_\gra(v,S)$, 
we will denote $\mu' = [u' \grs_{R'}]^P$ and $\nu' = [v' \grs_{S'}]^P$. Thus by Lemma \ref{lem:lem2,5} we have $(u',R') \leq (v',S')$ if and only if 
$u' \leq v'$ and $\nu' \leq \mu'$.

Since $\calE_\gra(u,R) \neq \vuoto$ and $\calE_\gra(v,S)\neq \vuoto$, we have 
$m_\gra(u,R) = (u,R)$ and $m_\gra(v,S) = (v,S)$. If moreover $(u',R') \in \calE_\gra(u,R)$, then we have the inequalities
$$
[s_\gra u]^P \leq u' \leq u, \qquad [s_\gra v]^P \leq v' \leq v, \qquad [s_\gra \mu]^P \geq \mu' \geq \mu, \qquad [s_\gra \nu]^P \geq \nu' \geq \nu.
$$

We prove i). Suppose that $v' = v$. Then $u' \leq u \leq v = v'$, thus we only have to show the inequality  $\nu' \leq \mu'$. 
We can choose $(u',R')  \in \calE_\gra(u,R)$ in such a way that $\mu' = [s_\gra \mu]^P$: if $[s_\gra \mu]^P = \mu$, 
then every $(u',R')  \in \calE_\gra(u,R)$ has this property, otherwise we can apply Lemma \ref{lem:azP3} i).
On the other hand we have $s_\gra \mu > \mu \geq \nu$, thus 
$s_\gra \mu \geq s_\gra \nu$. Therefore we get $\mu' = [s_\gra \mu]^P \geq [s_\gra \nu]^P \geq \nu'$.

Suppose that $\nu' = \nu$, and assume that $v' = s_\gra v < v$ (otherwise we apply the argument above). 
Since $\mu' \geq \mu$ we have $\nu' = \nu \leq \mu'$, thus we only have to show the inequality $u' \leq v'$.
Arguing as in the case $v=v'$, by Lemma \ref{lem:azP2} i), we can choose $(u',R')  \in \calE_\gra(u,R)$ in such a way that $u' = [s_\gra u]^P$. 
If $u' < u$, then $u' = s_\gra u < u$ and we get $u' = s_\gra u \leq s_\gra v = v'$. If instead $u' = u$, then $s_\gra u > u$, and by the lifting property we get $u < s_\gra v = v'$.

We prove ii). If $[s_\gra v]^P = v$ or $[s_\gra \nu]^P = \nu$, then any $(v',S') \in \calE_\gra(v,S)$ satisfies the condition in i), thus the claim follows from i). Therefore we can assume that $v' = s_\gra v < v$ and $\nu' = s_\gra \nu > \nu$.

Suppose that $[s_\gra u]^P = u$. Then we have $u' = u < s_\gra u$, thus $u' \leq v'$ by the lifting property. To show the other inequality, by Lemma \ref{lem:azP3} i) we can choose $(u',R') \in \calE_\gra(u,R)$ in such a way that $\mu' = [s_\gra \mu]^P$. Since $s_\gra \mu > \mu \geq \nu$, for such a choice we get $\mu' = [s_\gra \mu]^P \geq [s_\gra \nu]^P = \nu'$.

Suppose that $[s_\gra \mu]^P = \mu$. Notice that $s_\gra \mu > \mu$ and $s_\gra \nu > \nu$, thus $\mu' = [s_\gra \mu]^P \geq [s_\gra \nu]^P = \nu'$. To show the other inequality, by the previous case we can assume that $s_\gra u < u$. Thus by Lemma \ref{lem:azP2} i) we can choose $(u',R')  \in \calE_\gra(u,R)$ in such a way that $u' = s_\gra u$, and since $s_\gra u < u$ and $s_\gra v < v$ we get $u' = s_\gra u \leq  s_\gra v = v'$.
\end{proof}

\subsection{Proof of Theorem \ref{teo:conRR}}\label{sez:GL2}

By Lemma \ref{lem:daGaC} we have to show that, 
if $(u,R) \leq (v,S)$, then $Bux_R \subset \ol{Bvx_S}$.
Throughout this subsection, we will denote $\mu = [u \grs_R]^P$ and $\nu = [v \grs_S]^P$.

By Lemma \ref{lem:baseind2}, 
we have $L(v,S) - L(u,R) \geq 0$, therefore we can 
proceed by induction on $\ell(v)$, on $L(v,S)$, on $\ell(v) - \ell(u)$ and on $L(v,S) - L(u,R)$. 
By Lemmas \ref{lem:baseind1} and \ref{lem:baseind2}, 
the claim holds if $\ell(u) = \ell(v)$ or if $L(u,R) = L(v,S)$, and in particular if $\ell(v) = 0$ or if $L(v,S) = 0$. 
Therefore we may assume that $\ell(u) < \ell(v)$ and $L(u,R) < L(v,S)$.

Since $L(v,S) > 0$, we have $\ell(\grs_{v(S)}) >0$ as well. Thus by Lemma \ref{lem:inv0} ii) together with Lemma \ref{lem:azP1} ii) there exists $\gra \in \grD$ such that $\calE_\gra(v,S) \neq \vuoto$. 
Thanks to Lemma \ref{lem:lem2} we can easily conclude the proof in a few special cases.\\

\textit{Case }$1_\gra$. 
Suppose that $\calE_\gra(u,R) = \vuoto$ and $m_\gra(u,R) =(u',R')\neq (u,R)$. 
Then by Lemma \ref{lem:lem2} i) we have $(u,R) < (u',R') \leq (v,S)$. 
Since $L(v,S) - L(u',R') = L(v,S) - L(u,R)-1$, 
it follows from the inductive assumption on $L(v,S)-L(u,R)$ that $\ol{P_\gra u x_R}=\ol{Bu'x_{R'}} \subset \ol{Bvx_S}$, 
hence $B u x_R \subset \ol{Bvx_S}$.\\

\textit{Case }$2_\gra$.
Suppose that $\calE_\gra(u,R) = \vuoto$ and $m_\gra(u,R) = (u,R)$. Let $(v',S') \in \calE_\gra(v,S)$, then by Lemma \ref{lem:lem2} ii) we have $(u,R)\leq (v',S')$. On the other hand $L(v',S') - L(u,R) = L(v,S) - L(u,R)-1$, thus by the inductive assumption on $L(v,S)-L(u,R)$ it follows that $Bux_R \subset \ol{Bv' x_{R'}}$, and the claim follows because $Bv' x_{R'} \subset \ol{Bvx_R}$.\\

\textit{Case }$3_\gra$.
Suppose that $\calE_\gra(u,R) \neq \vuoto$, and assume that there exist $(u',R') \in \calE_\gra(u,R)$ and $(v',S') \in \calE_\gra(v,S)$ such that $(u',R') \leq (v',S')$. Then $L(v',S') = L(v,S)-1$, thus by the inductive assumption on $L(v,S)$ we get $Bu'x_{R'} \subset \ol{Bv' x_{S'}}$. Moreover, it follows that $P_\gra u'x_{R'} \subset \ol{P_\gra v' x_{S'}}$, hence $Bux_R \subset \ol{Bv x_S}$.\\

\textit{Case }$4_\gra$.
Suppose that $\calE_\gra(u,R) = \{(u',R')\}$ and let $(v',S') \in \calE_\gra(v,S)$. We claim that in this case we have necessarily $(u',R') \leq (v',S')$, so that we fall again in the previous case. Denote $\mu' = [u' \grs_{R'}]^P$ and $\nu' = [v' \grs_{S'}]^P$. By Lemma \ref{lem:lem2,5} we only have to show that $u' \leq v'$ and $\mu' \geq \nu'$. We only show the first inequality, as the other one is similar.

Notice that $[s_\gra v]^P \leq v' \leq v$, and that $u' = [s_\gra u]^P \leq u$  by the assumption on $(u,R)$. If $u' = u$ and $v'=v$ there is nothing to show, therefore we can assume that either $u' = s_\gra u < u$ or $v' = s_\gra v < v$. 

Suppose that we are in the first case: then either $v'=v$, in which case $u' < u \leq v'$, or $v' = s_\gra v < v$, in which case we get  $u' \leq v'$ by Lemma \ref{lem:parallelogramma} ii).

Suppose that we are in the second case, and not in the first case. Then we have $u'= u$ and $v' = s_\gra v$. Thus $u = [s_\gra u]^P$ (hence $u < s_\gra u$) and $s_\gra v < v$, and by Lemma \ref{lem:parallelogramma} iii) we get $u' = u \leq s_\gra v = v'$.\\

{\em Hence we can assume, and we will assume it from now on, that for all $\gra \in \grD$ such that 
$\calE_\gra(v,S)\neq \vuoto$ none of the previous cases hold}.\\

Since a $P_\gra$-orbit in $G/L$ decomposes 
at most into three $B$-orbits, it follows that $\calE_\gra(u,R)$ has cardinality 2 for all simple root $\gra$ such that $\calE_\gra(v,S)\neq \vuoto$ . 
Notice also that by Lemma \ref{lem:lem3} we are in the following setting.

\begin{claim} 	\label{lem:proof1}
Let $\gra \in \grD$ be such that $\calE_\gra(v,S) \neq \vuoto$, and let $(v',S') \in \calE_\gra(v,S)$. Denote $\nu' = [v' \grs_{S'}]^P$, then the following hold:
$$\begin{array}{lllll}
	{s_\gra u < u,} & \qquad & {s_\gra v < v,} & \qquad & {v' = s_\gra v,} \\
	{[s_\gra \mu]^P = s_\gra \mu,} & \qquad & {[s_\gra \nu]^P = s_\gra \nu,} & \qquad & {\nu' = s_\gra \nu.}
\end{array}$$
\end{claim}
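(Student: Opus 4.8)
\emph{Proof plan.} The idea is that all the real work has already been done in Lemma \ref{lem:lem3}, so that the claim will follow by combining that lemma with the standing hypotheses, namely that $\calE_\gra(v,S)\neq\vuoto$ and that, for the simple root $\gra$ at hand, none of the Cases $1_\gra$--$4_\gra$ occurs. First I would note that, since Cases $1_\gra$ and $2_\gra$ together exhaust the possibility $\calE_\gra(u,R)=\vuoto$, their failure forces $\calE_\gra(u,R)\neq\vuoto$; hence $m_\gra(u,R)=(u,R)$, and likewise $m_\gra(v,S)=(v,S)$ because $\calE_\gra(v,S)\neq\vuoto$, so Lemma \ref{lem:lem3} applies to this $\gra$.

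The next step is to pin down four of the six assertions. Applying Lemmas \ref{lem:azP2} ii) and \ref{lem:azP3} ii) in contrapositive form to $m_\gra(v,S)=(v,S)$ gives $[s_\gra v]^P\leq v$ and $[s_\gra\nu]^P\geq\nu$, and applying the same two lemmas to $m_\gra(u,R)=(u,R)$, with $\mu=[u\grs_R]^P$ playing the role of $\nu$, gives $[s_\gra u]^P\leq u$ and $[s_\gra\mu]^P\geq\mu$. On the other hand, Lemma \ref{lem:lem3} ii) says that if any one of the equalities $[s_\gra v]^P=v$, $[s_\gra\nu]^P=\nu$, $[s_\gra u]^P=u$, $[s_\gra\mu]^P=\mu$ held, then for every $(v',S')\in\calE_\gra(v,S)$ there would exist $(u',R')\in\calE_\gra(u,R)$ with $(u',R')\leq(v',S')$, i.e.\ we would be in Case $3_\gra$, contrary to assumption. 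Hence none of these equalities holds, and combining with the inequalities just obtained yields $s_\gra v<v$, $s_\gra u<u$, $[s_\gra\nu]^P=s_\gra\nu>\nu$ and $[s_\gra\mu]^P=s_\gra\mu>\mu$; in particular $s_\gra\nu\in W^P$.

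Finally, for the remaining two assertions I would fix $(v',S')\in\calE_\gra(v,S)$ and set $\nu'=[v'\grs_{S'}]^P$. As recalled in the proof of Lemma \ref{lem:lem3}, one always has $[s_\gra v]^P\leq v'\leq v$ and $[s_\gra\nu]^P\geq\nu'\geq\nu$. Since $[s_\gra v]^P=s_\gra v$ with $\ell(v)=\ell(s_\gra v)+1$, and $[s_\gra\nu]^P=s_\gra\nu$ with $\ell(s_\gra\nu)=\ell(\nu)+1$, and since nothing lies strictly between two comparable elements of consecutive length, it follows that $v'\in\{s_\gra v,v\}$ and $\nu'\in\{\nu,s_\gra\nu\}$. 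If $v'=v$, or if $\nu'=\nu$, then Lemma \ref{lem:lem3} i) would again produce $(u',R')\in\calE_\gra(u,R)$ with $(u',R')\leq(v',S')$, i.e.\ Case $3_\gra$ once more, a contradiction; therefore $v'=s_\gra v$ and $\nu'=s_\gra\nu$, which establishes the claim. I do not expect a genuine obstacle here: the only delicate point is keeping the bookkeeping straight between the $G/P$-side (governed by Lemma \ref{lem:azP2}) and the $G/P^-$-side (Lemma \ref{lem:azP3}), and matching the case hypotheses of Lemma \ref{lem:lem3} i)--ii) to the dichotomies $v'\in\{s_\gra v,v\}$ and $\nu'\in\{\nu,s_\gra\nu\}$; the substantive geometry has already been absorbed into Lemma \ref{lem:lem3}.
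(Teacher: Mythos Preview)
Your argument is correct and is exactly the approach the paper takes: the paper's entire justification is the single sentence ``by Lemma \ref{lem:lem3} we are in the following setting'', and you have accurately unpacked how Lemma \ref{lem:lem3} (together with Lemmas \ref{lem:azP2} ii) and \ref{lem:azP3} ii) and the exclusion of Cases $1_\gra$--$3_\gra$) forces the six listed conditions. In particular, your observation that the failure of Case $4_\gra$ is not needed for this claim is correct; it is used only for the separate assertion that $\calE_\gra(u,R)$ has cardinality $2$.
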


Moreover, we have the following.

\begin{claim}	\label{lem:proof2}
 Let $\gra \in \grD$ be such that $\calE_\gra(v,S) \neq \vuoto$ and denote
$$\calE_\gra(u,R) = \{(u',R'), (u'',R'')\}.$$
Set $\grb = -u^{-1}(\gra)$, $\mu' = [u' \grs_{R'}]^P$ and $\mu'' = [u'' \grs_{R''}]^P$, then the following hold:
\begin{enumerate}	[\indent i)]
	\item $\grb \in R$, and up to switching $(u',R')$ and $(u'',R'')$ we have
\[
	\qquad \qquad u' = u, \qquad \mu' = s_\gra \mu , \qquad u'' = s_\gra u, \qquad \mu'' = \mu, \qquad R' = R'' = R \senza \{\grb\};
\]
	\item $u$ and $s_\gra  v$ are uncomparable;
	\item $\grb$ is maximal both in $\Phi^+(u)$ and in $\Phi^+(v)$, and $\grb = -v^{-1}(\gra)$.
\end{enumerate}
\end{claim}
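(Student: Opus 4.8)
I will prove i), ii), iii) in that order, using Lemma~\ref{lem:azP4}, the reductions recorded in Claim~\ref{lem:proof1}, the monotonicity of $m_\gra$ from Remark~\ref{oss:m_alpha-compatibile}, and the lifting properties of Lemma~\ref{lem:parallelogramma}.

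\emph{Part i).} Since $\calE_\gra(u,R)$ has cardinality $2$, Lemma~\ref{lem:azP4} applies: $\grb=-u^{-1}(\gra)\in R$ and $\calE_\gra(u,R)=\{(s_\gra u,\,R\senza\{\grb\}),\,(u,\,R\senza\{\grb\})\}$, so $R'=R''=R\senza\{\grb\}$. From $u(\grb)=-\gra$ one has $us_\grb=s_\gra u$, and $s_\grb$ commutes with $\grs_{R\senza\{\grb\}}$ since $R$ is orthogonal; hence $s_\gra u\,\grs_{R\senza\{\grb\}}=u\grs_R$ and $u\,\grs_{R\senza\{\grb\}}=s_\gra(u\grs_R)$. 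The first identity gives $[s_\gra u\,\grs_{R\senza\{\grb\}}]^P=\mu$. For the second, write $u\grs_R=\mu z$ with $z\in W_P$ and $\ell(u\grs_R)=\ell(\mu)+\ell(z)$; as $s_\gra\mu\in W^P$ by Claim~\ref{lem:proof1}, we get $[u\,\grs_{R\senza\{\grb\}}]^P=[s_\gra\mu\,z]^P=s_\gra\mu$. Labelling the two pairs so that $u'=u$ yields i). Comparing $\mu'$ with $\mu$ through Remark~\ref{oss:m_alpha-compatibile} also gives $s_\gra\mu\geq\mu$, hence $s_\gra\mu>\mu$, and then $\mu<u$ (if $\mu=u$ then $s_\gra\mu=s_\gra u<u=\mu$, against $s_\gra u<u$ from Claim~\ref{lem:proof1}); both facts are used below.

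\emph{Part ii).} Suppose $u\leq s_\gra v$. Take $(u',R')=(u,R\senza\{\grb\})\in\calE_\gra(u,R)$ and any $(v',S')\in\calE_\gra(v,S)$; by Claim~\ref{lem:proof1}, $v'=s_\gra v$ and $[v'\grs_{S'}]^P=s_\gra\nu$, while $[u'\grs_{R'}]^P=s_\gra\mu$ by i). Then $(u',R')\leq(v',S')$: the inequality $\grs_{u'(R')}\leq\grs_{v'(S')}$ is Lemma~\ref{lem:lem2,5}; $u'=u\leq s_\gra v=v'$ is the hypothesis; $s_\gra\nu\leq s_\gra\mu$ follows from $\nu\leq\mu$ by Lemma~\ref{lem:parallelogramma}~i) (using $s_\gra\nu>\nu$, $s_\gra\mu>\mu$); and $s_\gra\mu\leq u$ follows from $\mu<u$ and $s_\gra u<u$ by Lemma~\ref{lem:parallelogramma}~iii). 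This contradicts the standing assumption that Case~$3_\gra$ does not occur. And $s_\gra v\leq u$ is impossible: by length it would force $u=s_\gra v$, whence $s_\gra u=v>u$, against $s_\gra u<u$ (Claim~\ref{lem:proof1}). So $u$ and $s_\gra v$ are uncomparable.

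\emph{Part iii).} That $\grb=-u^{-1}(\gra)$ is maximal in $\Phi^+(u)$ follows from $s_\gra u<u$ and Proposition~\ref{prp:GP2}~i). Put $\grb_v=-v^{-1}(\gra)$; it is maximal in $\Phi^+(v)$ by Proposition~\ref{prp:GP2}~i). By ii) we have $u\not\leq s_\gra v$; since $\Phi^+(u)\subseteq\Phi^+(v)$ (Proposition~\ref{prp:GP1}) and $\Phi^+(s_\gra v)=\Phi^+(v)\senza\{\grb_v\}$, it follows that $\grb_v\in\Phi^+(u)$, and being maximal in the larger set $\Phi^+(v)$ it is maximal in $\Phi^+(u)$ too. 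It remains to prove $\grb_v=\grb$; this is the crux. First, $\grb_v\notin S$: otherwise $\grs_{v(S)}(\gra)=v\grs_S(-\grb_v)=-\gra$, so $\gra$ would be real for $(v,S)$, forcing $\calE_\gra(v,S)$ to have cardinality $2$ by \cite[\S 4.3]{RS1}; but $\calE_\gra(v,S)=\{(v',S')\}$ (by Claim~\ref{lem:proof1} and Corollary~\ref{cor:uTvS}, since all its elements share $v'=s_\gra v$ and $\grs_{v'(S')}=s_\gra\circ\grs_{v(S)}$). Assume now $\grb_v\neq\grb$ and set $\gra^*=-u(\grb_v)\in\grD$, a simple root distinct from $\gra$ with $s_{\gra^*}u<u$ (Proposition~\ref{prp:GP2}~ii)). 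The plan is to derive a contradiction by confronting $\grs_{u(R)}\leq\grs_{v(S)}$ with the description of $\Phi_{u(R)}$ and $\Phi_{v(S)}$ in Proposition~\ref{prp:PhiS}, tracking the action of the two involutions on $\gra$, $\gra^*$ and $\grb_v$ (recall $\grs_{u(R)}(\gra)=-\gra$ since $\grb\in R$, whereas $\grs_{v(S)}(\gra)\neq-\gra$). Carrying out this last step uniformly, without a case analysis on the type of $\Phi$, is the main obstacle. Once $\grb_v=\grb$ is established, the equality $\grb=-v^{-1}(\gra)$ is just the definition of $\grb_v$, and maximality of $\grb$ in $\Phi^+(v)$ follows from Proposition~\ref{prp:GP2}~i).
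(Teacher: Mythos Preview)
Your proofs of i) and ii) are correct and essentially match the paper's approach (the extra observations about $s_\gra\mu>\mu$ and $\mu<u$ are not strictly needed, since $[u'\grs_{R'}]^P\leq u'$ is automatic by Remark~\ref{oss:deodhar}, but they do no harm).

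The gap is in iii). You correctly reach the point $\grb_v\in\Phi^+(u)$, but then you announce that showing $\grb_v=\grb$ is ``the crux'' and ``the main obstacle'', and you sketch an approach via $\Phi_{u(R)}$, $\Phi_{v(S)}$, and the simple root $\gra^*=-u(\grb_v)$ that you admit you cannot carry through. This is unnecessary: the missing step is a one-line application of Lemma~\ref{lem:parallelogramma}~ii) together with Proposition~\ref{prp:GP1}. Since $u\leq v$, $s_\gra u<u$ and $s_\gra v<v$ (both from Claim~\ref{lem:proof1}), Lemma~\ref{lem:parallelogramma}~ii) gives $s_\gra u\leq s_\gra v$, hence by Proposition~\ref{prp:GP1}
\[
\Phi^+(u)\senza\{\grb\}=\Phi^+(s_\gra u)\subset\Phi^+(s_\gra v)=\Phi^+(v)\senza\{\grb_v\}.
\]
Now $\grb_v\in\Phi^+(u)$ but $\grb_v\notin\Phi^+(v)\senza\{\grb_v\}$, so $\grb_v\notin\Phi^+(u)\senza\{\grb\}$, which forces $\grb_v=\grb$. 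This is exactly the paper's argument (stated tersely there, but this is why the paper records the equalities $\Phi^+(s_\gra u)=\Phi^+(u)\senza\{\grb\}$ and $\Phi^+(s_\gra v)=\Phi^+(v)\senza\{\grb'\}$). Your detour through $\grb_v\notin S$ and the root $\gra^*$ is not needed and does not lead anywhere obvious.
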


\begin{proof}[Proof of the claim]
Point i) follows from Lemma \ref{lem:azP4}, 
by noticing that $u \grs_{u(R \senza \{\grb\})} = s_\gra u \grs_{u(R)}$ and $s_\gra u \grs_{u(R \senza \{\grb\})} = u \grs_{u(R)}$.

Point ii) follows from Lemma \ref{lem:lem2,5}, since $\nu' \leq \mu'$ and by the assumption above we cannot have $(u',R') \not \leq (v',S')$ otherwise 
we would be in step $3_\gra$.

We prove iii). Denote $\grb' =  -v^{-1}(\gra)$. 
Then $\grb$ is maximal in $\Phi^+(u)$ and $\grb'$ is maximal in $\Phi^+(v)$ thanks to Proposition \ref{prp:GP2} i) and the fact 
that $s_\gra v<v$ by Claim \ref{lem:proof1}.  
Notice that $\Phi^+(s_\gra u) = \Phi^+(u) \senza \{\grb\}$ and $\Phi^+(s_\gra v) = \Phi^+(v) \senza \{\grb'\}$. 
On the other hand, because $u \leq v$ and $u \not \leq s_\gra v$, by Proposition \ref{prp:GP1} we have 
$\Phi^+(u) \subset \Phi^+(v)$ and $\Phi^+(u) \not \subset \Phi^+(s_\gra v)$. Therefore it must be $\grb =\grb'$.
\end{proof}

Fix $\gra \in \grD$ such that $\calE_\gra(v,S) \neq \vuoto$, we will keep the notation of Claim \ref{lem:proof2}. 
By assumption we have $u < v$, thus, by the chain property (see \cite[Theorem 2.5.5]{BB}) and Lemma \ref{lem:GP1}, 
there exists $\gra_0 \in \grD$ such that $u \leq s_{\gra_0} v < v$ and (in particular) $s_{\gra_0} v \in W^P$. 
Let $\grb_0 = -v^{-1}(\gra_0)$. Then by Proposition \ref{prp:GP2} i) we see that $\grb_0$ is maximal in $\Phi^+(v)$. 
Since $u \leq s_{\gra_0} v$, notice that by 
Lemma \ref{lem:proof2} ii) it must be  $\calE_{\gra_0}(v,S) = \vuoto$. Denote
$$S_0 = S \cup \{\grb_0\}.$$

\begin{claim}	\label{lem:proof3}
The following hold:
\begin{enumerate}[\indent i)]
	\item $\gra_0$ and $\gra$ are orthogonal;
	\item $S_0 \subset \Phi^+(v)$ is an orthogonal subset, and $\grb_0 \not \in S$. Moreover,
	$m_{\gra_0}(v,S) = (v,S_0)$, and $\calE_{\gra_0}(v,S_0) = \{(s_{\gra_0} v, S), (v,S)\}$.
\end{enumerate}
\end{claim}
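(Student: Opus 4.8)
The plan is to prove the two assertions in turn, relying on the analysis of the minimal parabolic subgroups from Section~\ref{sez:azioneminimali}. Set $\grb = -v^{-1}(\gra)$ and $\grb_0 = -v^{-1}(\gra_0)$. Since $s_\gra v < v$ by Claim~\ref{lem:proof1} and $s_{\gra_0}v<v$ by the choice of $\gra_0$, Proposition~\ref{prp:GP2}~i) shows that $\grb$ and $\grb_0$ are both maximal in $\Phi^+(v)$, and in particular $\grb,\grb_0\in\Phi^+(v)\subseteq\Psi$. Moreover $\gra\neq\gra_0$, since $u\leq s_{\gra_0}v$ whereas $u$ and $s_\gra v$ are uncomparable by Claim~\ref{lem:proof2}~ii); as $v^{-1}$ is injective, $\grb\neq\grb_0$.

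To prove i), note first that $\grb+\grb_0\notin\Phi$, as in the proof of Lemma~\ref{lem:panyushev}~i) (because $\gop^\mru$ is an abelian ideal of $\gob$ and $\grb,\grb_0\in\Psi$). If $(\grb,\grb_0)<0$ then $\grb+\grb_0\in\Phi$, a contradiction, so $(\grb,\grb_0)\geq 0$. If $(\grb,\grb_0)>0$ then $\grb-\grb_0\in\Phi$, and according as $\grb-\grb_0$ is a positive or a negative root we get $\grb_0<\grb$ or $\grb<\grb_0$ in the dominance order, contradicting the maximality in $\Phi^+(v)$ of $\grb_0$, respectively of $\grb$. Hence $(\grb,\grb_0)=0$, and since $v$ preserves the scalar product, $(\gra,\gra_0)=(v(\grb),v(\grb_0))=(\grb,\grb_0)=0$.

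To prove ii), observe first that $\grb_0\notin S$: otherwise $\grb_0\in S\subseteq\Phi^+(v)$ and Lemma~\ref{lem:azP4} would give $\card\calE_{\gra_0}(v,S)=2$, contradicting $\calE_{\gra_0}(v,S)=\vuoto$. Next, $m_{\gra_0}(v,S)\neq(v,S)$: otherwise, since $\calE_{\gra_0}(v,S)=\vuoto$, Lemma~\ref{lem:azP2}~iii) would force $[s_{\gra_0}v]^P=v$, which is impossible as $s_{\gra_0}v\in W^P$ and $s_{\gra_0}v<v$. Write $m_{\gra_0}(v,S)=(v',T)$. By Remark~\ref{oss:m_alpha-compatibile} we have $(v,S)\leq(v',T)$, hence $v\leq v'$; on the other hand, applying $\pi\colon G/L\to G/P$ to the inclusion $Bv'x_T\subseteq P_{\gra_0}vx_S$ gives $Bv'P/P\subseteq P_{\gra_0}vP/P=BvP/P\sqcup Bs_{\gra_0}vP/P$, so $v'\in\{v,s_{\gra_0}v\}$. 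Together with $v\leq v'$ and $s_{\gra_0}v<v$ this forces $v'=v$. Thus $m_{\gra_0}(v,S)=(v,T)$ with $T\neq S$ and $(v,S)\in\calE_{\gra_0}(v,T)$.

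It remains to identify $T$. Since $\calE_{\gra_0}(v,T)\neq\vuoto$ and $s_{\gra_0}v<v$, Lemma~\ref{lem:azP2}~i) produces a pair $(s_{\gra_0}v,T'')\in\calE_{\gra_0}(v,T)$. Then $Bvx_T$, $Bvx_S$ and $Bs_{\gra_0}vx_{T''}$ are three distinct $B$-orbits (by Corollary~\ref{cor:parametrizzazione}~ii)) lying in the single $P_{\gra_0}$-orbit $P_{\gra_0}vx_S=P_{\gra_0}vx_T$; as such an orbit splits into at most three $B$-orbits, these exhaust it, whence $\calE_{\gra_0}(v,T)=\{(v,S),(s_{\gra_0}v,T'')\}$ has cardinality~$2$. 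Applying Lemma~\ref{lem:azP4} to $(v,T)$ and $\gra_0$ gives $\grb_0\in T$ and $\calE_{\gra_0}(v,T)=\{(s_{\gra_0}v,T\senza\{\grb_0\}),(v,T\senza\{\grb_0\})\}$; comparing the two descriptions we obtain $T\senza\{\grb_0\}=S$ and $T''=S$, hence $T=S\cup\{\grb_0\}=S_0$ (using $\grb_0\notin S$). Consequently $S_0=T$ is an orthogonal subset of $\Phi^+(v)$, $m_{\gra_0}(v,S)=(v,S_0)$, and $\calE_{\gra_0}(v,S_0)=\{(s_{\gra_0}v,S),(v,S)\}$, as asserted. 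The delicate point is the step proving $m_{\gra_0}(v,S)\neq(v,S)$ and then $v'=v$, which couples the combinatorics of $\calE_{\gra_0}$ with the geometry of the projection to $G/P$; once this is in place, the identification of $T$ is formal.
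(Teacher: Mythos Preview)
Your proof is correct and follows essentially the same approach as the paper's: for i) you show $\grb$ and $\grb_0$ are orthogonal via their maximality in $\Phi^+(v)$ and the abelianness of $\gop^\mru$, and for ii) you argue that $m_{\gra_0}(v,S)\neq(v,S)$, that the open orbit lies over $v$, and that $\calE_{\gra_0}$ of the open orbit has cardinality~$2$, then invoke Lemma~\ref{lem:azP4}. The only differences are cosmetic---you cite Lemma~\ref{lem:azP2}~iii) and Remark~\ref{oss:m_alpha-compatibile} where the paper argues geometrically, and you separate out $\grb_0\notin S$ at the start---but the substance is the same.
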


\begin{proof}[Proof of the claim]
i). Notice that $\gra$ and $\gra_0$ are orthogonal if and only if $\grb$ and $\grb_0$ are orthogonal. Since $\grb, \grb_0 \in \Psi$, we have $\grb + \grb_0 \not \in \Phi$. On the other hand $\grb$ and $\grb_0$ are both maximal in $\Phi^+(v)$ by Proposition \ref{prp:GP2} i), and by construction they cannot be equal. Thus $\grb - \grb_0 \not \in \Phi$, which shows that $\grb$ and $\grb_0$ 
are orthogonal.

ii). Since $s_{\gra_0} v < v$, it follows that $P_{\gra_0} v x_S \cap B s_{\gra_0} v P/L$ is a nonempty closed proper subset of $P_{\gra_0} v x_S$. On the other hand $\calE_{\gra_0}(v,S) = \vuoto$, thus $Bvx_S$ cannot be open in $P_{\gra_0} v x_S$. It follows that $m_{\gra_0}(v,S) \neq (v,S)$, and the complement of the open $B$-orbit in $P_{\gra_0} v x_S$ intersects both $BvP/L$ and $Bs_{\gra_0}vP/L$. Therefore $\calE_{\gra_0}(m_{\gra_0}(v,S))$ has cardinality at least 2, and the claims follow by Lemma \ref{lem:azP4}.
\end{proof}

\begin{claim}	\label{lem:proof4}
We have the following equalities:
$$
\calE_\gra(v,S) = \{(s_\gra v,S)\},  \quad \calE_\gra(s_{\gra_0}v,S) = \{(s_\gra s_{\gra_0}v, S)\}, \quad \calE_\gra(v,S_0) = \{(s_\gra v,S_0)\}.
$$
Moreover, $\calE_{\gra_0}(s_\gra v,S_0) = \{(s_\gra v,S), (s_\gra s_{\gra_0}v, S)\}$.
\end{claim}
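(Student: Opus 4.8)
The plan is to prove the four equalities in turn, using Lemma~\ref{lem:azP4} and the structural data recorded in Claims~\ref{lem:proof1}, \ref{lem:proof2} and \ref{lem:proof3}, together with one auxiliary observation which I would isolate at the outset: \emph{if $(w,T)\in V_L$ with $w\in W^P$, $T\subset\Phi^+(w)$, $\gra\in\grD$, $s_\gra w<w$, and if, setting $\grb=-w^{-1}(\gra)$, we have $\grb\notin T$ and $\grs_{w(T)}(\gra)\in\Phi^-$, then $\calE_\gra(w,T)=\{(s_\gra w,T)\}$.} To see this, $\grs_{w(T)}(\gra)<0$ gives $s_\gra\grs_{w(T)}<\grs_{w(T)}$, hence $s_\gra\circ\grs_{w(T)}<\grs_{w(T)}$ by Lemma~\ref{lem:inv0}~ii) and thus $\calE_\gra(w,T)\neq\vuoto$ by Lemma~\ref{lem:azP1}~ii); since $\grb\notin T$, Lemma~\ref{lem:azP4} excludes cardinality $2$, so $\calE_\gra(w,T)$ is a single pair. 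On the other hand $s_\gra w\in W^P$, $\Phi^+(s_\gra w)=\Phi^+(w)\senza\{\grb\}\supset T$, and $-\gra\notin w(T)$ (because $\grb\notin T$), so $s_\gra w(T)=s_\gra(w(T))\subset\Phi^-$ and $(s_\gra w,T)\in V_L$; as $Bwx_T$ is the open $B$-orbit in $P_\gra wx_T=P_\gra s_\gra wx_T$, we get $(s_\gra w,T)\in\calE_\gra(w,T)$, hence equality.

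For the \emph{first equality}, note that by Claim~\ref{lem:proof1} every element of $\calE_\gra(v,S)$ has first component $s_\gra v\neq v$; were this set of cardinality $2$, Lemma~\ref{lem:azP4} would yield an element with first component $v$. So $\calE_\gra(v,S)$ is a single pair and, by Lemma~\ref{lem:azP4}, $\grb:=-v^{-1}(\gra)\notin S$; since $\calE_\gra(v,S)\neq\vuoto$ we have $\grs_{v(S)}(\gra)\in\Phi^-$, and the auxiliary observation applied to $(v,S)$ gives $\calE_\gra(v,S)=\{(s_\gra v,S)\}$. For the \emph{third equality} I would apply the auxiliary observation to $(v,S_0)$: one has $s_\gra v<v$, and $\grb\notin S_0$ because $\grb\notin S$ and $\grb\neq\grb_0$ (distinct maximal elements of $\Phi^+(v)$ by Claims~\ref{lem:proof2} and \ref{lem:proof3}); since $\grb\perp\grb_0$ we have $\grs_{S_0}(\grb)=\grs_S(\grb)$ and therefore $\grs_{v(S_0)}(\gra)=\grs_{v(S)}(\gra)\in\Phi^-$, whence $\calE_\gra(v,S_0)=\{(s_\gra v,S_0)\}$.

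The \emph{second equality} is the delicate one. Set $w=s_{\gra_0}v$; then $w\in W^P$ (since $s_{\gra_0}v<v$) and, using $\gra\perp\gra_0$ from Claim~\ref{lem:proof3}~i), $w^{-1}(\gra)=v^{-1}(\gra)=-\grb$, so $s_\gra w<w$ and $-w^{-1}(\gra)=\grb\notin S$. Again by $\gra\perp\gra_0$, $\grs_{w(S)}(\gra)=s_{\gra_0}\grs_{v(S)}s_{\gra_0}(\gra)=s_{\gra_0}(\grs_{v(S)}(\gra))$, which lies in $\Phi^-$ \emph{unless} the negative root $\grs_{v(S)}(\gra)$ equals $-\gra_0$. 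To rule this out, rewrite: $\grs_{v(S)}(\gra)=-v\grs_S(\grb)$ and $-\gra_0=v(\grb_0)$, so the bad case is $\grs_S(\grb)=-\grb_0$; but $\grb_0$ is orthogonal to every element of $S$ (Claim~\ref{lem:proof3}~ii)), so $\grs_S(\grb_0)=\grb_0$, and applying the involution $\grs_S$ to $\grs_S(\grb)=-\grb_0$ would force $\grb=-\grb_0\in\Phi^-$, which is absurd. Hence $\grs_{w(S)}(\gra)\in\Phi^-$ and the auxiliary observation gives $\calE_\gra(s_{\gra_0}v,S)=\{(s_\gra s_{\gra_0}v,S)\}$.

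Finally, the \emph{fourth equality} follows directly from Lemma~\ref{lem:azP4} applied to the pair $(s_\gra v,S_0)$ (which belongs to $V_L$ by the third equality) and the simple root $\gra_0$: since $\gra_0\perp\gra$ one has $-(s_\gra v)^{-1}(\gra_0)=-v^{-1}(\gra_0)=\grb_0\in S_0$, so $\calE_{\gra_0}(s_\gra v,S_0)=\{(s_{\gra_0}s_\gra v,S_0\senza\{\grb_0\}),(s_\gra v,S_0\senza\{\grb_0\})\}$, which equals $\{(s_\gra s_{\gra_0}v,S),(s_\gra v,S)\}$ since $s_{\gra_0}s_\gra=s_\gra s_{\gra_0}$ and $S_0\senza\{\grb_0\}=S$. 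The only real obstacle is the non-emptiness of $\calE_\gra(s_{\gra_0}v,S)$ in the second equality, i.e. the inequality $\grs_{v(S)}(\gra)\neq-\gra_0$, and it is exactly there that the orthogonality of the full set $S_0=S\cup\{\grb_0\}$ — rather than merely of $S$ — is used.
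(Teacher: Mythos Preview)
Your proof is correct. The overall strategy matches the paper's: use Lemma~\ref{lem:azP4} to bound the cardinality of each $\calE_\gra$, then exhibit the claimed element. The paper argues the latter geometrically (the displayed orbits $Bs_\gra v x_S$, $Bs_\gra s_{\gra_0}v x_S$, $Bs_\gra v x_{S_0}$ lie in the relevant $P_\gra$-orbits and are not open because their first components are strictly smaller), whereas you package the same reasoning into an auxiliary observation based on the involution criterion $\grs_{w(T)}(\gra)\in\Phi^-$. Your route is a bit more explicit about non-emptiness of $\calE_\gra(s_{\gra_0}v,S)$ and $\calE_\gra(v,S_0)$ --- the paper leaves this implicit --- and your exclusion of $\grs_{v(S)}(\gra)=-\gra_0$ via the orthogonality of $S_0$ is a clean way to handle the one genuine subtlety; the paper's argument ultimately rests on the same orthogonality (if $S\cup\{\grb\}$ were orthogonal, Lemma~\ref{lem:azP4} applied at $(v,S\cup\{\grb\})$ would contradict the standing hypothesis $\calE_\gra(v,S)\neq\vuoto$).
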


\begin{proof}[Proof of the claim]
Recall that $\grb =- v^{-1}(\gra)$ from Claim \ref{lem:proof2} iii). Notice that $\grb \in \Psi \senza S$: otherwise by Lemma \ref{lem:azP4} 
there would exist $(v',S') \in \calE_\gra(v,S)$ with $v' = v$, and this is not possible by Claim \ref{lem:proof1}. On the other hand by construction 
$\grb \neq \grb_0$, therefore $\grb \not \in S_0$ as well.

It follows from Lemma \ref{lem:azP4} that $\calE_\gra(v,S)$, $\calE_\gra(s_{\gra_0}v,S)$ and $\calE_\gra(v,S_0)$ have all cardinality at most 1. 
On the other hand $(s_\gra v,S)$, $(s_\gra s_{\gra_0}v, S)$ and $(s_\gra v,S_0)$ are all admissible pairs, and we have the obvious inclusions
$$
	Bs_\gra v x_S  \subset P_\gra  v x_S, \qquad   Bs_\gra s_{\gra_0} v x_S \subset 
	P_\gra  s_{\gra_0} v x_S, \qquad Bs_\gra v x_{S_0} \subset P_\gra v x_{S_0}. 
$$
Therefore the first claim follows thanks to the inequalities $s_\gra v < v$ and $s_\gra s_{\gra_0} v < s_{\gra_0} v$, and the second claim follows from 
Lemma \ref{lem:azP4} thanks to the orthogonality of $\gra$ and $\gra_0$.
\end{proof}

\begin{claim}	\label{lem:proof5}
We have the inequality $(u,R) \leq (s_{\gra_0}v,S)$. In particular, we have the inclusion $B u x_R \subset \ol{B s_{\gra_0} v x_S}$.
\end{claim}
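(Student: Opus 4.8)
The plan is to prove first the combinatorial inequality $(u,R) \leq (s_{\gra_0}v,S)$, and then to deduce the geometric inclusion $Bux_R \subset \ol{Bs_{\gra_0}vx_S}$ from the inductive hypothesis of the main theorem, which applies because $\ell(s_{\gra_0}v) = \ell(v)-1 < \ell(v)$. Unwinding definition \eqref{eq:ordVL} with $v' = s_{\gra_0}v$, the inequality $(u,R)\leq(s_{\gra_0}v,S)$ amounts to the four conditions $[s_{\gra_0}v\grs_S]^P \leq [u\grs_R]^P \leq u \leq s_{\gra_0}v$ and $\grs_{u(R)} \leq \grs_{(s_{\gra_0}v)(S)}$. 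Of these, $[u\grs_R]^P \leq u$ is automatic by Remark \ref{oss:deodhar}, and $u \leq s_{\gra_0}v$ holds by the very choice of $\gra_0$, so only the first and the last inequality need an argument.

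For $[s_{\gra_0}v\grs_S]^P \leq \mu$, I would use that $S_0 = S\cup\{\grb_0\}$ is orthogonal by Claim \ref{lem:proof3} ii), so $\grs_S$ fixes $\grb_0$. Since $v^{-1}(\gra_0) = -\grb_0$ by definition of $\grb_0$, this yields $(v\grs_S)^{-1}(\gra_0) = \grs_S(-\grb_0) = -\grb_0 \in \Phi^-$, hence $s_{\gra_0}v\grs_S < v\grs_S$ and therefore $[s_{\gra_0}v\grs_S]^P \leq [v\grs_S]^P = \nu$; the conclusion then follows from $\nu \leq \mu$, which is part of the hypothesis $(u,R)\leq(v,S)$.

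The subtler point is the involution inequality, and the key observation there is that in fact $\grs_{(s_{\gra_0}v)(S)} = \grs_{v(S)}$. Indeed Claim \ref{lem:proof3} ii) gives $\calE_{\gra_0}(v,S_0) = \{(s_{\gra_0}v,S),(v,S)\}$, so $m_{\gra_0}(v,S) = m_{\gra_0}(s_{\gra_0}v,S) = (v,S_0)$; applying Lemma \ref{lem:azP1} i) to each of the two pairs $(v,S)$ and $(s_{\gra_0}v,S)$ expresses $\grs_{v(S_0)}$ both as $s_{\gra_0}\circ\grs_{v(S)}$ and as $s_{\gra_0}\circ\grs_{(s_{\gra_0}v)(S)}$, and since the circle action of $s_{\gra_0}$ is an involution on $\calI$ the two involutions must coincide. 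Then $\grs_{u(R)} \leq \grs_{v(S)} = \grs_{(s_{\gra_0}v)(S)}$ follows from $(u,R)\leq(v,S)$, completing the proof of $(u,R)\leq(s_{\gra_0}v,S)$.

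I expect the main obstacle to be realizing that one should go for the equality $\grs_{(s_{\gra_0}v)(S)} = \grs_{v(S)}$ rather than a mere inequality, and that it falls out of the two-element set $\calE_{\gra_0}(v,S_0)$ combined with the involutivity of the circle action; once this is seen, everything else is routine bookkeeping with \eqref{eq:ordVL}, Remark \ref{oss:deodhar}, and standard facts about the Bruhat order on $W^P$ (in particular that $x < y$ implies $[x]^P \leq [y]^P$).
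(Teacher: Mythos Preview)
Your argument is correct and follows the paper's approach: verify the four conditions in \eqref{eq:ordVL} for $(u,R) \leq (s_{\gra_0}v,S)$, then invoke the inductive hypothesis on $\ell(v)$. For the equality $\grs_{(s_{\gra_0}v)(S)} = \grs_{v(S)}$ the paper simply asserts it, and the implicit reason is more direct than your detour through $\calE_{\gra_0}(v,S_0)$ and Lemma~\ref{lem:azP1}: since $\grb_0 \perp S$ (Claim~\ref{lem:proof3}~ii)) one has $\grs_{v(S)}(\gra_0) = v\grs_S v^{-1}(\gra_0) = -v\grs_S(\grb_0) = -v(\grb_0) = \gra_0$, so $s_{\gra_0}$ commutes with $\grs_{v(S)}$ and $\grs_{(s_{\gra_0}v)(S)} = s_{\gra_0}\grs_{v(S)}s_{\gra_0} = \grs_{v(S)}$.
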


\begin{proof}[Proof of the claim]
Notice that $s_{\gra_0} \nu < \nu$, $[s_{\gra_0}v \grs_S]^P = [s_{\gra_0} \nu]^P = s_{\gra_0} \nu$ and that $\grs_{s_{\gra_0}v(S)} =  \grs_{v(S)}$. 
Thus by the inequality $(u,R) \leq (v,S)$ we immediately obtain the inequalities $s_{\gra_0} \nu < \mu$ and $\grs_{u(R)} \leq \grs_{s_{\gra_0}v(S)}$.

To prove the first claim, it only remains to show that $u \leq s_{\gra_0} v$. By Proposition \ref{prp:GP1} we have 
$\Phi^+(u) \subset \Phi^+(v)$. On the other hand $\Phi^+(s_{\gra_0}v)  = \Phi^+(v) \senza \{\grb_0\}$, and by construction we have 
$\grb_0 \not \in \Phi^+(u)$. It follows that $u \leq s_{\gra_0} v$, thus  $(u,R) \leq (s_{\gra_0}v,S)$, and the last claim follows 
from the inductive assumption on $\ell(v)$.
 \end{proof}

\begin{claim} \label{lem:proof6}
We have the inclusions
$$Bs_\gra u x_{R'} \subset \ol{B s_\gra s_{\gra_0}v x_S}, \qquad
B u x_R \subset \ol{B s_{\gra_0} v x_S}.$$
\end{claim}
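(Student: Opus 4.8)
The plan is to deduce the two inclusions from the claims already established, using the action of the minimal parabolic subgroup $P_{\gra_0}$ together with the inductive hypothesis. For the first inclusion, the idea is to apply $P_{\gra_0}$ to the inclusion $Bux_R \subset \ol{Bs_{\gra_0}v x_S}$ coming from Claim \ref{lem:proof5}, but it is more efficient to argue through the pair $(u',R')=(u,R)$ (recall $u'=u$, $R'=R\senza\{\grb\}$ by Claim \ref{lem:proof2} i)) and to produce the inequality $(s_\gra u, R') \leq (s_\gra s_{\gra_0}v, S)$. The key observation is that $s_\gra s_{\gra_0}v \in W^P$ (the elements $\gra$ and $\gra_0$ are orthogonal by Claim \ref{lem:proof3} i), and both $s_\gra$ and $s_{\gra_0}$ decrease the length of $v$, so $s_\gra s_{\gra_0}v < s_{\gra_0}v < v$, and both products land in $W^P$ by the standard fact recalled at the start of Section \ref{sez:GP}), and that $(s_\gra s_{\gra_0}v, S) \in \calE_\gra(s_{\gra_0}v, S)$ by Claim \ref{lem:proof4}.

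First I would check the combinatorial inequality $(s_\gra u, R') \leq (s_\gra s_{\gra_0}v, S)$. For the $\grs$-component: by Claim \ref{lem:proof5} we have $\grs_{u(R)} \leq \grs_{s_{\gra_0}v(S)}$, and since $\calE_\gra(s_{\gra_0}v,S)\neq\vuoto$ and $(s_\gra u, R')\in\calE_\gra(u,R)$, Lemma \ref{lem:lem2,5} applied to the pair $(u,R)\leq(s_{\gra_0}v,S)$ gives $\grs_{s_\gra u(R')} \leq \grs_{s_\gra s_{\gra_0}v(S)}$. For the $W^P$-inequalities: $s_\gra u \leq u \leq s_{\gra_0}v$, and since $u < s_\gra u$ is false here ($s_\gra u < u$ by Claim \ref{lem:proof1}) while $s_{\gra_0}v$ and $s_\gra s_{\gra_0}v$ differ by $s_\gra$ with $s_\gra(s_{\gra_0}v)<s_{\gra_0}v$, I would invoke Lemma \ref{lem:parallelogramma} ii) to the pair $s_\gra u < u \leq s_{\gra_0}v$ — wait, one needs $s_\gra u \leq s_\gra s_{\gra_0}v$, so I would instead apply Lemma \ref{lem:parallelogramma} ii) to $u \leq s_{\gra_0}v$ with left multiplication by $s_\gra$, using $s_\gra u < u$ and $s_\gra s_{\gra_0}v < s_{\gra_0}v$, obtaining $s_\gra u \leq s_\gra s_{\gra_0}v$. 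The inequality among the $\nu$-components $[s_\gra s_{\gra_0}v \cdot \grs_S]^P \leq [s_\gra u \cdot \grs_{R'}]^P$ is handled symmetrically, using $s_{\gra_0}\nu < \nu$, $[s_\gra \mu]^P = s_\gra \mu$, $[s_\gra\nu]^P = s_\gra\nu$ from Claim \ref{lem:proof1}, Lemma \ref{lem:azP3}, and the lifting property once more. Having established $(s_\gra u, R')\leq (s_\gra s_{\gra_0}v, S)$, note $L(s_\gra s_{\gra_0}v, S) = L(v,S)-2 < L(v,S)$, so the inductive assumption on $L(v,S)$ yields $Bs_\gra u x_{R'} \subset \ol{Bs_\gra s_{\gra_0}v x_S}$, which is the first inclusion.

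For the second inclusion $Bux_R \subset \ol{Bs_{\gra_0}v x_S}$: this is precisely the last sentence of Claim \ref{lem:proof5}, obtained there from the inductive assumption on $\ell(v)$ applied to $(u,R)\leq(s_{\gra_0}v,S)$ (valid since $\ell(s_{\gra_0}v) = \ell(v)-1 < \ell(v)$). So it suffices to restate it. I expect the main obstacle to be the bookkeeping of which instance of the lifting property (Lemma \ref{lem:parallelogramma}) and which part of Lemmas \ref{lem:azP2} and \ref{lem:azP3} to invoke for each of the four scalar inequalities in $(s_\gra u, R')\leq(s_\gra s_{\gra_0}v, S)$, and in particular verifying that all the relevant elements genuinely lie in $W^P$; once the orthogonality of $\gra$ and $\gra_0$ (Claim \ref{lem:proof3} i)) is used to guarantee $s_\gra$ and $s_{\gra_0}$ commute in the relevant way, these verifications should be routine.
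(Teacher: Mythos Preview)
Your proposal has a genuine gap in the verification of the $\varphi_-$-inequality needed for $(s_\gra u,R')\leq (s_\gra s_{\gra_0}v,S)$. Concretely, you need
\[
[s_\gra s_{\gra_0}v\,\grs_S]^P \;\leq\; [s_\gra u\,\grs_{R'}]^P.
\]
By Claim~\ref{lem:proof2}\,i) the right hand side equals $\mu$, and one checks (using orthogonality of $\gra$ and $\gra_0$, together with $s_{\gra_0}\nu<\nu$ and $s_\gra\nu>\nu$) that the left hand side equals $s_{\gra_0}s_\gra\nu=s_\gra s_{\gra_0}\nu\in W^P$. So you must show $s_\gra s_{\gra_0}\nu\leq\mu$. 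From Claim~\ref{lem:proof5} you have $s_{\gra_0}\nu\leq\mu$; but left multiplication by $s_\gra$ \emph{increases} both sides here (indeed $s_\gra s_{\gra_0}\nu>s_{\gra_0}\nu$ and $s_\gra\mu>\mu$ by Claim~\ref{lem:proof1}), so Lemma~\ref{lem:parallelogramma} only yields $s_\gra s_{\gra_0}\nu\leq s_\gra\mu$, not $\leq\mu$. There is no ``symmetric'' lifting-property step available: one would need $s_\gra\mu<\mu$, which is false. In fact the missing inequality $s_{\gra_0}s_\gra\nu\leq\mu$ is exactly what Claim~\ref{lem:proof7} later extracts \emph{from} Claim~\ref{lem:proof6} (via Lemma~\ref{lem:daGaC}), so your argument is circular in the overall structure of the proof.

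The paper circumvents this by arguing geometrically rather than combinatorially. Starting from the second inclusion $Bux_R\subset\ol{Bs_{\gra_0}v\,x_S}$ (which is Claim~\ref{lem:proof5}, as you note), the paper applies Lemma~\ref{lem:ordG1}\,ii) with the simple root $\gra$, using that $\calE_\gra(u,R)=\{(s_\gra u,R'),(u,R')\}$ has two elements while $\calE_\gra(s_{\gra_0}v,S)=\{(s_\gra s_{\gra_0}v,S)\}$ has one (Claim~\ref{lem:proof4}). This geometric lemma guarantees that the orbit of \emph{some} $(u',R'')\in\calE_\gra(u,R)$ lies in $\ol{Bs_\gra s_{\gra_0}v\,x_S}$; the possibility $(u',R'')=(u,R')$ is then ruled out because $u\not\leq s_\gra v$ (Claim~\ref{lem:proof2}\,ii)) forces $u\not\leq s_\gra s_{\gra_0}v$. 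Thus the first inclusion is obtained without ever proving the combinatorial inequality you attempt. (Incidentally, $L(s_\gra s_{\gra_0}v,S)=L(v,S)-1$, not $L(v,S)-2$, since $\grs_{s_{\gra_0}v(S)}=\grs_{v(S)}$; this does not affect the logic but is worth correcting.)
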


\begin{proof}[Proof of the claim]
By Claim \ref{lem:proof5} we have the inclusion $Bux_R \subset \ol{B s_{\gra_0} v x_S}$, whereas by Claims \ref{lem:proof2} i) 
and \ref{lem:proof4} we have
$$\calE_\gra(u,R) = \{(s_\gra u,R'), (u,R')\}, \qquad \calE_\gra(s_{\gra_0}v,S) = \{(s_\gra s_{\gra_0}v, S)\}.$$
Thus by Lemma \ref{lem:ordG1} ii) either $Bs_\gra u x_{R'}$ or $Bu x_{R'}$ is contained in $\ol{B s_\gra s_{\gra_0}v x_S}$. 
On the other hand $s_\gra s_{\gra_0}v < s_\gra v$, and $u \not \leq s_\gra v$ thanks to Claim \ref{lem:proof2} ii). 
Therefore $u \not \leq s_\gra s_{\gra_0}v$, and we obtain $Bs_\gra u x_{R'} \subset \ol{B s_\gra s_{\gra_0}v x_S}$. 
The second inclusion follows as well, since $(u, R)= m_\gra(s_\gra u, R')$ and $(s_{\gra_0} v, S) = m_\gra(s_\gra s_{\gra_0} v, S)$.
\end{proof}

\begin{claim} \label{lem:proof7}
We have the inclusion $Bs_\gra u x_{R'} \subset \ol{Bs_\gra v x_S}$.
\end{claim}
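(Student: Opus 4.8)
The plan is to derive Claim \ref{lem:proof7} from Claim \ref{lem:proof6} by transferring that inclusion across the minimal parabolic $P_{\gra_0}$, using the one--step property (Lemma \ref{lem:ordG1}). The starting observation is that, since $m_{\gra_0}(s_\gra s_{\gra_0}v, S) = (s_\gra v, S_0)$ by Claim \ref{lem:proof4}, the inclusion $Bs_\gra u x_{R'} \subset \ol{Bs_\gra s_{\gra_0}v x_S}$ of Claim \ref{lem:proof6} enlarges to
$$
Bs_\gra u x_{R'} \subset \ol{Bs_\gra s_{\gra_0}v x_S} \subset \ol{P_{\gra_0}(s_\gra s_{\gra_0}v)x_S} = \ol{Bs_\gra v x_{S_0}}.
$$
By Claim \ref{lem:proof4} both $(s_\gra v, S)$ and $(s_\gra s_{\gra_0}v, S)$ lie in $\calE_{\gra_0}(s_\gra v, S_0)$, so what remains is to descend from $\ol{Bs_\gra v x_{S_0}}$ to $\ol{Bs_\gra v x_S}$.

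For this I would apply Lemma \ref{lem:ordG1} to the inclusion $Bs_\gra u x_{R'} \subset \ol{Bs_\gra v x_{S_0}}$ with the simple root $\gra_0$ and the pair $(s_\gra v, S) \in \calE_{\gra_0}(s_\gra v, S_0)$. Which part of the lemma one uses depends on whether $m_{\gra_0}(s_\gra u, R') = (s_\gra u, R')$ or not, but in either case it outputs that \emph{either} $Bs_\gra u x_{R'} \subset \ol{Bs_\gra v x_S}$ directly -- which is exactly Claim \ref{lem:proof7} -- \emph{or} there is an admissible pair $(\hat u, \hat R)$ lying in $\calE_{\gra_0}(s_\gra u, R')$ (or in $\calE_{\gra_0}(m_{\gra_0}(s_\gra u, R'))$, a set which in any case contains $(s_\gra u, R')$ itself) with $B\hat u x_{\hat R} \subset \ol{Bs_\gra v x_S}$. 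If that $\calE_{\gra_0}$--set is a singleton we are forced to $(\hat u, \hat R) = (s_\gra u, R')$ and we are done. Otherwise, by Lemma \ref{lem:azP4} its two elements share the second component $R'$, their first components being $s_\gra u$ and $s_{\gra_0}s_\gra u$, so the only case left to exclude is $(\hat u, \hat R) = (s_{\gra_0}s_\gra u, R')$.

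To exclude it I would feed $Bs_{\gra_0}s_\gra u x_{R'} \subset \ol{Bs_\gra v x_S}$ into Lemma \ref{lem:daGaC} to get $(s_{\gra_0}s_\gra u, R') \leq (s_\gra v, S)$, hence $s_{\gra_0}s_\gra u \leq s_\gra v$. Translating through Proposition \ref{prp:GP1} and using that $\grb = -v^{-1}(\gra)$ is maximal both in $\Phi^+(u)$ and in $\Phi^+(v)$ (Claim \ref{lem:proof2} iii)) together with the orthogonality of $\gra$ and $\gra_0$ (Claim \ref{lem:proof3} i)), one compares the sets $\Phi^+(s_{\gra_0}s_\gra u)$, $\Phi^+(u)$ and $\Phi^+(s_\gra v) = \Phi^+(v)\senza\{\grb\}$ and wants to reach $u \leq s_\gra v$, contradicting the uncomparability of $u$ and $s_\gra v$ from Claim \ref{lem:proof2} ii). The analogous argument on the representatives in $G/P^-$, carried out with $\mu = [u\grs_R]^P$ and $\nu = [v\grs_S]^P$ in place of $u$ and $v$, disposes of the inequality involving the second projection.

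The step I expect to be the main obstacle is precisely this last elimination: when $s_{\gra_0}s_\gra u$ is \emph{longer} than $s_\gra u$ the comparison of $\Phi^+$--sets no longer yields a contradiction by itself, and in the worst configuration -- where the one--step property fails to return $(s_\gra u, R')$ for every admissible choice of $\gra_0$ with $u \leq s_{\gra_0}v < v$ -- one is genuinely stuck with the tools used so far. To break this I would invoke the induction on $\ell(v) - \ell(u)$: the inclusion $Bu x_R \subset \ol{Bs_{\gra_0}v x_S}$ of Claim \ref{lem:proof5} has $\ell(s_{\gra_0}v) < \ell(v)$, and repeating the analysis on this shorter datum eventually lands in the case $u = v$, which is settled by Theorem \ref{teo:panv} through Lemma \ref{lem:baseind1} and serves as the base of the induction.
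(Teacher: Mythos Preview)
Your proposal does not close; the gap is exactly the one you flag yourself, and your fallback does not repair it. When you apply Lemma \ref{lem:ordG1} to $Bs_\gra u x_{R'} \subset \ol{Bs_\gra v x_{S_0}}$ with $\gra_0$, nothing forces the output pair to be $(s_\gra u, R')$. Your remark that ``if that $\calE_{\gra_0}$--set is a singleton we are forced to $(\hat u,\hat R)=(s_\gra u,R')$'' is already wrong in the case $m_{\gra_0}(s_\gra u,R')=(s_\gra u,R')$, since by definition $(s_\gra u,R')\notin\calE_{\gra_0}(s_\gra u,R')$. In the genuinely bad case $(\hat u,\hat R)=(s_{\gra_0}s_\gra u,R')$ your elimination argument --- deducing $u\leq s_\gra v$ from $s_{\gra_0}s_\gra u\leq s_\gra v$ via $\Phi^+$--sets --- does not work when $s_{\gra_0}s_\gra u>s_\gra u$, as you note. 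The proposed fallback through Claim \ref{lem:proof5} and induction on $\ell(v)-\ell(u)$ concerns the pairs $(u,R)$ and $(s_{\gra_0}v,S)$, not $(s_\gra u,R')$ and $(s_\gra v,S)$; replaying the analysis there would at best reproduce Claim \ref{lem:proof5}, not Claim \ref{lem:proof7}.

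The paper avoids the one--step descent entirely. It proves the \emph{combinatorial} inequality $(s_\gra u,R')\leq(s_\gra v,S)$ directly and then invokes the inductive hypothesis on $\ell(v)$ (legitimate since $s_\gra v<v$). The inequalities $s_\gra u\leq s_\gra v$ and $\grs_{s_\gra u(R')}\leq\grs_{s_\gra v(S)}$ are immediate; the only real content is $[s_\gra v\grs_S]^P=s_\gra\nu\leq\mu=[s_\gra u\grs_{R'}]^P$. Here Claim \ref{lem:proof6} is used \emph{combinatorially}: via Lemma \ref{lem:daGaC} it yields $s_{\gra_0}s_\gra\nu\leq\mu$, and a short coweight--lattice computation (Proposition \ref{prp:GP1} with $\gro_P^\vee$ minuscule) shows that $\nu\leq\mu$ together with $s_{\gra_0}s_\gra\nu\leq\mu$ forces $\nu(\gro_P^\vee)-\mu(\gro_P^\vee)\geq\gra^\vee-\gra_0^\vee$; since this difference is also $\geq 0$ and $\gra\neq\gra_0$, one gets $\nu(\gro_P^\vee)-\mu(\gro_P^\vee)\geq\gra^\vee$, i.e.\ $s_\gra\nu\leq\mu$.
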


\begin{proof}[Proof of the claim]
Since $s_\gra v < v$, by the inductive assumption on $\ell(v)$ it's enough to show that $(s_\gra u, R') \leq (s_\gra v, S)$.

Notice that $s_\gra u < s_\gra v$, indeed we have $u < v$ by assumption, and by Claim \ref{lem:proof1} 
we have $s_\gra u < u$ and $s_\gra v < v$. Notice also that by Lemma \ref{lem:lem2,5} we have 
$\grs_{s_\gra u(R')} \leq \grs_{s_\gra v(S)}$. Indeed $(s_\gra u, R') \in \calE_\gra(u,R)$ and 
$(s_\gra v,S) \in \calE_\gra(v,S)$, and by assumption $(u,R) \leq (v,S)$. Finally, notice that 
$[s_\gra u \grs_{R'}]^P = \mu$ and $[s_\gra v \grs_S]^P = s_\gra \nu$, therefore it only remains to show the inequality $\mu \geq s_\gra \nu$.

By the previous lemmas, the Hasse diagram of $(V_L,\leq)$ has the following subdiagram (for convenience we extend all admissible pairs with a 
third entry representing the Weyl group element associated by projecting on $G/P^-$):

\[	\xy
(-225,0);(-240,-10);**\dir{-};
(-225,0);(-210,-10);**\dir{-};
(-224,2)*{(v,S_0,s_{\gra_0} \nu)};
(-234,-4)*{{}_{\gra_0}};
(-216,-4)*{{}_{\gra_0}};
(-183,-4)*{{}_{\gra}};
(-238,-13)*{(s_{\gra_0} v, S, s_{\gra_0} \nu)};
(-207,-13)*{(v,S,\nu)};
(-215,0);(-156,-9,5);**\dir{-};
(-237,-15);(-198,-35);**\dir{-};
(-227,-15);(-170,-25);**\dir{-};
(-205,-15);(-192,-35);**\dir{-};
(-200,-15);(-140,-25);**\dir{-};
(-145,-15);(-160,-25);**\dir{-};
(-145,-15);(-130,-25);**\dir{-};
(-183,-16.5)*{{}_{\gra}};
(-183,-21.5)*{{}_{\gra}};
(-154,-19)*{{}_{\gra_0}};
(-136,-19)*{{}_{\gra_0}};
(-146,-12)*{(s_\gra v, S_0 , s_{\gra_0} s_\gra \nu)};
(-165,-27.5)*{(s_{\gra_0} s_\gra v, S,  s_{\gra_0} s_\gra \nu)};
(-134,-27.5)*{(s_\gra v, S, s_\gra \nu)};
(-190,-40);(-205,-50);**\dir{-};
(-190,-40);(-175,-50);**\dir{-};
(-195,-50);(-165,-30);**\dir{-};
(-180,-45)*{{}_{\gra}};
(-199.5,-45)*{{}_{\gra}};
(-190,-38)*{(u,R,\mu)};
(-203,-53)*{(s_\gra u ,R', \mu)};
(-173,-53)*{(u ,R', s_\gra \mu)};
\endxy
\]	

Consider now the Hasse diagram in $W^P$ with the Bruhat order obtained by looking at the last components of the entries of 
the previous diagram (notice that this reverses the order). By assumption we have the three inequalities $\nu \leq \mu$, 
$\nu< s_\gra \nu$ and $\mu < s_\gra \mu$, therefore $s_\gra \nu \leq s_\gra \mu$ as well. Thus we get the following diagram in $W^P$:
\[	\xy
(-20,-3)*{\mu};
(-20,-5);(-20,-15);**\dir{-};
(-20,-17)*{\nu};
(-20,-19);(-20,-29);**\dir{-};
(-22,-31)*{s_{\gra_0} \nu};
(0,3)*{s_\gra \mu};
(0,1);(0,-9);**\dir{-};
(0,-10)*{s_\gra \nu};
(0,-12);(0,-22);**\dir{-};
(2,-24)*{s_{\gra_0} s_\gra \nu};
(-18,-2);(-4,3);**\dir{-};
(-18,-16);(-4,-10);**\dir{-};
(-18,-30);(-4,-24);**\dir{-};
(-18,-4);(-3,-22);**\dir{-};
\endxy
\]	
\\

By applying the elements in the diagram above to the fundamental coweight $\gro_P^\vee$ as in Proposition \ref{prp:GP1}, 
we can translate the previous diagram into the following Hasse diagram in the coweight lattice with the dominance order:
\[	\xy
(-24,3)*{s_{\gra_0} \nu(\gro_P^\vee)};
(-20,0);(-20,-7);**\dir{-};
(-22,-10)*{\nu(\gro_P^\vee)};
(-20,-13);(-20,-21);**\dir{-};
(-22,-24)*{\mu(\gro_P^\vee)};
(6,-3)*{s_{\gra_0} s_\gra \nu(\gro_P^\vee)};
(0,-5);(0,-15);**\dir{-};
(5,-17)*{s_\gra \nu(\gro_P^\vee)};
(0,-19);(0,-29);**\dir{-};
(5,-31)*{s_\gra \mu(\gro_P^\vee)};
(-3,-2);(-16,3);**\dir{-};
(-2,-16);(-16,-10);**\dir{-};
(-2,-30);(-16,-24);**\dir{-};
(-2,-5);(-17,-22);**\dir{-};
\endxy
\]

Since $\gro_P^\vee$ is minuscule, we have the equalities
\begin{gather*}
\gra = s_{\gra_0} \nu(\gro_P^\vee) - s_{\gra_0} s_\gra \nu(\gro_P^\vee) = \nu(\gro_P^\vee) - s_\gra\nu(\gro_P^\vee),\\
\gra_0 = s_{\gra_0} \nu(\gro_P^\vee) - \nu(\gro_P^\vee).
\end{gather*}
Therefore
$$
	\nu(\gro_P^\vee) - \mu (\gro_P^\vee) = \big(s_{\gra_0} \nu(\gro_P^\vee) - \mu (\gro_P^\vee) \big) - 
	\big(s_{\gra_0} \nu(\gro_P^\vee) - \nu(\gro_P^\vee) \big) \geq \gra - \gra_0 .
$$
On the other hand by assumption we have $\mu (\gro_P^\vee)  \leq \nu(\gro_P^\vee)$, and by construction 
$\gra_0$ and $\gra$ are distinct simple roots. Therefore it must be $\nu(\gro_P^\vee) - \mu (\gro_P^\vee) \geq \gra$, 
and we get $\mu(\gro_P^\vee) \leq s_\gra \nu(\gro_P^\vee)$. Thus by Proposition \ref{prp:GP1} we get $s_\gra \nu \leq \mu$ .
\end{proof}

Since $(v,S) = m_\gra(s_\gra v, S)$, we have the equality $\ol{Bvx_S} = \ol{P_\gra s_\gra v x_S}$. On the other hand by Claim \ref{lem:proof7} 
we have $Bs_\gra u x_{R'} \subset \ol{Bs_\gra v x_S}$, therefore we get
$$
	Bux_R \subset P_\gra s_\gra u x_{R'} \subset \ol{P_\gra s_\gra v x_S} = \ol{Bvx_S}.
$$
The proof is complete.

\subsection{An example: the case of $\mathbf{Sp_4}$} \label{ex:Sp4}
We now give some details about the example of $G=\mathrm{Sp}_4$, and explain the failure of our theorems in characteristic 2.

Let $\gra_1,\gra_2$ be the simple roots of $G$, and denote $\eta=\gra_1+\gra_2$ and $\theta=2\gra_1+\gra_2$. In this case $P=P_{\gra_1}$ and $L=\mathrm{GL}_2$.
First we will give the Hasse diagram for $G/L$ in characteristic different from 2, then we will explain how the situation changes in characteristic 2. 

If $\car \mk \neq 2$, the Bruhat order on $G/L$ is described by the picture here below, which is organized as follows. The vertical diagram on the right and the horizontal diagram on the bottom are respectively the Hasse diagrams of the $B$-orbits in $G/P^-$ and in $G/P$. 
The other diagram is the Hasse diagram of the $B$-orbits in $G/L$. For every entry of the Hasse diagram of $G/L$ we have written down the set $S$. One can read the projection of a $B$-orbit in $G/L$ on $G/P$ by looking at the corresponding entry on the same column in the horizontal diagram on the bottom, and similarly 
one can read the projection on $G/P^-$ by looking at the corresponding entry on the same row in the vertical diagram on the right.
There is only one exception to these rules: the entry on the upper-left corner of the Hasse diagram of $G/L$, which is the open 
$B$-orbit in $G/L$ and which projects onto the open orbits both on $G/P$ and on $G/P^-$, which is not aligned with the two projections for graphical reasons.
Finally, we have decorated the arrows representing the covering relations in the Hasse diagrams with the number $1$ (resp. $2$) when the two $B$-orbits are in the same $P_{\gra_1}$ orbit (resp. in the same $P_{\gra_2}$ orbit).

$$
\xymatrix{
\theta,\gra_2 \ar^1[rd] \ar^2@/^/[rrd] \ar_2@/_/[rdd] & & & & & & G/P^-\\
 & \eta  \ar^2[rd] \ar@/_1.5pc/[dd] \ar@/^1.5pc/[rr] & \gra_2 \ar^2[d] \ar^1[r]  & \gra_2 \ar^2[r] \ar^2[d] & \vuoto &  &        1 \ar^2[d]\\
 & \gra_2 \ar^1[d] \ar^2[r]                          & \eta \ar^1[r] \ar^1[d]    & \vuoto                   &        &  &      s_2 \ar^1[d]\\
 & \theta \ar^2[d] \ar^2[r]                          & \vuoto                    &                          &        &  &  s_1 s_2 \ar^2[d]\\
 & \vuoto                                            &                           &                          &        &  &        s_2s_1s_2 \\
 G/P& s_2s_1s_2 \ar^2[r]  & s_1s_2 \ar^1[r] & s_2 \ar^2[r] & 1      }
$$

\medskip

The proof given in Section \ref{sez:GL2} is of course very fast in this case. The diagram above represents the combinatorial order on the set $V_L$ of the admissible pairs, and we want 
to prove that it represents the Bruhat order on $G/L$. The statement is trivial for all covering relations that are decorated. We only have two covering relations that are not decorated,
both starting from the admissible pair $(v,S) = (s_2s_1s_2, \{\eta\})$. Following the general line of the proof given in the previous section, suppose that we want to prove that the closure of the corresponding orbit contains the orbit defined by the admissible pair $(u,R) = (s_2, \{\gra_2\})$, then we argue by induction by noticing that $\calE_{\gra_2}(v,S)=\{(s_1s_2,S)\}$ and $\calE_{\gra_2}(u,R)=\{(s_2,\vuoto),(1,\vuoto)\}$, and that 
$(s_1s_2,S)\geq (s_2,\vuoto)$.

\medskip

Suppose now that $\car \mk = 2$. Denote $J=\left(\begin{smallmatrix}0&I\\I&0\end{smallmatrix}\right)$, and notice that $G = \{ g \in \mathrm{GL}_4 \st  gJg^t=J \}$
is a simply connected algebraic group of type $\sfC_2$. 
So, as in the case of characteristic different from 2, the nilradical $\gop^\mru$ is the algebra of symmetric matrices and $L=\mathrm{GL}_2$. 

If $\car \mk \neq 2$, the Bruhat order of the $B$-orbits in $\gop^\mru$ is represented in the diagram of $G/L$ by the subdiagram of the elements 
whose projection in $G/P$ equals $s_2s_1s_2$: in particular we have five $B$-orbits in $\gop^\mru$. 
A direct computation shows that, if $\car \mk = 2$, there is an new $B$-orbit in $\gop^\mru$ and that is $Be_S$ with  
$S=\{\eta,\theta\}$, and the Bruhat order on $\gop^\mru$ is represented by the following Hasse diagram:\\

$$
\xymatrix{
\theta,\gra_2 \ar[r] \ar@/^1.5pc/[rr] & \gra_2 \ar@/_1.5pc/[rrr] & \eta, \theta \ar[r] \ar@/^1.5pc/[rr] & \eta \ar@/_1.5pc/[rr] & \theta \ar[r] & \vuoto
}
$$
\medskip

\noindent In particular, the Bruhat order on $\gop^\mru$ in characteristic different from 2 is different form that in characteristic 2, and similarly for the Bruhat order on $G/L$.


\begin{thebibliography}{99}

\bibitem{BM1} N.~Barnea and A.~Melnikov,
\newblock{\em $B$-orbits of square zero in nilradical of the symplectic algebra},
Transform. Groups \textbf{22} (2017), 885--910.

\bibitem{BM2} N.~Barnea and A.~Melnikov,
\newblock{\em $B$-orbits in abelian nilradicals of types B, C and D: towards a conjecture of Panyushev},
\newblock in: V.~Dobrev (ed.) \textit{Lie Theory and Its Applications in Physics. LT 2015}, Springer Proceedings in Mathematics \& Statistics \textbf{191}, Springer, Singapore, 2016, 399--411.

\bibitem{BB} A.~Bjorner and F.~Brenti,
\newblock{\em Combinatorics of Coxeter Groups},
Graduate Texts in Mathematics \textbf{231}, Springer-Verlag, New York, 2005.
    
\bibitem{Bou} N.~Bourbaki, {\em \'El\'ements de
  math\'ematique. Fasc.\ XXXIV. Groupes et alg\`ebres de Lie.  Chapitres
  VI, V, VI}, Actualit\'es Scientifiques et Industrielles
  \textbf{1337}, Hermann Paris 1968; chapitres VII, VIII, Hermann, Paris, 1975.


\bibitem{EHP} T.J.~Enright, M.~Hunziker and W.A.~Pruett,
\newblock{\em Diagrams of Hermitian type, highest weight modules, and syzygies of determinantal varieties}
in: R.~Howe, M.~Hunziker, J.F.~Willenbring (eds.), \textit{Symmetry: representation theory and its applications}, Progress in Mathematics \textbf{257}, Birkh\"auser, New York, 2014, 121--184.

\bibitem{GMMP} J.~Gandini, A.~Maffei, P.~M\"oseneder Frajria and P.~Papi,
\newblock{\em The Bruhat order on abelian ideals of Borel subalgebras}, arXiv: math.AG/1806.08622.

\bibitem{GP} J.~Gandini and G.~Pezzini,
\newblock{\em Orbits of strongly solvable spherical subgroups on the flag variety}, J. Algebraic Combin. \textbf{47} (2018), 357–401.

\bibitem{Hu} J.E.~Humphreys,
\newblock{\em Introduction to Lie algebras and representation theory}, Graduate Texts in Mathematics \textbf{9}, Springer-Verlag, New York-Berlin, 1972.

\bibitem{Ign} M.~Ignatyev,
\newblock{\em Combinatorics of $B$-orbits and Bruhat-Chevalley order on involutions},
\newblock Transform. Groups \textbf{17} (2012), 747--780.

\bibitem{Kn} F.~Knop,
\newblock{\em On the set of orbits for a Borel subgroup},
\newblock Comment. Math. Helv. \textbf{70} (1995), 285--309.

\bibitem{MO} T.~Matsuki and T.~Oshima,
\newblock{\em Embeddings of discrete series into principal series},
\newblock in: M.~Duflo, N.V.~Pedersen, M.~Vergne (eds.), {\em The Orbit Method in Representation Theory}, Progress in Mathematics \textbf{82}, Birkh\"auser, Boston, 1990, 147--175.

\bibitem{Mel} A.~Melnikov,
\newblock{\em $B$-orbits of nilpotency order 2 and link patterns},
Indag. Math. NS \textbf{24} (2013), 443--473.


\bibitem{Pa} D.I.~Panyushev,
\newblock{\em On the orbits of a Borel subgroup in abelian ideals},
\newblock Transform. Groups \textbf{22} (2017), 503--524.



\bibitem{RS1} R.W.~Richardson and T.A.~Springer,
{\em The Bruhat order on symmetric varieties},
Geom. Dedicata \textbf{35} (1990), 389--436.

\bibitem{RS2} R.W.~Richardson and T.A.~Springer,
\newblock{\em Combinatorics and geometry of $K$-orbits on the flag manifold},
\newblock in: R.~Elman, M.~Schacher, V.~Varadarajan (eds.),
{\em Linear algebraic groups and their representations},
Contemp. Math. \textbf{153}, Amer. Math. Soc., Providence, 1993, 109--142.

\bibitem{RS3} R.W.~Richardson and T.A.~Springer,
{\em Complements to ``The Bruhat order on symmetric varieties''},
Geom. Dedicata \textbf{49} (1994), 231--238.

\bibitem{Se} G.~Seitz,
{\em Unipotent elements, tilting modules, and saturation},
Invent. Math. \textbf{141} (2000), 467--502.

\bibitem{Springer} T.A.~Springer,
\newblock{\em Linear algebraic groups (Second edition)},
Progress in Mathematics \textbf{9}, Birkh\"auser, Boston, 1998.

\bibitem{Sp} T.A.~Springer,
\newblock{\em Some results on algebraic groups with involutions},
\newblock in: R.~Hotta (ed.), {\em Algebraic groups and related topics},
Adv. Stud. in Pure Math. \textbf{6}, Kinokuniya/North-Holland, 1985, 525--543.

\bibitem{Steinberg} R.~Steinberg,
\newblock{\em Lectures on Chevalley groups}, 
Notes, Yale University, 1967.

\bibitem{St1} J.A.~Stembridge,
\newblock{\em On the Fully Commutative Elements of Coxeter Groups},
J. Algebraic Combin. \textbf{5} (1996), 353--385.

\bibitem{Wy} B.~Wyser,
\newblock{\em The Bruhat order on clans},
\newblock J. Algebraic Combin. \textbf{44} (2016), 495--517.

\bibitem{Ya} A.~Yamamoto,
\newblock{\em Orbits in the flag variety and images of the moment map for classical groups I},
Represent. Theory \textbf{1} (1997), 329--404.

\end{thebibliography}
\end{document}